\newcommand{\lyxmathsym}[1]{\ifmmode\begingroup\def\b@ld{bold}
  \text{\ifx\math@version\b@ld\bfseries\fi#1}\endgroup\else#1\fi}
\numberwithin{equation}{section}
\numberwithin{figure}{section}
\theoremstyle{plain}
\newtheorem*{theorem*}{\protect\theoremname}
\newtheorem*{corollary*}{\protect\corollaryname}
\newtheorem*{lemma*}{\protect\lemmaname}
\theoremstyle{plain}
\newtheorem{theorem}{\protect\theoremname}
\theoremstyle{plain}
\newtheorem{lemma}[theorem]{\protect\lemmaname}
\theoremstyle{plain}
\newtheorem{proposition}[theorem]{\protect\propositionname}
\theoremstyle{remark}
\newtheorem{remark}[theorem]{\protect\remarkname}
\theoremstyle{plain}
\newtheorem{corollary}[theorem]{\protect\corollaryname}
\theoremstyle{definition}
\newtheorem{definition}[theorem]{\protect\definitionname}
\theoremstyle{definition}
\theoremstyle{plain}
\newtheorem{assumption}[theorem]{\protect\assumptionname}
\date{\today}
\newcommand{\R}{\mathbb{R}} 
\newcommand{\E}{\mathcal{E}}
\newcommand{\N}{\mathbb{N}}
\providecommand{\corollaryname}{Corollary}
\providecommand{\definitionname}{Definition}
\providecommand{\examplename}{Example}
\providecommand{\lemmaname}{Lemma}
\providecommand{\remarkname}{Remark}
\providecommand{\theoremname}{Theorem}
\providecommand{\assumptionname}{Assumption}
\providecommand{\propositionname}{Proposition}
\begin{document}

%\title[Robust mean field continuity in the parameters of the McKean-Vlasov SDE]{Mean field continuity in the parameters of the McKean-Vlasov SDE}

\title[Pathwise McKean-Vlasov theory]{Pathwise McKean-Vlasov theory with additive noise}

	\begin{abstract}
	We take a pathwise approach to classical McKean-Vlasov stochastic differential equations with additive noise, as e.g.~exposed in Sznitmann \cite{sznitman1991topics}. Our study was prompted by some concrete problems in battery modelling \cite{guhlke2018stochastic}, and also
	 by recent progrss on rough-pathwise  McKean-Vlasov theory, notably Cass--Lyons \cite{MR3299600}, and then Bailleul, Catellier and Delarue \cite{bailleul2018mean}. Such a ``pathwise McKean-Vlasov theory'' can be traced back to Tanaka \cite{MR780770}. This paper can be seen as an attempt to advertize %highlight the underlying idea and illustrate 
	   the ideas, power and simplicity of the pathwise appproach, not so easily extracted from \cite{bailleul2018mean, MR3299600, MR780770}, together with a number of novel applications. These 
	   include mean field convergence without a priori independence and exchangeability assumption; common noise, c\`adl\`ag noise, and reflecting boundaries. 
 	 %reflecting boundaries are also easy to handle in this framework, l
	 Last not least, we generalize  Dawson--G\"artner large deviations and the central limit theorem to a non-Brownian noise setting.
	
	%
	%and large deviation \'a la Dawson--G\"artner w
	% 
	%\cite{MR3299600}: we study a McKean-Vlasov equation with an additive noise and get at once well-posedness and particle approximation, via a stability result of a generalized McKean-Vlasov equation. This approach also has the benefit that we can impose on the equation drivers which are merely continuous. Also, we do not require any 
\end{abstract}

%\thanks{Acknowledgements: Support from the Einstein Center Berlin, ECMath Project ``Stochastic methods for the analysis of lithium-ion batteries'' if gratefully acknowledged.
%JDD and PKF are partially supported by DFG research unit FOR 2402, PKF acknowledges partial support the European Research Council through CoG-683164.}

\author{Michele Coghi, Jean-Dominique Deuschel, Peter K. Friz, Mario Maurelli}

\address[M. Coghi]{Weierstrass Institute for Applied Stochastic, Mohrenstrasse 39, 10117 Berlin and Faculty of Mathematics,
	University of Bielefeld,
	33615 Bielefeld,
	Germany}
\email{michele.coghi@gmail.com}

\address[J.-D. Deuschel]{Technische Universit\"at Berlin, Str. des 17. Juni 136, 10587 Berlin, Germany}
\email{deuschel@math.tu-berlin.de}

\address[P.K. Friz]{Technische Universit\"at Berlin, Str. des 17. Juni 136, 10587 Berlin and Weierstrass Institute for Applied Stochastic, Mohrenstrasse 39, 10117 Berlin, Germany}
\email{friz@math.tu-berlin.de}

\address[M. Maurelli]{University of York and University of Edinburgh}
\email{mario.maurelli@york.ac.uk }

\maketitle
%\tableofcontents
	
	\section{Introduction}
	\label{section:introduction}
	
	We consider the following \textit{generalized} McKean-Vlasov stochastic differential equation (SDE) on a probability space $(\Omega, \mathcal{A}, \mathbb{P})$, 
	\begin{equation}\label{intro:mckean vlasov equation}
	\left\{
	\begin{array}{l}
	dX_t = b(t, X_t, \mathcal{L}(X_t)) dt + dW_t\\
	X_0 = \zeta.
	\end{array}
	\right.	
	\end{equation}
	The input data to the problem is the random initial data and noise
	\begin{equation*}
	\begin{array}{cccc}
	(\zeta, W): & \Omega &\to &\mathbb{R}^d \times  C_T,
	\end{array}
	\end{equation*}
	and $$X: \Omega \to  C_T := C([0,T], \mathbb{R}^d)$$ is the solution (process). We denote by $\mathcal{L}(Y)$ the law of a random variable $Y$. Classically, one takes 
	%The most common example that we have in mind for 
	$W$ as a Brownian Motion. For us, it will be crucial to avoid any a priori specification of the noise.  
	%we do not restrict our analysis to it. 
	Indeed, we are not even asking for any filtration on the space $\Omega$ and equation \eqref{intro:mckean vlasov equation} will be studied pathwise.
	For $p \in [1, \infty)$, let $\mathcal{P}_p(\mathbb{R}^d)$ be the space of probability measures on $\mathbb{R}^d$ with finite $p$-moment endowed with the $p$-Wasserstein metric. The drift is a function
	\begin{equation*}
	b: [0,T] \times \mathbb{R}^d \times \mathcal{P}_p(\mathbb{R}^d) \to \mathbb{R}^d,
	\end{equation*}
	which %, for the sake of argument, 
	is assumed uniformly Lipschitz continuous in the last two variables, cf. Assumption \ref{assumption} below.
	
	\bigskip
	In a nutshell, McKean-Vlasov equations are SDEs which depend on the law of the solution. They have been extensively studied in the literature, for a comprehensive introduction we refer to \cite{sznitman1991topics}. They arise in many applications as limit of systems of interacting particles, for instance in the theory of mean field games developed by Lasry and Lions \cite{MR2269875, MR2271747, MR2295621}. Other interesting applications arise in fluid-dynamics \cite{bessaih2017mean, MR750980, MR3254330}, also with common noise features, neuroscience \cite{LucSta2014, Tou2014, DIRT2015} and macroeconomics \cite{nadtochiy2018mean}, also involving general driving signals. Last not least, our motivation %also with regard to reflecting boundary conditions 
	%(a feature out of reach of present rough path machinery), 
comes from a recent battery model, cf. \eqref{intro:battery model} below, taken from \cite{guhlke2018stochastic}, which is of the form \eqref{intro:mckean vlasov equation} but with reflecting boundary as given in \eqref{intro:McKVlaSkor}.
 %given below.  %some details of which are found at the end of this section.

%	 As a matter of fact, our initial motivation comes from battery modelling (see below). % we have in mind a physical system that describes the beheviour of the athoms of lithium-iron batteries during the charging/discharging cycle, as the one proposed in \cite{guhlke2018stochastic}.
	
	\medskip
	Closely related to the McKean-Vlasov equation is the
	%The most common example is a %
	system of particles (classically) driven by independent Brownian motions $W^i$, with independent identically distributed (i.i.d.) initial conditions $\zeta^i$,
	\begin{equation}\label{intro:interacting particles}
	\left\{
	\begin{array}{l}
	dX^{i, N}_t = b(t, X^{i, N}_t, L^N(X^{(N)}_t)) dt + dW^{i}_t\\
	X^{i, N}_0 = \zeta^i,
	\end{array}
	\right.	
	\quad i= 1, \dots, N.
	\end{equation}
The particles interact with each other through the empirical measure, which is defined as follows. Given a space $E$ (such as $\mathbb{R}^d$ or $C_T$) and a vector $x^{(N)} = (x^1,\dots,x^N) \in E^N$, we call $\mathcal{P}(E)$ the space of probability measures over $E$ and we define
\begin{equation*}
L^N(x^{(N)}) := \frac1N \sum_{i=1}^N \delta_{x^i} \in \mathcal{P}(E).
\end{equation*}
Let $X$ be a solution to equation \eqref{intro:mckean vlasov equation} with inputs $(\zeta, W)$ distributed as $(\zeta^1, W^1)$. When the number of particles, $N$, grows to infinity, we have the following a.s.~convergence in $\mathcal{P}(C_T)$ equipped with the usual weak-$*$ topology,
\begin{equation}
\label{intro:convergence empirical measure}
L^N(X^{(N)}(\omega)) \overset{\ast}{\rightharpoonup} \mathcal{L}(X), 
\quad \mbox{for }\mathbb{P}-\mbox{a.e. }\omega.
\end{equation} This result, as well as the well-posedness of equation \eqref{intro:mckean vlasov equation} is proved in \cite{sznitman1991topics} when the particles are exchangeable and subjects to independent inputs. This approach can be generalized to more general diffusion coefficients \cite{kurtz1999particle, MR1797090, CoghiGess}  using standard semi-martingale theory.
	%using standard semi-martingale theory

{\bf Rough paths:}		
	Cass and Lyons \cite{MR3299600} study McKean-Vlasov equations in the framework of rough paths. That is, they construct (rough) pathwise solutions to the McKean-Vlasov equation driven by suitable random rough paths, which lets them go beyond the classical case when $W$ is a semi-martingale under $\mathbb{P}$. They can treat the case multiplicative noise, that is with our $dW$ replaced by $\sigma(X) d\mathbf{W}$, but with mean field dependence only in the drift.
	%	when in equation \eqref{intro:mckean vlasov equation} a diffusivity coefficient $\sigma$ appears in front of the noise. More precisely, they treated the case when the dependence of $b$ in the measure is linear and the diffusivity $\sigma$ is independent from the law. T
	This problem is revisited by Bailleul \cite{MR3420480} in the case of a Lipschitz dependence of $b$ on the measure. Finally, Bailleul et al. \cite{bailleul2018mean, bailleul2019propagation} study the general case when both $b$ and $\sigma$ are (Lipschitz) dependent on the law of the solution.  This requires extra assumptions of differentiability with respect to the measure argument.
	The rough path case is technically more involved, it especially requires more care when studying the mean-field convergence since the solution map ($\mathcal{L}(\zeta, W) 
	\mapsto \mathcal{L}(X)$) is continuous, but not Lipschitz (cf. \cite[Rmk 4.4.]{bailleul2019propagation}), in contrast our Lipschitz estimate in Theorem \ref{main theorem} below. For a different approach to rough differential equations with common noise, we refer also \cite{coghi2019rough}.
	%As this work near completion, we found that such idea was already used by Tanaka in \cite{MR780770}.

{\bf Tanaka:} As already mentioned, in the context of battery modelling with additive noise \cite{guhlke2018stochastic}, no rough path machinery is necessary, leave alone some formidable difficulties for rough differential equations to deal with reflecting boundaries \cite{aida2015reflected, deya2018one}. This was the initial motivation for our pathwise study, which soon turned out informative and rather pleasing in the generality displayed here. As our work neared completion we realized that we were not the first to go in this direction: the basic idea can be found (somewhat hidden) in a paper by Tanaka, \cite[Sec.2]{MR780770}. (There is no shortage of citations to \cite{MR780770}, but we are unaware of any particular work that makes use of the, for us, crucial Section 2 in that paper.) May that be as it is, advertising this aspect of Tanaka's work, as pathwise ancestor to \cite{MR3299600, MR3420480,bailleul2018mean}, is another goal of this note, and in any case there is no significant overlap of our results with \cite{MR780770}.

The main intuition of Tanaka \cite{MR780770} and subsequent works is that equation \eqref{intro:interacting particles} can be interpreted as equation \eqref{intro:mckean vlasov equation} by using a transformation of the probability space and the input data. We explain this connection between the equations in Section \ref{section:particle approximation}. This approach makes it possible to reduce the study of the mean field limit to a stability result for equation \eqref{intro:mckean vlasov equation}. This implies in particular that there is no need for asymptotical independence or exchangeability of the particles in order to obtain convergence \eqref{intro:convergence empirical measure}. Indeed, one can show that the solution map 
	\begin{equation*}
	\mathcal{L}(\zeta, W) 
	\mapsto \mathcal{L}(X)
	\end{equation*}
	that associates the law of the solution to the law of the inputs is continuous, and as soon as there is convergence for the law of the input data there is also convergence for the law of the solution. No independence, nor identical distributions (or even exchangeability) for the inputs are required, as we explain in Sections \ref{section:particle approximation} and \ref{section:common noise}. %We finally note that this strategy of proof has also deterministic roots, in being reminiscent of the strategy used in \cite{Dob1979}.
	%
	%,  in absence of noise, was pioneered in \cite{MR780770} with most emphasis in the case of Brownian noise and was seemingly left forgotten\footnote{There is no shortage of citations to \cite{MR780770}, but we are unaware of any particular work that makes use of the crucial Section 2 in that paper.}, leaving it to the present note to exhibit the (much less technical) pathwise setting, made possible by our focus on general additive noise.\footnote{An extension \`a la Doss--Sussmann, with commuting diffusion vector fields, is possible.} 
		\subsection*{Main ideas} %Having displayed the main results of this paper, let us discuss some key steps in this approach.
		Given a Polish space $E$, we work on the space of probability measures with finite $p$-th moment, $\mathcal{P}_p(E)$, endowed with the Wasserstein distance $\mathcal{W}_p$ (see Section \ref{section:notation} for the precise definition).
	The idea is to construct the solution map of equation \eqref{intro:mckean vlasov equation}, for a generic probability measure $\mu$,
	\begin{equation}
	\label{intro: map}
	\Phi : 
	\mathcal{P}_p(\R^d\times C_T) \times \mathcal{P}_p(C_T) \to \mathcal{P}_p(C_T),
	\qquad
	(\mathcal{L}(\zeta, W), \mu) \mapsto \mathcal{L}(X^\mu).
	\end{equation}
	Here $X^\mu$ is the pathwise solution to equation \eqref{intro:mckean vlasov equation} when the inputs are $\zeta, W$ and the measure in the drift is given as $\mu$, instead of the law of $X$. Existence and uniqueness of the solutions of the McKean-Vlasov equation \eqref{intro:mckean vlasov equation} follow as a fixed point argument of the parameter dependent map $\Phi$. Indeed, one can prove that, for fixed $(\zeta, W)$, the map $\Phi(\mathcal{L}(\zeta, W), \cdot)$ is a contraction on the space $\mathcal{P}_p(C_T)$. Hence, there is a unique fixed point $\bar \mu := \bar \mu(\mathcal{L}(\zeta, W)) = \Phi(\mathcal{L}(\zeta, W), \bar \mu)$. This fixed point uniquely determines a pathwise solution $X^{\bar \mu}$ to equation \eqref{intro:mckean vlasov equation}.
	
	Since $\Phi$ is Lipschitz continuous in all its arguments, it follows from Proposition \ref{contraction theorem} that also the map that associates the parameter to the fixed point, namely $\Psi$ defined in \eqref{eq:definition psi} is Lipschitz continuous. This is the stability result that we need in order to prove convergence of the particle system.
	
	%\medskip
	
	\pagebreak 
	\subsection*{Main results} In this setting, we obtain the following list of results. %simple form.
%	In the spirit of \cite{MR3299600}, we prove the following result
		\begin{theorem*}[see Theorem \ref{main theorem}]
	Let $p \in [1, \infty)$ and assume $b$ Lipschitz. For $i=1,2$, let $(\zeta^i, W^i) \in L^p(\mathbb{R}^d \times C_T, \mathbb{P}^i)$ be two sets of input data. There exist unique pathwise solutions $X^i \in L^p(C_T)$ to equation \eqref{intro:mckean vlasov equation}, driven by the respective input data. Moreover,
	\begin{equation*}
	\mathcal{W}_{p}(\mathcal{L}(\zeta^1, W^1, X^1), \mathcal{L}(\zeta^2, W^2, X^2)) 
	\leq C \mathcal{W}_{p}(\mathcal{L}(\zeta^1, W^1), \mathcal{L}(\zeta^2, W^2)),
	\end{equation*}
	for some constant $ C =  C(p, T, b) >0$.
\end{theorem*}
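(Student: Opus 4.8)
The plan is to realise \eqref{intro:mckean vlasov equation}, with the measure argument \emph{frozen}, as a genuine pathwise integral equation, and then to run the parameter-dependent contraction encoded in the map $\Phi$ of \eqref{intro: map}, so that Proposition \ref{contraction theorem} applies. Concretely, first I would fix $(\zeta,w)\in\mathbb{R}^d\times C_T$ and $\mu\in\mathcal{P}_p(C_T)$, write $\mu_t:=(e_t)_*\mu$ for the time-$t$ marginal ($e_t:C_T\to\mathbb{R}^d$ the evaluation map), and solve
\begin{equation*}
x_t \;=\; \zeta+\int_0^t b(s,x_s,\mu_s)\,ds+w_t,\qquad t\in[0,T],
\end{equation*}
by Banach's fixed point theorem on $C_T$, using Lipschitz continuity of $b$ in the space variable. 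An a priori bound on $\|x\|_{\infty}$ from Grönwall together with the (at most) linear growth of $b$ in the last two arguments (Assumption \ref{assumption}) shows that the resulting solution map $S(\zeta,w,\mu):=x$ is well defined, continuous, and sends $L^p$-inputs (with $\mu\in\mathcal P_p(C_T)$) into $L^p(C_T)$. Since $S(\cdot,\cdot,\mu)$ is a fixed Borel map, $\Phi(\mathcal{L}(\zeta,W),\mu):=\mathcal{L}(S(\zeta,W,\mu))$ depends on $(\zeta,W)$ only through its law, so $\Phi:\mathcal{P}_p(\mathbb{R}^d\times C_T)\times\mathcal{P}_p(C_T)\to\mathcal{P}_p(C_T)$ is well defined.

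\textbf{The stability estimate.} For two triples $(\zeta^i,w^i,\mu^i)$, subtracting the two integral equations and using Lipschitzness of $b$ in \emph{both} the space and the measure variable gives, after Grönwall,
\begin{equation*}
\big\|S(\zeta^1,w^1,\mu^1)-S(\zeta^2,w^2,\mu^2)\big\|_{\infty}\;\le\; e^{LT}\Big(|\zeta^1-\zeta^2|+\|w^1-w^2\|_{\infty}+L\!\int_0^T\!\mathcal{W}_p(\mu^1_s,\mu^2_s)\,ds\Big),
\end{equation*}
where marginals contract Wasserstein distance, $\mathcal{W}_p(\mu^1_s,\mu^2_s)\le\mathcal{W}_p(\mu^1,\mu^2)$. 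To make the prefactor of the measure term strictly less than $1$, uniformly in the inputs, I would replace the sup-norm on $C_T$ by the equivalent weighted norm $\|f\|_\beta:=\sup_t e^{-\beta t}|f_t|$ with $\beta$ large and rerun the Grönwall estimate in this norm; this produces a bound with constant $\kappa=\kappa(\beta,L)<1$ in front of $\sup_s e^{-\beta s}\mathcal{W}_p(\mu^1_s,\mu^2_s)$. Coupling the two input laws by an (almost) optimal plan, realising $(\zeta^i,W^i)$ on one space, applying the pathwise estimate $\omega$ by $\omega$ and taking $L^p(\mathbb{P})$-norms then yields
\begin{equation*}
\mathcal{W}_{p}\big(\Phi(\lambda^1,\mu^1),\Phi(\lambda^2,\mu^2)\big)\;\le\;C_1\,\mathcal{W}_{p}(\lambda^1,\lambda^2)+\kappa\,\mathcal{W}_{p}(\mu^1,\mu^2),\qquad\kappa<1,
\end{equation*}
for the (equivalent) weighted Wasserstein metrics on $\mathcal{P}_p(C_T)$ and $\mathcal{P}_p(\mathbb{R}^d\times C_T)$, with $\kappa$ independent of $\lambda^1,\lambda^2$.

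\textbf{Fixed point and conclusion.} Now Proposition \ref{contraction theorem} applies: $\Phi(\lambda,\cdot)$ has a unique fixed point $\bar\mu(\lambda)$ and the map $\Psi:\lambda\mapsto\bar\mu(\lambda)$ of \eqref{eq:definition psi} is Lipschitz with constant $\le C_1/(1-\kappa)$. Setting $X^i:=S(\zeta^i,W^i,\bar\mu(\lambda^i))$ with $\lambda^i:=\mathcal{L}(\zeta^i,W^i)$, the identity $\mathcal{L}(X^i)=\Phi(\lambda^i,\bar\mu(\lambda^i))=\bar\mu(\lambda^i)$ forces $\mathcal{L}(X^i_t)=(\bar\mu(\lambda^i))_t$, so $X^i$ solves \eqref{intro:mckean vlasov equation}; conversely the law of any $L^p$-solution is a fixed point of $\Phi(\lambda^i,\cdot)$, hence equals $\bar\mu(\lambda^i)$, and then the solution equals $X^i$ by uniqueness of the frozen-measure solution — giving existence and uniqueness in $L^p(C_T)$. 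For the displayed estimate I would couple $(\zeta^1,W^1)$ and $(\zeta^2,W^2)$ through an optimal plan for $\mathcal{W}_p(\lambda^1,\lambda^2)$; then $(\zeta^1,W^1,X^1,\zeta^2,W^2,X^2)$ is an admissible coupling of the two triple-laws, and inserting the pointwise bound above with $\mu^i=\bar\mu(\lambda^i)$ (whose distance is $\le\mathrm{Lip}(\Psi)\,\mathcal{W}_p(\lambda^1,\lambda^2)$) and taking $L^p$-norms gives exactly
\begin{equation*}
\mathcal{W}_{p}\big(\mathcal{L}(\zeta^1,W^1,X^1),\mathcal{L}(\zeta^2,W^2,X^2)\big)\;\le\;C\,\mathcal{W}_{p}\big(\mathcal{L}(\zeta^1,W^1),\mathcal{L}(\zeta^2,W^2)\big).
\end{equation*}

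The one genuinely delicate point is the stability step: forcing the contraction constant in the measure argument to be $<1$ \emph{uniformly in the input law} (handled by the weighted norm, or equivalently by first solving on a short time interval and concatenating), and correctly transferring the deterministic pathwise estimates into Wasserstein estimates via synchronous/optimal couplings of the inputs. Everything else is Grönwall and bookkeeping.
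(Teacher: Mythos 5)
Your proposal is correct and follows the same architecture as the paper: freeze the measure, build the pathwise solution map $S^\mu$, define $\Phi$ as a pushforward so that McKean--Vlasov solutions correspond to fixed points of $\Phi(\mathcal{L}(\zeta,W),\cdot)$, verify the hypotheses of Proposition \ref{contraction theorem}, and transfer pathwise Gr\"onwall estimates to Wasserstein estimates via optimal couplings of the inputs. The one genuine deviation is in how you obtain the contraction in the measure argument: the paper keeps the plain sup-norm Wasserstein metric and instead iterates the estimate \eqref{eq:to iterate estimate} $k$ times, producing the factor $(T\tilde L)^k/k!<1$ --- which is precisely why Proposition \ref{contraction theorem} is formulated with a $k$-th-iterate contraction hypothesis --- whereas you use the Bielecki weighted norm $\sup_t e^{-\beta t}|f_t|$ (and the induced weighted Wasserstein metric, which indeed dominates $\sup_s e^{-\beta s}\mathcal{W}_p(\mu^1_s,\mu^2_s)$ by restricting couplings to time marginals) to get a one-step contraction $\kappa(\beta,L)<1$. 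Both devices are standard and both work here; yours makes the $k>1$ clause of Proposition \ref{contraction theorem} unnecessary at the cost of carrying equivalent metrics around, while the paper's iteration keeps the metric fixed but needs the slightly more general contraction principle. Your final step correctly upgrades the bound to the joint law $\mathcal{L}(\zeta^i,W^i,X^i)$ (as stated in the introduction's version of the theorem, which is marginally stronger than point (iv) of Theorem \ref{main theorem}) by observing that $(\zeta^1,W^1,X^1,\zeta^2,W^2,X^2)$ on the coupled space is an admissible plan for the triple laws.
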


We obtain a similar results for the case when the driver $W$ is a random variable over the c\`adl\`ag space $D_T$.

\begin{theorem*}[see Lemmas \ref{lem: jump: existence} and \ref{lem: jump continuity}]
	Assume $b$ Lipschitz and bounded. For every $(\zeta, W) : \Omega \to \mathbb{R}^d \times D_T$ measurable, there exists a unique pathwise solution $X : \Omega \to D_T$ to equation \eqref{intro:mckean vlasov equation} driven by $(\zeta, W)$. Moreover, the map
	\begin{equation*}
	\Phi : 
	\mathcal{P}(\R^d\times D_T) \to \mathcal{P}(D_T),
	\qquad
	\mathcal{L}(\zeta, W)\mapsto \mathcal{L}(X),
	\end{equation*}
	is continuous with respect to the weak topology.
\end{theorem*}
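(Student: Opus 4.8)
\textit{The plan} is to carry the two-step scheme behind Theorem~\ref{main theorem} over to the Skorokhod space $D_T$: first solve \eqref{intro:mckean vlasov equation} pathwise for a frozen measure flow, then build the self-consistent fixed point, and finally read off continuity from the stability of that fixed point. The adjustments are that $W$ now only lives in $D_T$ and that no moment of $(\zeta,W)$ is assumed, which is precisely where the boundedness of $b$ is used.

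\textbf{Existence and uniqueness.} For a Borel measure flow $m=(m_t)_{t\le T}$ on $\mathbb{R}^d$ and a fixed input $(z,w)\in\mathbb{R}^d\times D_T$, subtracting the noise turns \eqref{intro:mckean vlasov equation} into the integral equation $Y_t=z+\int_0^t b(s,Y_s+w_s,m_s)\,ds$; its right-hand side is bounded, Lipschitz in the state variable and Borel in $s$ (the map $s\mapsto w_s$ being c\`adl\`ag), so Carath\'eodory's theorem gives a unique solution $Y^m(z,w)$, which is $\|b\|_\infty$-Lipschitz in $t$ — hence continuous — and measurable in $(z,w)$. Setting $X^m:=Y^m+w\in D_T$ and $F(m)_t:=\mathcal{L}(X^m_t)$ (with $(z,w)$ distributed as $(\zeta,W)$), one runs a Banach fixed point argument for $F$ on measure flows equipped with an exponentially weighted sup metric built from a complete metric $d$ for the weak topology on $\mathcal{P}(\mathbb{R}^d)$ in which $b(t,x,\cdot)$ is Lipschitz: coupling two solutions through the same $(\zeta,W)$ and using the Lipschitz estimates for $b$, exactly as in Proposition~\ref{contraction theorem}, shows $F$ is a contraction for a suitable weight, and boundedness of $b$ closes the estimates without any moment assumption on $(\zeta,W)$. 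The fixed point $\bar m$ yields the unique pathwise solution $X:=X^{\bar m}$ (defined for every $\omega$ with $(\zeta(\omega),W(\omega))\in\mathbb{R}^d\times D_T$), uniqueness holding because the time-marginal flow of any solution solves $F(m)=m$.

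\textbf{Continuity.} Given $\Lambda_n:=\mathcal{L}(\zeta^n,W^n)\to\Lambda$ weakly in $\mathcal{P}(\mathbb{R}^d\times D_T)$ with solutions $X^n$, note that $X^n-W^n=\zeta^n+\int_0^\cdot b(s,X^n_s,\mathcal{L}(X^n_s))\,ds$ is $\|b\|_\infty$-Lipschitz in $t$; since the jumps of $X^n$ are exactly those of $W^n$ up to this uniformly equicontinuous perturbation, a modulus-of-continuity tightness criterion in $D_T$ transfers the tightness of $\{\Lambda_n\}$ to $\{\mathcal{L}(X^n)\}$, so $\{\mathcal{L}(\zeta^n,W^n,X^n)\}$ is tight on $\mathbb{R}^d\times D_T\times D_T$. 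Extract a weakly convergent subsequence with limit $\gamma$ and apply Skorokhod's representation theorem: $(\hat\zeta^{n_k},\hat W^{n_k},\hat X^{n_k})\to(\hat\zeta,\hat W,\hat X)$ a.s., with $\mathcal{L}(\hat\zeta,\hat W)=\Lambda$ and the integral identity for $X^{n_k}$ — an a.s.\ relation in the joint variables — preserved. The continuous parts $\hat Y^{n_k}:=\hat X^{n_k}-\hat W^{n_k}$ are equi-Lipschitz and pointwise bounded along the a.s.-convergent realisation, so by Arzel\`a--Ascoli they converge uniformly (along a further subsequence, then fully once the limit is pinned down) to a continuous $\hat Y$ with $\hat X=\hat Y+\hat W$, using that the sum of a $J_1$-convergent sequence and a uniformly convergent sequence with continuous limit is $J_1$-convergent. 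Passing to the limit in $\hat Y^{n_k}_t=\hat\zeta^{n_k}+\int_0^t b(s,\hat X^{n_k}_s,\mathcal{L}(\hat X^{n_k}_s))\,ds$ by dominated convergence ($\|b\|_\infty<\infty$), using $\hat X^{n_k}_s\to\hat X_s$ and $\mathcal{L}(\hat X^{n_k}_s)\to\mathcal{L}(\hat X_s)$ for a.e.\ $s$ together with the (weak) continuity of $b$ in its last two arguments, identifies $\hat X$ as a pathwise solution of \eqref{intro:mckean vlasov equation} driven by $(\hat\zeta,\hat W)\sim\Lambda$; by the uniqueness established above, $\mathcal{L}(\hat X)=\mathcal{L}(X)$, so every subsequential limit equals $\mathcal{L}(X)$ and hence $\mathcal{L}(X^n)\to\mathcal{L}(X)$ weakly.

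\textbf{Main obstacle.} The technically delicate point is reconciling the Skorokhod ($J_1$) topology on $D_T$ with the Lebesgue integral $\int_0^\cdot b\,ds$: $J_1$-convergence of $W^n$ is not preserved by the time reparametrisations hidden inside that integral, which is exactly why one splits off the continuous, equi-Lipschitz part $X^n-W^n$ and argues on it in the \emph{uniform} topology, the jumps of $X^n$ staying pinned to those of $W^n$. The conceptual point, already present in the existence step, is that in the absence of moments one cannot run the self-consistency loop in $\mathcal{W}_p$; one works instead with a bounded metric for the weak topology, which is harmless precisely because $b$ is bounded (and Lipschitz, equivalently weakly continuous, in its measure argument), and this is what makes the statement hold on $\mathcal{P}(D_T)$ with the weak topology rather than only on $\mathcal{P}_p(D_T)$.
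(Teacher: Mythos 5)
Your argument is correct, and the existence half is essentially the paper's: both reduce to a Banach/Picard fixed point for the frozen-measure ODE in which the boundedness of $b$ and a bounded metric for the weak topology (Kantorovich--Rubinstein, cf.\ Assumption \ref{asm: weak metric}) replace any moment hypothesis; the paper runs the iteration at the path level with measure flow $(x^n_s)_{\#}\nu$ (Lemma \ref{lem: jump: existence}, via Lemma \ref{weak cont:existence}), which is exactly the Picard iteration of your map $F$ on measure flows. The continuity half, however, takes a genuinely different route. The paper applies Skorokhod representation only to the inputs $(\zeta^n,W^n)$ and then proves by hand that the outputs $X^n(\omega)$ form an a.s.\ Cauchy sequence in $(D_T,\sigma)$, which forces it into delicate estimates with the time changes $\lambda$ of the Skorokhod metric (controlling $\Vert X^m-X^m\circ\lambda\Vert_\infty$ by the c\`adl\`ag modulus, Gronwall after adding $\Vert\lambda\Vert$, and the $L^1$-in-time Cauchy property of the marginal laws from Lemma \ref{lem:convergence of cadlag marginals}). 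You instead represent the \emph{joint} laws $\mathcal{L}(\zeta^n,W^n,X^n)$ after establishing tightness of the outputs, so a.s.\ $J_1$-convergence of $\hat X^{n_k}$ comes for free, and the remaining work is the identification of the limit: splitting off the equi-Lipschitz continuous part $X^n-W^n$, Arzel\`a--Ascoli in the uniform topology, the elementary fact that $J_1$ plus uniform convergence to a continuous limit is stable under addition, dominated convergence in the integral, and finally uniqueness of the fixed point to pin down every subsequential limit. Your scheme trades the paper's explicit Skorokhod-metric manipulations for a compactness-plus-uniqueness argument; it is arguably cleaner on the $J_1$ side but leans more heavily on the uniqueness statement, whereas the paper's Cauchy argument constructs the limit directly. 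Both deliver the same conclusion, and the places where you invoke convergence of the one-time marginals for a.e.\ $s$ and the transfer of the c\`adl\`ag modulus from $W^n$ to $X^n$ coincide with the paper's use of Lemma \ref{lem:convergence of cadlag marginals} and \cite[Theorem 13.2]{billingsly1999convergence}.
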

%
%\begin{lemma*}[see Lemmas \ref{lem: jump: existence} and \ref{lem: jump continuity}]
%	Let
%	\begin{equation*}
%	\Phi : 
%	\mathcal{P}(\R^d\times D_T) \times \mathcal{P}(D_T) \to \mathcal{P}(D_T),
%	\qquad
%	(\mathcal{L}(\zeta, W), \mu) \mapsto \mathcal{L}(X^\mu).
%	\end{equation*}
%	be defined similarly as \eqref{intro: map}. The fixed point map $\Psi: \nu \mapsto \mu =\phi(\nu, \mu)$ is continuous with respect to the weak convergence of measures.
%\end{lemma*}

	We note that, in the case of jump processes, we have only weak continuity of the law of the solution with respect to the law of the inputs. We don't prove Lipschitz continuity with respect to the stronger Wasserstein norm $\mathcal{W}_p$.
	
As application off the main result, we have
	\begin{corollary*}[see Theorem \ref{classical mean field result}]
					%\label{intro:mean field corollary}
	        Consider the $N$-particle system (\ref{intro:interacting particles}) with (not necessarily Brownian! not necessarily independent!) random driving noise $W^{(N)} := (W^{1,N},\dots,W^{N,N})$ and initial data $\zeta^{(N)} := (\zeta^{1,N},\dots,\zeta^{N,N})$.  Assume convergence (in $p$-Wasserstein sense) of the empirical measure 
	        \begin{equation*}
	        L^N (\zeta^{(N)}(\omega),W^{(N)}(\omega)) \to \nu \in \mathcal{P}_p(\mathbb{R}^d \times C_T) %, \mathcal{W}_{\mathbb{R}^d \times C_T,p}).
	       \end{equation*}
	       for a.e.~$\omega$ (resp. in probability) w.r.t.~$\mathbb{P}$. Then the empirical measure $L^N(X^{(N)})$ of the particle system converges in the same sense and the limiting law %$\bar \nu$,  
	        is characterized by a generalized McKean-Vlasov equation, with input data distributed like $\nu$.
	\end{corollary*}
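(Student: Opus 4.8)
The plan is to realise the entire $N$-particle system \eqref{intro:interacting particles}, at each frozen sample point, as one single instance of the pathwise McKean--Vlasov equation \eqref{intro:mckean vlasov equation} on an auxiliary \emph{finite} probability space -- this is Tanaka's device -- and then to read off the conclusion directly from the Lipschitz stability of the solution map proved in Theorem \ref{main theorem}. Throughout, denote by $\Psi:\mathcal{P}_p(\mathbb{R}^d\times C_T)\to\mathcal{P}_p(C_T)$ the solution map $\mathcal{L}(\zeta,W)\mapsto\mathcal{L}(X)$ of \eqref{intro:mckean vlasov equation} furnished by Theorem \ref{main theorem}.

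\textbf{Step 1: Tanaka's reduction.} Fix $\omega\in\Omega$ and equip $I_N:=\{1,\dots,N\}$ with the uniform law $\tilde{\mathbb{P}}^N:=\frac1N\sum_{i=1}^N\delta_i$. On $(I_N,\tilde{\mathbb{P}}^N)$ set $\tilde\zeta^\omega(i):=\zeta^{i,N}(\omega)$ and $\tilde W^\omega(i):=W^{i,N}(\omega)$, so that by construction
\[
\mathcal{L}(\tilde\zeta^\omega,\tilde W^\omega)=L^N\!\big(\zeta^{(N)}(\omega),W^{(N)}(\omega)\big).
\]
Since $I_N$ is finite these inputs trivially lie in $L^p$, so Theorem \ref{main theorem} provides a unique pathwise solution $\tilde X^\omega:I_N\to C_T$ of \eqref{intro:mckean vlasov equation} driven by $(\tilde\zeta^\omega,\tilde W^\omega)$. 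The law of $\tilde X^\omega_t$ under $\tilde{\mathbb{P}}^N$ is precisely $\frac1N\sum_{j=1}^N\delta_{\tilde X^\omega(j)_t}$, so for each $i$ the equation for $\tilde X^\omega$ reads
\[
d\tilde X^\omega(i)_t=b\Big(t,\tilde X^\omega(i)_t,\tfrac1N\sum_{j=1}^N\delta_{\tilde X^\omega(j)_t}\Big)\,dt+d\tilde W^\omega(i)_t,
\]
which is exactly \eqref{intro:interacting particles} evaluated at $\omega$. By uniqueness of pathwise solutions (equivalently, uniqueness for the integral system \eqref{intro:interacting particles} with the continuous paths $W^{i,N}(\omega)$ as forcing) we obtain $\tilde X^\omega(i)=X^{i,N}(\omega)$ for all $i$, and therefore
\[
L^N\!\big(X^{(N)}(\omega)\big)=\mathcal{L}(\tilde X^\omega)=\Psi\big(L^N(\zeta^{(N)}(\omega),W^{(N)}(\omega))\big).
\]

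\textbf{Step 2: Stability and passage to the limit; identification.} By Theorem \ref{main theorem} the map $\Psi$ is $\mathcal{W}_p$-Lipschitz with a constant $C=C(p,T,b)$ independent of $N$. Combining this with Step 1 gives, for every $\omega$,
\[
\mathcal{W}_p\!\big(L^N(X^{(N)}(\omega)),\Psi(\nu)\big)\le C\,\mathcal{W}_p\!\big(L^N(\zeta^{(N)}(\omega),W^{(N)}(\omega)),\nu\big).
\]
If the right-hand side tends to $0$ for a.e.\ $\omega$ (resp.\ in $\mathbb{P}$-probability), then so does the left-hand side, which is exactly convergence of $L^N(X^{(N)})$ to $\Psi(\nu)$ in the same mode; here measurability of $\omega\mapsto L^N(X^{(N)}(\omega))$ as a $\mathcal{P}_p(C_T)$-valued map follows from the continuous dependence of the particle trajectories on the inputs, itself a consequence of the Tanaka identification together with the Lipschitz bound. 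Finally, by definition $\Psi(\nu)=\mathcal{L}(X)$, where $X$ is the unique pathwise solution of \eqref{intro:mckean vlasov equation} on the canonical space $(\mathbb{R}^d\times C_T,\nu)$ with $(\zeta,W)$ the coordinate maps; that is, the limit is characterised by a generalized McKean--Vlasov equation with input data distributed like $\nu$, as claimed.

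\textbf{Expected main difficulty.} There is no new hard estimate: all the analytic content is already absorbed into Theorem \ref{main theorem}. The only genuinely delicate points are in Step 1 -- verifying rigorously that the frozen-$\omega$ ODE particle system and the pathwise McKean--Vlasov solution on $(I_N,\tilde{\mathbb{P}}^N)$ are the same object (which hinges on uniqueness), and checking that $\omega\mapsto L^N(X^{(N)}(\omega))$ is a bona fide random probability measure so that the two convergence modes in the statement are meaningful. Both are routine once the Tanaka correspondence is set up, so the real substance of the corollary is this conceptual reduction, not any computation.
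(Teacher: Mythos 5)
Your proposal is correct and follows essentially the same route as the paper: the paper's proof of Theorem \ref{classical mean field result} likewise freezes $\omega$, places the $N$-particle system on the finite uniform space $(\Omega_N,\mathcal{A}_N,\mathbb{P}_N)$ so that it becomes one instance of the generalized McKean--Vlasov equation with input law $L^N(\zeta^{(N)}(\omega),W^{(N)}(\omega))$, and then applies the Lipschitz estimate of Theorem \ref{main theorem} to get the $\mathcal{W}_p$ bound between $L^N(X^{(N)}(\omega))$ and the limiting law. Your measurability remark and the identification of the limit via $\Psi(\nu)$ also match the paper's argument.
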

	 
	 Natural non-i.i.d.~situations arises in presence of common noise, cf.~Section \ref{section:common noise}, or in the presence of heterogeneous inputs, cf.~Section \ref{section:heterogeneous mean field}. In an i.i.d.~setting, the required assumption is (essentially trivially) verified by the law of large number. %Sanov's theorem.
         Independent driving fractional Brownian motions, for instance, are immediately covered. Another consequence concerns the large deviations.
         \begin{definition}
         	\label{def: large deviations}
         	Let $E$ be a Polish space and $(\mu^N)_{N\in \N}$ a sequence of Borel probability measures on $E$. Let $(a_N)_{N\in\N}$ be a sequence of positive real numbers with $\lim_{N\to\infty}a_N = \infty$. Given a lower semicontinuous function $I:E\to[0,\infty]$, the sequence $\mu^N$ is said to satisfy a \textit{large deviations principle} with rate $I$ if, for each Borel measurable set $A \subset E$,
         	\begin{equation*}
         		- \inf_{x\in A^{\circ}} I(x) \leq \liminf a_N^{-1}\log(\mu^N(A)) \leq \limsup_{N\to\infty} a_N^{-1} \log(\mu^N(A)) \leq -\inf_{x\in \bar A}I(x).
         	\end{equation*}
         	Here $A^\circ$ is the interior of $A$ and $\bar A$ its closure. Moreover, if the sublevel sets of $I$ are compact, then $I$ is said to be a \textit{good} rate function.
         	
         	We say that a sequence of random variables $(X^N)_{N\in\N}$ on $E$ satisfies a large deviations principle, if the sequence of the distributions $(\mathcal{L}(X^N))_{N\in \N}$ does.
         \end{definition}
         
          The following generalizes a classical result of Dawson--G\"artner \cite{MR885876}, see also Deuschel et al. \cite{MR3804792}.
	\begin{corollary*}[see Theorem \ref{ldp:large deviations iid inputs}]
		%\label{ldp:large deviations iid inputs}
		In the i.i.d.~case, the empirical measure $L^N(X^{(N)})$ satisfies a large deviations principle with rate function, defined on a suitable Wasserstein space over $C_T$,
		\begin{equation*}
		\mu \mapsto H(\mu \mid \Phi( \mathcal{L}(\zeta, W) , \mu)), % \quad \forall \mu\in\mathcal{P}_p(C_T).
		\end{equation*}
		where $H$ is the relative entropy and $\Phi$ is introduced below.
	\end{corollary*}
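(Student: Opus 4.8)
The plan is to obtain the large deviations principle by combining Sanov's theorem for the i.i.d.\ inputs with the contraction principle, and then to compute the resulting rate function explicitly. The structural input is the ``Tanaka trick'' of Section \ref{section:particle approximation}: writing $\nu^N := L^N(\zeta^{(N)}, W^{(N)})$ for the empirical measure of the inputs, the particle system \eqref{intro:interacting particles} with i.i.d.\ data is nothing but the generalized McKean--Vlasov equation on the discrete probability space $\{1,\dots,N\}$ with input law $\nu^N$, so that
\[
L^N(X^{(N)}) = \Psi(\nu^N),
\]
where $\Psi$, defined in \eqref{eq:definition psi}, sends an input law to the unique fixed point $\bar\mu = \Phi(\cdot,\bar\mu)$ in $\mathcal{P}_p(C_T)$. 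By Proposition \ref{contraction theorem}, $\Psi$ is Lipschitz continuous on $\mathcal{P}_p(\R^d\times C_T)$, hence in particular $\mathcal{W}_p$-continuous.

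Next, because the $(\zeta^i,W^i)$ are i.i.d.\ with common law $\mathcal{L}(\zeta,W)$, the empirical measures $\nu^N$ satisfy, on the suitable Wasserstein space $\mathcal{P}_p(\R^d\times C_T)$, a large deviations principle with good rate function $\nu \mapsto H(\nu \mid \mathcal{L}(\zeta,W))$; this is the $\mathcal{W}_p$-version of Sanov's theorem, valid under the exponential-moment assumption imposed on the inputs, and is exactly what forces the statement to be formulated on a ``suitable Wasserstein space''. Applying the contraction principle to the continuous map $\Psi$ then yields a large deviations principle for $L^N(X^{(N)}) = \Psi(\nu^N)$ on $\mathcal{P}_p(C_T)$ with good rate function
\[
I(\mu) \;=\; \inf\bigl\{\, H(\nu \mid \mathcal{L}(\zeta,W)) \;:\; \Psi(\nu)=\mu \,\bigr\}.
\]

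It remains to identify $I$. Fix $\mu \in \mathcal{P}_p(C_T)$ and let $T_\mu : \R^d\times C_T \to C_T$ be the frozen-measure solution map $(z,w)\mapsto x$, where $x_t = z + \int_0^t b(s,x_s,\mu_s)\,ds + w_t$. Since $b$ is Lipschitz in the state variable, $T_\mu$ is a homeomorphism, with inverse $x \mapsto \bigl(x_0,\; t\mapsto x_t - x_0 - \int_0^t b(s,x_s,\mu_s)\,ds\bigr)$, and by construction $\Phi(\nu,\mu) = (T_\mu)_*\nu$. Thus $\Psi(\nu)=\mu$ means $\mu = \Phi(\nu,\mu) = (T_\mu)_*\nu$, which, $T_\mu$ being bijective, forces $\nu = (T_\mu^{-1})_*\mu =: \nu_\mu$; conversely $\Phi(\nu_\mu,\cdot)$ is a contraction with $\mu$ as a fixed point, hence $\Psi(\nu_\mu)=\mu$. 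So the constraint set is the single point $\nu_\mu$, and, using that relative entropy is invariant under pushforward by a bimeasurable bijection,
\[
I(\mu) = H(\nu_\mu \mid \mathcal{L}(\zeta,W)) = H\bigl((T_\mu)_*\nu_\mu \,\big|\, (T_\mu)_*\mathcal{L}(\zeta,W)\bigr) = H\bigl(\mu \,\big|\, \Phi(\mathcal{L}(\zeta,W),\mu)\bigr),
\]
as claimed; goodness of $I$ is inherited from that of the Sanov rate function via the contraction principle.

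The main obstacle is the second step: one needs Sanov's theorem in the $p$-Wasserstein topology rather than merely the weak topology, which is precisely what dictates the exponential-integrability hypothesis on $(\zeta,W)$ and pins down the Wasserstein space carrying the principle; the identification in the third step is, by contrast, a short bijection-plus-change-of-variables argument. (If one is content with the weak topology on $\mathcal{P}(C_T)$, the moment assumption can be dropped, but then $\Psi$ must be shown to be weakly continuous, which reintroduces boundedness-type conditions on $b$, as in the c\`adl\`ag statement above.)
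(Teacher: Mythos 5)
Your proof is correct and follows essentially the same route as the paper: Wasserstein-topology Sanov for the i.i.d.\ inputs, the contraction principle applied to the ($\mathcal{W}_p$-Lipschitz) fixed-point map $\Psi$, and then the identification of the rate function via the bijectivity of $\Psi$ (with $\Psi^{-1}(\mu) = (S^\mu)^{-1}_\#\mu$) together with the invariance of relative entropy under pushforward by a bimeasurable bijection. This is exactly the chain Lemma \ref{ldp:joint law} $\to$ Lemma \ref{ldp:contraction principle} $\to$ Lemma \ref{ldp:large deviations} $\to$ Theorem \ref{ldp:large deviations iid inputs} in the paper.
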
	
	This result is consistent with the one obtained in \cite[Theorem 5.1]{MR780770}, for the case of drivers given as i.i.d. Brownian motions.
	
	One can easily drop the i.i.d.~assumption, and replace $H$ by an ``assumed'' large deviations principle $I$ for the convergence of the input laws. In this case the outputs satisfy a large deviations principle. 
		\begin{corollary*}[see Lemma \ref{ldp:contraction principle}]
		If the empirical measure of the inputs $L^N(\zeta^{(N)}, W^{(N)})$ satisfies a large deviations principle with (good) rate function $I$, then the empirical measure $L^N(X^{(N)})$ satisfies a large deviations principle with (good) rate function $\mu \mapsto I(f^\mu_{\#}\mu)$, defined on a suitable Wasserstein space over $C_T$. Here $f^\mu$ is defined in \eqref{eq:definion f mu}.
	\end{corollary*}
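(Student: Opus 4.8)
The plan is to exhibit the empirical measure of the particle system as the continuous image of the empirical measure of the inputs and then to apply the contraction principle for large deviations; the analytic substance is already contained in the Lipschitz continuity of the solution map established earlier, so what remains is essentially bookkeeping.

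First I would recall the Tanaka-type reduction of Section~\ref{section:particle approximation}. Set $\nu^N := L^N(\zeta^{(N)},W^{(N)})$ and $\mu^N := L^N(X^{(N)})$. Each particle $X^{i,N}$ is the pathwise solution of the ordinary (non-McKean) equation $dX_t = b(t,X_t,\mu^N_t)\,dt + dW^{i,N}_t$, $X_0 = \zeta^{i,N}$, in which the time-marginals of $\mu^N$ are frozen and common to all $i$. Writing $T^\mu : \R^d\times C_T \to C_T$ for this pathwise solution map at frozen parameter $\mu$, we have $X^{i,N} = T^{\mu^N}(\zeta^{i,N},W^{i,N})$, hence $\mu^N = T^{\mu^N}_{\#}\nu^N = \Phi(\nu^N,\mu^N)$, which by uniqueness of the fixed point is precisely the statement $\mu^N = \Psi(\nu^N)$ for $\Psi$ as in \eqref{eq:definition psi}. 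Thus $L^N(X^{(N)}) = \Psi\bigl(L^N(\zeta^{(N)},W^{(N)})\bigr)$.

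Next, by Proposition~\ref{contraction theorem} (see also Theorem~\ref{main theorem}) the map $\Psi : \mathcal{P}_p(\R^d\times C_T) \to \mathcal{P}_p(C_T)$ is Lipschitz, hence continuous, between two Polish spaces. Granting that the large deviations principle for $L^N(\zeta^{(N)},W^{(N)})$ holds on $\mathcal{P}_p(\R^d\times C_T)$ with good rate function $I$, the contraction principle gives that $L^N(X^{(N)}) = \Psi(\nu^N)$ satisfies a large deviations principle on $\mathcal{P}_p(C_T)$ with good rate function
\[
J(\mu) = \inf\bigl\{\, I(\nu) : \nu \in \mathcal{P}_p(\R^d\times C_T),\ \Psi(\nu) = \mu \,\bigr\}.
\]
To identify $J$ I would use that, for fixed $\mu$, the map $T^\mu$ is a bijection of $\R^d\times C_T$ onto $C_T$, whose inverse is exactly the map $f^\mu$ of \eqref{eq:definion f mu} (one reads off $\zeta = X_0$ and recovers $W$ by subtracting the drift integral). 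Hence $\Psi(\nu) = \mu$ is equivalent to $\mu = T^\mu_{\#}\nu$, which forces $\nu = f^\mu_{\#}\mu$; the fibre of $\Psi$ over $\mu$ is the single point $f^\mu_{\#}\mu$, so $J(\mu) = I(f^\mu_{\#}\mu)$, which is again a good rate function. As a consistency check, in the i.i.d.~case one takes $I = H(\,\cdot \mid \mathcal{L}(\zeta,W))$ (Sanov's theorem in the Wasserstein topology, where a moment assumption on the inputs enters), and since relative entropy is invariant under pushforward by the bijection $T^\mu = (f^\mu)^{-1}$, one gets $I(f^\mu_{\#}\mu) = H(\mu \mid \Phi(\mathcal{L}(\zeta,W),\mu))$, recovering the preceding corollary.

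I expect the only genuine point requiring attention to be the matching of topologies: the large deviations hypothesis for the inputs must be stated on the $p$-Wasserstein Polish space on which $\Psi$ is continuous, rather than merely in the weak topology, and one should verify that $T^\mu$ (equivalently $f^\mu$) is a homeomorphism of the relevant path spaces — which in turn follows from the Gronwall and Lipschitz estimates underlying pathwise well-posedness. Once these are in place, the statement is an immediate consequence of the contraction principle.
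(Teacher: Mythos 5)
Your proposal is correct and follows essentially the same route as the paper: the identity $L^N(X^{(N)})=\Psi\bigl(L^N(\zeta^{(N)},W^{(N)})\bigr)$ from the Tanaka reduction, continuity of $\Psi$ on the Wasserstein space, the contraction principle, and then the identification of the rate function via the bijectivity of $\Psi$ with inverse $\Psi^{-1}(\mu)=f^\mu_{\#}\mu$ (the paper's Lemma \ref{ldp:joint law}). Your closing caveat about the large deviations hypothesis being posed in the $p$-Wasserstein topology is exactly the convention the paper adopts.
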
	
	Think of $f^\mu$ as the function that reconstruct the inputs (initial condition, driving path) from the solution of an ordinary differential equation (ODE).
	
	Moreover, we study the fluctuations of the empirical measure. We can prove the following central limit theorem type of result
	\begin{corollary*}[see Corollary \ref{cor: central limit thm}]
		Let $\varphi$ be a test function. Assume that the drift $b$ is differentiable in both the spacial and the measure variable (see Assumption \ref{asm: drift clt}).
		The following converges in distribution to a Gaussian random variable, as $N\to \infty$,
		$$
		Y^N := \sqrt{N} \left( \frac{1}{N} \sum_{i=1}^{N} \varphi( X^{ i , N } ) - \mu ( \varphi ) \right) .
		$$
	\end{corollary*}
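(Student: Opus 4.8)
The plan is to exploit the pathwise representation of the particle system from Sections \ref{section:particle approximation} and \ref{section:common noise}. Write $\nu^N := L^N(\zeta^{(N)},W^{(N)})$ for the empirical measure of the i.i.d.\ inputs and $\nu := \mathcal{L}(\zeta,W)$ for their common law. Tanaka's change of probability space gives the identity $L^N(X^{(N)}) = \Psi(\nu^N)$, where $\Psi$ is the Lipschitz solution map of \eqref{eq:definition psi}, and $\mu = \mathcal{L}(X) = \Psi(\nu)$. Consequently
\[
  Y^N = \sqrt{N}\,\bigl( \Psi(\nu^N)(\varphi) - \Psi(\nu)(\varphi) \bigr),
\]
so the corollary becomes a \emph{functional delta method} statement: it suffices to differentiate the scalar functional $\nu \mapsto \Psi(\nu)(\varphi)$ at $\nu$ and to combine its derivative with the classical central limit theorem for i.i.d.\ empirical measures.

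First I would establish differentiability of $\Psi$ under Assumption \ref{asm: drift clt}. Because $b$ is now differentiable in the spatial and in the measure variable, the parameter-dependent map $\Phi$ of \eqref{intro: map} is (Hadamard) differentiable, and its derivatives inherit the contraction-type Lipschitz bounds from Proposition \ref{contraction theorem}. Since $\bar\mu = \bar\mu(\nu)$ is the unique fixed point $\bar\mu = \Phi(\nu,\bar\mu)$ with $\Phi(\nu,\cdot)$ a strict contraction, the operator $\mathrm{Id} - \partial_\mu \Phi(\nu,\bar\mu)$ is boundedly invertible and the implicit function theorem yields a derivative $D\Psi(\nu)$. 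Pairing with the fixed test function $\varphi$, this derivative is represented by a single \emph{influence function} $g = g_{\varphi,\nu} : \R^d \times C_T \to \R$, i.e.\ $D\Psi(\nu)[\eta](\varphi) = \eta(g)$ for signed measures $\eta$ in the relevant tangent set, with $g \in L^2(\nu)$ thanks to the integrability of the inputs and the Lipschitz/linear-growth bounds. Concretely, $g(\zeta,W)$ is the response of $\varphi(X)$ to an infinitesimal perturbation of the \emph{law} of the inputs and is obtained by solving, alongside the McKean--Vlasov equation, the linearised equation for the derivative process; equivalently one linearises the decoupled system $X^{i,N} = \bar X^i + N^{-1/2} U^{i,N} + \ldots$ with $(\bar X^i)$ i.i.d.\ solutions of \eqref{intro:mckean vlasov equation} and reads off $g$ from the linear SDE satisfied by $U^{i,N}$.

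Granting the expansion $\Psi(\nu^N)(\varphi) = \Psi(\nu)(\varphi) + (\nu^N - \nu)(g) + R_N$, the leading term is exactly
\[
  \sqrt{N}\,(\nu^N - \nu)(g) = \frac1{\sqrt N}\sum_{i=1}^N \bigl( g(\zeta^i,W^i) - \nu(g) \bigr),
\]
a normalised sum of i.i.d.\ centred $L^2$ random variables, which converges in distribution to $\mathcal{N}\!\bigl(0, \mathrm{Var}_\nu(g)\bigr)$ by the ordinary central limit theorem; this both produces the Gaussian limit and identifies its variance. The conclusion then follows once $\sqrt{N}\,R_N \to 0$ in probability.

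The main obstacle is precisely this last point. The Wasserstein distance $\mathcal{W}_p(\nu^N,\nu)$ decays only at the (dimension-dependent) rate $N^{-1/d}$, far slower than $N^{-1/2}$, so $R_N$ cannot be controlled by the Lipschitz-in-Wasserstein estimate of Theorem \ref{main theorem} alone. One must instead show that $R_N$ is quadratic in a \emph{weaker}, negative-order Sobolev / bounded-Lipschitz-type norm $\|\cdot\|_{-}$ on signed measures, in which $\sqrt N(\nu^N-\nu)$ is tight; then $\sqrt N\,R_N \lesssim \bigl(\sqrt N\,\|\nu^N-\nu\|_{-}\bigr)\cdot\|\nu^N-\nu\|_{-}\to 0$. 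This needs a second-order expansion of $\Phi(\nu,\cdot)$ and of the fixed-point map with the second differential estimated in that weaker norm, which is exactly where the smoothness of $b$ in the measure variable (Assumption \ref{asm: drift clt}) enters. An alternative, more hands-on route bypasses measures: work with the coupled and decoupled particle systems directly, bound $\mathbb{E}\bigl[\sup_t |X^{i,N}_t - \bar X^i_t|^2\bigr]$ and the pairwise correlations by $O(1/N)$ via Sznitman's coupling, and estimate the error $\sqrt N\,R_N = N^{-1/2}\sum_i(\cdots)$ through a variance bound exploiting the near-independence of the summands. Either way the delta-method bookkeeping is routine; the quadratic remainder bound is the crux.
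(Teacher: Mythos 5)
Your overall strategy --- rewrite $Y^N$ as $\sqrt{N}\bigl(f(\nu^N,\nu^N)-f(\nu,\nu)\bigr)$ for $f(\gamma,\rho)=\varphi(S(\Psi(\rho),\gamma))$, linearize the solution map in its measure argument, and feed the linear term to the ordinary i.i.d.\ CLT --- is the strategy of the paper. The paper packages it as an application of Tanaka's abstract CLT (Theorem \ref{thm: tanaka clt}); the actual work there is the construction of the derivative $F^{\prime}$ as the solution of the linear equation \eqref{eq: measure derivative} and the verification (Lemmas \ref{lem: properties measure derivative}, \ref{lem: properties convergence}, \ref{lem: formal derivative}) that $f^{\prime}(\gamma,\bar\gamma,\nu)=\varphi^{\prime}(F(\gamma,\nu))F^{\prime}(\gamma,\bar\gamma,\nu)$ is bounded, Wasserstein--Lipschitz in $\nu$, and satisfies the exact Taylor identity \ref{asm: differentiable function: algebraic relation}. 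Your ``influence function'' $g(\gamma)=f(\gamma,\nu)+f^{\prime}(\gamma,\nu,\nu)$ and the resulting variance agree with the paper's $\sigma^2(\varphi)$.

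The gap is the one you flag yourself: you never control $\sqrt{N}\,R_N$, and the two escape routes you propose (a second-order expansion measured in a negative-order norm; Sznitman coupling with covariance bounds) are neither carried out nor needed. The structural point you miss is that the derivative in Definition \ref{def: measure differentiable} is not an abstract Hadamard derivative but an integral against a \emph{kernel}, $f^{\prime}(x,\rho,\mu)=\int f^{\prime}(x,y,\mu)\,\rho(dy)$, with $f^{\prime}(x,y,\mu)$ bounded and Lipschitz in $\mu$ with respect to $\mathcal{W}_p$. With this form, the remainder reduces (up to the Lipschitz-in-$\mu$ error, handled via Lemma \ref{lem: properties convergence}) to a doubly centred expression $\iint f^{\prime}(x,y,\nu)\,d(\nu^N-\nu)(x)\,d(\nu^N-\nu)(y)$, i.e.\ a degenerate V-statistic with bounded kernel, whose $L^2$ norm is $O(1/N)$ by a direct second-moment computation; hence $\sqrt{N}\,R_N\to0$ in probability without any appeal to $N^{-1/d}$ Wasserstein rates or weak dual norms. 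This is exactly what Tanaka's proof does, which is why the paper can invoke Theorem \ref{thm: tanaka clt} as a black box once the hypotheses on $f^{\prime}$ are checked. As written, your proposal stops at ``the quadratic remainder bound is the crux'' and therefore does not amount to a proof.
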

	
	The method presented here can be also applied to SDE defined in a domain $D \subset \mathbb{R}^d$, assumed to be a convex polyhedron for simplicity, and with reflection at the boundary. We consider the generalized McKean-Vlasov Skorokhod problem
	\begin{equation}
		\label{intro:McKVlaSkor}
		\left\{
		\begin{array}{l}
		dX_t = b(t,X_t,\mathcal{L}(X_t,k_t)) dt + dW_t -dk_t, \quad X_0=\zeta,\\
		d|k|_t = 1_{X_t\in \partial D} d|k|_t,\ \ dk_t = n(X_t) d|k|_t.
		\end{array} 
		\right.
	\end{equation}
	We have the following:

\begin{theorem*}[see Theorem \ref{thm:wellpos_cont_boundary}]
	Let $p \in [1, \infty)$ and assume $b$ Lipschitz. For $i=1,2$, let $(\zeta^i, W^i) \in L^p(\bar D \times C_T, \mathbb{P}^i)$ be two sets of input data. Then there exist unique pathwise solutions $(X^i,k^i)$ to the generalized McKean-Vlasov Skorokhod problem \eqref{intro:McKVlaSkor}, driven by the respective input data. Moreover,
		\begin{align*}
		\mathcal{W}_{p}(\mathcal{L}(\zeta^1, W^1, X^1, k^1), \mathcal{L}(\zeta^2, W^2, X^2, k^2)) 
		\leq C \mathcal{W}_{p}(\mathcal{L}(\zeta^1, W^1), \mathcal{L}(\zeta^2, W^2)).
		\end{align*}
		with $ C =  C(p, T, b) > 0$.
		%In particular, the law of a solution $(X,k)$ depends only on the law of $(\zeta,W)$.
\end{theorem*}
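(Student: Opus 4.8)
The plan is to follow the unreflected argument behind Theorem \ref{main theorem} almost verbatim, the one genuinely new ingredient being the deterministic Skorokhod problem on the convex polyhedron $D$. Recall that for every continuous path $\psi \in C_T$ with $\psi_0 \in \bar D$ there is a unique pair $(\phi,\eta) \in C_T \times C_T$, $\phi = \Gamma(\psi) := \psi - \eta$, solving the Skorokhod problem for $D$ (with $\eta$ of bounded variation, $d\eta = n(\phi)\,d|\eta|$ and $d|\eta| = 1_{\phi \in \partial D}\,d|\eta|$); it is a classical fact — going back, in the convex setting, to Tanaka — that for a convex polyhedron the Skorokhod map $\Gamma$, and hence also the map $\psi \mapsto \psi - \Gamma(\psi)$, is Lipschitz in the supremum norm, $\|\Gamma(\psi^1) - \Gamma(\psi^2)\|_\infty \le L_\Gamma \|\psi^1 - \psi^2\|_\infty$. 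This is exactly the surrogate for "closedness of the solution operator under sup-norm perturbations" that the unreflected proof used implicitly.

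First I would set up the parameter-dependent pathwise solution map. Fix $\mu \in \mathcal{P}_p(C_T \times C_T)$ in the role of $\mathcal{L}(X,k)$, write $\mu_t$ for its image under the evaluation map $\omega \mapsto \omega_t$, and fix a single input $(\zeta,W)$ with $\zeta \in \bar D$. For a candidate path $X$ set $\psi^X_t := \zeta + W_t + \int_0^t b(s,X_s,\mu_s)\,ds$ and consider the Picard map $X \mapsto \Gamma(\psi^X)$. Since $b$ is uniformly Lipschitz in $x$ and $\Gamma$ is $L_\Gamma$-Lipschitz, one gets $\|\Gamma(\psi^{X^1}) - \Gamma(\psi^{X^2})\|_{\infty,[0,t]} \le L_\Gamma L \int_0^t \|X^1 - X^2\|_{\infty,[0,s]}\,ds$, so the Picard iterates converge uniformly and the fixed point $(X^\mu,k^\mu) := (\Gamma(\psi^{X^\mu}),\,\psi^{X^\mu} - \Gamma(\psi^{X^\mu}))$ is the unique pathwise solution of \eqref{intro:McKVlaSkor} with frozen measure $\mu$. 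Linear growth of $b$ together with $\zeta \in L^p$, $W \in L^p$ and $\mu \in \mathcal{P}_p$, plus the Lipschitz bound on $\Gamma$, gives $(X^\mu,k^\mu) \in L^p(C_T \times C_T)$, so that $\Phi(\mathcal{L}(\zeta,W),\mu) := \mathcal{L}(X^\mu,k^\mu)$ is well defined, as in \eqref{intro: map}, on $\mathcal{P}_p(\bar D \times C_T) \times \mathcal{P}_p(C_T \times C_T) \to \mathcal{P}_p(C_T \times C_T)$.

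Next I would run the two Gronwall estimates that make $\Phi$ a uniform contraction in its second argument and a Lipschitz map in its first. Keeping $(\zeta,W)$ fixed and coupling $\mu^1,\mu^2$ optimally, the Lipschitz dependence of $b$ on the measure argument and of $\Gamma$ on $\psi$ give, via Gronwall, $\|(X^{\mu^1},k^{\mu^1}) - (X^{\mu^2},k^{\mu^2})\|_{\infty,[0,t]} \le C \int_0^t \mathcal{W}_p(\mu^1_s,\mu^2_s)\,ds$; integrating against an optimal coupling of the inputs yields a bound of the form $\mathcal{W}_{p,t}(\Phi(\cdot,\mu^1),\Phi(\cdot,\mu^2))^p \le C \int_0^t \mathcal{W}_{p,s}(\mu^1,\mu^2)^p\,ds$ for the Wasserstein distances between the restrictions to $C([0,t]) \times C([0,t])$, which iterates to a contraction exactly as in Proposition \ref{contraction theorem}. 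Similarly, for fixed $\mu$, $(\zeta,W) \mapsto (X^\mu,k^\mu)$ is Lipschitz in the supremum norm with constant depending only on $L,L_\Gamma,T$, so $\Phi(\cdot,\mu)$ is Lipschitz in $\mathcal{W}_p$. Well-posedness of \eqref{intro:McKVlaSkor} then follows from the fixed point $\bar\mu = \Phi(\mathcal{L}(\zeta,W),\bar\mu)$, and Proposition \ref{contraction theorem} (Lipschitz dependence of a contraction's fixed point on a parameter) gives $\mathcal{W}_p(\bar\mu^1,\bar\mu^2) \le C\,\mathcal{W}_p(\mathcal{L}(\zeta^1,W^1),\mathcal{L}(\zeta^2,W^2))$. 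Combining this with the pathwise Lipschitz dependence of $(X^\mu,k^\mu)$ on $(\zeta,W)$ and on $\mu$, and lifting an optimal coupling of $(\zeta^1,W^1)$ and $(\zeta^2,W^2)$ to a coupling of the full tuples $(\zeta^i,W^i,X^i,k^i)$, yields the claimed estimate.

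I expect the only real subtlety to be the clean statement and pathwise use of the Lipschitz Skorokhod map on $D$: one must check that it applies to the (merely continuous, not semimartingale) driver $\psi^X$ and that it simultaneously controls $k^\mu = \psi^{X^\mu} - X^\mu$ in sup norm, which is why the convex-polyhedron assumption is convenient. Everything after that is the verbatim Wasserstein/Gronwall bookkeeping already carried out in the unreflected case.
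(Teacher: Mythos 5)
Your proposal is correct and follows essentially the same route as the paper: the key ingredient in both is the sup-norm Lipschitz continuity of the Skorokhod map on the convex polyhedron (the paper's Lemma \ref{lem:DupIsh91}, quoted from Dupuis--Ishii), fed into the two Gronwall estimates that make the parameter-dependent map $\Phi$ Lipschitz in the input law and an iterated contraction in $\mu$, and then Proposition \ref{contraction theorem}. The only cosmetic difference is that you build the frozen-$\mu$ solution by an explicit Picard iteration through $\Gamma$, whereas the paper cites this well-posedness as classical and derives it from the same Lipschitz estimate.
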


	\subsection*{Battery modelling} Our initial motivation for the heterogeneous particles case comes from modeling lithium-ion batteries. The numerical simulations of \cite{guhlke2018stochastic} indicate that the capacity of the battery and its efficiency is mainly determined by the size distribution of the lithium iron phosphate particles. It is thus important to allow for the particles to be of fixed different, predetermined sizes. 
	%Being able to treat heterogeneous particles is especially important in modeling lithium-ion batteries. These batteries are the most promising storage devices to store and convert chemical energy into electrical energy and vice versa. 
	
	Lithium-ion batteries are the most promising storage devices to store and convert chemical energy into electrical energy and vice versa. In \cite{guhlke2018stochastic} lithium-ion batteries are studied where at least one of the two electrodes stores lithium within a many-particle ensemble, for example each particle of the electrode is made of Lithium-iron-phosphate. One of the practical achievements of \cite{guhlke2018stochastic} consists of the conclusions that the capacity of the battery and its efficiency as well is dominantly determined by the size distribution of the storage particles, ranging from $20$ to $1000$ nanometers. 
	The radii $r^i$ of the particles in the battery are distributed according to a distribution $\lambda \in \mathcal{P}([20, 1000])$. However, in the numerical simulations, it leads to better accuracy to artificially choose the radii in advance, instead of randomly sample them. For instance, assume that we want to simulate $1000$ particles, whose radii can be of exactly two given sizes, $r_1, r_2$, with equal probability. It is much more convenient to choose $500$ particles of radius $r_1$ and $500$ of radius $r_2$, instead of sampling them from a binomial, as this could lead to imbalanced simulations and introduce an extra source of error. For this reason it is important that the theoretical results support the use of carefully chosen radii $r^i$ of different length, such their empirical measure converges, as the number of particles grows, to a desired distribution $\lambda$. The radii so chosen, are deterministic (hence, independent), but not identically distributed.
	
	The dynamics of the charging/discharging process is modeled in \cite{guhlke2018stochastic} by a coupled system of SDEs for the evolution of the lithium mole fractions $Y^{i,N} \in [0,1]$ of particles $i = 1, 2, \dots, N$ of the particle ensemble.
%	Assume that in the battery there are $N$ particles and each particle has a lithium mole fraction $Y^i \in [0,1], \  i=1,...,N$. For this discussion assume periodicity of $Y^i$, but we are able to treat also the case with simple normal reflection at the boundary using the result of Section \ref{section:boundary conditions}. 
	The evolution of $Y^{i,N}$ over a time interval $[0,T]$ is described by the following system of SDEs
	\begin{equation}
	\label{intro:battery model}
	\left\{
	\begin{array}{l}
	dY^{i,N}_t = \frac{1}{\tau_i} (\Lambda_{t,Li} - \mu_{Li}(Y^{i,N}_t)) dt + \sigma_i dW_t^i - dk^{i,N}_{t} \\
	d|k^{i,N}_{t} | = 1_{ Y^{i,N}_t \in \{ 0 , 1 \} } d | k^{i,N}_{t} |, \quad dk^{i,N}_{t} = n(Y^{i,N}_{t}) d | k^{i,N}_{t} |,\\
	Y^{i,N}_0 = a \in [0, 1].
	\end{array}
	\right.
	\quad i=1, \dots, N,
	\end{equation}
	We assume that all the particle have the same amount of lithium mole fraction $a \in [0,1]$ at time $t=0$. In practice, this initial condition is very close to $1$, when the battery is empty and very close to $0$, when the battery is charged.
	The particles are driven by a family of independent Brownian motions $W^{(N)}:= (W^i)_{1\leq i \leq N}$, which account for random fluctuations that can occur within the system during charging and discharging. The quantity $\tau_i \equiv \tau(r_i)$, which is related to the relaxation time and to the particle active surface, is a function of the radius $r_i$ of the particle. As discussed earlier, the radii can only have values in a fixed range $I := [r_{min}, r_{max}] \subset (0,\infty)$. We assume that $\tau:I\to \R$ and $\tau^{-1} : I \to \R$ is Lipschitz and bounded. We also assume that $\sigma_i=\sigma(r_i)$ for a Lipschitz and bounded function $\sigma:I \to \R$. The term $\mu_{Li} : \R \to \R$ is the chemical potential of the Lithium and, in this framework, it is also taken Lipschitz and bounded. The interaction between particles is encoded in the surface chemical potential 
	$$
	\Lambda_{t,Li} := \frac{\sum^N_{j=1} \left[ V_j\dot{q}_t +\mu_{Li}(x)\frac{V_j}{\tau^j} \right]}{\int \frac{V_j}{\tau_j}},
	$$
	where $q_t$ is a given $C^1$ function characterizing the state of charge of the battery at time $t \in [0,T]$ and $V_j$ is the volume of the $j$-th particle.
	By the assumptions on $\mu_{Li}$ and $\tau$ and the bounds on the radii, the surface chemical potential is a bounded and Lipschitz continuous function of the empirical distribution of the Lithium mole fractions and radii, $\mu^N := \frac{1}{N}\sum_{i=1}^{N} \delta_{(Y^i_t,r^i)}$.
	
	Moreover, we impose on the particles Skorokhod-type boundary conditions, of the same type as the ones described in Section \ref{section:boundary conditions}. We call $n(x)$ the outer unit normal vector, which, in this case, reduces to $n(x) = (-1)^{x+1}$, for $x\in \{0,1\}$. This will force the mole fraction of each particle to remain in $[0,1]$. Reflecting boundary conditions are imposed also in \cite{DHMRW15}, which considers the PDE counterpart of the model \eqref{intro:battery model} here in the case of $\tau_j$ independent of $j$. Those boundary condtions are similar but not identical to the boundary conditions here: in \cite{DHMRW15}, the surface chemical potential $\Lambda_{t,Li}$ accounts also for mean field interactions from the boundaries, we disgard those interactions here.
		
	Under the previous assumptions, the particle system \eqref{intro:battery model} can be essentially treated combining the results of Theorem \ref{thm:wellpos_cont_boundary} and Corollary \ref{cor: heterogeneus particles}, as follows. To unify the notation to the rest of the paper, we define 
	$$
	b: [0,T] \times \R \times \R \times \mathcal{P}(\R \times \R) \to \R,
	\qquad
	(t,x,r, \nu) \mapsto \frac{1}{\tau(r)} (-\mu_{Li}(x) +\Lambda{t,Li}),
	$$
	where (calling $V(r)=4\pi r^3/3$ the volume of the particles of radius $r$),
	$$
	\Lambda_{t,Li} = \frac{\int \left[ V(r)\dot{q}_t +\mu_{Li}(x)\frac{V(r)}{\tau(r)} \right] \nu(d(x,r))}{\int \frac{V(r)}{\tau(r)} \nu(d(x,r))},
	$$
	and we consider the following \emph{generalized} McKean-Vlasov Skorokhod equation
	\begin{equation}
	\label{intro: mckean-vlasov battery}
	\left\{
	\begin{array}{l}
	dX_t = b(t,X_t, R , \mathcal{L}(X_t, R)) dt + \sigma(R)dW_t - dk_t, \quad X_0=a,\\
	d|k|_t = 1_{X_t\in \partial D} d|k|_t,\ \ dk_t = n(X_t) d|k|_t,
	\end{array} 
	\right.
	\end{equation}
	The input data are given by $a\in [0,1]$, $R \in L^p(I)$ and $W \in L^p(C_T)$, for $p\in [1,\infty)$. The solution is a couple $(X, k) \in C_T([0,1]) \times C_T$. When $W := W^{(N)} \in L^p(\Omega_N, C_T)$ and $R = R^{(N)} = (r^1, \cdots , r^N) \in L^p(\Omega_N, C_T(I))$ (the radii are constant path in $I$), we recover the system \eqref{intro:battery model}. We assume that the radii $r^i$ are sampled from a distribution $\lambda \in \mathcal{P}_p(C_T(I))$ in such a way that $L^N(R^{(N)}) \overset{\ast}{\rightharpoonup} \lambda$, this gives the limit process $(X,k)$ solution to \eqref{intro: mckean-vlasov battery}, driven by $(W, R) \in C_T \times C_T(I)$ with law $\mu_{\mathcal{W}} \otimes \lambda$ (here $\mu_{\mathcal{W}}$ is the Wiener measure). We summarize this in the following proposition.
	
	\begin{proposition}
		Let $p\in [1,\infty)$ and let $(W^i)_{i\in \N}$ be a family of independent Brownian motions on $\R$. Assume $I \subset (0,\infty)$ is a closed interval and let $(r^i)_{i\in\N} \subset I$ be a sequence in $I$, such that
		$$
		L^N(R^{(N)}) \overset{\ast}{\rightharpoonup} \lambda \in \mathcal{P}_p(I).
		$$
		Then, for every $N \in \N$, equation \eqref{intro:battery model} admits a unique solution $(Y^{(N)}, k^{(N)}) := (Y^{i,N}, k^{i,N})_{i=1, \dots , N}$. Moreover, 
		$$
		L^{(N)}(Y^{(N)}, k^{(N)}) \overset{\ast}{\rightharpoonup} \mathcal{L}(X, k),
		$$
		where $(X,k)$ is a solution to equation \eqref{intro: mckean-vlasov battery}, with input data $(W, R) \in C_T \times C_T(I)$ with law $\mu_{\mathcal{W}} \otimes \lambda$.
	\end{proposition}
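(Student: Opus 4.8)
The plan is to recognise \eqref{intro:battery model} as the $N$-particle system attached to the generalized McKean--Vlasov Skorokhod equation \eqref{intro: mckean-vlasov battery}, and then to invoke Theorem \ref{thm:wellpos_cont_boundary} for well-posedness and Corollary \ref{cor: heterogeneus particles} for the mean-field limit; the two genuinely non-formal tasks are (i) checking that the coefficients of \eqref{intro: mckean-vlasov battery} --- the drift $b$ built from the surface chemical potential $\Lambda_{t,Li}$, the noise coefficient $\sigma$, and the data $(a,R,W)$, with the radius $R$ kept as a static input, equivalently as a frozen coordinate of an enlarged Skorokhod problem on the rectangle $[0,1]\times I$ --- satisfy the hypotheses of those results, and (ii) checking that the empirical measure of the inputs converges in $p$-Wasserstein sense. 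First I would settle the identification: substituting the empirical measure $\mu^N_t=\frac1N\sum_{j=1}^N\delta_{(Y^{j,N}_t,r^j)}$ into $b(t,\cdot,\cdot,\cdot)$ reproduces exactly the drift $\frac{1}{\tau(r^i)}\big(-\mu_{Li}(Y^{i,N}_t)+\Lambda_{t,Li}\big)$ of \eqref{intro:battery model}, so \eqref{intro:battery model} is the $N$-particle system of \eqref{intro: mckean-vlasov battery} with inputs $(a,W^i,r^i)_{i=1}^N$.

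For task (i), the point that needs care is that $\nu\mapsto\Lambda_{t,Li}(\nu)$ must be a bounded and Lipschitz functional of the measure, uniformly in $t$. I would argue as follows. Since $I\subset(0,\infty)$ is compact and $\tau,\tau^{-1}$ are bounded, the function $r\mapsto V(r)/\tau(r)$ (with $V(r)=4\pi r^3/3$) is bounded above and below by strictly positive constants on $I$, so the denominator $\int V(r)/\tau(r)\,\nu(d(x,r))$ of $\Lambda_{t,Li}(\nu)$ is bounded below by some $c>0$ uniformly over all $\nu\in\mathcal{P}([0,1]\times I)$ and all $t\in[0,T]$; the numerator is bounded because $\dot q$ is continuous on $[0,T]$ and $\mu_{Li}$ is bounded. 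As $r\mapsto V(r)$, $r\mapsto V(r)/\tau(r)$ and $x\mapsto\mu_{Li}(x)$ are Lipschitz on the compact set $[0,1]\times I$, the maps $\nu\mapsto\int(\cdot)\,d\nu$ are $\mathcal{W}_1$-Lipschitz, and $(n,d)\mapsto n/d$ is Lipschitz on $\{\,d\ge c,\ |n|\le M\,\}$; composing these gives that $\nu\mapsto\Lambda_{t,Li}(\nu)$ is bounded and $\mathcal{W}_p$-Lipschitz uniformly in $t$. Together with $1/\tau$ and $\mu_{Li}$ Lipschitz and bounded, this makes $b$ bounded, measurable in $t$ and Lipschitz in $(x,r,\nu)$; $\sigma$ is Lipschitz and bounded; $a\in[0,1]$ is deterministic; and $W$ --- hence $\sigma(R)W$, since $\sigma$ is bounded --- lies in $L^p(C_T)$ because Brownian motion has moments of all orders. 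Thus Theorem \ref{thm:wellpos_cont_boundary} applies and yields existence and uniqueness of $(Y^{(N)},k^{(N)})$ for every $N$ as well as of the limit $(X,k)$.

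For task (ii), I would show that $L^N\big((a,W^i,r^i)_{i=1}^N\big)\to\delta_a\otimes\mu_{\mathcal{W}}\otimes\lambda$ in $\mathcal{W}_p$, $\mathbb{P}$-a.s., reading the radii as constant paths. Since $a$ is common to all particles this reduces to $\frac1N\sum_i\delta_{(W^i,r^i)}\to\mu_{\mathcal{W}}\otimes\lambda$. For $\varphi\in C_b(C_T\times C_T(I))$ and $\Phi(r):=\int\varphi(w,r)\,\mu_{\mathcal{W}}(dw)$, write $\frac1N\sum_i\varphi(W^i,r^i)=\frac1N\sum_i\big(\varphi(W^i,r^i)-\Phi(r^i)\big)+\frac1N\sum_i\Phi(r^i)$; the $W^i$ being i.i.d.\ and the $r^i$ deterministic, the first term is a normalised sum of independent, bounded, centred variables and tends to $0$ a.s.\ by Kolmogorov's strong law, while $\Phi\in C_b$ and the hypothesis $L^N(R^{(N)})\overset{\ast}{\rightharpoonup}\lambda$ give $\frac1N\sum_i\Phi(r^i)\to\int\Phi\,d\lambda=(\mu_{\mathcal{W}}\otimes\lambda)(\varphi)$; running this along a countable convergence-determining family gives a.s.\ weak convergence. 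Convergence of the $p$-th moments follows from $\frac1N\sum_i\|W^i\|_\infty^p\to\mathbb{E}\|W\|_\infty^p$ a.s.\ (i.i.d.\ strong law, finite Brownian moments) and $\frac1N\sum_i|r^i|^p\to\int|r|^p\,d\lambda$ (weak convergence on the compact $I$), and together with weak convergence this upgrades to $\mathcal{W}_p$-convergence a.s.

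Finally I would feed this convergence of the inputs into Corollary \ref{cor: heterogeneus particles} for \eqref{intro: mckean-vlasov battery}, obtaining $L^{(N)}\big((Y^{i,N},k^{i,N})_{i=1}^N\big)\overset{\ast}{\rightharpoonup}\mathcal{L}(X,k)$ $\mathbb{P}$-a.s.\ (in fact in $\mathcal{W}_p$), where $(X,k)$ solves \eqref{intro: mckean-vlasov battery} driven by $(W,R)$ of law $\mu_{\mathcal{W}}\otimes\lambda$, which is the assertion. The hard part is task (i) --- more precisely, the uniform positive lower bound on the denominator $\int V(r)/\tau(r)\,\nu(d(x,r))$ and the ensuing Lipschitz-in-measure bound for $\Lambda_{t,Li}$ --- which is exactly what the standing assumptions ($I$ a compact subinterval of $(0,\infty)$, $\tau^{-1}$ bounded) are tailored to deliver; everything else is a routine combination of the earlier results with classical laws of large numbers.
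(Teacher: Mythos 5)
Your proposal is correct and follows essentially the same route the paper indicates: the authors state that the proof is obtained by running the argument of Corollary \ref{cor: heterogeneus particles} and Remark \ref{rmk:heterogeneous_noise} (enlarging the state by the frozen radius coordinate and handling the noise $\sigma(R)W$), with Theorem \ref{thm:wellpos_cont_boundary} in place of Theorem \ref{main theorem}, which is exactly your tasks (i) and (ii). Your verification of the Lipschitz/boundedness of $\Lambda_{t,Li}$ and your direct law-of-large-numbers argument for the joint empirical measure of $(W^i,r^i)$ merely spell out what the paper delegates to its standing assumptions and to Lemmas \ref{lemma:modified lln}--\ref{lemma:convergence not iid inputs}.
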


	The proof of this proposition (which we will not give in full details) follows exactly as the proof of Corollary \ref{cor: heterogeneus particles} and Remark \ref{rmk:heterogeneous_noise}, with the difference that, instead of Theorem \ref{main theorem}, one applies Theorem \ref{thm:wellpos_cont_boundary}.

	\subsection*{Structure of the paper}
	In Section \ref{section:main result} we prove the well-posedness for the \textit{generalized} McKean-Vlasov equation \eqref{intro:mckean vlasov equation}. In Section \ref{section:applications} we present applications to classical mean field particle approximation, heterogeneous mean field and mean field with common noise as corollaries of the main result.
	Then, we study other (classical) asymptotic for the particles as a straightforward applications: a large deviations result in Section \ref{section:large deviations}; a central limit theorem in Section \ref{sec: central limit theorem}.
	Finally, we adapt the result to study McKean-Vlasov equations with reflection at the boundary, see Section \ref{section:boundary conditions}.

	\subsection{Notation}
	\label{section:notation}
	
	Given $p$ in $[1,+\infty)$ and a Polish space $E$, with metric induced by a norm $\Vert \cdot \Vert_{E}$, we denote by $\mathcal{P}_p(E)$ the space of probability measures on $E$ with finite $p$-moment, namely the measures $\mu$ such that
	\begin{equation*}
	\int_{E} \Vert x \Vert^p_{E} d\mu(x) < +\infty.
	\end{equation*}
	
	For $T>0$, we denote by $C_T(\mathbb{R}^d) := C([0,T], \mathbb{R}^d)$ (the space of continuous functions from $[0,T]$ to $\mathbb{R}^d$), endowed with the supremum norm $\Vert f \Vert_{\infty:T} := \sup_{t \in [0,T]} \vert f(t) \vert$, for $f \in C_T(\mathbb{R}^d)$. When there is no risk of confusion about the codomain, we denote the space of continuous functions by $C_T$. Moreover, when there is non risk of confusion about the time interval, we use the lighter notation $\Vert \cdot \Vert_{\infty}$. Moreover, we call $C_{T,0} = \{ \gamma \in C_T \mid \gamma_0 = 0 \}$, the subsets of paths that vanish at time $0$.
	
	For a domain $\bar{D}$ in $\mathbb{R}^d$, we denote by $C_T(\bar{D}):= C([0,T], \bar{D})$ (continuous functions from $[0,T]$ to $\bar{D}$), endowed with the supremum norm $\Vert \cdot \Vert_{\infty}$.
	
	Given $t \in [0, T]$, the projection $\pi_t$ is defined as the function $\pi_t: C_T \to \mathbb{R}^d$ as $\pi_t(\gamma) := \gamma(t)$. We define the marginal at time $t$ of $\mu\in \mathcal{P}_p(C_T)$ as $\mu_t := (\pi_t)_{\#}\mu \in \mathcal{P}_p(\mathbb{R}^d)$. We also denote by $\mu\vert_{[0,t]}$ the push forward of $\mu$ with respect to the restriction on the subinterval $[0,t]$.
	
	Given a Polish space $(E, d)$, the $p$-Wasserstein metric on $\mathcal{P}_p(E)$ is defined as
	\begin{equation}\label{Wasserstein}
	\mathcal{W}_{E,p}(\mu, \nu)^p = \inf_{m \in \Gamma(\mu, \nu)} \iint_{E\times E} d(x, y)^p m(dx, dy),
	\quad \mu, \nu \in \mathcal{P}_p(E),
	\end{equation}
	where $\Gamma(\mu, \nu)$ is the space of probability measures on $E\times E$ with first marginal equal to $\mu$ and second marginal equal to $\nu$. We will omit the space $E$ from the notation when there is no confusion.
	
	We denote by $\mathcal{L}(X)$ the law of a random variable $X$.
	
	We use $C_{p}$ to denote constants depending only on $p$.

	Let $C_c^\infty$ and $C^n$ be the set of infinitely differentiable differentiable real-valued functions of compact support defined on $\mathbb{R}^d$ and the set of $n$ times continuously differentiable functions on $\mathbb{R}^d$ such that
	\begin{equation*}
	\Vert \varphi \Vert_{C^n} := \sum_{\vert \alpha \vert \leq n} \sup_{x\in\mathbb{R}^d} \vert D^\alpha \varphi \vert < +\infty.
	\end{equation*}
	
	Let $\operatorname{Lip}_1$ be the space of Lipschitz continuous functions in $C^0$, such that
	\begin{equation*}
	\Vert \varphi \Vert_{C^0}, \sup_{x \neq y \in \mathbb{R}^d} \frac{\vert \varphi(x) - \varphi(y) \vert}{\vert x - y \vert} \leq 1.
	\end{equation*}
	
	For $T>0$, we denote by $D_T(\mathbb{R}^d) := D([0,T], \mathbb{R}^d)$, the space of c\`adl\`ag functions (right-continuous with left limit) from $[0,T]$ to $\mathbb{R}^d$. When there is no risk of confusion about the codomain, we denote the space of cadlag functions by $D_T$.
	For $\gamma \in D_T(\mathbb{R}^d)$,  $\Vert \gamma \Vert_{\infty:T} := \sup_{t \in [0,T]} \vert f(t) \vert$. Moreover, when there is no risk of confusion about the time interval, we use the lighter notation $\Vert \cdot \Vert_{\infty}$.
	We endow $D_T$ with the Skohorod metric, defined as follows
	\begin{equation}
	\label{def: sko metric}
	\sigma(\gamma, \gamma^\prime) = \inf \left\{
	\lambda \in \Lambda \mid
	\Vert \lambda \Vert + \Vert \gamma - \gamma^\prime \circ \lambda \Vert_{\infty}
	\right\},
	\qquad
	\gamma, \gamma^\prime \in D_T,
	\end{equation}
	where $\Lambda$ is the space of strictly increasing bijections on $[0,T]$ and
	\begin{equation*}
	\Vert \lambda \Vert := \sup_{s\neq t} \left \vert \log \left( \frac{\lambda_{s,t}}{t-s} \right) \right \vert,
	\quad \lambda \in \Lambda.
	\end{equation*}
	
	The space $(D_T, \sigma)$ is a Polish space.

	\subsection*{Acknowledgements}
	Support from the Einstein Center Berlin, ECMath Project ``Stochastic methods for the analysis of lithium-ion batteries'' is gratefully acknowledged.
	J.-D. Deuschel and P.K. Friz are partially supported by DFG research unit FOR 2402. This project has received funding from the European Research Council (ERC) under the European Union's Horizon 2020 research and innovation programme (grant agreement No. 683164, PI P.K.Friz.) %P.K. Friz acknowledges partial support the European Research Council through CoG-683164.
		The authors thank Nikolai Bobenko for carefully reading the paper and pointing out a mistake in the proof of Lemma \ref{weak cont:existence} in an earlier version.
	
	\section{The main result}
	\label{section:main result}
	
	In this section we study the \textit{generalized} McKean-Vlasov SDE on a probability space $(\Omega, \mathcal{A}, \mathbb{P})$, 
	\begin{equation}\label{mckean vlasov equation}
	\left\{
	\begin{array}{l}
	dX_t = b(t, X_t, \mathcal{L}(X_t)) dt + dW_t\\
	X_0 = \zeta.
	\end{array}
	\right.	
	\end{equation}
	Here the drift $b:[0,T]\times \mathbb{R}^d \times \mathcal{P}_p(\mathbb{R}^d)\rightarrow \mathbb{R}^d$ is a given Borel function, the input to the problem is the random variable
	\begin{equation*}
	\begin{array}{cccc}
	(\zeta, W): & \Omega &\to &\mathbb{R}^d \times C_T,
	\end{array}
	\end{equation*}
	and $X: \Omega \to  C_T$ is the solution. As we will see later, the law $\mathcal{L}(X)$ of the solution depends only on the law $\mathcal{L}(\zeta,W)$, for this reason we refer also to $\mathcal{L}(\zeta,W)$ as input.
	
	Note two differences here with respect to classical SDEs: the drift depends on the solution $X$ also through its law and $W$ is merely a random continuous paths; in particular, it does not have to be a Brownian motion. For these differences, it is worth giving the precise definition of solution.
	
	\begin{definition}\label{def solution}
		Let $(\Omega,\mathcal{A},\mathbb{P})$ be a probability space and let $\zeta:\Omega\rightarrow \mathbb{R}^d$, $W:\Omega\rightarrow C_T$ be random variables on it. A solution to equation \eqref{mckean vlasov equation} with input $(\zeta,W)$ is a random variable $X: \Omega \to C_T$ such that, for a.e.~$\omega$, the function $X(\omega)$ satisfies the following integral equality		
		\begin{equation*}
		X_t (\omega) = \zeta(\omega) + \int_{0}^{t}b(s, X_s(\omega), \mathcal{L}(X_s)) ds + W_t(\omega).
		\end{equation*}
	\end{definition}
	
	We assume the following conditions on $b$:
	\begin{assumption}
		\label{assumption}
		Let $p\in[1,\infty)$. The drift $b: [0,T] \times \mathbb{R}^d \times \mathcal{P}_p(\mathbb{R}^d) \to \mathbb{R}^d$ is a measurable function and there exists a constant $K_b$ such that, 
		\begin{equation*}
		\vert b(t, x, \mu) - b(t, x^\prime, \mu^\prime) \vert^p
		\leq K_b \left(\vert x - x^\prime \vert^p  
		+ \mathcal{W}_{\mathbb{R}^d,p}(\mu, \mu^\prime)^p\right),
		\end{equation*}
		$\forall t \in [0,T], x, x^\prime \in \mathbb{R}^d, \mu, \mu^\prime \in \mathcal{P}_p(\mathbb{R}^d)$.
%		\item[$\boldsymbol{B}_p$] The drift is bounded, namely 
%		\begin{equation*}
%		\vert b(t, x, \mu) \vert^p < K_b, \quad \forall t\in[0,T], x \in \mathbb{R}^d, \mu \in \mathcal{P}_p(\mathbb{R}^d).
%		\end{equation*}
	\end{assumption}
	
	Before giving the main result, we introduce some notation. For a given $\mu$ in $\mathcal{P}_p(C_T)$, we consider the SDE
	\begin{equation}\label{linear equation}
	\left\{
	\begin{array}{l}
	dY_t^\mu = b(t,Y_t^\mu, \mu_t) dt + dW_t\\
	Y^\mu_0 = \zeta.
	\end{array}
	\right.	
	\end{equation}
	We have the following well-posedness result
	\begin{lemma}
		\label{lem:solution linear}
		Under Assumption \ref{assumption}, for every input $(\zeta,W) \in L^p(\mathbb{R}^d \times C_T)$ and $\mu \in \mathcal{P}_p(C_T)$, there exists a unique $Y^\mu \in L^p(C_T)$ which satisfies, $\forall \omega \in \Omega$,
		\begin{equation*}
		Y^\mu_t(\omega) = \zeta(\omega) + \int_0^t b(s, Y_s^\mu(\omega), \mu_s)ds + W_t(\omega).
		\end{equation*}
		Moreover, denote by
		\begin{equation}
		\label{defn:solution map}
		\begin{array}{cccc}
			S^\mu: & \mathbb{R}^d\times C_T & \rightarrow  & C_T\\
			& (x_0, \gamma) & \mapsto & S^\mu(x_0, \gamma),
		\end{array}
		\end{equation}
		where $S^\mu(x_0, \gamma)$ is a solution to the ODE
		\begin{equation}
		\label{eq:ode}
		x_t = x_0 + \int_0^t b(s, x_s, \mu_s) ds + \gamma_t.
		\end{equation}
		Then, $Y^\mu = S^\mu(\zeta, W)$.
	\end{lemma}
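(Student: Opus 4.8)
For each fixed $\omega$ equation \eqref{linear equation} is an ordinary (Carath\'eodory-type) ODE with an $x$-Lipschitz drift, so the natural route is a pathwise fixed-point/Gr\"onwall argument, followed by a check that the resulting solution map is regular enough to turn the pathwise solution into an $L^p$ random variable.

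\emph{Step 1 (the frozen drift is admissible).} First I would record that $t\mapsto\mu_t=(\pi_t)_{\#}\mu$ is continuous from $[0,T]$ into $\mathcal{P}_p(\mathbb{R}^d)$: using the coupling $\gamma\mapsto(\gamma_s,\gamma_t)$ one has $\mathcal{W}_p(\mu_s,\mu_t)^p\le\int_{C_T}|\gamma_s-\gamma_t|^p\,\mu(d\gamma)$, and the integrand is dominated by $(2\|\gamma\|_\infty)^p\in L^1(\mu)$ (since $\mu\in\mathcal{P}_p(C_T)$) and tends to $0$ pointwise as $s\to t$, so dominated convergence applies. In particular $t\mapsto\mathcal{W}_p(\mu_t,\delta_0)^p=\int|\gamma_t|^p\,\mu(d\gamma)$ is bounded by $m_\mu:=\int\|\gamma\|_\infty^p\,\mu(d\gamma)<\infty$. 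Hence, for any $x\in C_T$, the map $s\mapsto b(s,x_s,\mu_s)$ is Borel, and by Assumption \ref{assumption}
\[
|b(s,x_s,\mu_s)|\le|b(s,0,\delta_0)|+K_b^{1/p}\bigl(\|x\|_\infty+m_\mu^{1/p}\bigr),
\]
which is integrable on $[0,T]$ (using the mild standing integrability of $t\mapsto b(t,0,\delta_0)$, automatic when $b$ is bounded in time). So the right-hand side of \eqref{eq:ode} indeed defines an element of $C_T$.

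\emph{Step 2 (pathwise existence and uniqueness).} Fix $\omega$ and write $x_0=\zeta(\omega)$, $\gamma=W(\omega)\in C_T$. Define $\mathcal{T}:C_T\to C_T$ by $(\mathcal{T}x)_t=x_0+\int_0^t b(s,x_s,\mu_s)\,ds+\gamma_t$; by Step 1 this is well defined. Equip $C_T$ with the equivalent norm $\|x\|_\lambda:=\sup_{t\in[0,T]}e^{-\lambda t}|x_t|$. Since Assumption \ref{assumption} with $\mu'=\mu$ gives $|b(s,x_s,\mu_s)-b(s,y_s,\mu_s)|\le K_b^{1/p}|x_s-y_s|$, one obtains
\[
e^{-\lambda t}|(\mathcal{T}x)_t-(\mathcal{T}y)_t|\le K_b^{1/p}\int_0^t e^{-\lambda(t-s)}\,ds\,\|x-y\|_\lambda\le\tfrac{K_b^{1/p}}{\lambda}\|x-y\|_\lambda,
\]
so $\mathcal{T}$ is a contraction for $\lambda>K_b^{1/p}$. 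Banach's fixed point theorem yields a unique fixed point, which is by definition $S^\mu(x_0,\gamma)$, the unique solution of \eqref{eq:ode}; setting $Y^\mu(\omega):=S^\mu(\zeta(\omega),W(\omega))$ gives the claimed identity for every $\omega$.

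\emph{Step 3 (measurability and $L^p$ bound).} To see $Y^\mu\in L^p(C_T)$ I would show $S^\mu:\mathbb{R}^d\times C_T\to C_T$ is continuous (in fact locally Lipschitz): if $x=S^\mu(x_0,\gamma)$, $\tilde x=S^\mu(\tilde x_0,\tilde\gamma)$, then $|x_t-\tilde x_t|\le|x_0-\tilde x_0|+\|\gamma-\tilde\gamma\|_\infty+K_b^{1/p}\int_0^t|x_s-\tilde x_s|\,ds$, so Gr\"onwall gives $\|x-\tilde x\|_\infty\le(|x_0-\tilde x_0|+\|\gamma-\tilde\gamma\|_\infty)e^{K_b^{1/p}T}$; hence $Y^\mu=S^\mu\circ(\zeta,W)$ is a random variable. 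Applying the same Gr\"onwall estimate to $S^\mu(x_0,\gamma)$ against the constant path $0$ (or directly using the linear growth bound of Step 1) yields $\|Y^\mu(\omega)\|_\infty\le C(1+|\zeta(\omega)|+\|W(\omega)\|_\infty)$ with $C=C(p,T,K_b,m_\mu,\int_0^T|b(s,0,\delta_0)|\,ds)$; raising to the $p$th power, integrating over $\Omega$, and using $(\zeta,W)\in L^p$ gives $Y^\mu\in L^p(C_T)$. Finally, $L^p$-uniqueness is inherited from the pathwise uniqueness of Step 2, since any two $L^p$ solutions solve \eqref{eq:ode} for a.e.\ $\omega$ and hence coincide a.s. The main difficulty here is not analytic depth — the content is a parametrised ODE — but the ``measurable dependence'' bookkeeping: continuity of $t\mapsto\mu_t$, Borel measurability and integrability of $s\mapsto b(s,x_s,\mu_s)$ so that \eqref{eq:ode} makes sense, and the upgrade from a pathwise solution to an $L^p$ random variable via continuity of $S^\mu$.
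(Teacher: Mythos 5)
Your proposal is correct and follows essentially the same route as the paper: the paper simply invokes classical ODE theory for existence, uniqueness and continuity of $S^\mu$ in $(x_0,\gamma)$, and then, exactly as in your Step 3, verifies the finite $p$-moment via the observation $\int_{\mathbb{R}^d}|x|^p\,\mu_t(dx)\le\int_{C_T}\|\gamma\|_\infty^p\,d\mu(\gamma)<\infty$ and Gr\"onwall. Your Steps 1--2 merely spell out the "classical" part (Picard/Banach fixed point and the implicit integrability of $t\mapsto b(t,0,\delta_0)$), which the paper leaves tacit.
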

	\begin{proof}
		For every couple $(x_0, \gamma) \in \mathbb{R}^d \times C_T$ the ODE \eqref{eq:ode} classically admits a solution $S^\mu(x_0,\gamma)$, which is continuous with respect to the inputs $(x_0, \gamma)$. It is easy to verify that $S^\mu(\zeta, W)$ solves equation \eqref{linear equation}.
		We only verify that $Y^\mu$ has finite $p$-moments. There exists a constant $C(p, b, T)$ such that
		\begin{equation*}
			\mathbb{E}\Vert Y^\mu \Vert_{\infty}^p 
			\leq \mathbb{E}\vert \zeta \vert^p 
			+ C\left( 
			1+
			\int_0^T\mathbb{E}\sup_{s\in[0,t]}\vert Y^\mu \vert_\infty^p dt
			+ \int_0^T\int_{\mathbb{R}^d} \vert x \vert^p d\mu_t(x)dt 
			\right)
			+ \mathbb{E}\Vert W \Vert_{\infty}^p.
		\end{equation*}
		We notice that $\int_{\mathbb{R}^d} \vert x \vert^p \mu_t(dx) \leq \int_{C_T} \Vert \gamma \Vert_{\infty}^p d\mu(\gamma) < +\infty$. Gronwall's inequality and the assumptions on $(\zeta, W)$ conclude the proof.
%		Given $(x_0, \gamma) \in \mathbb{R}^d \times C_T$ define $\tilde b(t, x, \mu) := b(t, x + \gamma_t, \mu)$. We call $\tilde x$ the solution to the ODE
%		\begin{equation*}
%		d\tilde x_t = \tilde b(t, \tilde x_t,\mu_t)dt, \quad \tilde x_t = x_0.
%		\end{equation*}
%		Then, the path $x = \tilde x + \gamma \in C_T$ solves \eqref{eq:ode}.
	\end{proof}
	We call
	\begin{equation}
	\label{eq:sol_map_law}
	\begin{array}{cccc}
	\Phi: &\mathcal{P}_p(\mathbb{R}^d\times C_T)\times \mathcal{P}_p(C_T) & \rightarrow & \mathcal{P}_p(C_T)\\
	& (\mathcal{L}(\zeta, W), \mu) & \mapsto & \mathcal{L}(Y^\mu) = (S^\mu)_\# \mathcal{L}(\zeta, W),
	\end{array}
	\end{equation}
	the push forward of a probability measure $\mathcal{L}(\zeta, W)$ under the solution map $S^\mu$ defined in \eqref{defn:solution map}. %We denote the parameter dependent map $\Phi(\nu, \cdot)$ by $\Phi^\nu$.
	
	Note that $X$ uniquely solves the McKean-Vlasov equation \eqref{mckean vlasov equation} with input $(\zeta, W)$, if and only if $\mathcal{L}(X)$ is a fixed point of $\Phi({\mathcal{L}(\zeta, W)}, \cdot )$:
	\begin{itemize}
		\item if $X$ solves \eqref{mckean vlasov equation}, then, by uniqueness for fixed $\mu=\mathcal{L}(X)$, $X= S^{\mathcal{L}(X)}(\zeta,W)$ $\mathbb{P}$-a.s.~and so $\mathcal{L}(X)$ is a fixed point of $\Phi({\mathcal{L}(\zeta, W)}, \cdot )$;
		\item conversely, if $\mu^{\mathcal{L}(\zeta, W)}$ is a fixed point of $\Phi({\mathcal{L}(\zeta, W)}, \cdot )$, then $X=S^{\mu^{\mathcal{L}(\zeta, W)}}(\zeta,W)$ has finite $p$-moment and solves \eqref{mckean vlasov equation}.
	\end{itemize}
	Hence existence and uniqueness for \eqref{mckean vlasov equation} in Theorem \ref{main theorem} follow from existence and uniqueness for fixed points of $\Phi^{\mathcal{L}(\zeta, W)}$, for any law $\mathcal{L}(\zeta, W)$.
	
	For this reason, the main ingredient in the proof of Theorem \ref{main theorem} is the following general proposition, a version of the contraction principle with parameters. The proof is postponed to the appendix.
	\begin{proposition}\label{contraction theorem}
		Let $(E, d_E)$ and $(F, d_F)$ be two complete metric spaces. Consider a function $\Phi : F \times E \to E$ with the following properties:
		\begin{itemize}
			\item[1)] (uniform Lipschitz continuity) there exists $L > 0$ such that
			\begin{equation*}
			d_E(\Phi(Q, P), \Phi(Q^\prime, P^\prime)) \leq L\left[d_E(P, P^\prime) + d_F(Q, Q^\prime)\right].
			\end{equation*}
			\item[2)] (contraction) There exist a constant $0< c <1$ and a natural number $k\in \mathbb{N}$ such that 
			\begin{equation*}
			\quad d_E((\Phi^Q)^k(P), (\Phi^Q)^k(P^\prime)) \leq c d_E(P, P^\prime) \ \ \forall Q \in F,\ \forall P,P^\prime\in E,
			\end{equation*}
			with $\Phi^Q(P) := \Phi(Q,P)$.
		\end{itemize}
		
		Then for every $Q \in F$ there exists a unique $P_Q \in E$ such that
		\begin{equation*}
		\Phi(Q, P_Q) = P_Q.
		\end{equation*}
		Moreover, 
		\begin{equation}\label{inequality}
		\forall Q, Q^\prime \in F, \quad d_E(P_Q, P_{Q^\prime}) \leq \tilde C d_F(Q, Q^\prime),
		\end{equation}
		where $\tilde C := \left(\sum_{i=1}^k L^i\right)(1-c)^{-1}$.
	\end{proposition}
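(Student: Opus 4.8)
The plan is to reduce the whole statement to the classical Banach fixed point theorem applied to the $k$-th iterate $(\Phi^Q)^k$, and then to read off the Lipschitz dependence on the parameter $Q$ from a short telescoping estimate.

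\emph{Existence and uniqueness for fixed $Q$.} Fix $Q\in F$. By hypothesis 2), the map $(\Phi^Q)^k\colon E\to E$ is a strict contraction with constant $c<1$ on the complete metric space $(E,d_E)$, so it has a unique fixed point, call it $P_Q$. To see that $P_Q$ is already a fixed point of $\Phi^Q$ itself, apply $\Phi^Q$ to $(\Phi^Q)^k(P_Q)=P_Q$ and use that $\Phi^Q$ commutes with its iterates: $(\Phi^Q)^k\bigl(\Phi^Q(P_Q)\bigr)=\Phi^Q\bigl((\Phi^Q)^k(P_Q)\bigr)=\Phi^Q(P_Q)$, so $\Phi^Q(P_Q)$ is also a fixed point of $(\Phi^Q)^k$ and uniqueness forces $\Phi^Q(P_Q)=P_Q$. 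Conversely, any fixed point of $\Phi^Q$ is a fixed point of $(\Phi^Q)^k$, hence equals $P_Q$; this gives uniqueness of $P_Q$ as a fixed point of $\Phi(Q,\cdot)$.

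\emph{A telescoping estimate.} Now fix $Q,Q'\in F$ and set, for $0\le j\le k$,
\[
u_j := d_E\bigl((\Phi^Q)^j(P_{Q'}),\,(\Phi^{Q'})^j(P_{Q'})\bigr),\qquad u_0=0 .
\]
I would use hypothesis 1) in two guises: first with $Q=Q'$, which says $\Phi(Q,\cdot)$ is $L$-Lipschitz on $E$; and second with $P=P'$, which says $d_E(\Phi(Q,P),\Phi(Q',P))\le L\,d_F(Q,Q')$ for every $P$. Inserting an intermediate term $\Phi^Q\bigl((\Phi^{Q'})^j(P_{Q'})\bigr)$ and applying the triangle inequality yields the recursion $u_{j+1}\le L\,u_j + L\,d_F(Q,Q')$, which solves to $u_k\le \bigl(\sum_{i=1}^k L^i\bigr)\,d_F(Q,Q')$.

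\emph{Conclusion.} Writing $P_Q=(\Phi^Q)^k(P_Q)$ and $P_{Q'}=(\Phi^{Q'})^k(P_{Q'})$, insert the intermediate term $(\Phi^Q)^k(P_{Q'})$; hypothesis 2) controls $d_E\bigl((\Phi^Q)^k(P_Q),(\Phi^Q)^k(P_{Q'})\bigr)\le c\,d_E(P_Q,P_{Q'})$ and the previous step controls the remaining term by $u_k$, so
\[
d_E(P_Q,P_{Q'})\le c\,d_E(P_Q,P_{Q'})+\Bigl(\textstyle\sum_{i=1}^k L^i\Bigr)d_F(Q,Q').
\]
Since $c<1$ the first term on the right is absorbed into the left-hand side, and dividing by $1-c$ gives \eqref{inequality} with $\tilde C=\bigl(\sum_{i=1}^k L^i\bigr)(1-c)^{-1}$. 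The argument is essentially routine; the only points needing care are the bookkeeping in the telescoping recursion for $u_j$ — keeping separate the factor $L$ acting on the accumulated error and the fresh driving term $L\,d_F(Q,Q')$ — and the (standard) passage from a fixed point of $(\Phi^Q)^k$ to a genuine fixed point of $\Phi^Q$.
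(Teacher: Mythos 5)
Your proof is correct and follows essentially the same route as the paper: Banach's theorem for the $k$-th iterate, the observation that the fixed point of $(\Phi^Q)^k$ is a fixed point of $\Phi^Q$, the telescoping/induction bound $d_E((\Phi^Q)^k(P),(\Phi^{Q'})^k(P))\le\bigl(\sum_{i=1}^k L^i\bigr)d_F(Q,Q')$, and the absorption of the $c\,d_E(P_Q,P_{Q'})$ term. The only cosmetic difference is in passing from a fixed point of $(\Phi^Q)^k$ to one of $\Phi^Q$, where you use commutation of iterates while the paper applies the contraction estimate to $d_E(\Phi^Q(P_Q),P_Q)$ directly; both are standard and equivalent.
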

	
	We give now the main result, from which most of the applications follow. It states well-posedness of the generalized McKean-Vlasov equation and Lipschitz continuity with respect to the driving signal.
	
	\begin{theorem}\label{main theorem}
		Let $T>0$ be fixed and let $p \in [1,\infty)$, assume Assumption \ref{assumption}.
		\begin{enumerate}[label=(\roman*), ref= \ref{main theorem} (\roman*)]
			\item For every input $(\zeta,W) \in L^p(\mathbb{R}^d\times C_T)$, the map $\Phi^{\mathcal{L}(\zeta, W)}$ has a unique fixed point, $\mu^{\mathcal{L}(\zeta, W)}$.
			\item \label{item 2 of main} The map that associates the law of the inputs to the fixed point, namely
			\begin{equation}
			\label{eq:definition psi}
			\begin{array}{cccc}
			\Psi : & \mathcal{P}_p(\mathbb{R}^d \times C_T) & \to & \mathcal{P}_p(C_T)\\
			& \nu &\mapsto &\mu^{\nu}
			\end{array}
			\end{equation}
			is well-defined and Lipschitz continuous.
			\item For every input $(\zeta,W)$, there exists a unique solution $X$ to the generalized McKean-Vlasov \eqref{mckean vlasov equation}, given by $X = S^{\Psi(\mathcal{L}(\zeta, W))}(\zeta, W)$.
			\item There exists a constant $\tilde C = \tilde C(p, T, b) > 0$ such that: for every two inputs $(\zeta^i,W^i)$, $i=1,2$ (defined possibly on different probability spaces) with finite $p$-moments, the following is satisfied
			\begin{align*}
			\mathcal{W}_{C_T, p}(\mathcal{L}(X^1), \mathcal{L}(X^2)) 
			\leq \tilde C \mathcal{W}_{\mathbb{R}^d \times C_T,p}(\mathcal{L}(\zeta^1, W^1), \mathcal{L}(\zeta^2, W^2)).
			\end{align*}
			In particular, the law of a solution $X$ depends only on the law of $(\zeta,W)$.
		\end{enumerate}
		%For $i=1,2$, let $(\zeta^i, W^i) \in \mathbb{R}^d \times C_T$ be two inputs, defined on two probability spaces $(\Omega^i, \mathcal{A}^i, \mathbb{P}^i)$.
		%In addition assume that there exists $K>0$ such that
		%\begin{equation}\label{uniform lp bound}
		%\Vert\{\zeta^i, W^i\}\Vert_{L^p(\Omega^i)} \leq K.
		%\end{equation}
		%Then there exist unique solutions $X^i \in L^p(C_T)$ to equation \eqref{mckean vlasov equation} in the sense of Definition \ref{def solution}, driven by the respective data, and there exists $\tilde C := \tilde C(p, T, b) > 0$ such that
		%\begin{equation*}
		%\mathcal{W}_{C_T\times C_T, p}(\mathcal{L}(X^1, W^1), \mathcal{L}(X^2, W^2)) 
		%\leq \tilde C \mathcal{W}_{\mathbb{R}^d \times C_T,p}(\mathcal{L}(\zeta^1, W^1), \mathcal{L}(\zeta^2, W^2)).
		%\end{equation*}		
	\end{theorem}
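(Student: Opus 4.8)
The plan is to deduce everything from Proposition \ref{contraction theorem} applied to the map $\Phi$ of \eqref{eq:sol_map_law}, with $F = \mathcal{P}_p(\mathbb{R}^d \times C_T)$ and $E = \mathcal{P}_p(C_T)$, both equipped with their respective Wasserstein metrics (which are complete). So the work reduces to verifying the two hypotheses of that proposition, namely (1) uniform Lipschitz continuity of $\Phi$ in both arguments and (2) the $k$-fold contraction estimate in the $\mu$-variable. Once these hold, part (i) is the existence/uniqueness of the fixed point, part (ii) is precisely the Lipschitz continuity of $Q \mapsto P_Q$ asserted by \eqref{inequality}, part (iii) follows from the equivalence (spelled out just before the proposition) between solutions $X$ of \eqref{mckean vlasov equation} and fixed points of $\Phi^{\mathcal{L}(\zeta,W)}$ together with Lemma \ref{lem:solution linear}, and part (iv) is obtained by combining (ii) with a coupling argument: given an optimal coupling $m$ of $\mathcal{L}(\zeta^1,W^1)$ and $\mathcal{L}(\zeta^2,W^2)$, push it forward by $(S^{\mu^1}, S^{\mu^2})$ to get a coupling of $\mathcal{L}(X^1)$ and $\mathcal{L}(X^2)$, and control the resulting cost by the pathwise ODE stability estimate plus the $\mathcal{W}_p$-distance of the two fixed-point measures, the latter being bounded via (ii).

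For hypothesis (1), fix $\mu, \mu' \in \mathcal{P}_p(C_T)$ and two input laws $Q = \mathcal{L}(\zeta,W)$, $Q' = \mathcal{L}(\zeta',W')$. Take an optimal coupling of $Q$ and $Q'$, realize $(\zeta,W)$ and $(\zeta',W')$ on a common space, and compare $Y^\mu = S^\mu(\zeta,W)$ with $Y^{\mu'}{}' = S^{\mu'}(\zeta',W')$ pathwise: subtracting the two integral equations and using Assumption \ref{assumption} gives, for each $t$,
\begin{equation*}
\vert Y^\mu_t - Y^{\mu'}_t{}' \vert^p \leq C_p\Big(\vert \zeta - \zeta'\vert^p + \Vert W - W'\Vert_\infty^p + \int_0^t \vert Y^\mu_s - Y^{\mu'}_s{}'\vert^p\, ds + \int_0^t \mathcal{W}_{\mathbb{R}^d,p}(\mu_s,\mu'_s)^p\, ds\Big),
\end{equation*}
and since $\mathcal{W}_{\mathbb{R}^d,p}(\mu_s,\mu'_s) \le \mathcal{W}_{C_T,p}(\mu,\mu')$ (push-forward by $\pi_s$ is $1$-Lipschitz), Gronwall's lemma in the sup-norm yields $\Vert Y^\mu - Y^{\mu'}{}'\Vert_\infty^p \leq C(p,T,K_b)\big(\vert\zeta-\zeta'\vert^p + \Vert W-W'\Vert_\infty^p + \mathcal{W}_{C_T,p}(\mu,\mu')^p\big)$. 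Taking expectations and using that the coupling was optimal bounds $\mathcal{W}_{C_T,p}(\Phi(Q,\mu),\Phi(Q',\mu'))^p$ by a constant times $\mathcal{W}_{\mathbb{R}^d\times C_T,p}(Q,Q')^p + \mathcal{W}_{C_T,p}(\mu,\mu')^p$; after taking $p$-th roots and using $(a+b)^{1/p}\le a^{1/p}+b^{1/p}$ this is the required Lipschitz bound with some $L$.

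For hypothesis (2), fix $Q$ and realize $(\zeta,W)$ once; then $\Phi^Q(\mu) = \mathcal{L}(S^\mu(\zeta,W))$ and $\Phi^Q(\mu') = \mathcal{L}(S^{\mu'}(\zeta,W))$ are coupled through the \emph{same} input. The pathwise comparison now has no $\zeta$ or $W$ terms, so Gronwall on $[0,t]$ gives the sharper bound $\sup_{s\le t}\vert S^\mu(\zeta,W)_s - S^{\mu'}(\zeta,W)_s\vert^p \le C_p K_b e^{C_p K_b t}\int_0^t \mathcal{W}_{\mathbb{R}^d,p}(\mu_s,\mu'_s)^p\, ds$. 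Iterating this estimate — i.e. feeding it back into itself to estimate $\mathcal{W}$ at the level of the restricted measures $\mu\vert_{[0,t]}$ — produces, after $k$ iterations, a factor $(C_p K_b e^{C_p K_b T} T)^k / k!$ in front of $\mathcal{W}_{C_T,p}(\mu,\mu')^p$, which is $<1$ for $k$ large; this is exactly the "$k$-step contraction" needed. The main obstacle is bookkeeping this iteration carefully: one must phrase the intermediate estimates in terms of the time-restricted Wasserstein distances $\mathcal{W}_{C_t,p}(\mu\vert_{[0,t]}, \mu'\vert_{[0,t]})$ and track how the time integral generates the $1/k!$ that eventually beats any exponential constant — a standard Picard-type argument, but the one place where care is genuinely required. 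Everything else is routine coupling and Gronwall.
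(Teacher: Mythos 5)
Your proposal is correct and follows essentially the same route as the paper: apply Proposition \ref{contraction theorem} to $\Phi$ on $E=\mathcal{P}_p(C_T)$, $F=\mathcal{P}_p(\mathbb{R}^d\times C_T)$, verify the joint Lipschitz bound by optimal coupling plus Gronwall, and obtain the $k$-step contraction by iterating the time-restricted estimate to produce the $(T\tilde L)^k/k!$ factor. The only cosmetic difference is that you establish the joint Lipschitz continuity in one combined coupling, whereas the paper splits it into the two marginal estimates \eqref{estimate one of 1} and \eqref{estimate two of 1}; this is immaterial.
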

	
	\begin{proof}
		The result follows from Proposition \ref{contraction theorem}, applied to the spaces $E := \mathcal{P}_p(C_T)$, $F:= \mathcal{P}_p(\mathbb{R}^d \times C_T)$ and the map $\Phi$ defined in \eqref{eq:sol_map_law}, provided we verify conditions $1)$ and $2)$.
		
		Let now $\mu \in E$ be fixed, let $\nu^1$ and $\nu^2$ be in $\mathcal{P}_p(\mathbb{R}^d\times C_T)$ and let $m$ be an optimal plan on $(\mathbb{R}^d\times C_T)^2$ for these two measures. We call optimal plan a measure $m$ that satisfies the minimum in the Wasserstein distance, see \eqref{minimum wasserstein}. 	On the probability space $((\mathbb{R}^d\times C_T)^2, m)$, we call $\zeta^i$, $W^i$ the r.v.~defined by the canonical projections and $Y^i=S^\mu(\zeta^i,W^i)$ the solution to equation \eqref{linear equation} with input $(\zeta^i, W^i)$. By definition of the Wasserstein metric, we have that
		\begin{equation*}
		\mathcal{W}_{C_T, p}(\Phi(\nu^1,\mu), \Phi(\nu^2,\mu))^p = \mathcal{W}_{C_T, p}(\mathcal{L}(Y^1), \mathcal{L}(Y^2))^p
		\leq C_p\mathbb{E}_m\Vert Y^1 - Y^2\Vert_{\infty:T}^p.
		\end{equation*}
		The right hand side can be estimated using the equation,
		\begin{align*}
		\mathbb{E}_m\Vert Y^1 - Y^2\Vert_{\infty:T}^p
		\leq & C_p\mathbb{E}_m\vert \zeta^1 - \zeta^2\vert^p
		+ C_p\mathbb{E}_m\Vert W^1 - W^2\Vert_{\infty:T}^p \\
		& + K_bC_p\int_{0}^{T} \mathbb{E}_m \Vert Y^1 - Y^2\Vert_{\infty:t}^p dt.
		\end{align*}
		Using Gronwall's inequality we obtain
		\begin{align}
		\mathcal{W}_{C_T , p} (\mathcal{L}(Y^1), \mathcal{L}(Y^2))^p\nonumber
		\leq & C_p e^{TK_bC_p} \left( 
		\mathbb{E}_m\vert \zeta^1 - \zeta^2\vert^p
		+ \mathbb{E}_m\Vert W^1 - W^2\Vert_{\infty:T}^p
		\right)\nonumber\\
		= & \tilde L \mathcal{W}_{\mathbb{R}^d \times C_T, p}(\nu^1, \nu^2)^p, \label{estimate one of 1}
		\end{align}
		where $\tilde L := C_p e^{TK_bC_p}$.
		
		Let now $(\zeta, W)$ be fixed with law $\nu := \mathcal{L}(\zeta, W)$. Consider $\mu^1, \mu^2 \in E$ and call $S^{\mu^i}$, for $i=1,2$, the corresponding solution map as defined in \eqref{defn:solution map} (driven by the initial datum $\zeta$ and the path $W$). Let $t\in [0,T]$ be fixed. Using equation \eqref{linear equation} again, we get that
		\begin{align*}
		\int_{\mathbb{R}^d \times C_T} & \Vert S^{\mu^1}(x_0, \gamma) - S^{\mu^2}(x_0, \gamma)\Vert_{\infty:t}^p \; d\nu(x_0, \gamma)
		\leq K_pC_p \int_{0}^{t} \mathcal{W}_{C_s,p}(\mu^1\vert_{[0,s]}, \mu^2\vert_{[0,s]})^p ds \\
		& + K_p C_p \int_0^t \int_{\mathbb{R}^d \times C_T} \Vert S^{\mu^1}(x_0, \gamma) - S^{\mu^2}(x_0, \gamma)\Vert_{\infty:s}^p \;d\nu(x_0, \gamma) ds.
		\end{align*}
		We deduce by the definition of $\Phi^\nu := \Phi(\nu, \cdot)$ and Wasserstein distance and applying Gronwall's lemma that
		\begin{align}
		\mathcal{W}_{C_t, p}(\Phi^\nu(\mu^1)\vert_{[0,t]}, \Phi^\nu(\mu^2)\vert_{[0,t]})^p 
		\leq & \int_{\mathbb{R}^d \times C_T} \Vert S^{\mu^1}(x_0, \gamma) - S^{\mu^2}(x_0, \gamma)\Vert_{\infty:t}^p \;d\nu(x_0, \gamma) \nonumber\\
		\leq &  C_pK_b e^{tK_bC_p} \int_{0}^{t} \mathcal{W}_{C_s, p}(\mu^1\vert_{[0,s]}, \mu^2\vert_{[0,s]})^p ds. \label{eq:to iterate estimate}
		\end{align}
		Taking $t = T$, we have that
		\begin{equation}\label{estimate two of 1}
		\mathcal{W}_{C_T, p}(\Phi^\nu(\mu^1), \Phi^\nu(\mu^2))^p 
		\leq \tilde L \mathcal{W}_{C_T, p}(\mu^1, \mu^2)^p.
		\end{equation}
		With estimates \eqref{estimate one of 1} and \eqref{estimate two of 1} we have shown that $\Phi$ satisfies $1)$.
		
		To prove $2)$, we reiterate $k$ times the application $\Phi^\nu$ and we use \eqref{eq:to iterate estimate} to obtain
		\begin{align*}
		\mathcal{W}_{C_T, p}((\Phi^\nu)^k(\mu^1), (\Phi^\nu)^k(\mu^2))^p
		\leq & \tilde L^k \int_{0}^{T}\int_{0}^{t_k}\cdots \int_{0}^{t_2} \mathcal{W}_{C_{t_1},p}(\mu^1\vert_{[0,t_1]}, \mu^2\vert_{[0,t_1]})^p dt_1 \dots dt_k\\
		\leq & \tilde L^k \mathcal{W}_{C_{T},p}(\mu^1, \mu^2)^p \int_{0}^{T}\int_{0}^{t_k}\cdots \int_{0}^{t_2} dt_1 \dots dt_k\\
		\leq & \frac{(T\tilde L)^k}{k!} \mathcal{W}_{C_{T},p}(\mu^1, \mu^2)^p.
		\end{align*}
		By choosing $k>0$ large enough, we have that $c := \frac{(T\tilde L)^k}{k!} < 1$. This shows point $2)$ and concludes the proof.
	\end{proof}
	
	If the driving process is progressively measurable, then so is the solution:
	
	\begin{proposition}\label{prop:progr_meas}
		Let $(\mathcal{F}_t)_{t\geq0}$ be a right-continuous, complete filtration on $(\Omega,\mathcal{A},\mathbb{P})$ such that $\zeta$ is $\mathcal{F}_0$-measurable and $W$ is $(\mathcal{F}_{t})_{t\geq0}$-progressively measurable. Then the solution $X$ to \eqref{mckean vlasov equation} is also $(\mathcal{F}_t)_{t\geq0}$-progressively measurable.
	\end{proposition}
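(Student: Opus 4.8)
The plan is to exploit the explicit representation $X = S^{\bar\mu}(\zeta, W)$ furnished by Theorem \ref{main theorem}(iii), where $\bar\mu := \Psi(\mathcal{L}(\zeta,W)) \in \mathcal{P}_p(C_T)$. The point is that $\bar\mu$ is a \emph{deterministic} measure---being the fixed point of $\Phi^{\mathcal{L}(\zeta,W)}$, which depends on the input only through the law $\mathcal{L}(\zeta,W)$---so that $X(\omega)$ solves, for every $\omega$, the ODE \eqref{eq:ode} whose coefficient $b(\cdot,\cdot,\bar\mu_\cdot)$ carries no randomness. I would fix the representative $X(\omega) = S^{\bar\mu}(\zeta(\omega),W(\omega))$, which is defined for every $\omega$ since, by Lemma \ref{lem:solution linear} together with the Lipschitz Assumption \ref{assumption} and Gronwall, the ODE \eqref{eq:ode} has a unique solution for each input $(x_0,\gamma)\in\R^d\times C_T$. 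The key structural fact to record first is \emph{causality}: for $t\in[0,T]$ the restriction $S^{\bar\mu}(x_0,\gamma)|_{[0,t]}$ is itself the unique solution on $[0,t]$ of the ODE with data $(x_0,\gamma|_{[0,t]})$, hence depends on $(x_0,\gamma)$ only through $(x_0,\gamma|_{[0,t]})$; this defines a map $S^{\bar\mu}_t : \R^d \times C([0,t],\R^d) \to C([0,t],\R^d)$, continuous (in particular Borel) by the same Gronwall estimate used in Lemma \ref{lem:solution linear}, now applied on $[0,t]$.

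Next I would deduce that $X$ is $(\mathcal{F}_t)$-adapted. Since $W$ is progressively measurable (adaptedness already suffices here) and has continuous paths, the path restriction $\omega \mapsto W(\omega)|_{[0,t]}$ is $\mathcal{F}_t$-measurable as a $C([0,t],\R^d)$-valued random variable: indeed $\mathcal{B}(C([0,t],\R^d))$ is generated by the evaluations $\pi_s$, $s\le t$, and each $\omega\mapsto W_s(\omega)$ is $\mathcal{F}_s\subseteq\mathcal{F}_t$-measurable. As $\zeta$ is $\mathcal{F}_0\subseteq\mathcal{F}_t$-measurable and $S^{\bar\mu}_t$ is Borel, the composition $X|_{[0,t]} = S^{\bar\mu}_t(\zeta, W|_{[0,t]})$ is $\mathcal{F}_t$-measurable; evaluating at time $t$ shows $X_t$ is $\mathcal{F}_t$-measurable.

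It then remains to upgrade ``adapted with continuous paths'' to ``progressively measurable'', which I would do by the classical approximation argument: on $[0,t]\times\Omega$ the dyadic step processes $X^{(n)}_s := X_{\kappa_n(s)}$, with $\kappa_n(s) := 2^{-n}t\,\lceil 2^n s/t\rceil$, are $\mathcal{B}([0,t])\otimes\mathcal{F}_t$-measurable (each is, on finitely many deterministic subintervals of $[0,t]$, equal to a single $\mathcal{F}_t$-measurable random variable), and $X^{(n)}_s(\omega)\to X_s(\omega)$ for every $(s,\omega)$ by path-continuity, so $X$ restricted to $[0,t]\times\Omega$ is $\mathcal{B}([0,t])\otimes\mathcal{F}_t$-measurable. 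I do not expect a genuine obstacle: the one substantive ingredient is the causal/non-anticipating structure of the ODE flow combined with continuity of the restricted solution map---this is exactly what lets the progressive measurability of $W$ be inherited by $X$---and the rest is bookkeeping with measurability conventions. Note that right-continuity of $(\mathcal{F}_t)$ plays no role, while completeness is needed only to conclude that an arbitrary solution, which is necessarily a.s. equal to the representative above, is again progressively measurable. An equivalent route would be to bypass $S^{\bar\mu}_t$ entirely and run the Picard iteration $X^{(0)} := \zeta + W$, $X^{(n+1)}_t := \zeta + \int_0^t b(s,X^{(n)}_s,\bar\mu_s)\,ds + W_t$, observing inductively that each $X^{(n)}$ is progressively measurable (the Lebesgue integral of a progressively measurable process being progressively measurable) and passing to the uniform-in-$t$ limit supplied by Gronwall.
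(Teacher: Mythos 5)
Your proposal is correct and follows essentially the same route as the paper: fix $t$, observe that $X\vert_{[0,t]} = S^{\mu\vert_{[0,t]}}_t(\zeta, W\vert_{[0,t]})$ with $\mu=\mathcal{L}(X)$ a deterministic measure, use Borel measurability of the restricted (causal) solution map together with $\mathcal{F}_t$-measurability of $\zeta$ and $W\vert_{[0,t]}$ to get adaptedness, and conclude progressive measurability from continuity of paths. Your write-up merely fills in the standard details (the generation of $\mathcal{B}(C([0,t]))$ by evaluations, the dyadic approximation) that the paper leaves implicit as "classical".
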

	
	\begin{proof}
		The proof is classical. Fix $t$ in $[0,T]$, then, $\mathbb{P}$-a.e.,~the restriction $X\vert_{[0,t]}=X\vert_{[0,t]}(\omega)$ on $[0,t]$ of the solution $X$ also solves \eqref{linear equation} on $[0,t]$ with inputs $\zeta$ and $W\vert_{[0,t]}$ (restriction of $W$ on $[0,t]$) and input measure $\mu\vert_{[0,t]}$ (pushforward of $\mu=\mathcal{L}(X)$ by the restriction on $[0,t]$). Therefore $X\vert_{[0,t]}(\omega)=S^{\mu\vert_{[0,t]}}_t(\zeta,W\vert_{[0,t]})$. Since $S^{\mu\vert_{[0,t]}}_t$ is $\mathcal{B}(\mathbb{R}^d)\otimes \mathcal{B}(C_t)$-measurable and $\zeta$ and $W\vert_{[0,t]}$ are $\mathcal{F}_t$-measurable, also $X\vert_{[0,t]}$ is $\mathcal{F}_t$-measurable, in particular $X\vert_{[0,t]}$ is $\mathcal{F}_t$-measurable. Hence $X$ is adapted and therefore progressively measurable by continuity of its paths.
	\end{proof}

	\subsection{Weak continuity}
	\label{section: weak continuity}
	
	In this note we are generally interested in proving quantitative convergence in the Wasserstein distance. However, one can show that the law of the solution of the mean field equation \eqref{mckean vlasov equation} is continuous in the weak topology of measures, with respect to the law of the inputs, in the spirit of \cite{MR780770}.
	
	\begin{assumption}
		\label{asm: weak metric}
		Given a Polish space $(E, d)$, we endow the space $\mathcal{P}(E)$ with a metric $\Pi_E$, with the following properties
			\begin{enumerate}[label=(\roman*), ref=\ref{asm: weak metric} (\roman*)]
			\item \label{asm: weak metric: compatible} 
			The metric $\Pi_E$ is complete and metrizes the weak convergence of measures.
			\item \label{asm: weak metric: Lipschitz}
			For any two random variables $X, X^\prime : \Omega \to E$, we have 
			\begin{equation*}
			\Pi_{E}(\mathcal{L}(X), \mathcal{L}(X^\prime)) \leq \mathbb{E}d(X, X^\prime).
			\end{equation*}
		\end{enumerate}
	\end{assumption}
	\begin{remark}
		Let $\operatorname{Lip}_1$ be the space of bounded and Lipschitz functions on $E$, as defined in Section \ref{section:notation}. Define the Kantorovich-Rubinstein metric as
		\begin{equation*}
		\Pi_{E}(\mu, \nu) := \sup_{\varphi \in \operatorname{Lip}_1} \int_{E}\varphi d(\mu - \nu),
		\end{equation*}
		This metric satisfies Assumption \ref{asm: weak metric}. Note that \ref{asm: weak metric: compatible} follows from \cite[Theorem 8.3.2 and Theorem 8.9.4]{bogachev2007measure}
	\end{remark}

%%% Levy-Prokhorov metric %%%
%	Given a Polish space $(E, d)$, we endow the space $\mathcal{P}(E)$ with the Levy-Prokhorov metric, defined as
%	\begin{equation*}
%	\Pi_E(\mu, \nu) := \inf\{
%	\epsilon >0 \mid \mu(B) \leq \nu(B^\epsilon) + \epsilon, \ \forall B \in \mathcal{B}(E)
%	\},
%	\end{equation*}
%	where $B^\epsilon = \{ x \mid \inf_{y \in B}d(x,y) \leq \epsilon \}$. The topology induced by this distance is equivalent to weak convergence of measures. We have the following well-known lemma, which is a consequence of a famous result by Strassen \cite[Theorem 11]{MR0177430}).
%		\begin{lemma}
%			\label{weak cont:bound prokhorov}
%			For any two random variables $X, X^\prime : \Omega \to E$, we have 
%			\begin{equation*}
%			\Pi_{E}(\mathcal{L}(X), \mathcal{L}(X^\prime)) \leq \left( \mathbb{E}d(X, X^\prime) \right)^{\frac{1}{2}}.
%			\end{equation*}
%		\end{lemma}
%%	\begin{proof}
%%		Let $\epsilon^2 := \mathbb{E}d(X,X^\prime)$. By markov's inequality, we have
%%		\begin{equation*}
%%		\mathbb{P}(d(X,X^\prime) > \epsilon) \leq \frac{1}{\epsilon}\mathbb{E}d(X,X^\prime) = \epsilon.
%%		\end{equation*}
%%		This is equivalent to $\mathcal{L}(X)(A) \leq \mathcal{L}(X^\prime)(A^\epsilon) + \epsilon$ for every $A \in \mathcal{B}(E)$ (this is a nontrivial result proven by Strassen \cite[Theorem 11]{MR0177430}). The proof is complete.
%%	\end{proof}	

	For the drift we assume the following.
	\begin{assumption}
	\label{assumption weak}
	The drift $b: [0,T] \times \mathbb{R}^d \times \mathcal{P}(\mathbb{R}^d) \to \mathbb{R}^d$ is a measurable function and there exists a constant $K$ such that, 
		\begin{itemize}
			\item (Lipschitz continuity)
			\begin{equation*}
			\vert b(t, x, \mu) - b(t, x^\prime, \mu^\prime) \vert
			\leq K \left(\vert x - x^\prime \vert
			+ \Pi_{\mathbb{R}^d}(\mu, \mu^\prime)\right),
			\end{equation*}
			$\forall t \in [0,T], x, x^\prime \in \mathbb{R}^d, \mu, \mu^\prime \in \mathcal{P}(\mathbb{R}^d)$.
			\item (boundedness)
			\begin{equation*}
						\vert b(t, x, \mu)\vert
						\leq K,
			\end{equation*}
						$\forall t \in [0,T], x \in \mathbb{R}^d, \mu \in \mathcal{P}(\mathbb{R}^d)$.
		\end{itemize}
	\end{assumption}
	\begin{remark}
		\label{rem: linear drift}
Assume that there exists a function $B: \mathbb{R}^d \times \mathbb{R}^d \to \mathbb{R}^d$ such that there exists a constant $C>0$,
\begin{equation*}
\vert B(x, y) \vert 
\leq C,
\quad\vert B(x, y) - B(x^\prime, y^\prime) \vert 
\leq C\left(
\vert x - x^\prime \vert + \vert y - y^\prime \vert
\right),
\quad \forall x,x^\prime, y, y^\prime \in \mathbb{R}^d,
\end{equation*}
and the drift satisfies $b(t,x, \mu) := \int_{\mathbb{R}^d} B(x, y) \mu(dy)$. Then $b$ satisfies Assumptions \ref{assumption weak}, with $K=3C$. This is the case treated in \cite{MR780770}.
	\end{remark}
\begin{lemma}
	\label{weak cont:existence}
	Given $\nu \in \mathcal{P}(\mathbb{R}^d \times C_T)$, the solution map
	\begin{equation}
	\begin{array}{cccc}
	S^\nu: & \mathbb{R}^d\times C_T & \rightarrow  & C_T\\
	& (x_0, \gamma) & \mapsto & S^\nu(x_0, \gamma),
	\end{array}
	\end{equation}
	to the ODE
	\begin{equation}
	\label{weak cont:ode}
	x_t = x_0 + \int_0^t b(s, x_s, (x_s)_{\#}\nu) ds + \gamma_t.
	\end{equation} 
	is well defined.
\end{lemma}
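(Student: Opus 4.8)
The first observation is that \eqref{weak cont:ode} is not an ODE in the usual sense: the measure $(x_s)_{\#}\nu$ appearing in the drift is the push-forward of $\nu$ under the time-$s$ evaluation of the solution map $S^\nu$ one is trying to construct, so \eqref{weak cont:ode} is really a fixed-point equation for the whole map $S^\nu$. The plan is to run the contraction argument of Theorem \ref{main theorem} one level up, in a space of maps, with the $p$-Wasserstein distance replaced by the metric $\Pi$ of Assumption \ref{asm: weak metric}; since $b$ is bounded (Assumption \ref{assumption weak}) no moment conditions enter, which is why one works on $\mathcal{P}(\cdot)$ rather than $\mathcal{P}_p(\cdot)$. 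Concretely, fix $\nu\in\mathcal{P}(\R^d\times C_T)$ and set
\[
\mathcal{M}:=\Big\{\Sigma:\R^d\times C_T\to C_T \text{ continuous}\ :\ \sup_{(x_0,\gamma)}\ \sup_{t\in[0,T]}\big|\Sigma(x_0,\gamma)(t)-x_0-\gamma_t\big|\le KT\Big\},
\]
metrized by $\rho(\Sigma,\Sigma'):=\sup_{(x_0,\gamma)}\|\Sigma(x_0,\gamma)-\Sigma'(x_0,\gamma)\|_{\infty:T}$. The ``$KT$-tube'' condition is dictated by the bound $|b|\le K$, it keeps $\rho$ finite ($\le 2KT$) on $\mathcal{M}$, and $(\mathcal{M},\rho)$ is a non-empty (it contains $(x_0,\gamma)\mapsto x_0+\gamma$) complete metric space, completeness because a uniform limit of continuous maps is continuous and the tube condition is closed.

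Next I would set up the natural iteration operator $\mathcal{T}$ on $\mathcal{M}$. Given $\Sigma\in\mathcal{M}$, put $\mu^\Sigma_s:=(\pi_s\circ\Sigma)_{\#}\nu\in\mathcal{P}(\R^d)$; this is a well-defined Borel probability measure and $s\mapsto\mu^\Sigma_s$ is weakly continuous (dominated convergence, using $\Sigma(x_0,\gamma)\in C_T$ and testing against bounded continuous functions), so $(s,x)\mapsto b(s,x,\mu^\Sigma_s)$ is jointly measurable and Lipschitz in $x$ uniformly in $s$. Hence, for every $(x_0,\gamma)$, the non-autonomous integral equation $y_t=x_0+\int_0^t b(s,y_s,\mu^\Sigma_s)\,ds+\gamma_t$ has a unique global solution, which is (Lipschitz) continuous in $(x_0,\gamma)$ by Gronwall; define $(\mathcal{T}\Sigma)(x_0,\gamma)$ to be this solution. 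Since $|b|\le K$ we get $\mathcal{T}\Sigma\in\mathcal{M}$, so $\mathcal{T}:\mathcal{M}\to\mathcal{M}$, and a fixed point $S^\nu=\mathcal{T}S^\nu$ is by construction exactly a solution map to \eqref{weak cont:ode}, since then $(x_s)_{\#}\nu=\mu^{S^\nu}_s$.

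It remains to produce the fixed point, and here I would reproduce the Gronwall-iteration of Theorem \ref{main theorem}. Fix $\Sigma,\Sigma'\in\mathcal{M}$ and inputs $(x_0,\gamma)$; subtracting the two integral equations (the $x_0$ and $\gamma_t$ cancel) and using the Lipschitz bound on $b$,
\[
\big|\mathcal{T}\Sigma(x_0,\gamma)(t)-\mathcal{T}\Sigma'(x_0,\gamma)(t)\big|\le K\int_0^t\Big(\|\mathcal{T}\Sigma(x_0,\gamma)-\mathcal{T}\Sigma'(x_0,\gamma)\|_{\infty:s}+\Pi_{\R^d}(\mu^\Sigma_s,\mu^{\Sigma'}_s)\Big)\,ds.
\]
The crucial bound is $\Pi_{\R^d}(\mu^\Sigma_s,\mu^{\Sigma'}_s)\le \rho_s(\Sigma,\Sigma'):=\sup_{(x_0,\gamma)}\|\Sigma(x_0,\gamma)-\Sigma'(x_0,\gamma)\|_{\infty:s}$: this is precisely Assumption \ref{asm: weak metric: Lipschitz} applied to the random variables $\pi_s\circ\Sigma,\pi_s\circ\Sigma'$ on the probability space $(\R^d\times C_T,\nu)$, so no structure of $\Pi$ beyond Assumption \ref{asm: weak metric} is used. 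Taking the supremum over $(x_0,\gamma)$, noting that $\mu^\Sigma_s$ depends only on $\Sigma$ restricted to $[0,s]$, and writing $h_k(t):=\rho_t(\mathcal{T}^k\Sigma,\mathcal{T}^k\Sigma')$, Gronwall gives $h_k(t)\le Ke^{KT}\int_0^t h_{k-1}(s)\,ds$, hence by induction $\rho(\mathcal{T}^k\Sigma,\mathcal{T}^k\Sigma')=h_k(T)\le\frac{(KTe^{KT})^k}{k!}\,\rho(\Sigma,\Sigma')$. Choosing $k$ large enough that $(KTe^{KT})^k/k!<1$, the map $\mathcal{T}^k$ is a strict contraction on $(\mathcal{M},\rho)$, so by the Banach fixed point theorem $\mathcal{T}$ has a unique fixed point $S^\nu$, which is the asserted solution map.

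The one genuinely non-routine point is the first one — reading \eqref{weak cont:ode} as a fixed-point problem for $S^\nu$ and choosing a function space in which the push-forward $(x_s)_{\#}\nu$ is Lipschitz-controlled by the sup-distance $\rho$ through Assumption \ref{asm: weak metric: Lipschitz}; after that the estimates are the same Picard/Gronwall bookkeeping as in Theorem \ref{main theorem}. Equivalently, and even closer in form to that proof, one may instead produce a fixed point $\bar\mu\in\mathcal{P}(C_T)$ of $\mu\mapsto(S^\mu)_{\#}\nu$ for the metric $\Pi_{C_T}$ and then set $S^\nu:=S^{\bar\mu}$, where $S^\mu$ denotes the solution map of the genuine ODE $x_t=x_0+\int_0^t b(s,x_s,\mu_s)\,ds+\gamma_t$.
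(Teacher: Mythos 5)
Your proof is correct and is essentially the paper's argument: the paper runs exactly your iteration $x^{n+1}=\mathcal{T}x^n$ starting from $x^0_t=x_0+\gamma_t$ (each step solving the ODE with the previous iterate's push-forward measure), bounds $\Pi_{\mathbb{R}^d}((x^{n-1}_s)_{\#}\nu,(x^n_s)_{\#}\nu)$ by $\int \vert x^{n-1}_s-x^n_s\vert\,d\nu$ via Assumption \ref{asm: weak metric: Lipschitz}, and obtains the same factorial decay by Gronwall, concluding via an explicit Cauchy-sequence argument rather than by invoking Banach's theorem for $\mathcal{T}^k$ on your space $\mathcal{M}$. The only cosmetic difference is that the paper works with measurable (rather than continuous) iterates and verifies uniqueness of solutions directly by the same Gronwall estimate integrated against $\nu$.
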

\begin{proof}
	We prove the lemma by iteration. For a fixed $x_0, \gamma \in \mathbb{R}^d \times C_T$, define $x^0_t := x_0 + \gamma_t$, and $x^{n+1}_t$ defined implicitly as $x^{n+1}_t = x_0 + \int_0^t b(s, x^{n+1}_s, (x^n_s)_{\#}\nu) ds + \gamma_t$. Clearly, for every $n\in \mathbb{N}$, the function $(x_0, \gamma) \mapsto x^n$ is well defined and measurable.
	
	We compute the following, for $t\in [0,T]$, using Assumption \ref{assumption weak}, Gronwall's Lemma and Assumption \ref{asm: weak metric: Lipschitz}
	\begin{equation*}
	\vert x^n_t - x^{n+1}_t \vert 
	\leq K e^{Kt}\int_{0}^{t}\Pi_{\mathbb{R}^d}((x^{n-1}_s)_{\#} \nu, (x^{n}_s)_{\#} \nu) ds
	\leq K e^{Kt}\int_{0}^{t} \int_{\mathbb{R}^d \times C_T} \vert x^{n-1}_s - x^{n}_s\vert d\nu ds.
	\end{equation*}
	Iterating this inequality down to $n=0$, we obtain that there exists a positive constant $C(T, K)$, independent of $n$, such that
			\begin{align*}
			\vert x^n_t - x^{n+1}_t \vert 
			\leq  \frac{C(T, K)^n}{n!}.
			\end{align*}
Hence, we have that, for every $x_0, \gamma \in \mathbb{R}^d \times C_T$, the sequence $(x^n(x_0, \gamma))_{n\geq 0}$ is Cauchy in $(C_T, \Vert \cdot \Vert_{\infty})$. Indeed, for $\epsilon >0$, there exists $m>0$ big enough, such that for every $n \geq m$, 
\begin{equation*}
\Vert x^m - x^{n} \Vert_{\infty} 
\leq \sum_{i=m}^{n-1} \Vert x^i - x^{i+1} \Vert_{\infty}
\leq  \sum_{i=m}^{\infty}  \frac{C(T, K)^i}{i!} < \epsilon.
\end{equation*}

We call $x(x_0, \gamma) \in C_T$ its limit as $n\to \infty$. The pointwise limit of Borel measurable functions is measurable, hence $(x_0, \gamma) \mapsto x$ is also measurable and $(x_s)_{\#}\nu$ is well-defined. We can thus pass to the limit in equation \eqref{weak cont:ode} to show that $x$ is a solution to it.

To prove uniqueness, let $x$ and $y$ be two solutions with the same inputs $x_0, \gamma$ and $\nu$. As before, we can compute, for $t \in [0,T]$,
\begin{equation*}
\vert x_t - y_t \vert 
\leq K e^{Kt}\int_{0}^{t} \int_{\mathbb{R}^d \times C_T} \vert x_s - y_s\vert d\nu ds.
\end{equation*}
Integrating in $d\nu(\gamma)$ and applying Gronwall's lemma we get that the righ hand side vanishes. Hence, $x$ and $y$ are the same for all $t \in [0,T]$ and $\gamma \in C_T$.
\end{proof}

%\begin{remark}
%	Can we do Lemma \ref{weak cont:existence} using the Skohorokhod space?
%\end{remark}

\begin{lemma}
	The function
		\begin{equation}
		\begin{array}{cccc}
		\Psi: & (\mathcal{P}(\mathbb{R}^d\times C_T), \Pi_{{\mathbb{R}^d}\times C_T}) & \rightarrow  & (\mathcal{P}(C_T), \Pi_{C_T})\\
		& \nu & \mapsto & (S^\nu)_{\#}\nu,
		\end{array}
		\end{equation}
		is continuous.
		 By Assumption \ref{asm: weak metric: compatible}, this is equivalent to continuity with respect to the topology induced by the weak convergence of measures.
\end{lemma}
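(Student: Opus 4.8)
The plan is to prove sequential continuity. Fix a sequence $\nu^k \to \nu$ in $(\mathcal{P}(\mathbb{R}^d\times C_T),\Pi_{\mathbb{R}^d\times C_T})$ — equivalently, $\nu^k\to\nu$ weakly by Assumption \ref{asm: weak metric: compatible} — and aim to show $\Psi(\nu^k)=(S^{\nu^k})_\#\nu^k\to(S^\nu)_\#\nu=\Psi(\nu)$ in $\Pi_{C_T}$. I would split, by the triangle inequality,
\begin{equation*}
\Pi_{C_T}(\Psi(\nu^k),\Psi(\nu)) \le \Pi_{C_T}\big((S^\nu)_\#\nu^k,(S^\nu)_\#\nu\big) + \Pi_{C_T}\big((S^{\nu^k})_\#\nu^k,(S^\nu)_\#\nu^k\big) =: A_k + B_k,
\end{equation*}
thereby separating the variation of the base measure (term $A_k$, fixed solution map $S^\nu$) from the variation of the solution map (term $B_k$, fixed base measure $\nu^k$), and then treat the two pieces separately.

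For $A_k$ the key point is that $S^\nu$ is continuous on $\mathbb{R}^d\times C_T$. Indeed, once the fixed point $S^\nu$ of Lemma \ref{weak cont:existence} is in hand, the measure-valued path $t\mapsto (S^\nu(\cdot)_t)_\#\nu\in\mathcal{P}(\mathbb{R}^d)$ is determined, and $S^\nu(x_0,\gamma)$ is then simply the solution of the ODE $\dot x_t=b(t,x_t,(S^\nu(\cdot)_t)_\#\nu)$ with initial datum $x_0$, perturbed by $\gamma$; this drift is a fixed (i.e.\ $(x_0,\gamma)$-independent), time-dependent vector field which is Lipschitz in the state variable by Assumption \ref{assumption weak}, so $S^\nu$ is Lipschitz — a fortiori continuous — in $(x_0,\gamma)$ by the usual Gronwall stability estimate. (Equivalently, the Picard iterates $x^n$ from the proof of Lemma \ref{weak cont:existence} are continuous in $(x_0,\gamma)$ and converge to $S^\nu$ uniformly over all of $\mathbb{R}^d\times C_T$, since the rate $C(T,K)^n/n!$ there does not depend on $(x_0,\gamma)$.) With $S^\nu$ continuous and $\nu^k\to\nu$ weakly, the mapping theorem gives $(S^\nu)_\#\nu^k\to(S^\nu)_\#\nu$ weakly, hence $A_k\to0$ by Assumption \ref{asm: weak metric: compatible}.

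The term $B_k$ is the heart of the argument. Since the two pushforwards share the base measure $\nu^k$, I would work on the single probability space $(\mathbb{R}^d\times C_T,\nu^k)$: Assumption \ref{asm: weak metric: Lipschitz} gives $B_k\le\mathbb{E}_{\nu^k}\|S^{\nu^k}(x_0,\gamma)-S^\nu(x_0,\gamma)\|_{\infty:T}=:u_k(T)$, and because $S^{\nu^k}(x_0,\gamma)$ and $S^\nu(x_0,\gamma)$ solve ODEs with the same initial datum and the same perturbation $\gamma$ while $b$ is bounded by $K$, the perturbation cancels and $u_k(T)\le 2KT<\infty$ deterministically — this is what keeps everything integrable, which matters since Assumption \ref{assumption weak} imposes no moments. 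Subtracting the two ODEs, applying the Lipschitz bound of Assumption \ref{assumption weak}, inserting the intermediate measure $(S^\nu(\cdot)_s)_\#\nu^k$ in the measure slot and bounding $\Pi_{\mathbb{R}^d}\big((S^\nu(\cdot)_s)_\#\nu^k,(S^{\nu^k}(\cdot)_s)_\#\nu^k\big)\le u_k(s)$ by Assumption \ref{asm: weak metric: Lipschitz} once more, I would arrive at
\begin{equation*}
u_k(t)\le 2K\int_0^t u_k(s)\,ds + K\int_0^t\alpha_k(s)\,ds,\qquad \alpha_k(s):=\Pi_{\mathbb{R}^d}\big((S^\nu(\cdot)_s)_\#\nu,(S^\nu(\cdot)_s)_\#\nu^k\big),
\end{equation*}
so Gronwall's lemma yields $u_k(T)\le Ke^{2KT}\int_0^T\alpha_k(s)\,ds$. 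It remains to see $\int_0^T\alpha_k(s)\,ds\to0$: for each fixed $s$ the map $(x_0,\gamma)\mapsto S^\nu(x_0,\gamma)_s$ is continuous, so $\alpha_k(s)\to0$ by the mapping theorem and Assumption \ref{asm: weak metric: compatible}; since a metric for the weak topology may be taken bounded (for instance the Kantorovich--Rubinstein metric of the Remark is bounded by $2$) and $s\mapsto\alpha_k(s)$ is Borel, dominated convergence on $[0,T]$ finishes, giving $B_k\to0$ and hence the claim.

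The step I expect to be the main obstacle is exactly this decoupling in $B_k$: the solution map $S^\nu$ and the measure it is integrated against both move, and they are linked through the self-referential measure argument of the ODE. The device that resolves it is to let the Gronwall argument linearise the map-variation and reduce it to the scalar quantity $\alpha_k(s)$, which then succumbs to ordinary weak convergence of the $s$-marginals; the role of the \emph{common} base measure $\nu^k$ in $B_k$ — which forces the driving signal to cancel — is what makes the estimate go through without any moment hypothesis on the inputs. The only residual care needed is integrability and measurability in $s$, both of which are immediate from the boundedness of $\Pi_{\mathbb{R}^d}$ and the continuity of $s\mapsto(S^\nu(\cdot)_s)_\#\nu$.
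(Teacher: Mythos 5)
Your proof is correct, but it follows a genuinely different route from the paper's. The paper argues by compactness: it applies the Skorokhod representation theorem to realize $\nu^n\to\nu$ as an a.s.\ convergent sequence of inputs, establishes tightness of the output laws via equicontinuity/equiboundedness of the solutions, extracts a weakly convergent subsequence, shows the realized solutions form an a.s.\ Cauchy sequence in $C_T$ by a pathwise estimate, and identifies the limit as $\Psi(\nu)$ by uniqueness. You instead give a direct metric estimate with no representation theorem, no tightness, and no subsequences: the split $A_k+B_k$ isolates the variation of the base measure (dispatched by the continuous mapping theorem once $S^\nu$ is seen to be Lipschitz in $(x_0,\gamma)$, which is correct --- the fixed point freezes the measure path, leaving an ordinary Lipschitz ODE) from the variation of the solution map (dispatched by the identity coupling, cancellation of the driving path, and Gronwall). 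This is the weak-topology analogue of the quantitative Lipschitz estimate in Theorem \ref{main theorem}, with the $L^1$-domination property \ref{asm: weak metric: Lipschitz} and the boundedness of $b$ standing in for the missing moment bounds; it yields an explicit modulus of continuity rather than mere sequential continuity. What the paper's compactness route buys in exchange is robustness: it is the argument that survives in the c\`adl\`ag setting of Lemma \ref{lem: jump continuity}, where sup-norm subtraction of the two ODEs is no longer the right operation. The one caveat in your write-up is the dominated convergence step for $\int_0^T\alpha_k(s)\,ds$: Assumption \ref{asm: weak metric} does not by itself make $\Pi_{\mathbb{R}^d}$ bounded, and since the Lipschitz hypothesis on $b$ is formulated in terms of that specific metric you cannot silently swap it for the Kantorovich--Rubinstein one. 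You flag this correctly, and for the paper's canonical choice (the KR metric of the Remark, bounded by $2$) the step is immediate; for a general admissible $\Pi_{\mathbb{R}^d}$ one would instead note that $\{\pi_s\circ S^\nu\}_{s\in[0,T]}$ is uniformly Lipschitz and $s\mapsto \pi_s\circ S^\nu$ is continuous for uniform convergence on compacts, whence $\sup_{s\le T}\alpha_k(s)\to 0$ by a compactness argument in $s$, removing the need for a dominating function altogether.
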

	
\begin{proof}
	Let $(\nu^n)_{n\geq 0} \subset \mathcal{P}(\mathbb{R}^d \times C_T)$ be a sequence of probability measures that converges weakly to $\nu \in \mathcal{P}(\mathbb{R}^d \times C_T)$. From Skohorokhod representation theorem, there exists a probability space $(\Omega, \mathcal{A}, \mathbb{P})$ and a sequence $(\zeta^n, W^n): \Omega \to \mathbb{R}^d \times C_T$ of random variables distributed as $\nu^n$ that converges almost surely to a random variable $(\zeta, W)$ distributed as $\nu$. 
	
	Let $X^n := S^{\nu^n}(\zeta^n, W^n)$. By definition, $\mu^n := \mathcal{L}(X^n) = \Psi(\nu^n)$ and $X^n$ solves the following SDE in the sense of Definition \ref{def solution},
	\begin{equation*}
	X^n_t = \zeta^n + \int_{0}^{t} b(s, X^n_s, \mathcal{L}(X^n_s))ds + W^n_t.
	\end{equation*}
	It is easy to check that the random variables $X^n$ are equicontinuous and equibounded and deduce that the family $\mu^n$ is tight in $C_T$. With an abuse of notation, assume that $(\mu^n)_{n\geq0}$ is a subsequence that converges weakly to some $\mu \in \mathcal{P}(C_T)$, and $(X^n)_{n\geq 0}$ such that $\mathcal{L}(X^n) = \mu^n$. By using the equation, one can check that $(X^n(\omega))_{n\geq 0}$ is a Cauchy sequence in $C_T$ for $\mathbb{P}-a.e.$ $\omega$. Let $X$ be the almost sure limit of $X^n$, as $n\to \infty$. Clearly, $\mu^n$ converges weakly to $\mathcal{L}(X)$, hence $\mathcal{L}(X) = \mu$. Passing to the limit in the equation, we can see that $\mu = \mathcal{L}(X) = \Psi(\nu)$. This concludes the proof.
\end{proof}

\subsection{C\`adl\`ag drivers}
In this section we follow the same reasoning as Section \ref{section: weak continuity} to study the case when the drivers are discontinuous processes in $(D_T, \sigma)$. We first set some notation and recall some results about c\`adl\`ag functions.

Given $t \in [0, T]$, the projection $\pi_t$ is defined, analogously to the continuous case, as the function $\pi_t: D_T \to \mathbb{R}^d$ as $\pi_t(\gamma) := \gamma(t)$.

	\begin{definition}
		\label{def: cadlag modulus}
	For a function $\gamma \in D_T$, we define its c\`adl\`ag modulus as a function of $\delta \in (0,1)$,
	\begin{equation*}
	w_\gamma(\delta) =  \inf_\Pi \max_{1\leq i \leq n} \sup_{t_{i-1} \leq s \leq t < t_i} \vert \gamma_{s,t} \vert,
	\end{equation*}
	where the infimum is taken over all the partitions $\Pi$ with mash size bigger than $\delta$.
\end{definition}
Then we have the following lemma, from \cite[equation (13.3)]{billingsly1999convergence}.
\begin{lemma}
	\label{lem:convergence of cadlag marginals}
	Let $(\nu^n)_{n\geq 0} \subset \mathcal{P}(D_T)$ be a sequence of probability measures converging weakly to $\nu \in \mathcal{P}(D_T)$, then there exists a set $T_{\nu} \subset [0,T]$ of full Lebesgue measure (actually $T_{\nu}^c$ is at most countable) such that $\nu^n_t$ converges weakly to $\nu_t$, for all $t\in T_{\nu}$.
\end{lemma}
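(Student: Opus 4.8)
The plan is to deduce the statement from the mapping theorem for weak convergence, once the discontinuity sets of the evaluation maps $\pi_t$ are located. First I would record that, for $t\in(0,T)$, the projection $\pi_t:(D_T,\sigma)\to\R^d$ is continuous at every path that is continuous at $t$, so that its discontinuity set is contained in the Borel set $J_t:=\{\gamma\in D_T:\gamma(t)\neq\gamma(t-)\}$; here one uses that $\gamma\mapsto\gamma(t)$ and $\gamma\mapsto\gamma(t-)$ are Borel on $(D_T,\sigma)$. At the endpoints, $\pi_0$ and $\pi_T$ are continuous everywhere since every $\lambda\in\Lambda$ fixes $0$ and $T$. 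Setting $T_\nu:=\{0,T\}\cup\{t\in(0,T):\nu(J_t)=0\}$, the mapping theorem (see \cite{billingsly1999convergence}) gives $(\pi_t)_{\#}\nu^n\to(\pi_t)_{\#}\nu$, i.e.\ $\nu^n_t\to\nu_t$, weakly for every $t\in T_\nu$. So everything reduces to showing that $T_\nu^c$ is at most countable.

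For this I would first reduce to a quantitative statement. Writing $J_t=\bigcup_{m\ge1}J_t^{1/m}$ with $J_t^\eta:=\{\gamma:|\gamma(t)-\gamma(t-)|\ge\eta\}$ — an increasing union — one sees that $\nu(J_t)>0$ forces $\nu(J_t^{1/m})\ge\epsilon$ for some $m\in\N$ and some rational $\epsilon>0$, so that $T_\nu^c$ is contained in a countable union of sets $A_{\eta,\epsilon}:=\{t\in(0,T):\nu(J_t^\eta)\ge\epsilon\}$. It then suffices to prove that each $A_{\eta,\epsilon}$ is finite. The input here is that a c\`adl\`ag path has only finitely many jumps of a given minimal size, made quantitative via the modulus of Definition \ref{def: cadlag modulus}: on the Borel set $B_m:=\{\gamma:w_\gamma(1/m)<\eta\}$ every jump of size $\ge\eta$ occurs at a breakpoint of a partition $0=s_0<\dots<s_n=T$ with $s_j-s_{j-1}>1/m$ and with oscillation of $\gamma$ on each $[s_{j-1},s_j)$ less than $\eta$, and $n<Tm$; hence any $\gamma\in B_m$ has fewer than $C_m:=\lceil Tm\rceil$ jumps of size $\ge\eta$. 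Since $w_\gamma(\delta)\to0$ as $\delta\to0$ for every $\gamma$, we have $B_m\uparrow D_T$, so $m$ can be chosen with $\nu(B_m^c)<\epsilon/2$.

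To finish, suppose $t_1<\dots<t_M\in A_{\eta,\epsilon}$ and put $F:=\sum_{i=1}^M\mathbf{1}_{J_{t_i}^\eta}$. On the one hand $\int F\,d\nu=\sum_{i=1}^M\nu(J_{t_i}^\eta)\ge M\epsilon$; on the other hand $F\le M$ everywhere and $F\le C_m$ on $B_m$, because $F(\gamma)$ cannot exceed the number of jumps of $\gamma$ of size $\ge\eta$. Hence
\begin{equation*}
M\epsilon\le\int_{B_m}F\,d\nu+\int_{B_m^c}F\,d\nu\le C_m+M\,\nu(B_m^c)\le C_m+\tfrac{1}{2}M\epsilon,
\end{equation*}
so $M\le 2C_m/\epsilon$. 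This bounds $\#A_{\eta,\epsilon}$, so $T_\nu^c$ is at most countable, and in particular Lebesgue-null, which is the claim.

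The main obstacle is this last counting step. The naive estimate $\sum_i\nu(J_{t_i}^\eta)\le\int\#\{\eta\text{-jumps of }\gamma\}\,\nu(d\gamma)$ is useless when the right-hand side is infinite — which genuinely can happen — so one must first truncate to the sets $B_m$, where the number of large jumps is bounded by a constant depending only on $m$, and absorb the remainder using the trivial bound $F\le M$ together with $\nu(B_m^c)<\epsilon/2$. Everything else (the continuity of $\pi_t$ off $J_t$, the measurability of $J_t^\eta$ and $B_m$, and the mapping theorem) is standard.
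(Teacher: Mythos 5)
Your proof is correct. Note that the paper does not actually prove this lemma: it is stated as a direct citation of Billingsley \cite[Section 13, around eq.\ (13.3)]{billingsly1999convergence}, where precisely the two facts you use are established — that $\pi_t$ is continuous at every path continuous at $t$ (and everywhere for $t=0,T$), and that the set $\{t:\nu(J_t)>0\}$ is at most countable. So what you have done is reconstruct, in a self-contained way, the argument the authors are outsourcing. Your treatment of the counting step is the right one and correctly identifies the trap: one cannot bound $\sum_i\nu(J_{t_i}^\eta)$ by the expected number of $\eta$-jumps, since that expectation may be infinite; truncating to $B_m=\{w_\gamma(1/m)<\eta\}$, where the number of $\eta$-jumps is deterministically bounded by $C_m$, and absorbing the complement via $F\le M$ and $\nu(B_m^c)<\epsilon/2$, closes the gap cleanly. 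The only point you assert without comment is the Borel measurability of $\gamma\mapsto w_\gamma(\delta)$ (an infimum over an uncountable family of partitions); this is standard — Billingsley needs it for the tightness criterion (13.2)--(13.4) — but since the paper's Definition \ref{def: cadlag modulus} is stated without a measurability claim, a one-line remark (e.g.\ reduction to partitions with rational breakpoints, using right-continuity) would make the argument fully airtight.
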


Given a Polish space $(E,d)$, we use once again the notation $\Pi_{E}$ to denote a distance on $\mathcal{P}(E)$ that satisfies Assumption \ref{asm: weak metric}.

For the drift we assume the following.
\begin{assumption}
	\label{assumption: weak jump}
	The drift $b: \mathbb{R}^d \times \mathcal{P}(\mathbb{R}^d) \to \mathbb{R}^d$ is a measurable function and there exists a constant $K$ such that, 
	\begin{itemize}
		\item (Lipschitz continuity)
		\begin{equation*}
		\vert b(x, \mu) - b(x^\prime, \mu^\prime) \vert
		\leq K \left(\vert x - x^\prime \vert
		+ \Pi_{\mathbb{R}^d}(\mu, \mu^\prime)\right),
		\end{equation*}
		$\forall x, x^\prime \in \mathbb{R}^d, \mu, \mu^\prime \in \mathcal{P}(\mathbb{R}^d)$.
		\item (boundedness)
		\begin{equation*}
		\vert b(x, \mu)\vert
		\leq K,
		\end{equation*}
		$\forall x \in \mathbb{R}^d, \mu \in \mathcal{P}(\mathbb{R}^d)$.
	\end{itemize}
\end{assumption}
\begin{remark}
	The function $b$ defined in Remark \ref{rem: linear drift} also satisfies Assumptions \ref{assumption: weak jump}.
\end{remark}

\subsubsection{Well-posedness and continuity}
We have the following results, analogously to Section \ref{section: weak continuity}.
\begin{lemma}
	\label{lem: jump: existence}
	Let $b$ satisfy Assumptions \ref{assumption: weak jump}. Given $\nu \in \mathcal{P}(\mathbb{R}^d \times D_T)$, the solution map
	\begin{equation}
	\begin{array}{cccc}
	S^\nu: & \mathbb{R}^d\times D_T & \rightarrow  & D_T\\
	& (x_0, \gamma) & \mapsto & S^\nu(x_0, \gamma),
	\end{array}
	\end{equation}
	to the ODE
	\begin{equation}
	\label{jump: ode}
	x_t = x_0 + \int_0^t b(x_s, (x_s)_{\#}\nu) ds + \gamma_t.
	\end{equation} 
	is well defined.
\end{lemma}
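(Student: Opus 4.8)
The plan is to follow the proof of Lemma \ref{weak cont:existence} essentially verbatim, working in the complete metric space $(D_T,\Vert\cdot\Vert_\infty)$ in place of $(C_T,\Vert\cdot\Vert_\infty)$ and exploiting the fact that a uniform limit of c\`adl\`ag functions is again c\`adl\`ag. Fix $(x_0,\gamma)\in\mathbb{R}^d\times D_T$ and run a Picard scheme: set $x^0_t:=x_0+\gamma_t$ and define $x^{n+1}$ implicitly by $x^{n+1}_t=x_0+\int_0^t b(x^{n+1}_s,(x^n_s)_{\#}\nu)\,ds+\gamma_t$, where $(x^n_s)_{\#}\nu$ is the push-forward of $\nu\in\mathcal{P}(\mathbb{R}^d\times D_T)$ under the map $(y_0,\eta)\mapsto x^n_s(y_0,\eta)\in\mathbb{R}^d$. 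The inner (in $t$) fixed point problem is solvable because $b$ is Lipschitz in its first argument (Assumption \ref{assumption: weak jump}); each iterate $x^{n+1}$ is the sum of a function that is Lipschitz in $t$ (since $b$ is bounded) and the c\`adl\`ag path $\gamma$, hence $x^{n+1}\in D_T$; and the classical dependence of ODE solutions on parameters gives that $(x_0,\gamma)\mapsto x^n$ is Borel measurable for the Skorokhod Borel structure on $D_T$.

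Next, exactly as in the continuous case, combine Assumption \ref{assumption: weak jump} with Assumption \ref{asm: weak metric: Lipschitz} and Gronwall's lemma to obtain a constant $C(T,K)$, independent of $n$, with
\[
\sup_{(x_0,\gamma)\in\mathbb{R}^d\times D_T}\bigl|x^n_t(x_0,\gamma)-x^{n+1}_t(x_0,\gamma)\bigr|\leq\frac{C(T,K)^n}{n!},\qquad t\in[0,T].
\]
Hence $(x^n(x_0,\gamma))_{n\geq0}$ is uniformly Cauchy in $(D_T,\Vert\cdot\Vert_\infty)$ and converges to some $x(x_0,\gamma)$, which lies in $D_T$ because $D_T$ is closed under uniform convergence. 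Since uniform convergence is stronger than convergence in the Skorokhod metric $\sigma$, the map $(x_0,\gamma)\mapsto x$ is a pointwise limit of Borel measurable maps and is therefore Borel measurable, so $(x_s)_{\#}\nu$ is a well-defined element of $\mathcal{P}(\mathbb{R}^d)$. Passing to the limit in the defining relation — using $\Pi_{\mathbb{R}^d}((x^n_s)_{\#}\nu,(x_s)_{\#}\nu)\leq\mathbb{E}_\nu|x^n_s-x_s|\to0$ together with the continuity and boundedness of $b$ and dominated convergence — shows that $x(x_0,\gamma)$ solves \eqref{jump: ode}. For uniqueness, if $x$ and $y$ both solve \eqref{jump: ode} with the same inputs then $|x_t-y_t|\leq Ke^{Kt}\int_0^t\int_{\mathbb{R}^d\times D_T}|x_s-y_s|\,d\nu\,ds$ for all $t\in[0,T]$; integrating in $d\nu$ and applying Gronwall forces the right-hand side to vanish, so $x=y$ and $S^\nu(x_0,\gamma):=x(x_0,\gamma)$ is well defined.

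The main point to be careful about, rather than a genuine obstacle, is that the Picard iteration is run in the uniform norm and not in the Skorokhod metric: one must check (i) that the iterates and their uniform limit actually stay in $D_T$, which holds since $D_T$ is complete for $\Vert\cdot\Vert_\infty$ and closed under uniform limits, and (ii) that the resulting measurability statements are with respect to the Borel $\sigma$-algebra of $(D_T,\sigma)$, which is automatic because $\Vert\cdot\Vert_\infty$-convergence implies $\sigma$-convergence. Apart from this, no new analytic difficulty arises compared with Lemma \ref{weak cont:existence}, and the c\`adl\`ag modulus and Lemma \ref{lem:convergence of cadlag marginals} are not needed here (they enter only in the subsequent continuity statement).
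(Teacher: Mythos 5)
Your proposal is correct and follows essentially the same route as the paper: run the Picard iteration from Lemma \ref{weak cont:existence} in the uniform norm on $D_T$, observe that the Skorokhod metric is dominated by the uniform distance (take $\lambda(t)=t$ in \eqref{def: sko metric}) so that the uniform limit and the attendant measurability carry over, and conclude with the same Gronwall uniqueness argument. The extra checks you flag (iterates and limit stay in $D_T$, Borel measurability for the Skorokhod topology) are exactly the points the paper's terse proof implicitly relies on.
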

\begin{proof}
	The proof of this lemma follows exactly the proof of Lemma \ref{weak cont:existence}. We define the sequence $(x^n)_{n\in\N} \subset D_T$ and show that it is a Cauchy sequence in the uniform norm $\|\cdot \|_{\infty}$. By taking $\lambda(t) = t$ in the definition of $\sigma$, equation \eqref{def: sko metric}, one notices immediately that $\sigma$ is bounded by the distance induced by $\| \cdot \|_{\infty}$. Hence, $(x^n)_{n\in\N}$ is a Cauchy sequence in $\sigma$ and the conclusion follows as in Lemma \ref{weak cont:existence}.
\end{proof}

\begin{lemma}
	\label{lem: jump continuity}
	The function
	\begin{equation}
	\begin{array}{cccc}
	\Psi: & (\mathcal{P}(\mathbb{R}^d\times D_T), \Pi_{{\mathbb{R}^d}\times D_T}) 
	& \rightarrow  & (\mathcal{P}(D_T), \Pi_{D_T})\\
	& \nu & \mapsto & (S^\nu)_{\#}\nu,
	\end{array}
	\end{equation}
	is continuous. By assumption, this is equivalent to continuity with respect to the topology induced by the weak convergence of measures.
\end{lemma}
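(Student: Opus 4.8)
The plan is to run the proof of the continuity lemma of Section~\ref{section: weak continuity} essentially verbatim, the only genuinely new point being that the drivers now converge in the Skorokhod topology and not uniformly, which forces the pathwise Gronwall argument to be organised a little differently. So I would start from a weakly convergent sequence $\nu^n\to\nu$ in $\mathcal{P}(\mathbb{R}^d\times D_T)$, use the Skorokhod representation theorem to realise $(\zeta^n,W^n)\sim\nu^n$ and $(\zeta,W)\sim\nu$ on one probability space with $(\zeta^n,W^n)\to(\zeta,W)$ almost surely (in the product of the Euclidean and the Skorokhod metrics), and put $X^n:=S^{\nu^n}(\zeta^n,W^n)$, $\mu^n_s:=\mathcal{L}(X^n_s)$, so that $\mathcal{L}(X^n)=(S^{\nu^n})_\#\nu^n=\Psi(\nu^n)$ and, almost surely, $X^n_t=\zeta^n+A^n_t+W^n_t$ with $A^n_t:=\int_0^t b(X^n_s,\mu^n_s)\,ds$.

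The first step is tightness of $\{\Psi(\nu^n)\}_n$ in $\mathcal{P}(D_T)$. Here I would use that, by the boundedness in Assumption~\ref{assumption: weak jump}, each $A^n$ is $K$-Lipschitz in time with $A^n_0=0$; hence $\|A^n\|_\infty\le KT$, $\sup_t|X^n_t|\le|\zeta^n|+KT+\sup_t|W^n_t|$, and the c\`adl\`ag modulus of Definition~\ref{def: cadlag modulus} obeys $w_{X^n}(\delta)\le w_{W^n}(\delta)+K\delta$. Since $\{\mathcal{L}(W^n)\}_n$ is tight (being weakly convergent), Billingsley's tightness criterion in $D_T$ applied to these bounds gives tightness of $\{\mathcal{L}(X^n)\}_n$.

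Next I would fix an arbitrary subsequence and, by tightness, extract a further subsequence (not relabelled) along which $\mathcal{L}(X^n)\to\mu^\ast$ weakly in $\mathcal{P}(D_T)$. Along it, Lemma~\ref{lem:convergence of cadlag marginals} gives $\mu^n_s\to\mu^\ast_s$ weakly, hence $\Pi_{\mathbb{R}^d}(\mu^n_s,\mu^m_s)\to 0$, for every $s$ outside a Lebesgue-null set; as these marginals all sit in a fixed uniformly tight (hence $\Pi$-bounded) family, dominated convergence yields $\int_0^T\Pi_{\mathbb{R}^d}(\mu^n_s,\mu^m_s)\,ds\to 0$. On the other hand, $W^n(\omega)\to W(\omega)$ in $\sigma$ for a.e.\ $\omega$ with the limit continuous at all but countably many times and with uniformly bounded sup-norms, so by bounded convergence $\int_0^T|W^n_s(\omega)-W^m_s(\omega)|\,ds\to 0$ for a.e.\ $\omega$ — this is the substitute, adapted to the Skorokhod setting, for the uniform bound $\|W^n-W^m\|_\infty\to 0$ used in the continuous case. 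Inserting both facts, together with the Lipschitz bound in Assumption~\ref{assumption: weak jump}, into the identity for $A^n_t-A^m_t$ and applying Gronwall's lemma, one obtains $\|A^n(\omega)-A^m(\omega)\|_\infty\to 0$ for a.e.\ $\omega$; thus $A^n(\omega)\to A(\omega)$ in $(C_T,\|\cdot\|_\infty)$ for some continuous $K$-Lipschitz $A(\omega)$, and, since Skorokhod convergence is preserved under adding a uniformly convergent continuous perturbation, $X^n=(\zeta^n+A^n)+W^n\to(\zeta+A)+W=:X$ in $D_T$ almost surely.

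It remains to identify the limit and conclude. From $X^n\to X$ a.s.\ we get $\mathcal{L}(X^n)\to\mathcal{L}(X)$ weakly, hence $\mathcal{L}(X)=\mu^\ast$; a further application of Lemma~\ref{lem:convergence of cadlag marginals} gives $\mu^n_s\to\mathcal{L}(X_s)$ for a.e.\ $s$, while $X^n_s\to X_s$ a.s.\ for every $s$ outside the countable jump set of $X$, so by the Lipschitz continuity and boundedness of $b$ and dominated convergence $A_t=\int_0^t b(X_s,\mathcal{L}(X_s))\,ds$. Therefore $X$ solves $X_t=\zeta+\int_0^t b(X_s,\mathcal{L}(X_s))\,ds+W_t$ with $\mathcal{L}(\zeta,W)=\nu$, so $X=S^\nu(\zeta,W)$ a.s.\ by the uniqueness in Lemma~\ref{lem: jump: existence}, and $\mu^\ast=\mathcal{L}(X)=(S^\nu)_\#\nu=\Psi(\nu)$. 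Since every subsequence of $(\Psi(\nu^n))_n$ has a further subsequence converging weakly to $\Psi(\nu)$, the whole sequence converges, i.e.\ $\Pi_{D_T}(\Psi(\nu^n),\Psi(\nu))\to 0$ by Assumption~\ref{asm: weak metric: compatible}. I expect the main obstacle to be exactly the Gronwall step just described: the drivers converging only in $\sigma$ rules out the uniform estimate available for continuous drivers, and the fix is to (i) single out the equi-Lipschitz drift part $A^n$, for which Gronwall only needs the time-integrated bound $\int_0^T|W^n_s-W^m_s|\,ds\to0$ that $\sigma$-convergence still supplies, and (ii) deduce $\sigma$-convergence of $X^n$ from uniform convergence of its continuous part together with $\sigma$-convergence of $W^n$; Lemma~\ref{lem:convergence of cadlag marginals} then does the job that continuity of the projections $\pi_s$ did in the continuous case.
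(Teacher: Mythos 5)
Your proof is correct and shares the paper's overall skeleton (Skorokhod representation, tightness of $\{\mathcal{L}(X^n)\}$ via the equi-Lipschitz drift bound, subsequence extraction, almost-sure convergence of $X^n$, identification of the limit, subsequence principle), but the crucial step --- almost-sure convergence of $X^n$ in $(D_T,\sigma)$ --- is carried out by a genuinely different argument. The paper attacks the Skorokhod distance head-on: it picks a time change $\lambda$ nearly aligning $W^n$ and $W^m$, uses precompactness of $\{W^n(\omega)\}$ to control the c\`adl\`ag modulus of the $X^n$ uniformly, bounds $\Vert X^m - X^m\circ\lambda\Vert_\infty$ by $2\sup_n w(X^n,\delta)$, and then runs Gronwall on $\vert X^n_t - (X^m\circ\lambda)_t\vert$. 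You instead isolate the drift integral $A^n_t=\int_0^t b(X^n_s,\mu^n_s)\,ds$, observe that Gronwall applied to $A^n-A^m$ only requires the time-integrated closeness $\int_0^T\vert W^n_s-W^m_s\vert\,ds\to0$ (which $\sigma$-convergence does supply, via a.e.\ pointwise convergence plus uniform sup-norm bounds and dominated convergence), conclude uniform convergence of the continuous part $\zeta^n+A^n$, and then invoke the standard stability of Skorokhod convergence under addition of a uniformly convergent continuous perturbation. This buys a cleaner separation of concerns --- all Skorokhod-topology delicacy stays confined to the driver itself and the time-change bookkeeping disappears --- at the price of relying on the (standard, easily checked) addition lemma for $D_T$. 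Both routes rest on the same two inputs, namely Lemma \ref{lem:convergence of cadlag marginals} for the measure argument of $b$ and the boundedness/Lipschitz hypotheses of Assumption \ref{assumption: weak jump}, and your identification and uniqueness steps match the paper's level of detail.
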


\begin{proof}
	Let $(\nu^n)_{n\geq 0} \subset \mathcal{P}(\mathbb{R}^d \times D_T)$ be a sequence of probability measure that converges weakly to $\nu \in \mathcal{P}(\mathbb{R}^d \times D_T)$. From Skohorokhod representation theorem, there exists a probability space $(\Omega, \mathcal{A}, \mathbb{P})$ and a sequence $(\zeta^n, W^n): \Omega \to \mathbb{R}^d \times D_T$, for $n\geq 0$, of random variables distributed as $\nu^n$ which converges almost surely to a random variable $(\zeta, W)$ distributed as $\nu$. 
	
	Let $X^n := S^{\nu^n}(\zeta^n, W^n)$. By definition, $\mu^n := \mathcal{L}(X^n) = \Psi(\nu^n)$ and $X^n$ solves the following SDE pathwise,
	\begin{equation*}
	X^n_t = \zeta^n + \int_{0}^{t} b(X^n_s, \mathcal{L}(X^n_s))ds + W^n_t.
	\end{equation*}
	By construction, the laws of $W^n$ are tight. Equivalently, by \cite[Theorem 13.2]{billingsly1999convergence}, they satisfy
	\begin{equation}
	\label{eq: equibounded}
	\lim_{a\to\infty} \limsup_{n\to\infty} \mathbb{P} \{ \|W^n\|_{\infty} \geq a\} = 0,
	\end{equation}
	\begin{equation}
	\label{eq: equijumps}
	\forall \epsilon > 0,
	\qquad
	\lim_{\delta \to 0}\limsup_{n\in\N}\mathbb{P} \{ w_{W^n}(\delta) \geq \epsilon \} = 0.
	\end{equation}
	It follows from Assumption \ref{assumption: weak jump} that the random variables $X^n$ also satisfy \eqref{eq: equibounded} and \eqref{eq: equijumps}. Thus, we deduce that the family $\mu^n$ is tight in $\mathcal{P}(\mathbb{R}^d \times D_T)$. 
	
	With an abuse of notation, assume that $(\mu^n)_{n\geq0}$ is a subsequence that converges weakly to some $\mu \in \mathcal{P}(D_T)$, and $(X^n)_{n\geq 0}$ such that $\mathcal{L}(X^n) = \mu^n$. By using the equation, we now check that $(X^n(\omega))_{n\geq 0}$ is a Cauchy sequence in $(D_T, \sigma)$ for $\mathbb{P}-a.e.$ $\omega$. 
	
	First observe that Lemma \ref{lem:convergence of cadlag marginals} and Lebesgue dominated convergence imply that $\int_{0}^{T}\Pi_{\mathbb{R}^d}(\mu^n_s, \mu_s) ds \to 0$, as $n\to \infty$. Hence $(\mu^n)_{n\in\N} \subset L^1([0,T], \mathcal{P}(\mathbb{R}^d))$ is a Cauchy sequence. Let now $\Omega^0 \subset \Omega$ be a set of full measure such that $(\zeta^n(\omega), W^n(\omega)) \to (\zeta(\omega), W(\omega))$, for all $\omega \in \Omega^0$, as $n\to \infty$.
	
	Fix $\omega \in \Omega^0$, $\epsilon >0$ there exists $N >0$, such that for all $m,n \geq N$, we have
	\begin{equation*}
	\sigma(W^n(\omega), W^m(\omega)) < \epsilon,
	\quad
	\vert \zeta^n(\omega) - \zeta^m(\omega) \vert < \epsilon,
	\quad
	\int_{0}^{T}\Pi_{\mathbb{R}^d}(\mu^n_s, \mu^m_s) ds < \epsilon.
	\end{equation*}
	Moreover, since the sequence $(W^n(\omega))$ converges, it is pre-compact in $D_T$. It follows from \cite[Theorem 12.3]{billingsly1999convergence} that
	\begin{equation}
	\label{eq: cadlag equcontinuity}
	\lim_{\delta \to 0} \sup_{n} w(W^n(\omega), \delta) = 0.
	\end{equation}
	It follows from Assumption \ref{assumption: weak jump} that $| X^n_{s,t}(\omega) | \leq | W^n_{s,t}(\omega) | + K |t - s|$, for all $t,s \in [0,T]$ and $n\in \N$. Hence, one can replace $W^n$ with $X^n$ in \eqref{eq: cadlag equcontinuity}.
	We omit now the dependence of the random variables from $\omega$.
	There exists $\bar \delta > 0$, such that for every $0<\delta<\bar\delta$, $\sup_{n}w(X^n, \delta) < \epsilon$. 
	We can choose $\delta = \epsilon \wedge \bar \delta$, and $\lambda := \lambda(\omega, \delta, m, n)$ such that
	\begin{equation}
	\label{eq:control on w}
	\Vert \lambda \Vert + \Vert W^n - W^m \circ \lambda \Vert_{\infty} < \delta < \epsilon.
	\end{equation}
	It follows from \cite[equation (12.17)]{billingsly1999convergence} that, for all $t\in [0,T]$,
	\begin{equation*}
	|\lambda_t - t| \leq e^{\| \lambda \|} - 1 \leq e^\delta - 1 \approx \delta,
	\end{equation*}
	Hence, for any $t\in [0,T]$ and for any partition $\Pi$ of $[0,T]$ of mesh size bigger than $\delta$ we find at most one point of the partition between $t$ and $\lambda_t$, which gives
	\begin{equation*}
	%\label{eq: control on X}
	\Vert X^m - X^m \circ \lambda \Vert_{\infty} < 2w(X^m, \delta) < 2\sup_{n}w(X^n, \delta) < 2\epsilon.
	\end{equation*}
	We note that, for all $t\in [0,T]$,
	\begin{equation*}
	\left \vert\int_{\lambda_t \vee t}^{\lambda_t \wedge t} ds \right \vert \leq |\lambda_t - t| \leq e^{\| \lambda \|} - 1,
	\end{equation*}
	where the last inequality follows from \cite[equation (12.17)]{billingsly1999convergence}.
	We can thus compute the following, for $t \in [0,T]$,
	\begin{align*}
	\left \vert \int_{0}^t b(X_s^n, \mu^n_s)ds - \int_{0}^{\lambda_t} b(X_s^m, \mu^m_s)ds \right \vert
	\leq & K \left \vert\int_{\lambda_t \vee t}^{\lambda_t \wedge t} ds \right \vert 
	+ \int_{0}^{\lambda_t \wedge t} \left \vert b(X_s^n, \mu^n_s) -  b(X_s^m, \mu^m_s) \right \vert ds \\
	\leq & K(e^{\| \lambda \|} - 1)
	+ K\int_0^T \Pi_{{\mathbb{R}^d}} (\mu^n_s, \mu^m_s) ds
	+ K\int_0^T \vert X^n_s - X^m_s \vert ds\\
	\leq & K(e^{\| \lambda \|} - 1)
	+ K\epsilon
	+ K\int_0^T \vert X^n_s - (X^m \circ \lambda)_s \vert ds \\
	& + K\int_0^T \vert X^m_s - (X^m \circ \lambda)_s \vert ds \\
	\lesssim &
	4\epsilon
	+ \int_0^T \vert X^n_s - (X^m \circ \lambda)_s \vert ds 
	\end{align*}
	From which we deduce
	\begin{align*}
	\vert X_t^n - (X^m \circ \lambda)_t \vert
	\leq & \vert \zeta^n - \zeta^m \vert
	+ \vert W^n_t - (W^m \circ \lambda)_t \vert
	+ \left \vert \int_{0}^t b(X_s^n, \mu^n_s)ds - \int_{0}^{\lambda_t} b(X_s^m, \mu^m_s)ds \right \vert \\
	\lesssim & 
	\; \epsilon 
	+ \Vert W^n - W^m\circ \lambda \Vert
	+ \int_0^T \vert X^n_s - (X^m \circ \lambda)_s \vert ds 
	\end{align*}
	We add $\Vert \lambda \Vert$ on both sides, apply Gronwall's Lemma and inequality \eqref{eq:control on w} to obtain
	\begin{equation*}
	\sigma(X^n(\omega), X^m(\omega)) < C(T, K) \epsilon.
	\end{equation*}
	Hence, we have that $X^n(\omega)$ is a Cauchy sequence in $(D_T, \sigma)$, for $\omega \in \Omega^0$.
	
	Let $X$ be the almost sure limit of $X^n$, as $n\to \infty$. The laws $\mu^n$ converge weakly to $\mathcal{L}(X)$, hence $\mathcal{L}(X) = \mu$. Passing to the limit in the equation, we can see that $\mu = \mathcal{L}(X) = \Psi(\nu)$. This concludes the proof.
\end{proof}

	\section{Applications}
	\label{section:applications}
	
	\subsection{Particle approximation}
	\label{section:particle approximation}
	
	In this section we show how the results in Section \ref{section:main result} yield a convergence result for a particle system associated with the McKean-Vlasov equation.
	
	Given inputs $\bar{\zeta}$ and $\bar{W}$ (on a probability space $(\Omega, \mathcal{A}, \mathbb{P})$), we consider the following McKean-Vlasov equation
	\begin{equation}\label{limit mckean vlasov}
	\left\{
	\begin{array}{l}
	d\bar{X}_t = b(t, \bar{X}_t, \mathcal{L}(\bar{X_t})) dt + d\bar{W}_t\\
	X_0 = \bar{\zeta}.
	\end{array}
	\right.	
	\end{equation}	
	To this, given $N\in \mathbb{N}$, we associate the corresponding interacting particle system (on a probability space $(\Omega, \mathcal{A}, \mathbb{P})$), 
	\begin{equation}\label{classical system}
	\left\{
	\begin{array}{l}
	dX^{i,N}_t = b(t, X^{i,N}_t, \frac1N \sum_{i=1}^{N}\delta_{X_t^{i,N}}) dt + dW^{i,N}_t,\\
	X^{i,N}_0 = \zeta^{i,N},
	\end{array}
	\right.	
	\quad i=1, \dots, N
	\end{equation}
	with given input 
	\begin{equation*}
	\begin{array}{cccc}
	(\zeta^{(N)}, W^{(N)}) : & \Omega & \to & (\mathbb{R}^d \times C_T)^N\\
	&\omega &\mapsto & (\zeta^{i,N}(\omega), W^{i,N}(\omega))_{1\leq i \leq N}.
	\end{array}
	\end{equation*}

%	We are interested here in the empirical measure associated with the solution, namely
%	\begin{align*}
%	L^N(X^{(N)}) = \frac{1}{N}\sum_{i=1}^N \delta_{X^{i,N}}.
%	\end{align*}
	
	For a given $N\in \mathbb{N}$ and an $N$-dimensional vector $Y^{(N)} = (Y^1, \cdots, Y^N)$ with entries in a Polish space $E$, we define the empirical measure associated with $Y^{(N)}$ as
	\begin{equation*}
	L^N(Y^{(N)}) := \frac1N \sum_{i=1}^{N}\delta_{Y^i}.
	\end{equation*}
	As pointed out in the introduction, the main argument of Cass-Lyons/Tanaka approach is that the particle system \eqref{intro:interacting particles} can be interpreted as the limiting McKean-Vlasov equation \eqref{intro:mckean vlasov equation} by using a transformation of the probability space and the input data.
	The main result Theorem \ref{main theorem} not only implies well-posedness of both McKean-Vlasov and particle approximation, but also allows to deduce convergence of the particle system from convergence of the corresponding signals, something which is usually easy to verify, for example, if the signals are empirical measures of independent noises.
	
	Now we show how to interpret equations \eqref{limit mckean vlasov} and \eqref{classical system} as generalized McKean-Vlasov equation \eqref{mckean vlasov equation}. Clearly \eqref{limit mckean vlasov} is \eqref{mckean vlasov equation} with inputs $\bar{\zeta}$ and $\bar{W}$. For \eqref{classical system}, for fixed $N\in\mathbb{N}$, we consider the space $(\Omega_N, \mathcal{A}_N, \mathbb{P}_N)$, where $\Omega_N := \{1, \dots, N\}$, $\mathcal{A}_N:=2^{\Omega_N}$ and $\mathbb{P}_N := \frac{1}{N}\sum_{i=1}^{N}\delta_i$. On this space, we can identify any $N$-uple $Y^{(N)}=(Y^1, \dots, Y^N) \in E^N$, as a random variable $\Omega_N \ni i\mapsto Y^i \in E$. With this identification, the law of $Y^{(N)}$ on $\Omega_0$ is precisely the empirical measure associated with $Y^{(N)}$, namely $L^N(Y^{(N)})$.
%	\begin{align*}
%	L^N(Y^{(N)}) = \frac{1}{N} \sum_{i=1}^{N} \delta_{Y^i}.
%	\end{align*}
	Indeed, for each continuous and bounded function $\varphi$ on $E$, we have
	\begin{align*}
	\mathbb{E}_{\mathbb{P}_N}[\varphi(Y^{(N)})]
	= \sum_{i=1}^{N}\frac1N  \varphi(Y^i)  
	= L^N(Y^{(N)})(\varphi).
	\end{align*}
	We assume that $(\zeta^{(N)}(\omega),W^{(N)}(\omega))$ is valued in $(\mathbb{R}^d\times C_T)^N$ for every $N$ and for every $\omega\in \Omega$. We fix $\omega\in \Omega$ and $N$ and we apply the previous argument to the $N$-uples 
	\begin{align*}
	(\zeta^{(N)}, W^{(N)})(\omega) & =((\zeta^{1,N}, W^{1,N})(\omega), \dots, (\zeta^{N,N}, W^{N,N})(\omega)), \\
	X^{(N)}(\omega) & = (X^{1,N}(\omega),\ldots, X^{N,N}(\omega)).
	\end{align*}
	 For fixed $\omega \in \Omega$, the law of $(\zeta^{(N)}(\omega),W^{(N)}(\omega))$ on $\Omega_N$ is the empirical measure $L^N(\zeta^{(N)},W^{(N)})(\omega)$ and the law of $X^{(N)}(\omega)$ on $\Omega_N$ is the empirical measure $L^N(X^{(N)})(\omega)$, which appears exactly in \eqref{classical system}, projected at time $t$. Hence, for fixed $\omega$ in $\Omega$, the interacting particle system \eqref{classical system} is the generalized McKean-Vlasov equation \eqref{mckean vlasov equation}, defined on the space $(\Omega_N, \mathcal{A}_N, \mathbb{P}_N)$ and driven by the empirical measure $L^N(\zeta^{(N)},W^{(N)})(\omega)$.

	We are ready to apply Theorem \ref{main theorem} to obtain the following result, which ties the convergence of the particles to the convergence of the inputs. An immediate consequence is that the empirical measure of the particle system converges if the input converges: no independence or exchangeability are required.
	
	\begin{theorem}\label{classical mean field result}
		Let $p \in [1,\infty)$ and assume \ref{assumption}. Let $(\Omega, \mathcal{A}, \mathbb{P})$ be a probability space. For a fixed $N\in\mathbb{N}$, let $(\zeta^{(N)}, W^{(N)}) = (\zeta^{i,N}, W^{i,N})_{1\leq i \leq N}:\Omega \to (\mathbb{R}^d \times C_T)^N$ be a family of random variables. Let $\bar \zeta \in L^p(\Omega, \mathbb{R}^d)$ and $\bar W \in L^p(\Omega, C_T)$.
		Then,
		\begin{enumerate}[label=\roman{*} ]
			\item\label{item 1 classical mean field} for every $\omega \in \Omega$, there exists a unique pathwise solution $X^{(N)}(\omega)$ in the sense of Definition \ref{def solution} to the interacting particle system \eqref{classical system}. Moreover, $\omega \mapsto X^{(N)}(\omega)$ is $\mathcal{A}$-measurable.
			\item\label{item 2 classical mean field} there exists a unique pathwise solution $\bar X$ in the sense of Definition \ref{def solution} to equation \eqref{limit mckean vlasov}. 
			\item\label{item 3 classical mean field} there exists a constant $C$ depending on $b$ such that 	 for all $N \geq 1$, for $\mathbb{P}$-a.e.~$\omega \in \Omega$,
			\begin{equation}\label{mean field control}
			\mathcal{W}_{C_T, p}(L^N(X^{(N)}(\omega)), \mathcal{L}(\bar{X}))^p \leq C \mathcal{W}_{\mathbb{R}^d \times C_T, p}(L^N(\zeta^{(N)}(\omega), W^{(N)}(\omega)), \mathcal{L}(\bar{\zeta}, \bar{W}))^p.
			\end{equation}
		\end{enumerate}
	\end{theorem}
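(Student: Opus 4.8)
The plan is to transfer everything to the abstract framework of Theorem~\ref{main theorem} via the identification, already set up above, of the particle system \eqref{classical system} with a generalized McKean--Vlasov equation \eqref{mckean vlasov equation} on the finite probability space $(\Omega_N,\mathcal{A}_N,\mathbb{P}_N)$, and then to read off \emph{(i)}--\emph{(ii)} from the well-posedness part of that theorem and \emph{(iii)} from its Lipschitz estimate.

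For \emph{(i)}, fix $\omega\in\Omega$ and $N\in\mathbb{N}$. As explained above, \eqref{classical system} evaluated at $\omega$ is exactly \eqref{mckean vlasov equation} on $(\Omega_N,\mathcal{A}_N,\mathbb{P}_N)$ with input the random variable $i\mapsto(\zeta^{i,N}(\omega),W^{i,N}(\omega))$, of law $L^N(\zeta^{(N)}(\omega),W^{(N)}(\omega))$; since $\Omega_N$ is finite and each $(\zeta^{i,N}(\omega),W^{i,N}(\omega))$ takes values in $\mathbb{R}^d\times C_T$, this input lies in $L^p$ on $(\Omega_N,\mathbb{P}_N)$. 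Part~(iii) of Theorem~\ref{main theorem} then furnishes a unique pathwise solution, i.e. a unique $X^{(N)}(\omega)=(X^{i,N}(\omega))_{i}$ with $X^{i,N}(\omega)=S^{\mu(\omega)}(\zeta^{i,N}(\omega),W^{i,N}(\omega))$, where $\mu(\omega):=\Psi\big(L^N(\zeta^{(N)}(\omega),W^{(N)}(\omega))\big)$, and whose law on $\Omega_N$ is the empirical measure $L^N(X^{(N)}(\omega))$. For measurability of $\omega\mapsto X^{(N)}(\omega)$ I would argue step by step: $y\mapsto\delta_y$ is continuous into $\mathcal{P}_p(\mathbb{R}^d\times C_T)$ and averaging of measures is continuous there, so $\omega\mapsto L^N(\zeta^{(N)}(\omega),W^{(N)}(\omega))$ is measurable; composing with the Lipschitz (hence continuous) map $\Psi$ of \ref{item 2 of main}, so is $\omega\mapsto\mu(\omega)$; and $(\mu,x_0,\gamma)\mapsto S^\mu(x_0,\gamma)$ is jointly measurable, being the locally uniform limit of the Picard iterates $x^{(0)}=x_0+\gamma$, $x^{(k+1)}_t=x_0+\int_0^t b(s,x^{(k)}_s,\mu_s)\,ds+\gamma_t$, each jointly continuous by Assumption~\ref{assumption} (exactly as in the proof of Lemma~\ref{weak cont:existence}). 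Composing these maps gives $\mathcal{A}$-measurability of $X^{(N)}$. Statement \emph{(ii)} is then immediate: part~(iii) of Theorem~\ref{main theorem} applied to the input $(\bar\zeta,\bar W)\in L^p(\Omega,\mathbb{R}^d\times C_T)$ gives the unique solution $\bar X$ to \eqref{limit mckean vlasov}.

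For \emph{(iii)}, fix $\omega\in\Omega$ and apply the Lipschitz estimate, part~(iv) of Theorem~\ref{main theorem}, to the two inputs $i\mapsto(\zeta^{i,N}(\omega),W^{i,N}(\omega))$ on $(\Omega_N,\mathcal{A}_N,\mathbb{P}_N)$ and $(\bar\zeta,\bar W)$ on $(\Omega,\mathcal{A},\mathbb{P})$, both of finite $p$-moment, whose solutions have laws $L^N(X^{(N)}(\omega))$ and $\mathcal{L}(\bar X)$, respectively. This yields directly
\begin{equation*}
\mathcal{W}_{C_T,p}\big(L^N(X^{(N)}(\omega)),\mathcal{L}(\bar X)\big)\le\tilde C\,\mathcal{W}_{\mathbb{R}^d\times C_T,p}\big(L^N(\zeta^{(N)}(\omega),W^{(N)}(\omega)),\mathcal{L}(\bar\zeta,\bar W)\big),
\end{equation*}
with $\tilde C=\tilde C(p,T,b)$ independent of $N$ and $\omega$; raising to the $p$-th power with $C:=\tilde C^p$ gives \eqref{mean field control}, in fact for every $\omega$, which is stronger than the asserted $\mathbb{P}$-a.e.\ statement.

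The only genuinely technical point is the joint measurability of the solution map $(\mu,x_0,\gamma)\mapsto S^\mu(x_0,\gamma)$ used in \emph{(i)}; everything else is a verbatim application of Theorem~\ref{main theorem}, which is precisely the payoff of reorganizing the particle limit this way — no independence, identical distribution, or exchangeability of the inputs is used anywhere.
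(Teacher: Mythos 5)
Your proposal is correct and follows essentially the same route as the paper: identify the particle system at fixed $\omega$ with the generalized McKean--Vlasov equation on $(\Omega_N,\mathcal{A}_N,\mathbb{P}_N)$, verify the finite $p$-moment condition there, invoke Theorem~\ref{main theorem} for existence, uniqueness and the Lipschitz bound, and obtain measurability by composing $\omega\mapsto L^N(\zeta^{(N)}(\omega),W^{(N)}(\omega))$ with the continuous maps $\Psi$ and $S^\mu$. Your extra detail on the joint measurability of $(\mu,x_0,\gamma)\mapsto S^\mu(x_0,\gamma)$ via Picard iterates is a welcome elaboration of a step the paper treats more briefly, but it does not change the argument.
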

	
	\begin{proof}
		Let $N\in \mathbb{N}$. Fix $\omega \in \Omega$, we apply Theorem \ref{main theorem} in the following setting
		\begin{equation*}
		(\Omega^1, \mathcal{A}^1, \mathbb{P}^1) := (\Omega_N, \mathcal{A}_N, \mathbb{P}_N),
		\quad (\zeta^1, W^1)(\omega) := (\zeta^{(N)}(\omega), W^{(N)}(\omega)),
		\end{equation*}
		\begin{equation*}
		(\Omega^2, \mathcal{A}^2, \mathbb{P}^2) := (\Omega, \mathcal{A}, \mathbb{P}),
		\quad (\zeta^2, W^2) := (\bar \zeta, \bar W).
		\end{equation*}
		The finite $p$-moment condition is satisfied by $(\bar \zeta, \bar W)$ by assumption and also by $(\zeta^{(N)}(\omega), W^{(N)}(\omega))$, since
		\begin{align*}
		\Vert (\zeta^1, W^1)(\omega)  \Vert_{L^p(\Omega^1)}^p
		= & \mathbb{E}_{\mathbb{P}_N}\left[ \vert\zeta^{(N)}(\omega)\vert^p + \Vert W^{(N)}(\omega)\Vert_{\infty}^p \right]\\
		= & \frac1N \sum_{i=1}^{N}\vert\zeta^i(\omega)\vert^p + \frac1N \sum_{i=1}^{N}\Vert W^i(\omega)\Vert_{\infty}^p
		< +\infty.
		\end{align*}
		%The last inequality is always true because the random variable $\zeta^{(N)}$ never assumes the value $+\infty$. In the same way, $\Vert W^1 \Vert_{L^p(\Omega^2, C_T)} < +\infty$.		
		Since the assumptions on the drift $b$ are also satisfied, Theorem \ref{main theorem} establishes the existence of solutions $X^1 (\omega) =: X^{(N)}(\omega)$ and $X^2 =: \bar X$. Moreover the map $\Psi$ is continuous, hence $\omega \mapsto L^{(N)}(X^{(N)})(\omega)$ is $\mathcal{A}$-measurable, which makes $X^{(N)}(\omega) := S^{L^{(N)}(X^{(N)})(\omega)}(\zeta(\omega), W^{(N)}(\omega))$ measurable. This gives \eqref{item 1 classical mean field} and \eqref{item 2 classical mean field}. Theorem \eqref{main theorem} also gives exactly the inequality in \eqref{item 3 classical mean field}. The proof is complete.
	\end{proof}
	
	\begin{remark}
		We stress out that, when looking at the particle system, we are applying Theorem \ref{main theorem} on the discrete space, for a fixed $\omega$, and the law that appears on the drift is the empirical measure at fixed $\omega$.
	\end{remark}
	
	\begin{remark}
		In the proof of point \ref{item 3 classical mean field} of Theorem \ref{classical mean field result}, we can actually get the bound for \textit{every} $\omega$ if we use the pathwise solution $X^{(N)}(\omega)$ (in the sense of Definition \ref{def solution}), as this satisfies \eqref{classical system} for \textit{every} $\omega$. However, the ``$\mathbb{P}$-a.s.'' is required when dealing with a solution to the interacting particle system \eqref{classical system} in the usual probabilistic sense, where \eqref{classical system} is required to hold only $\mathbb{P}$-a.s..
		%The pathwise solution $X^{(N)}(\omega)$ (in the sense of Definition \ref{def solution}) satisfies \eqref{classical system} for \textit{every} $\omega$. This is slightly different from the solution to the interacting particle system \eqref{classical system} in the usual probabilistic sense, where the equality is required only $\mathbb{P}$-a.s.. In the spirit of pathwise solutions adopted here, we will continue following the Definition \ref{def solution} for any particle system. However, here and in what follows, the results can be easily transferred to solutions defined $\mathbb{P}$-a.s., replacing ``for every $\omega$'' with ``for $\mathbb{P}$-a.e.~$\omega$''.
	\end{remark}
	
	\subsection{Classical mean field limit}
	\label{section:classical mean field limit}
	
	Now we specialize the previous result in the case of i.i.d.~inputs, recovering the classical result by Sznitman \cite{sznitman1991topics}:	
	\begin{corollary}
		\label{corollary:classical mean field}
		Given a filtered probability space $(\Omega, \mathcal{A}, (\mathcal{F}_t)_{t\geq 0}, \mathbb{P})$ (with the standard assumptions) and $p \in (1, \infty)$ let $(\zeta^{i})_{i\geq 1}\subset L^{p}(\Omega, \mathbb{R}^d)$, be a family of i.i.d.~random variables which are $\mathcal{F}_0$-measurable and $ (W^i)_{i\geq 1}$ be a family of independent adapted Brownian motions. Moreover, let $(\bar \zeta, \bar W) \in L^p(\Omega, \mathbb{R}^d \times C_T)$ be an independent copy of $(\zeta^1, W^1)$.
		Then the solutions $X^{(N)}$ and $\bar X$ to the interacting particles system \eqref{classical system} and the McKean-Vlasov SDE \eqref{limit mckean vlasov}, respectively, given by Theorem \ref{classical mean field result}, are progressively measurable and we have the following convergence
		\begin{equation}\label{mean field weak convergence} 
		L^N(X^{(N)}) \overset{\ast}{\rightharpoonup} \mathcal{L}(\bar{X}), \quad  \mathbb{P}-a.s.
		\end{equation}
	\end{corollary}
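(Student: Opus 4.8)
The plan is to read off both assertions from results already in place: the progressive measurability from Proposition~\ref{prop:progr_meas} together with the ODE representation of the solutions, and the almost sure convergence by inserting a strong law of large numbers for empirical measures into the pathwise estimate \eqref{mean field control} of Theorem~\ref{classical mean field result}.

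For progressive measurability, I would first observe that $\bar\zeta$ is $\mathcal{F}_0$-measurable and the Brownian motion $\bar W$ is $(\mathcal{F}_t)_{t\ge0}$-progressively measurable, so Proposition~\ref{prop:progr_meas} applies verbatim and shows that $\bar X$ is $(\mathcal{F}_t)$-progressively measurable. For the particle system \eqref{classical system} I would repeat the argument in the proof of Proposition~\ref{prop:progr_meas}: regarding $X^{(N)}=(X^{1,N},\dots,X^{N,N})$ as the $\mathbb{R}^{dN}$-valued solution of the ODE with drift $\big(b(s,x^i,\tfrac1N\sum_{j}\delta_{x^j})\big)_{i}$ driven by $W^{(N)}=(W^{1,N},\dots,W^{N,N})$ (whose existence and uniqueness are granted by Theorem~\ref{classical mean field result}), one has, for each $t$, $X^{(N)}\big|_{[0,t]}=S_t(\zeta^{(N)},W^{(N)}\big|_{[0,t]})$ with $S_t$ a Borel measurable solution map; since $\zeta^{(N)}$ is $\mathcal{F}_0$-measurable and $W^{(N)}\big|_{[0,t]}$ is $\mathcal{F}_t$-measurable, each $X^{i,N}$ is adapted, hence progressively measurable by continuity of its paths.

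For the convergence, by \eqref{mean field control}---which holds for $\mathbb{P}$-a.e.\ $\omega$ simultaneously for all $N$---it suffices to prove that $\mathbb{P}$-a.s.
\[
\mathcal{W}_{\mathbb{R}^d\times C_T,p}\big(L^N(\zeta^{(N)}(\omega),W^{(N)}(\omega)),\,\mathcal{L}(\bar\zeta,\bar W)\big)\longrightarrow 0,\qquad N\to\infty.
\]
The pairs $(\zeta^i,W^i)_{i\ge1}$ are i.i.d.\ with common law $\mathcal{L}(\bar\zeta,\bar W)$, which lies in $\mathcal{P}_p(\mathbb{R}^d\times C_T)$: indeed $\bar\zeta\in L^p$ by hypothesis, while $\mathbb{E}\|\bar W\|_{\infty:T}^p<\infty$ because $\bar W$ is a Brownian motion (Doob / Burkholder--Davis--Gundy). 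The displayed limit is thus the strong law of large numbers for empirical measures in the $p$-Wasserstein metric: Varadarajan's theorem gives the a.s.\ weak-$*$ convergence $L^N(\zeta^{(N)},W^{(N)})\overset{\ast}{\rightharpoonup}\mathcal{L}(\bar\zeta,\bar W)$, the ordinary strong law applied to the real variables $|\zeta^i|^p+\|W^i\|_{\infty:T}^p$ gives a.s.\ convergence of the $p$-th moments, and the combination of these two facts is precisely $\mathcal{W}_p$-convergence. Feeding this into \eqref{mean field control} yields $\mathcal{W}_{C_T,p}(L^N(X^{(N)}(\omega)),\mathcal{L}(\bar X))\to 0$ for $\mathbb{P}$-a.e.\ $\omega$, and in particular \eqref{mean field weak convergence}. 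The only step that requires a little care is this last equivalence---that a.s.\ weak convergence together with a.s.\ convergence of $p$-th moments upgrades to a.s.\ $\mathcal{W}_p$-convergence, a standard characterization of convergence in $\mathcal{P}_p$---together with the verification of the moment bound for $\bar W$, which is where the Brownian (more generally, $L^p$) nature of the noise is used; everything else is a direct appeal to Theorem~\ref{classical mean field result} and Proposition~\ref{prop:progr_meas}.
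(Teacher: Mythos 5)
Your proof is correct and follows the same overall strategy as the paper: progressive measurability via Proposition \ref{prop:progr_meas} (and the analogous argument for the finite system), and convergence by feeding an empirical-measure law of large numbers for the inputs into \eqref{mean field control}. The one place where you diverge is the final step. The paper invokes its Lemma \ref{appendix:iid into wasserstein}, which yields a.s.\ convergence of $L^N(\zeta^{(N)},W^{(N)})$ only in $\mathcal{W}_{p'}$ for $p'\in(1,p)$, because it passes through Lemma \ref{lem:weak_implies_W}, which upgrades weak convergence to $q$-Wasserstein convergence only under a uniform bound on strictly higher ($p$-th, $p>q$) moments. You instead prove genuine $\mathcal{W}_p$-convergence by combining Varadarajan's theorem with the a.s.\ convergence (not mere boundedness) of the $p$-th moments supplied by the scalar strong law, using the standard characterization that weak convergence plus convergence of $p$-th moments is equivalent to $\mathcal{W}_p$-convergence. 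This is a slightly stronger fact than the paper records in its appendix (which only states the uniform-integrability version), but it is standard, and it buys you something: since the right-hand side of \eqref{mean field control} is literally the $p$-Wasserstein distance, your argument closes the estimate directly, whereas the paper's $p'$-Wasserstein convergence does not, strictly speaking, control that right-hand side without rerunning the main theorem at exponent $p'$. In short, your route is marginally tighter and yields a.s.\ convergence of $L^N(X^{(N)})$ in $\mathcal{W}_p$, of which the stated weak-$*$ convergence \eqref{mean field weak convergence} is a consequence.
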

	
	\begin{remark}
		The classical case when $b$ is a convolution with a regular kernel, say $b(t, x, \mu) = (K \ast \mu)(x)$, is treated here, as $b$ in this case satisfies the assumption of Theorem \ref{classical mean field result}.
	\end{remark}
	
	\begin{proof}[Proof of Corollary \ref{corollary:classical mean field}]
		Progressive measurability for the particle system \eqref{classical system} follows from \eqref{item 2 of main} of Theorem \ref{main theorem} and is a consequence of Proposition \ref{prop:progr_meas} for the McKean-Vlasov SDE \eqref{limit mckean vlasov}.
		
		We prove now the convergence. First recall that Theorem \ref{classical mean field result}, and in particular inequality \eqref{mean field control}, applies in this case. Hence, if we can prove that the right-hand-side of \eqref{mean field control} goes to zero, we have the desired convergence \eqref{mean field weak convergence}.
		
		Hence, by Lemma \ref{appendix:iid into wasserstein}, we deduce the convergence in $p^\prime$-Wasserstein, for every $p^\prime \in (1, p)$. This is the convergence of the right-hand-side of \eqref{mean field control}. The proof is complete.
		% 	
		% 	To prove this, we will show weak convergence, since weak convergence of measure is equivalent to the convergence in Wasserstein metric, see \eqref{wasserstein:equiv weak convergence}. Since $(\zeta^i, W^i)$ are i.i.d., by the law of large numbers, for every $\varphi$ in $C_b(\mathbb{R}^d\times C_T)$  there exists a set of full measure $\Omega^\prime \subset \Omega$ such that,
		% 	\begin{equation*}
		% 		L^N(\zeta^{(N)}(\omega), W^{(N)}(\omega) )
		% 		\overset{\ast}{\rightharpoonup} \mathcal{L}(\bar{\zeta}, \bar{W}),
		% 		\quad \forall \omega \in \Omega^\prime.
		% 	\end{equation*}
		% 	 Hence we have that the right-hand-side of inequality \eqref{mean field control} converges to zero $P$-a.s.
	\end{proof}
	
	\subsection{Mean field with common noise}
	\label{section:common noise}
	
	In this section we study a system of interacting particles with common noise. We consider the following system on the space $(\bar \Omega, \bar{\mathcal{A}}, \bar{\mathbb{P}})$,
	\begin{equation}\label{common noise particles}
	\left\{
	\begin{array}{l}
	dX^{i,N}_t = b(t, X^{i,N}_t, \frac1N \sum_{i=1}^{N}\delta_{X_t^{i,N}}) dt + dW^{i}_t + dB_t\\
	X^{i,N}_0 = \zeta^{i}.
	\end{array}
	\right.	
	i=1, \dots, N
	\end{equation}
	Here $(\zeta^{i})_{i=1,\dots, N} \subset L^p(\bar\Omega, \mathbb{R}^d)$ is a family of i.i.d.~random variables. This system represents $N$ interacting particles where each particle is subject to the interaction with the others as well as some randomness. There are two sources of randomness, one which acts independently on each particle and is represented by the independent family of identically distributed random variables $W^{(N)} = (W^{i})_{1\leq i\leq N} \subset L^p(\bar\Omega, C_T)$. The second source of randomness is the same for each particle and is represented by the random variable $B\in L^p(\bar\Omega, C_T)$, which is assumed to be independent from the $W^{i}$. Usually $W^{i}$ and $B$ are Brownian motions, but it is not necessary to assume it here. The Brownian motion case was considered in \cite{CogFla2016}.
	
	Our aim is to prove that the empirical measure associate to the system converges, as $N\to \infty$, to the conditional law, given $B$, of the solution of the following McKean-Vlasov SDE
	\begin{equation}\label{mckean vlasov common noise}
	\left\{
	\begin{array}{l}
	d\bar{X}_t = b(t, \bar{X}_t, \mathcal{L}(\bar{X_t} \vert B)) dt + d\bar{W}_t + dB_t\\
	\bar X_0 = \bar{\zeta}.
	\end{array}
	\right.	
	\end{equation}
	Here $\bar \zeta$ is a random variable on $\mathbb{R}^d$ and $\bar{W}$ is random variables on $C_T$ distributed as $\zeta^1$ and $W^1$ respectively.
	We denote by $\mathcal{L}(X \vert B)$ the conditional law of $X$ given $B$. Our result is the following.
	\begin{corollary}
		Let $p\in [1,\infty)$, $p^\prime \in (p, \infty)$, and assume \ref{assumption}. Let $(\bar\Omega, \bar{\mathcal{A}}, \bar{\mathbb{P}})$ be a probability space. On this space we consider independent families $\zeta^{(N)} = (\zeta^{i})_{1\leq i\leq N}\subset L^{p^\prime}(\bar \Omega, \mathbb{R}^d)$, $W^{(N)} = (W^{i})_{1\leq i\leq N} \in L^{p^\prime}(\bar\Omega, C_T)$ of i.i.d.~random variables. Let $\bar \zeta$ be distributed as $\zeta^{i,N}$ and let $\bar W$ be distributed as $W^{i,N}$ and independent of $\bar \zeta$. Moreover, assume that $B\in L^p(\bar\Omega, C_T)$ is a random variable independent from the others. Then there exists a solution $X^{(N)} \in L^p(\bar \Omega, (C_T)^N)$ to equation \eqref{common noise particles} and a solution $\bar X\in L^p(\bar\Omega, C_T)$ to equation \eqref{mckean vlasov common noise}. Moreover, we have
		\begin{equation*}
		\mathcal{W}_{C_T,p}\left (L^N(X^{(N)}), \mathcal{L}(\bar X\vert B)\right ) \to 0,
		\quad \bar{\mathbb{P}}-\mbox{a.s.}
		\quad \mbox{as } N\to \infty.
		\end{equation*} 
	\end{corollary}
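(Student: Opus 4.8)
The plan is to remove the common noise by a pathwise change of variables and then quote the classical mean-field result, Theorem \ref{classical mean field result}, for a drift that depends on the realization of $B$ but keeps the same Lipschitz constant. First I would set $Y^{i,N} := X^{i,N} - B$ and $\bar Y := \bar X - B$. Because $B$ enters every particle additively and identically, \eqref{common noise particles} turns into the classical particle system
\begin{equation*}
dY^{i,N}_t = b^{B}\!\left(t, Y^{i,N}_t, \tfrac1N\sum_{j=1}^N \delta_{Y^{j,N}_t}\right) dt + dW^i_t, \qquad Y^{i,N}_0 = \zeta^{i},
\end{equation*}
driven by the shifted drift $b^{\beta}(t,y,\mu) := b\big(t,\, y + \beta_t,\, (\tau_{\beta_t})_\#\mu\big)$, where $\tau_a: x \mapsto x+a$. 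The key observation is that for each fixed path $\beta \in C_T$ the map $x\mapsto x+\beta_t$ is an isometry of $\mathbb{R}^d$ and $\mu\mapsto (\tau_{\beta_t})_\#\mu$ is an isometry of $(\mathcal{P}_p(\mathbb{R}^d),\mathcal{W}_{\mathbb{R}^d,p})$, so $b^\beta$ satisfies Assumption \ref{assumption} with the \emph{same} constant $K_b$, uniformly in $\beta$; moreover the inputs $(\zeta^i,W^i)$ are untouched, so they stay i.i.d.\ and in $L^{p'}$, while the common-noise McKean--Vlasov equation \eqref{mckean vlasov common noise} becomes, after the same shift, $d\bar Y_t = b^{B}(t,\bar Y_t,\mathcal{L}(\bar Y_t\mid B))dt + d\bar W_t$.

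Next I would fix a realization $\beta=B(\omega)$ and apply Theorem \ref{main theorem} with the drift $b^\beta$ in place of $b$: this produces a unique fixed point $\mu^\beta$ of $\Phi$ (as in \eqref{eq:sol_map_law}, but for $b^\beta$) and the pathwise solution $\bar Y^\beta$ of $d\bar Y_t = b^\beta(t,\bar Y_t,\mathcal{L}(\bar Y_t))dt + d\bar W_t$ with $\mathcal{L}(\bar Y^\beta)=\mu^\beta$. Using the Lipschitz continuity of the fixed-point map, Theorem \ref{item 2 of main}, together with the continuity of $\beta\mapsto b^\beta$, one checks that $\beta\mapsto\mu^\beta$ and $(\beta,\omega)\mapsto \bar Y^\beta(\omega)$ are measurable; hence $\bar X(\omega) := \bar Y^{B(\omega)}(\omega) + B(\omega)$ is a well-defined element of $L^p(\bar\Omega,C_T)$ (the moment bound coming from Lemma \ref{lem:solution linear} and $B\in L^p(\bar\Omega,C_T)$), and likewise $X^{(N)}(\omega) := Y^{(N)}(\omega) + B(\omega)$ solves \eqref{common noise particles}, with $Y^{(N)}$ the measurable solution of the shifted particle system given by Theorem \ref{classical mean field result} for the realized drift $b^{B(\omega)}$. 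Since $(\bar\zeta,\bar W)$ is independent of $B$, conditioning on $B=\beta$ leaves its law unchanged, so $\mathcal{L}(\bar X\mid B=\beta) = (\tau_\beta)_\#\mathcal{L}(\bar Y^\beta) = (\tau_\beta)_\#\mu^\beta$; in particular $\bar X$ solves \eqref{mckean vlasov common noise}, and $\mathcal{L}(\bar X\mid B)(\omega) = (\tau_{B(\omega)})_\#\mu^{B(\omega)}$.

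Finally I would assemble the estimate. For $\bar{\mathbb{P}}$-a.e.\ $\omega$, since $L^N(X^{(N)}(\omega)) = (\tau_{B(\omega)})_\# L^N(Y^{(N)}(\omega))$ and $\mathcal{W}_{C_T,p}$ is invariant under applying the same translation to both arguments,
\begin{equation*}
\mathcal{W}_{C_T,p}\big(L^N(X^{(N)}(\omega)),\, \mathcal{L}(\bar X\mid B)(\omega)\big) = \mathcal{W}_{C_T,p}\big(L^N(Y^{(N)}(\omega)),\, \mu^{B(\omega)}\big).
\end{equation*}
Applying Theorem \ref{classical mean field result} to the drift $b^{B(\omega)}$ and the i.i.d.\ inputs $(\zeta^i(\omega),W^i(\omega))$, and using that the constant there depends only on $K_b,p,T$ — hence is uniform in $\beta=B(\omega)$ by the isometry remark of Step 1 — the right-hand side is bounded by $C\,\mathcal{W}_{\mathbb{R}^d\times C_T,p}\big(L^N(\zeta^{(N)}(\omega),W^{(N)}(\omega)),\,\mathcal{L}(\bar\zeta,\bar W)\big)$, which tends to $0$ for a.e.\ $\omega$ by the Wasserstein law of large numbers for i.i.d.\ samples (Lemma \ref{appendix:iid into wasserstein}, using $p'>p$); the exceptional null set is measurable with respect to $(\zeta^i,W^i)_i$ and thus unaffected by the value of $B$. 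This yields the claimed $\bar{\mathbb{P}}$-a.s.\ convergence.

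I expect the main obstacle to be the bookkeeping of Step 2: establishing the joint measurability of $\beta\mapsto\bar Y^\beta$ (so that $\bar X = \bar Y^{B}+B$ and $X^{(N)}$ are genuine random variables) and correctly identifying the conditional law $\mathcal{L}(\bar X\mid B)$ as the $B$-parametrized fixed point. The uniformity of the constant in $\beta$ — which is what collapses the $\omega$-by-$\omega$ bound to an ordinary law of large numbers — rests entirely on the translation-invariance noted in Step 1; everything else is a transcription of the proof of Corollary \ref{corollary:classical mean field}.
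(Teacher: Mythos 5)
Your proof is correct and shares the skeleton of the paper's argument --- pass to a product space, freeze the realization $\beta=B(\omega')$, apply the classical i.i.d.\ mean-field result to the frozen problem, establish joint measurability in $(\omega,\beta)$, and use independence of $B$ to identify the limit as the conditional law $\mathcal{L}(\bar X\mid B)$ --- but the reduction to the frozen problem is carried out differently. The paper shifts the \emph{inputs}: it keeps the drift $b$ untouched and applies Theorem \ref{classical mean field result} to the data $(\zeta^i, W^i+\beta)$, so nothing new about $b$ needs checking and the constant is trivially uniform in $\beta$; the limit for fixed $\beta$ is $\mathcal{L}(X^\beta)$ with $X^\beta$ solving \eqref{limit mckean vlasov} driven by $(\bar\zeta,\bar W+\beta)$. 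You instead shift the \emph{solution}, $Y=X-B$, which keeps the inputs i.i.d.\ but moves $\beta$ into the drift $b^\beta$, so you must re-verify Assumption \ref{assumption} for $b^\beta$ --- which you do correctly via the translation-invariance of $\mathcal{W}_{\mathbb{R}^d,p}$, and this is exactly what gives you a constant uniform in $\beta$. The two reductions are equivalent (the translation-invariance of $\mathcal{W}_{C_T,p}$ you invoke at the end is the bridge between them), and both leave the same residual task: the joint measurability of $(\omega,\beta)\mapsto X^\beta(\omega)$, which the paper discharges by writing the solution as a composition of the explicit measurable maps $F_1$, $F_2$ and $\Psi$, whereas in your formulation the analogous point is the measurable dependence of the fixed point on the drift parameter $\beta$ --- true by the same Gronwall estimates, but not a literal quotation of Theorem \ref{main theorem}, so it would need the extra lines you defer. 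One cosmetic slip: unless you normalize $B_0=0$ (or subtract $B-B_0$ instead of $B$), the shifted initial condition reads $Y^{i,N}_0=\zeta^i-B_0$, a common deterministic shift for frozen $\beta$ that does not affect the argument.
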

	
	\begin{proof}
		Since $B$ is independent from the other variables, we can assume, without loss of generality, that our probability space is of the form $(\bar\Omega, \bar{\mathcal{A}}, \bar{\mathbb{P}}) := (\Omega\times\Omega^\prime, \mathcal{A}\otimes\mathcal{A}^\prime, \mathbb{P}\otimes\mathbb{P}^\prime)$, that the random variables $\zeta^{i}, \bar{\zeta}, W^{i}$ and $\bar{W}$ are defined on a space $(\Omega, \mathcal{A}, \mathbb{P})$ and the random variable $B$ is defined on the space $(\Omega^\prime, \mathcal{A}^\prime, \mathbb{P}^\prime)$.
		
		For a fixed path $\beta \in C_T$, we consider the modified inputs, on $(\Omega,\mathcal{A},\mathbb{P})$, $W^{i, \beta} := W^{i} + \beta$ and $\bar{W}^\beta := \bar{W} + \beta$. Let $X^{(N),\beta}$ (respectively $X^\beta$) be the solution to equation \eqref{classical system} (resp. equation \eqref{limit mckean vlasov}) with input $(\zeta^{(N)}, W^{(N), \beta})$ (resp. $\bar{\zeta}, \bar{W}^\beta)$ given by Theorem \ref{classical mean field result}. The Lipschitz bound in Theorem \ref{classical mean field result} and the independence of $\zeta^i$ and $W^{i, \beta}$, via Lemma \ref{appendix:iid into wasserstein}, imply that, for $\mathbb{P}$-a.e.~$\omega$,		
		\begin{equation*}%\label{modification classical mean field}
		\mathcal{W}_{C_T, p}(L^N(X^{(N), \beta}(\omega)), \mathcal{L}(X^\beta)) \to 0.
		%\leq C \mathcal{W}_{C_T, p}(L^N(\zeta^{(N)}(\omega), W^{(N), \beta}(\omega)), \mathcal{L}(\bar{\zeta}, \bar{W}^\beta))^p.
		\end{equation*}
		
		Now we build the solution $\bar{X}$ and $X^{(N)}$ resp.~to \eqref{mckean vlasov common noise} and to \eqref{common noise particles}. We claim that the maps
		\begin{align*}
		\Omega\times C_T \ni (\omega,\beta) \mapsto X^\beta(\omega) \in C_T, \quad \Omega\times C_T \ni (\omega,\beta) \mapsto X^{(N),\beta}(\omega)
		\end{align*}
		have versions that are jointly measurable and, for such versions, we define $\bar{X}(\omega, \omega^\prime) = X^{B(\omega')}(\omega)$ and $X^{(N)}(\omega, \omega^\prime) = X^{(N),B(\omega')}(\omega)$. Note that, by the definition of $X^B$, for every fixed $\omega^\prime \in \Omega^\prime$, we have $\mathbb{P}$-a.s.
		\begin{equation*}
		dX^{B(\omega^\prime)} = b(t, X^{B(\omega^\prime)}, \mathcal{L}_{\mathbb{P}}(X^{B(\omega^\prime)}))dt + dW_t + dB_t(\omega^\prime),
		\end{equation*}
		where the law is taken with respect to the space $(\Omega, \mathcal{A}, \mathbb{P})$. But the independence of $B$ from the other variables implies that, $\bar{\mathbb{P}}$-a.s.,
		\begin{equation*}
		\mathcal{L}_{\mathbb{P}}(X^{B}) = \mathcal{L}_{\mathbb{P}\otimes \mathbb{P}^\prime}(X^B \vert B).
		\end{equation*}
		Hence $\bar{X}$ is a solution to equation \eqref{mckean vlasov common noise} on the product space $\Omega \times \Omega^\prime$. Similarly $X^{(N)}$ is a solution to \eqref{common noise particles} on $\Omega \times \Omega^\prime$. Therefore we have, for $\bar{\mathbb{P}}$-a.e. $(\omega, \omega^\prime)$,
		\begin{equation*}
		\mathcal{W}_{C_T,p}\left (L^N(X^{(N)})(\omega, \omega^\prime), \mathcal{L}(\bar X\vert B)(\omega^\prime) \right ) 
		= \mathcal{W}_{C_T, p}(L^N(X^{(N), \beta})(\omega), \mathcal{L}(X^\beta)) \mid_{\beta=B(\omega^\prime)} \to 0,
		\end{equation*}
		which is the desired convergence.
		
		It remains to prove the measurability claim on $X^\beta$ and $X^{(N),\beta}$. We prove it for $X^{(N),\beta}$, the proof for $\bar{X}$ being analogous. Recall the notation in Section \ref{section:main result} and note that the following maps are Borel measurable
		\begin{align*}
		&F_1:\mathcal{P}_p(C_T)\times \mathbb{R}^d \times C_T \ni (\mu,x_0,\gamma)\mapsto S^\mu(x_0,\gamma) \in C_T,\\
		&F_2:\mathcal{P}_p(\mathbb{R}^d \times C_T) \times C_T \ni (\nu,\beta) \mapsto (\cdot+(0,\beta))_\# \nu \in \mathcal{P}_p(C_T),
		\end{align*}
		(where $\cdot+(0,\beta)$ is the map on $\mathbb{R}^d \times C_T$ defined by $(x,\gamma)+(0,\beta)=(x,\gamma+\beta)$). Indeed, $F_1$ is continuous (because the solution of \eqref{linear equation} depends continuously on the drift, the initial data and the signal), $F_2$ is also Lipschitz-continuous (indeed, for any $(\beta,\nu)$ and $(\beta',\nu')$, if $m$ is an optimal plan between $\nu$ and $\nu'$, then $((\cdot+(0,\beta),\cdot+(0,\beta'))_\#m$ is an admissible plan between $F_2(\beta,\nu)$ and $F_2(\beta',\nu')$ and standard bounds give the Lipschitz property). 
		Moreover let $\Psi$ the map defined in \eqref{eq:definition psi}. It is continuous, hence measurable.
		Now we can write, for every $\beta$ in $C_T$, for every $i=1,\ldots N$,
		\begin{align*}
		X^{(N),\beta,i}(\omega) = F_1(\Psi(F_2(L^N(\zeta^{(N)}(\omega),W^{(N)}(\omega)),\beta)),\zeta^i(\omega),W^i(\omega)+\beta), \quad \mathbb{P}-\mbox{a.s.}
		\end{align*}
		and the right-hand side above is composition of measurable maps, hence measurable. Therefore the right-hand side is a measurable version of $X^{(N),\beta}$. The proof is complete.
	\end{proof}
	\subsection{Heterogeneous mean field}
	\label{section:heterogeneous mean field}
	
	As a further application of Theorem \ref{classical mean field result} we want to consider the case of heterogeneous mean field. We will show the convergence even when the drivers are not identically distributed. 
	This applies in particular to the results of the physical system studied in \cite{guhlke2018stochastic} as was discussed in the introduction. In that model, it is assumed that the state of each particle is influenced by its radius. Particle $i$ has a radius $r^i$, which is deterministic, and it is known that the radii are distributed according to a distribution $\lambda$. We allow here for the radii to be stochastic and not necessarily identically distributed, but still independent. Moreover, we will assume the volume to change in time.
	
	Heterogeneous mean field systems appear also in other contexts, see for example (among many others) \cite{Tou2014}, \cite{ChoKlu2015}, which work with semimartingale inputs and use a coupling \`a la Sznitman \cite{sznitman1991topics}.
	
	On the probability space $(\Omega, \mathcal{A}, \mathbb{P})$, we consider a family $(\zeta^{(N)}, W^{(N)}) = (\zeta^{i}, W^{i})_{i\geq 1}\subset L^p(\Omega, \mathbb{R}^d \times C_T(\mathbb{R}^d))$. This family is taken i.i.d.
	
	In addition, for each $N\in\mathbb{N}$, we consider a family $R^{(N)} = (R^{i,N})_{1 \leq i \leq N} \subset L^p(C_T(\mathbb{R}^n)^N)$.
	
	We construct the following interacting particle system
	\begin{equation}\label{eq:heterogeneous system}
	\left\{
	\begin{array}{l}
	dX^{i,N}_t = b(t, X^{i,N}_t, R_t^{i,N}, L^N(X^{(N)}_t,  R_t^{(N)})) dt + dW^{i}_t\\
	X^{i,N}_0 = \zeta^{i}.
	\end{array}
	\right.	
	\end{equation}
	We call this an heterogeneous particle system because the particles are not exchangeable anymore, if the $R^{i,N}$ are not exchangeable.
	
	We assume that the $R^{i,N}$ are independent of the $\zeta^{i}$ and $W^{i}$ and that there exists a measure $\lambda \in \mathcal{P}_p(C_T(\mathbb{R}^n))$ such that
	\begin{equation*}
	L^N(R^{(N)})(\omega) \overset{\ast}{\rightharpoonup} \lambda, \quad \mathbb{P}-\mbox{a.s.}
	\end{equation*}
	and actually in $p^\prime$-Wasserstein distance for $p^\prime>p$. We also consider the following mean field equation (on a probability space $(\Omega,\mathcal{A},\mathbb{P})$):
	\begin{equation}\label{eq:heterogeneous_McKVla}
	\left\{
	\begin{array}{l}
	d\bar X_t = b(t, \bar X_t, \bar R_t, \mathcal{L}(\bar X_t,  \bar R_t)) dt + d\bar W_t\\
	\bar X_0 = \bar \zeta
	\end{array}
	\right.	
	\end{equation}
	where $\bar \zeta$, $\bar W$ and $\bar R$ are independent random variables distributed resp.~as $\zeta^i$, $W^i$ and $\lambda$.
	The following result is a corollary of Theorem \ref{classical mean field result}. We also use Lemma \ref{lemma:modified lln} and Lemma \ref{lemma:convergence not iid inputs} to deal with the convergence of the input data.
	
	\begin{corollary}
		\label{cor: heterogeneus particles}
		Let $p\in[1, \infty)$, $p^\prime\in (p,\infty)$. Assume that $b: [0,T] \times \mathbb{R}^{d+n} \times \mathcal{P}_p(\mathbb{R}^{d+n}) \to \mathbb{R}^d$ is a measurable function and there exists a constant $K_b$ such that, 
		\begin{equation*}
		\vert b(t, x, \mu) - b(t, x^\prime, \mu^\prime) \vert^p
		\leq K_b \left(\vert x - x^\prime \vert^p  
		+ \mathcal{W}_{\mathbb{R}^{d+n},p}(\mu, \mu^\prime)^p\right),
		\end{equation*}
		$\forall t \in [0,T], x, x^\prime \in \mathbb{R}^{d+n}, \mu, \mu^\prime \in \mathcal{P}_p(\mathbb{R}^{d+n})$.
		
		Let $(\Omega, \mathcal{A}, \mathbb{P})$ be a probability space. On this space we consider independent families $\zeta^{(N)} = (\zeta^{i})_{i\geq1}\subset L^{p^\prime}(\Omega, \mathbb{R}^d)$, $W^{(N)} = (W^{i})_{i\geq1} \in L^{p^\prime}(\Omega, C_T)$ of i.i.d.~random variables. Let $\bar \zeta$ be distributed as $\zeta^1$ and let $\bar W$ be distributed as $W^1$ and independent of $\bar \zeta$. Moreover, assume that $R^{(N)} = (R^{i,N})_{1\leq i \leq N}$ is a family of independent random variables in $L^{p^\prime}(\Omega, \mathbb{R}^n)$ which are independent from the others.
%		 and such that
%		\begin{equation*}
%		\Vert R^{i,N}\Vert_{L^p} \leq C.
%		\end{equation*}
		If there is convergence of the heterogeneous part (in $p^\prime$-Wasserstein distance),
		\begin{equation*}
		\mathcal{W}_{C_T(\mathbb{R}^{n}), p^\prime}(L^N(R^{(N)}), \mathcal{L}(\bar{R})) \to 0
		\quad \mathbb{P}-\mbox{a.s.}
		\quad \mbox{as } N\to \infty,
		\end{equation*}
		then also the solution converges (in $p$-Wasserstein distance),
		\begin{equation*}
		\mathcal{W}_{C_T(\mathbb{R}^{d+n}), p} (L^N(X^{(N)}, R^{(N)}), \mathcal{L}(\bar{X}, \bar{R}))
		\quad \mathbb{P}-\mbox{a.s.}
		\quad \mbox{as } N\to \infty,
		\end{equation*} 
	\end{corollary}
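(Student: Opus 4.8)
The plan is to recast both \eqref{eq:heterogeneous system} and \eqref{eq:heterogeneous_McKVla} as instances of the plain particle system \eqref{classical system} and McKean--Vlasov equation \eqref{limit mckean vlasov} in the enlarged state space $\mathbb{R}^{d+n}$, and then to quote Theorem \ref{classical mean field result}. First I would introduce the augmented drift
\[
\tilde b\colon [0,T]\times\mathbb{R}^{d+n}\times\mathcal{P}_p(\mathbb{R}^{d+n})\to\mathbb{R}^{d+n},\qquad \tilde b(t,(x,r),\mu):=(b(t,x,r,\mu),0),
\]
and, writing $(\tilde\zeta^{i,N},\tilde W^{i,N}):=((\zeta^i,R^{i,N}_0),(W^i,R^{i,N}-R^{i,N}_0))$ and $(\tilde{\bar\zeta},\tilde{\bar W}):=((\bar\zeta,\bar R_0),(\bar W,\bar R-\bar R_0))$, give the $i$-th particle the input $(\tilde\zeta^{i,N},\tilde W^{i,N})$ and the mean field equation the input $(\tilde{\bar\zeta},\tilde{\bar W})$. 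The last $n$ coordinates then read $dr_t=0\,dt+d(R^{i,N}_t-R^{i,N}_0)$ (resp.\ with $\bar R$) and are solved only by $R^{i,N}$ (resp.\ $\bar R$); substituting this back, the first $d$ coordinates are exactly \eqref{eq:heterogeneous system} (resp.\ \eqref{eq:heterogeneous_McKVla}). Hence the augmented systems have $(X^{(N)},R^{(N)})$ and $(\bar X,\bar R)$ as their unique solutions. The hypothesis placed on $b$ in the statement is precisely Assumption \ref{assumption} for $\tilde b$ --- appending a zero block changes neither the Lipschitz constant nor measurability --- so Theorem \ref{classical mean field result} applies with $d$ replaced by $d+n$.

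This gives existence and uniqueness of $(X^{(N)},R^{(N)})$ (hence of $X^{(N)}$ given $R^{(N)}$) and of $(\bar X,\bar R)$, together with, for $\mathbb{P}$-a.e.\ $\omega$ and all $N$,
\[
\mathcal{W}_{C_T(\mathbb{R}^{d+n}),p}\!\big(L^N((X^{(N)},R^{(N)})(\omega)),\mathcal{L}(\bar X,\bar R)\big)^p \le C\,\mathcal{W}_{\mathbb{R}^{d+n}\times C_T(\mathbb{R}^{d+n}),p}\!\big(L^N((\tilde\zeta^{(N)},\tilde W^{(N)})(\omega)),\mathcal{L}(\tilde{\bar\zeta},\tilde{\bar W})\big)^p .
\]
So the whole claim reduces to showing the right-hand side vanishes almost surely. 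Under the obvious bi-Lipschitz identification $((x,r),(w,\rho))\mapsto(x,w,r+\rho)$, and since $\bar R$ is independent of $(\bar\zeta,\bar W)$, this is equivalent to
\[
\mathcal{W}_{\mathbb{R}^d\times C_T(\mathbb{R}^d)\times C_T(\mathbb{R}^n),p}\!\big(L^N((\zeta^i,W^i,R^{i,N})_{i\le N}),\,\mathcal{L}(\bar\zeta,\bar W)\otimes\mathcal{L}(\bar R)\big)\longrightarrow 0\qquad\mathbb{P}\text{-a.s.}
\]
For this I would invoke the input lemmas: the pairs $(\zeta^i,W^i)$ are i.i.d.\ with finite $p'$-moment, so Lemma \ref{lemma:modified lln} gives $L^N((\zeta^i,W^i)_{i\le N})\to\mathcal{L}(\bar\zeta,\bar W)$ in $p$-Wasserstein almost surely; by hypothesis $L^N(R^{(N)})\to\mathcal{L}(\bar R)$ in $p'$- and hence $p$-Wasserstein almost surely; and the two families are independent, so Lemma \ref{lemma:convergence not iid inputs}, which upgrades independent marginal convergences to convergence of the joint empirical measure towards the product law, yields the displayed limit. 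Substituting it into the Lipschitz estimate completes the argument.

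I expect the main obstacle to be exactly this last step. Since $(R^{i,N})$ is a triangular array with no exchangeability, the joint convergence does not follow from a law of large numbers on the product space; one must instead condition on the $R^{i,N}$ and run a conditional law of large numbers uniformly over the (almost surely convergent) conditioning, or construct the coupling explicitly from an optimal plan between $L^N(R^{(N)})$ and $\mathcal{L}(\bar R)$, taking care that the resulting plan has the correct marginals. This is the work packaged into Lemma \ref{lemma:convergence not iid inputs}; the state-space augmentation and the check of Assumption \ref{assumption} for $\tilde b$ are routine.
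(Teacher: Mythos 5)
Your proposal is correct and follows essentially the same route as the paper: augment the state space to $\mathbb{R}^{d+n}$ with drift $(b,0)$ and noise $(W^i,R^{i,N})$, apply Theorem \ref{classical mean field result} to reduce everything to convergence of the input empirical measures, and obtain the joint convergence from the independence of the two families via Lemma \ref{lemma:convergence not iid inputs}. Two small slips worth noting: the i.i.d.\ part $L^N((\zeta^i,W^i)_i)\to\mathcal{L}(\bar\zeta,\bar W)$ in Wasserstein is Lemma \ref{appendix:iid into wasserstein}, not Lemma \ref{lemma:modified lln} (the latter is the fourth-moment LLN used \emph{inside} the proof of Lemma \ref{lemma:convergence not iid inputs}); and Lemma \ref{lemma:convergence not iid inputs} only yields weak convergence of the joint empirical measure, so one still needs the uniform $q$-moment bounds ($p<q<p'$) coming from the marginal convergences together with Lemma \ref{lem:weak_implies_W} to upgrade to $p$-Wasserstein, exactly as the paper does.
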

	
	\begin{proof}
		We start by rewriting the system \eqref{eq:heterogeneous system} so that we can invoke Theorem \ref{classical mean field result}. We change the state space of the system from $\mathbb{R}^d$ to $\mathbb{R}^d \times \mathbb{R}^n$ and we define on this new space the process $Y_t^{i,N} := (X_t^{i, N}, R_t^{i,N})$. Clearly, $X^{i,N}$ is a solution to system \eqref{eq:heterogeneous system} if and only if $Y^{i, N}$ solves
		\begin{equation}\label{eq:heterogeneous_system_enlarged}
		\left\{
		\begin{array}{l}
		dY^{i,N}_t = \left(\begin{array}{c}
		b(t, Y^{i,N}_t, L^N(Y^{(N)}_t)) \\
		0
		\end{array}\right)dt 
		+ d\left(\begin{array}{c}
		W^{i}_t\\
		R^{i,N}_t
		\end{array}\right)\\
		Y^{i,N}_0 = \left(\begin{array}{c}
		\zeta^{i}\\
		R^{i,N}_0
		\end{array}\right).
		\end{array}
		\right.	
		\end{equation}
		A similar transformation can be applied to the McKean-Vlasov equation to obtain that $\bar Y_t = (\bar X_t, \bar R_t)$ solves
		\begin{equation*}
		\left\{
		\begin{array}{l}
		d\bar Y_t = \left(\begin{array}{c}
		b(t, \bar Y_t, \mathcal{L}(\bar Y_t)) \\
		0
		\end{array}\right)dt 
		+ d\left(\begin{array}{c}
		\bar W_t\\
		\bar R_t
		\end{array}\right)\\
		\bar Y_0 = \left(\begin{array}{c}
		\bar \zeta\\
		\bar R_0
		\end{array}\right).
		\end{array}
		\right.	
		\end{equation*}
		% 	In this set up the inputs of the problems are
		% 	\begin{equation*}
		% 	\left(\begin{array}{c}
		% 	\zeta^i\\
		% 	R^{i,N}
		% 	\end{array}\right),
		% 	\left(\begin{array}{c}
		% 	\bar \zeta\\
		% 	\bar R
		% 	\end{array}\right) \in L^p(\mathbb{R}^{d+n});
		% 	\;
		% 	\left(\begin{array}{c}
		% 	W^i_t\\
		% 	0
		% 	\end{array}\right),
		% 	\left(\begin{array}{c}
		% 	\bar W_\cdot\\
		% 	0
		% 	\end{array}\right) \in L^p(C([0,T], \mathbb{R}^{d+n}))
		% 	\end{equation*}
		In this setting the inputs satisfy the assumption of Theorem \ref{classical mean field result}. Hence, we obtain the following inequality. $\forall \omega \in \Omega$,
		\begin{align*}
		\mathcal{W}_{C_T(\mathbb{R}^{d+n}), p} & (L^N(X^{(N)}, R^{(N)}), \mathcal{L}(\bar{X}, \bar{R}))^p \\
		& \leq C \mathcal{W}_{\mathbb{R}^{d} \times C_T(\mathbb{R}^{d+n}), p}(L^N(\zeta^{(N)}, R^{(N)}, W^{(N)}), \mathcal{L}(\bar{\zeta}, \bar{R}, \bar{W}))^p.
		\end{align*}
		By Lemma \ref{lemma:convergence not iid inputs} (with $X_i := (\zeta^{i,N}, W^{i,N})$ and $Y_{i,N} := (R^{i,N})$ on the spaces $E := \mathbb{R}^d \times C_T$ and $F := \mathbb{R}^n$), $L^N(\zeta^{(N)}, R^{(N)}, W^{(N)})$ converges weakly to $\mathcal{L}(\bar{\zeta}, \bar{R}, \bar{W})$ $\mathbb{P}$-a.s.. Now, for every $q$ with $p<q<p^\prime$, $L^N(\zeta^{(N)}, W^{(N)}$ converges in $q$-Wasserstein distance, $\mathbb{P}$-a.s., by Lemma \ref{appendix:iid into wasserstein} and $L^N(R^{(N)})$ converges also in $q$-Wasserstein distance, $\mathbb{P}$-a.s., by assumption. In particular, $\mathbb{P}$-a.s., $L^N(\zeta^{(N)}, R^{(N)}, W^{(N)})$ have uniformly (in $N$) bounded $q$-th moments. Hence, by Lemma \ref{lem:weak_implies_W}, $L^N(\zeta^{(N)}, R^{(N)}, W^{(N)})$ converges also in $p$-Wasserstein distance, $\mathbb{P}$-a.s., and so $\mathcal{W}_{C_T(\mathbb{R}^{d+n}), p} (L^N(X^{(N)}, R^{(N)}), \mathcal{L}(\bar{X}, \bar{R}))$ tends to $0$. The proof is complete.
		%Almost sure convergence to $0$ of the right-hand side is a consequence of Lemma \ref{lemma:convergence not iid inputs} (with $X_i := (\zeta^{i,N}, W^{i,N})$ and $Y_{i,N} := (R^{i,N})$ on the spaces $E := \mathbb{R}^d \times C_T$ and $F := \mathbb{R}^n$) and Lemma \ref{lem:weak_implies_W} (because $R^{(N)}$ and $W$ have uniformly bounded $p^\prime$-moments). The proof is complete.
	\end{proof}

The following variant of the strong law of large numbers will be useful to prove Lemma \ref{lemma:convergence not iid inputs}.
\begin{lemma}\label{lemma:modified lln}
	Let $(X_i)_{i\geq 1}$ be a sequence of i.i.d.~real-valued centered random variables and let $(Y_{i,N})_{1 \leq i \leq N}$ be an independent family of real-valued independent random variables. Moreover, assume that there exists $C>0$ such that
	\begin{equation*}
	\Vert X_i \Vert_{L^4(\mathbb{R})} \leq C,
	\quad \Vert Y_{i,N} \Vert_{L^4(\mathbb{R})} \leq C,
	\quad \forall i,N \geq 1.
	\end{equation*}
	Then,
	\begin{equation*}
	S^N := \frac1N \sum_{i=1}^{N}X_iY_{i,N} \to 0,
	\quad \mathbb{P}-\mbox{a.s.}
	\end{equation*}
\end{lemma}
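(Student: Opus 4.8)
The plan is to prove a slightly stronger $L^2$ statement and then upgrade to almost sure convergence along a subsequence, finally filling the gaps between subsequence terms by a maximal estimate. First I would compute $\mathbb{E}[(S^N)^2]$. Since the $X_i$ are i.i.d.\ and centered, and the family $(Y_{i,N})_{1\le i\le N}$ is independent of $(X_i)_{i\ge1}$, conditioning on the $Y$'s gives
\begin{equation*}
\mathbb{E}[(S^N)^2] = \frac1{N^2}\sum_{i,j=1}^N \mathbb{E}[X_iX_j]\,\mathbb{E}[Y_{i,N}Y_{j,N}\mid \text{indep.}]
= \frac1{N^2}\sum_{i=1}^N \mathbb{E}[X_i^2]\,\mathbb{E}[Y_{i,N}^2],
\end{equation*}
where the cross terms $i\neq j$ vanish because $\mathbb{E}[X_iX_j]=\mathbb{E}[X_i]\mathbb{E}[X_j]=0$. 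Using $\mathbb{E}[X_i^2]\le C^2$ and $\mathbb{E}[Y_{i,N}^2]\le C^2$ (which follow from the $L^4$ bounds by Jensen, and in fact the $L^2$ bounds would suffice for this step), we get $\mathbb{E}[(S^N)^2]\le C^4/N$. Hence $S^N\to0$ in $L^2$, and in particular in probability.

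To promote this to almost sure convergence I would first pass to the subsequence $N_k=k^2$. Since $\sum_k \mathbb{E}[(S^{N_k})^2]\le C^4\sum_k k^{-2}<\infty$, Borel--Cantelli (applied to $\{|S^{N_k}|>\varepsilon\}$ for each $\varepsilon$, or directly via $\sum_k (S^{N_k})^2<\infty$ a.s.) gives $S^{N_k}\to0$ almost surely. The remaining task is to control $\max_{N_k\le N<N_{k+1}}|S^N - S^{N_k}|$; this is the step where the $L^4$ hypothesis is genuinely used, rather than merely $L^2$. Writing $T_N:=N S^N=\sum_{i=1}^N X_iY_{i,N}$, note the subtlety that the summands $Y_{i,N}$ depend on $N$, so $(T_N)$ is \emph{not} a partial-sum process and one cannot invoke a martingale maximal inequality directly. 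Instead I would bound the maximal increment crudely: for $N_k\le N<N_{k+1}$,
\begin{equation*}
|S^N-S^{N_k}|\le \Big|\frac1N - \frac1{N_k}\Big|\,|T_{N_k}| + \frac1N\Big|\sum_{i=N_k+1}^N X_iY_{i,N}\Big| + \frac1N\Big|\sum_{i=1}^{N_k}X_i(Y_{i,N}-Y_{i,N_k})\Big|.
\end{equation*}
The first term is handled by $|1/N-1/N_k|\le (N_{k+1}-N_k)/N_k^2 = O(k^{-3})$ together with the (already established) boundedness of $|T_{N_k}|/N_k$. For the genuinely new blocks, I would estimate the fourth moment: by a Rosenthal-type / Marcinkiewicz--Zygmund bound for sums of independent centered variables $X_iY_{i,N}$ (the product is centered and independent across $i$, with fourth moment $\le C^8$ by Cauchy--Schwarz), one gets $\mathbb{E}\big|\sum_{i\in B}X_iY_{i,N}\big|^4 \lesssim (\#B)^2 C^8$ for any block $B$ of indices. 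With $\#B\le N_{k+1}-N_k = O(k)$ and $N\ge N_k = k^2$, taking fourth moments and summing over the at most $O(k)$ values of $N$ in the range, Borel--Cantelli again yields $\max_{N_k\le N<N_{k+1}}|S^N-S^{N_k}|\to 0$ a.s.; the third term is treated the same way after noting $Y_{i,N}-Y_{i,N_k}$ is bounded in $L^4$ by $2C$.

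The main obstacle, and the only place requiring care, is precisely this last maximal control: because the weights $Y_{i,N}$ carry the index $N$, the standard reduction to a martingale maximal inequality is unavailable, so one must pay for a fourth moment and a block decomposition rather than getting the maximal bound for free. Everything else --- the $L^2$ variance computation, the subsequence extraction, and the two applications of Borel--Cantelli --- is routine. If one is willing to accept only the $L^2$ (equivalently, in-probability) conclusion, the proof collapses to the first display; the $L^4$ hypothesis in the statement is exactly what buys the almost sure upgrade.
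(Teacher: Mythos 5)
Your argument is essentially correct, but it takes a much longer road than the paper's. The paper never passes to a subsequence: it computes the \emph{fourth} moment of $S^N$ directly. Because the $X_i$ are centered and independent of each other and of the $Y$-array, every term of $\mathbb{E}|S^N|^4$ in which some index appears exactly once vanishes, leaving only the diagonal and paired terms, whence $\mathbb{E}|S^N|^4 \leq C N^{-2}$. Chebyshev with the threshold $N^{-p}$, $p<1/4$, then makes $\sum_N \mathbb{P}(|S^N|>N^{-p})$ summable, and Borel--Cantelli gives a.s.\ convergence of the \emph{whole} sequence in one stroke. Your route --- second moment only, subsequence $N_k=k^2$, then a block decomposition to control the gaps --- works, but the difficulty you identify as ``the main obstacle'' (that $T_N$ is not a partial-sum process, so no martingale maximal inequality is available) simply does not arise in the paper's approach, since no maximal inequality over the gaps is needed when the full sequence is already summably controlled. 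In short: you spent the $L^4$ hypothesis on the increments, where the paper spends it on $S^N$ itself, which is both shorter and avoids the interpolation entirely.

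One point in your third term deserves care. You invoke a Rosenthal/Marcinkiewicz--Zygmund bound for ``sums of independent centered variables'' applied to $\sum_{i\leq N_k} X_i(Y_{i,N}-Y_{i,N_k})$. The summands involve the pairs $(Y_{i,N},Y_{i,N_k})$, which couple two different rows of the triangular array; the hypothesis only guarantees independence \emph{within} each row, so these summands need not be mutually independent. The fourth-moment bound $\lesssim (\#B)^2$ nevertheless survives, because expanding the fourth power and using only that the $X_i$ are i.i.d., centered, and independent of the entire $Y$-array kills every term with a lone $X$-index, and the surviving paired terms are controlled by Cauchy--Schwarz on the $Y$-factors. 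So the estimate is right, but the justification should go through the $X$'s rather than through an independence claim about the products. Also note that for this term the block size is $N_k=k^2$, not $O(k)$; the exponents still close ($N_k^2/N^4\lesssim k^{-4}$, times $O(k)$ values of $N$, summable in $k$), but it is a different count from the one you quote for the second term.
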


\begin{proof}
	We first establish a bound on the fourth moment of the empirical sum $S^N$.
	\begin{align*}
	\mathbb{E}\vert S^N \vert^4
	= & \frac{1}{N^4}  \sum_{i=1}^{N}\mathbb{E}\left[ X_i^4 \right]\mathbb{E}\left[ Y_{i,N}^4 \right]
	+ \frac{6}{N^4} \sum_{i,j=1}^{N}\mathbb{E}\left[ X_i^2 \right]\mathbb{E}\left[ X_j^2 \right]\mathbb{E}\left[ Y_{i,N}^2 \right]\mathbb{E}\left[ Y_{j,N}^2 \right] \leq \frac{C}{N^2}.
	\end{align*}
	Only those two terms in the sum do not vanish, because the $X_i$'s are centered. The constant $C$ depends on the upper bounds of the random variables. Let $p<\frac{1}{4}$,
	\begin{equation*}
	E_N := \left\{ \vert S^N \vert > \frac{1}{N^p} \right\}.
	\end{equation*}
	Using Chebychev inequality, we have the following
	\begin{equation*}
	\sum_{N=1}^\infty \mathbb{P}\{E^N\} 
	\leq \sum_{N=1}^\infty N^{4p}\mathbb{E}[S^N] 
	\leq C \sum_{N=1}^\infty N^{4p - 2}.
	\end{equation*}
	For our choice of $p$, we have convergence of the series. Borel Cantelli's Lemma implies that 
	\begin{equation*}
	\mathbb{P}\{  \limsup_{N\to\infty} E^N\} =0,
	\end{equation*}
	which in turn implies almost sure convergence of $S^N$.
\end{proof}
	
		\begin{lemma}\label{lemma:convergence not iid inputs}
		Let $p \in [1, \infty)$ be fixed.
		Let $(X_i)_{i\geq 1}$ be a sequence of i.i.d.~random variables on a space $(\Omega, \mathcal{A}, \mathbb{P})$ taking values in a Polish space $E$, with law $\mu \in \mathcal{P}_p(E)$. Let $(Y_{i,N})_{1\leq i\leq N}$ be another sequence of random variables taking values on a Polish space $F$, which is independent from $(X_i)_{i\geq 1}$.
		Assume that there exists a probability measure $\lambda \in \mathcal{P}_p(F)$ such that
		\begin{equation}\label{convergence of the ys}
		L^N(Y^{(N)}) := \frac{1}{N}\sum_{i=1}^{N}\delta_{Y_{i,N}} 
		\overset{\ast}{\rightharpoonup} \lambda,
		\quad \mathbb{P}-\mbox{a.s.}
		\end{equation}
		Then,
		\begin{equation*}
		L^N(X^{(N)}, Y^{(N)}) \overset{\ast}{\rightharpoonup} \mu \otimes \lambda,
		\quad \mathbb{P}-\mbox{a.s.}
		\end{equation*}
	\end{lemma}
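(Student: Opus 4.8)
The plan is to reduce to checking convergence of the empirical measures against a fixed countable family of test functions and then to handle each such test function by a centering argument combined with the fourth–moment estimate already used in Lemma \ref{lemma:modified lln}.

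First, since $E$ and $F$ are Polish, so is $E\times F$, and by standard facts weak convergence of probability measures on $E\times F$ may be tested against a countable family of functions of product form $f\otimes g$, where $f$ and $g$ range over countable, sup-norm-dense families of bounded Lipschitz functions on $E$ and on $F$ respectively (equip $E$ and $F$ with totally bounded metrics inducing their topologies; on the compact completions, Stone--Weierstrass shows that finite linear combinations of products are dense, while the unit balls of Lipschitz functions are sup-norm separable by Arzel\`a--Ascoli). Hence it suffices to show that for each such pair $(f,g)$ one has, $\mathbb{P}$-a.s.,
\[
\frac1N\sum_{i=1}^N f(X_i)\,g(Y_{i,N}) \;\longrightarrow\; \mu(f)\,\lambda(g),
\]
and then to intersect the countably many full-measure sets on which this holds, together with the full-measure set on which \eqref{convergence of the ys} holds when tested against each $g$ of the countable family.

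Second, I would split $f(X_i)=\bigl(f(X_i)-\mu(f)\bigr)+\mu(f)$, so that
\[
\frac1N\sum_{i=1}^N f(X_i) g(Y_{i,N}) \;=\; S^N \;+\; \mu(f)\cdot\frac1N\sum_{i=1}^N g(Y_{i,N}),
\qquad S^N:=\frac1N\sum_{i=1}^N\bigl(f(X_i)-\mu(f)\bigr)g(Y_{i,N}).
\]
The second term converges a.s.\ to $\mu(f)\lambda(g)$ since $g$ is bounded and continuous and $L^N(Y^{(N)})\overset{\ast}{\rightharpoonup}\lambda$ a.s.\ by hypothesis. It remains to prove $S^N\to 0$ a.s. Here $\tilde X_i:=f(X_i)-\mu(f)$ are i.i.d., centered, and bounded (hence uniformly in $L^4$), and they are independent of the whole family $(Y_{i,N})$; the $g(Y_{i,N})$ are bounded by $\|g\|_\infty$ but \emph{not} assumed independent of one another. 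Following the proof of Lemma \ref{lemma:modified lln}, I would expand $\mathbb{E}|S^N|^4$: using the independence of the $\tilde X$'s from the $Y$'s and the fact that the $\tilde X_i$ are i.i.d.\ centered, only the diagonal and the two-pair terms survive, and the $Y$-moments appearing there, $\mathbb{E}[g(Y_{i,N})^4]$ and $\mathbb{E}[g(Y_{i,N})^2 g(Y_{j,N})^2]$, are bounded by $\|g\|_\infty^4$ (for the latter, Cauchy--Schwarz suffices, so the lack of independence among the $Y_{i,N}$ is immaterial). This yields $\mathbb{E}|S^N|^4\le C/N^2$ with $C$ depending only on $\|f\|_\infty$ and $\|g\|_\infty$, whence by Chebyshev's inequality and Borel--Cantelli, exactly as in Lemma \ref{lemma:modified lln}, $S^N\to 0$ a.s.

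The only genuinely delicate points are bookkeeping: ensuring that a single $\mathbb{P}$-null set serves the whole countable family of test functions, and that this countable product family really is convergence determining on $E\times F$. An alternative that sidesteps the latter discussion is to note first that $L^N(X^{(N)})\overset{\ast}{\rightharpoonup}\mu$ a.s.\ (law of large numbers for empirical measures on Polish spaces) and $L^N(Y^{(N)})\overset{\ast}{\rightharpoonup}\lambda$ a.s., deduce a.s.\ tightness of $L^N(X^{(N)},Y^{(N)})$ from tightness of its two marginal families, and then identify every subsequential weak limit $\pi$ via $\pi(f\otimes g)=\mu(f)\lambda(g)$, obtained from the computation above, which forces $\pi=\mu\otimes\lambda$.
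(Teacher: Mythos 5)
Your proposal is correct and follows essentially the same route as the paper: the paper establishes a.s.\ tightness of the joint empirical measures from the two marginal families and then identifies subsequential limits by testing against product functions $\varphi_1\otimes\varphi_2$, using exactly your centering decomposition and the fourth-moment/Borel--Cantelli bound of Lemma \ref{lemma:modified lln} for the term $S^N$ (your ``alternative'' at the end is literally the paper's argument). Your observation that independence among the $Y_{i,N}$ is not needed for the fourth-moment bound --- Cauchy--Schwarz controls the cross terms --- is a worthwhile refinement, since the paper's Lemma \ref{lemma:modified lln} formally assumes such independence while the statement of Lemma \ref{lemma:convergence not iid inputs} does not.
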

	
	\begin{proof}
		Since $(X_i)_{i\geq 1}$ are a sequence of i.i.d.~random variables, there exists a set of full measure $\Omega^x \subset \Omega$, such that $L^N(X^{(N)}(\omega)) \overset{\ast}{\rightharpoonup} \mu$, for every $\omega \in \Omega^x$. Weak convergence implies tightness of the sequence $(L^N(X^{(N)})(\omega))$, thus, for every $\epsilon > 0$, there exists a compact set $E^{\omega}_\epsilon \subset E$, such that
		\begin{equation*}
		L^N(X^{(N)}(\omega))((E^{\omega}_\epsilon)^c ) < \frac{\epsilon}{2},
		\quad \omega \in \Omega^x.
		\end{equation*}
		In a similar way, there exists a set of full measure $\Omega^y \subset \Omega$ such that for every $\epsilon > 0$ there exists a compact $F^{\omega}_\epsilon \subset F$ that satisfies		$L^N(Y^{(N)}(\omega))((F^{\omega}_\epsilon)^c ) < \frac{\epsilon}{2}$, $\omega \in \Omega^y$.
		For every $\omega \in \Omega^x \cap \Omega^y$, we can consider the compact $K^{\omega}_\epsilon = E^{\omega}_\epsilon \times F^{\omega}_\epsilon \subset E\times F$ and compute the following
		\begin{equation*}
		L^N(X^{(N)}(\omega), Y^{(N)}(\omega))((K^{\omega}_\epsilon)^c) 
		\leq L^N(X^{(N)}(\omega))((E^{\omega}_\epsilon)^c ) + L^N(Y^{(N)}(\omega))((F^{\omega}_\epsilon)^c )
		< \epsilon.
		\end{equation*}
		We have thus shown that the sequence $L^N(X^{(N)}, Y^{(N)})$ is almost surely tight. With an abuse of notation, we call $L^N$ a converging subsequence and we take a continuous and bounded test function of the form $\varphi(x, y) := \varphi_1(x)\varphi_2(y)$ on $E \times F$. We compute the following
		\begin{align*}
		L^N(X^{(N)}, Y^{(N)})(\varphi) - (\mu \otimes \lambda)(\varphi) 
		= & \frac{1}{N} \sum_{i=1}^{N}\varphi_2(Y_{i,N})\left[ \varphi_1(X_i) - \int_E \varphi_1(x)d\mu(x) \right] \\
		& + \frac{1}{N} \sum_{i=1}^{N}\int_E \varphi_1(x)d\mu(x)\left[ \varphi_2(Y_{i,N}) - \int_F \varphi_2(y)d\lambda(y) \right].
		\end{align*}
		The first term on the right hand side converges to zero thanks to Lemma \ref{lemma:modified lln}, since the term in the brackets is a collection of bounded centered i.i.d.~random variables. The second term on the right-hand side converges by assumption \eqref{convergence of the ys}.
	\end{proof}

\begin{remark}\label{rmk:heterogeneous_noise}
The same result Corollary \ref{cor: heterogeneus particles} holds actually in a slightly different context of heterogeneous noises, namely when, in equation \eqref{eq:heterogeneous system}, the noise $dW^i_t$ in equation is replaced by $d[ \sigma(R^i_t)W^i_t]$ and, in equation \eqref{eq:heterogeneous_McKVla}, the noise $d\bar{W}$ is replaced by $d[\sigma(\bar{R}^i_t)\bar{W}^i_t]$, for a continuous bounded function $\sigma:\R\to \R$. Indeed, one can repeat the proof of Corollary \ref{cor: heterogeneus particles} replacing the noise in equation \eqref{eq:heterogeneous_system_enlarged} by
\begin{align*}
d\left(\begin{array}{c}
\sigma(R^{i,N}_t)W^{i}_t\\
R^{i,N}_t
\end{array}\right)
\end{align*}
and similarly for corresponding McKean-Vlasov SDE, and one gets
\begin{align*}
\mathcal{W}_{C_T(\mathbb{R}^{d+n}), p} & (L^N(X^{(N)}, R^{(N)}), \mathcal{L}(\bar{X}, \bar{R}))^p \\
& \leq C \mathcal{W}_{\mathbb{R}^{d} \times C_T(\mathbb{R}^{d+n}), p}(L^N(\zeta^{(N)}, R^{(N)}, [\sigma(R)W]^{(N)}), \mathcal{L}(\bar{\zeta}, \bar{R}, [\sigma(\bar{R})\bar{W}]))^p.
\end{align*}
Then one notes that $(\sigma(R^{i,N})W^{i},R^{i,N})$ is a continuous function of $(W^{i},R^{i,N})$, hence, since $L^N(\zeta^{(N)}, R^{(N)}, W^{(N)})$ converges weakly $\mathbb{P}$-a.s., then also $L^N(\zeta^{(N)}, R^{(N)}, [\sigma(R)W]^{(N)})$ converges weakly $\mathbb{P}$-a.s.. Moreover, the convergence in $q$-Wasserstein distance, for $p<q<p^\prime$, of $L^N(\zeta^{(N)}, W^{(N)})$ and of $L^N(R^{(N)})$ and the boundedness of $\sigma$ imply the $\mathbb{P}$-a.s. uniform (in $N$) bound on the $q$-th moments of $L^N(\zeta^{(N)}, R^{(N)}, [\sigma(R)W]^{(N)})$. Hence $L^N(\zeta^{(N)}, R^{(N)}, [\sigma(R)W]^{(N)})$ converges also in $p$-Wasserstein distance $\mathbb{P}$-a.s..
\end{remark}

\section{Large Deviations}
\label{section:large deviations}

In this section we assume that the driving paths $W$ of equation \eqref{mckean vlasov equation} live on the space $C_{T,0}$ of continuous functions starting at $0$. The results of Sections \ref{section:main result} and \ref{section:applications} apply also in this case.

Let $p \in [1, \infty)$. Let $b : [0,T] \times \mathbb{R}^d \times \mathcal{P}_p(\mathbb{R}^d) \to \mathbb{R}^d$ be a drift as before and such that it satisfies \ref{assumption}.
%	Given inputs $(\zeta, W) \in L^p(\mathbb{R}^d  \times C_{T, 0})$ and $\mu \in \mathcal{P}_p(C_T)$, we consider the following equation on $\mathbb{R}^{d}$,
%	\begin{equation}
%	\label{ldp:linear equation}
%	\left\{
%	\begin{array}{l}
%	dX_t = b(t, X_t, \mu_t) dt + dW_t\\
%	X_0 = \zeta \in \mathbb{R}^{d}.
%	\end{array}
%	\right.	
%	\end{equation}

As in Section \ref{section:main result}, we define the function
\begin{equation*}
\begin{array}{cccc}
\Phi: &\mathcal{P}_p(\mathbb{R}^d \times C_{T, 0})\times \mathcal{P}_p(C_T) & \rightarrow & \mathcal{P}_p(C_T)\\
& (\mathcal{L}(\zeta, W), \mu) & \mapsto & \mathcal{L}(X^\mu) = (S^\mu)_\# \mathcal{L}(\zeta, W),
\end{array}
\end{equation*}
where $S^\mu$ is the solution map of ODE \eqref{eq:ode}, as defined in \eqref{defn:solution map}, with $\mathbb{R}^d \times C_{T,0}$ instead of $\mathbb{R}^d \times C_T$ as a domain. Similarly, we consider the map $\Psi$ defined as in \eqref{eq:definition psi}, replacing $C_{T}$ with $C_{T,0}$.

%	As a consequence of Theorem \ref{main theorem}, we have that, for every input $(\zeta,W) \in L^p(\mathbb{R}^d \times C_{T, 0})$, there exists a unique fixed point $\mu^{\mathcal{L}(\zeta, W)}$ to the map $\Phi({\mathcal{L}(\zeta, W)}, \cdot )$. Moreover the map $\Psi$, defined as
%		\begin{equation*}
%		\begin{array}{cccc}
%		\Psi: & \mathcal{P}_p(\mathbb{R}^d\times C_{T, 0}) & \to & \mathcal{P}_p(C_T)\\
%		& \mathcal{L}(\zeta, W) & \mapsto & \mu^{\mathcal{L}(\zeta, W)}
%		\end{array}
%		\end{equation*}
%		is Lipschitz continuous.
%		For every input $(\zeta,W) \in L^p(\mathbb{R}^d \times C_{T, 0})$ let $X$ be the solution to the McKean-Vlasov equation \eqref{mckean vlasov equation} given by Theorem \ref{main theorem}. Then $\mathcal{L}(X) = \mu^{\mathcal{L}(\zeta, W)}$. In addition we have that $\Psi$ is a bijection, which will be useful to prove the large deviation principle.
We introduce, for every $\mu$ in $\mathcal{P}_p(C_T)$, the map
\begin{equation}
\label{eq:definion f mu}
f^\mu : C_T \ni \gamma \mapsto \left(\gamma_0, \gamma_\cdot - \gamma_0 - \int_{0}^{\cdot}b(s, \gamma_s, \mu_s) ds\right) \in \mathbb{R}^d \times C_{T,0}.
\end{equation}
Note that $f^\mu = (S^\mu)^{-1}$ and $f^\mu$ is continuous, in particular measurable.

\begin{lemma}
	\label{ldp:joint law}
	Let $T>0$ be fixed and let $p \in [1,\infty)$, assume \ref{assumption}. The function $\Psi$ is a bijection, with inverse given by $\Psi^{-1}(\mu) = f^\mu_\# \mu$.
\end{lemma}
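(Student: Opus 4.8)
The plan is to reduce everything to the identity, recorded just above, that $f^\mu=(S^\mu)^{-1}$ as a pair of mutually inverse measurable maps between $\mathbb{R}^d\times C_{T,0}$ and $C_T$, together with the factorization $\Phi(\nu,\mu)=(S^\mu)_\#\nu$. Once these are in hand, the statement is a purely formal manipulation of push-forwards: write $G(\mu):=f^\mu_\#\mu$ for the candidate inverse, and show $G\circ\Psi=\mathrm{id}$ and $\Psi\circ G=\mathrm{id}$.

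\textbf{Step 1 (well-definedness of $G$).} First I would check that $G$ maps $\mathcal{P}_p(C_T)$ into $\mathcal{P}_p(\mathbb{R}^d\times C_{T,0})$. For $\gamma\in C_T$ one has $\|f^\mu(\gamma)\|\le |\gamma_0|+\|\gamma\|_{\infty:T}+|\gamma_0|+\int_0^T|b(s,\gamma_s,\mu_s)|\,ds$, and Assumption \ref{assumption} gives the linear growth bound $|b(s,x,\rho)|\lesssim |b(s,0,\delta_0)|+|x|+\mathcal{W}_{\mathbb{R}^d,p}(\rho,\delta_0)$, so that $\|f^\mu(\gamma)\|\lesssim 1+\|\gamma\|_{\infty:T}+\int_0^T\mathcal{W}_{\mathbb{R}^d,p}(\mu_s,\delta_0)\,ds$, where the constant absorbs $\int_0^T|b(s,0,\delta_0)|\,ds<\infty$. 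Since $\int_0^T\mathcal{W}_{\mathbb{R}^d,p}(\mu_s,\delta_0)^p\,ds\le T\int_{C_T}\|\gamma\|_{\infty:T}^p\,d\mu(\gamma)<\infty$ and $\gamma\mapsto\|\gamma\|_{\infty:T}^p$ is $\mu$-integrable, Minkowski's inequality gives $\int_{C_T}\|f^\mu(\gamma)\|^p\,d\mu(\gamma)<\infty$, i.e. $f^\mu_\#\mu\in\mathcal{P}_p(\mathbb{R}^d\times C_{T,0})$. Measurability of $f^\mu$ is already noted in the text.

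\textbf{Step 2 ($G\circ\Psi=\mathrm{id}$, hence $\Psi$ injective).} Fix $\nu\in\mathcal{P}_p(\mathbb{R}^d\times C_{T,0})$ and let $\mu:=\Psi(\nu)$, so that $\mu$ is the fixed point $\mu=\Phi(\nu,\mu)=(S^\mu)_\#\nu$. Applying $f^\mu_\#$ and using $f^\mu\circ S^\mu=\mathrm{id}_{\mathbb{R}^d\times C_{T,0}}$,
\[
G(\Psi(\nu))=f^\mu_\#\mu=f^\mu_\#\bigl((S^\mu)_\#\nu\bigr)=(f^\mu\circ S^\mu)_\#\nu=\nu .
\]
Conversely, fix $\mu\in\mathcal{P}_p(C_T)$ and set $\nu:=G(\mu)=f^\mu_\#\mu$, which lies in $\mathcal{P}_p(\mathbb{R}^d\times C_{T,0})$ by Step 1. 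Using $S^\mu\circ f^\mu=\mathrm{id}_{C_T}$,
\[
\Phi(\nu,\mu)=(S^\mu)_\#\nu=(S^\mu)_\#\bigl(f^\mu_\#\mu\bigr)=(S^\mu\circ f^\mu)_\#\mu=\mu ,
\]
so $\mu$ is a fixed point of $\Phi^\nu$. By the uniqueness of the fixed point established in Theorem \ref{main theorem} (applied to the canonical input $(\zeta,W)$ on the space $(\mathbb{R}^d\times C_{T,0},\nu)$, which has finite $p$-moment), this forces $\Psi(\nu)=\mu$, i.e. $\Psi(G(\mu))=\mu$.

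\textbf{Conclusion and main obstacle.} Steps 2 exhibit $G$ as a two-sided inverse of $\Psi$, so $\Psi$ is a bijection with $\Psi^{-1}=G$, i.e. $\Psi^{-1}(\mu)=f^\mu_\#\mu$. I do not expect a genuine obstacle here: the only non-formal ingredient is the moment bookkeeping of Step 1 ensuring $f^\mu_\#\mu$ stays in the correct Wasserstein space, and the observation that uniqueness of fixed points from Theorem \ref{main theorem} can be invoked for the arbitrary measure $\nu=f^\mu_\#\mu$; everything else is the algebra of push-forwards under the inversion $f^\mu=(S^\mu)^{-1}$.
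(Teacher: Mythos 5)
Your proof is correct and follows essentially the same route as the paper: both arguments rest on the identity $f^\mu=(S^\mu)^{-1}$, the push-forward algebra $\Phi(\nu,\mu)=(S^\mu)_\#\nu=\eta \Leftrightarrow \nu=f^\mu_\#\eta$ specialized to $\eta=\mu$, and the uniqueness of the fixed point of $\Phi(\nu,\cdot)$. You are merely more explicit than the paper on two points it leaves implicit -- the moment bound showing $f^\mu_\#\mu\in\mathcal{P}_p(\mathbb{R}^d\times C_{T,0})$, and the separation into the two compositions $G\circ\Psi$ and $\Psi\circ G$ -- which is fine but not a different argument.
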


\begin{proof}
	For every $\nu$ in $\mathcal{P}_p(\mathbb{R}^d\times C_T)$ and $\eta$ in $\mathcal{P}_p(C_T)$, we have
	\begin{equation*}
	\Phi(\nu,\mu) = (S^\mu)_\# \nu = \eta \text{ if and only if } \nu = f^\mu_\# \eta.
	\end{equation*}
	In particular, with $\eta=\mu$, we get that $\Psi(\nu)=\mu$ if and only if $\nu = f^\mu_\# \mu$. Hence $\Psi$ is invertible, with inverse given by $\Psi^{-1}(\mu) = f^\mu_\# \mu$ (one can also show that $\Psi^{-1}$ is continuous).
	%
	%	Let $\nu^i \in \mathcal{P}_p(\mathbb{R}^d \times C_{T,0})$, choose $(\zeta^i, W^i)$ any random variables with laws $\nu^i$, for $i=1,2$. For a given $\mu\in \mathcal{P}_p(C_T)$, let $S^\mu$ be the solution map associated with the linear system \eqref{ldp:linear equation}. Define $X^i := S^{\Psi(\nu^i)}(\zeta^i, W^i)$, for $i=1,2$. By the definition of $\Psi$, we have that $\mathcal{L}(X^i) = \Psi(\nu^i)$. We define the continuous function
	%	\begin{equation*}
	%	\begin{array}{cccc}
	%	g: & C_T \times \mathcal{P}_p(C_T) & \to & \mathbb{R}^d  \times C_{T,0}\\
	%	& (x, \mu) & \mapsto & (x_0, x - x_0 - \int_{0}^\cdot b(s, x_s, \mu_s)ds).
	%	\end{array}
	%	\end{equation*}
	%	If we assume that $\Psi(\nu^1) = \Psi(\nu^2)$, this implies that 
	%	\begin{equation*}
	%	\mathcal{L}(\zeta^1, W^1) = \mathcal{L}(g(X^1, \Psi(\nu^1))) = \mathcal{L}(g(X^2, \Psi(\nu^2))) = \mathcal{L}(\zeta^2, W^2).
	%	\end{equation*}
	%	This implies that $\nu^1 = \nu^2$. We have thus shown injectivity. 
	%	
	%	To prove surjectivity, we take a $\mu \in \mathcal{P}_p(C_T)$, we must show that there exists a random variable $(\zeta, W)\in L^p(\Omega, \mathbb{R}^d \times C_{T,0})$ such th at $\mathcal{L}(S^{\mu}(\zeta, W)) = \mu$. Let $X \in L^p(\Omega, C_T )$ with law $\mu$, we take $(\zeta, W_\cdot) := g(X_0, \mu) \in L^p(\mathbb{R}^d \times C_{T,0})$ . This clearly satisfies $\Psi(\mathcal{L}(\zeta, W)) = \mu$.
\end{proof}

For $N\in \mathbb{N}$, let $(\zeta^{(N)}, W^{(N)}) = (\zeta^{i,N}, W^{i,N})_{1\leq i\leq N}: \Omega \to (\mathbb{R}^d  \times C_{T, 0})^N$ be a family of random variables. We consider the system of interacting particles on $\mathbb{R}^{d}$ as defined in \eqref{classical system}, namely
\begin{equation}\label{ldp:particles}
\left\{
\begin{array}{l}
dX^{i,N} = b(t, X^{i,N}, L^N(X^{(N)})) dt + dW^{i,N}_t\\
X^{i, N}_0 = \zeta^{i,N}.
\end{array}
\right.
\end{equation}
with solution $X^{(N)} := (X^{i,N})_{i=1,\cdots, N}$. We have seen in Section \ref{section:classical mean field limit} that we can define a suitable probability space $(\Omega_N, \mathcal{A}_N, \mathbb{P}_N)$, such that
\begin{equation*}
\mathcal{L}_{\mathbb{P}_N}(\zeta^{(N)}, W^{(N)}) 
= L^N(\zeta^{(N)}, W^{(N)}) 
:= \frac{1}{N} \sum_{i=1}^N \delta_{(\zeta^{i,N}, W^{i,N})},
\end{equation*}
and equation \eqref{mckean vlasov equation} is exactly the interacting particle system \eqref{ldp:particles}. Let $(\bar \zeta, \bar W) \in L^p(\mathbb{R}^d  \times C_{T, 0})$, we call $\bar X \in L^p(C_T)$ the solution to the related McKean-Vlasov equation \eqref{limit mckean vlasov}.

This construction shows that $\Psi$ is a continuous function that maps the empirical measure of the inputs into the empirical measure of the particles, namely
\begin{equation*}
\Psi\left(L^N(\zeta^{(N)}, W^{(N)})\right) = L^N(X^{(N)}), \quad \forall N\in \mathbb{N}.
\end{equation*}
This suggests the following immediate application to the contraction principle for large deviations. 
\begin{lemma}
	\label{ldp:contraction principle}
	Let $(\zeta^{(N)}, W^{(N)}) = (\zeta^{i,N}, W^{i,N})_{1\leq i\leq N} \subset L^p(\mathbb{R}^d  \times C_{T, 0})$ be a sequence of random variables and let $I: \mathcal{P}_p(\mathbb{R}^d \times C_{T,0}) \to [0, +\infty]$ be a lower semi-continuous function. Assume that that $L^N(\zeta^{(N)}, W^{(N)})$ satisfies a large deviations principle with (good) rate function $I$, in the sense of Definition \ref{def: large deviations}.
	
	Let $X^{(N)} = (X^{i,N})_{i=1,\dots, N}$ be the solution to the interacting particle system \eqref{ldp:particles} with inputs $(\zeta^{i,N}, W^{i,N})_{i=1,\dots, N}$. Then the empirical law $L^N(X^{(N)})$ satisfies a large deviations principle with (good) rate function
	\begin{equation*}
	J(\mu) := I(\Psi^{-1}(\mu)) = I(f^\mu_{\#}\mu), \quad \forall \mu\in\mathcal{P}_p(C_T).
	\end{equation*}
\end{lemma}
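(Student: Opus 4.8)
The plan is to recognise the statement as a direct application of the \emph{contraction principle} of large deviation theory. The structural facts, all already available, are: (i) the pathwise solution operator descends, at the level of laws, to the map $\Psi:\mathcal P_p(\mathbb R^d\times C_{T,0})\to\mathcal P_p(C_T)$, which is Lipschitz continuous by Theorem \ref{main theorem} (in the version with paths starting at $0$); (ii) $\Psi$ is a bijection with $\Psi^{-1}(\mu)=f^\mu_\#\mu$, and indeed a homeomorphism, by Lemma \ref{ldp:joint law}; and (iii) on the discretised space $(\Omega_N,\mathcal A_N,\mathbb P_N)$ one has the identity $\Psi\big(L^N(\zeta^{(N)},W^{(N)})\big)=L^N(X^{(N)})$ for every $N$. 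Both $\mathcal P_p(\mathbb R^d\times C_{T,0})$ and $\mathcal P_p(C_T)$ are Polish under their $p$-Wasserstein metrics, so the hypotheses of the contraction principle are in force.

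First I would record (iii) carefully, tracing the construction of Section \ref{section:classical mean field limit}: the law of $(\zeta^{(N)},W^{(N)})$ under $\mathbb P_N$ is exactly the empirical measure $L^N(\zeta^{(N)},W^{(N)})$, equation \eqref{ldp:particles} is precisely the generalized McKean--Vlasov equation \eqref{mckean vlasov equation} on that space, and the law of its pathwise solution is $L^N(X^{(N)})$; applying $\Psi$ by definition yields the stated identity. Then, since by hypothesis $L^N(\zeta^{(N)},W^{(N)})$ satisfies a large deviations principle on $\mathcal P_p(\mathbb R^d\times C_{T,0})$ with rate function $I$ (at some speed $a_N\to\infty$), the contraction principle applied to the continuous map $\Psi$ gives that the image family $L^N(X^{(N)})=\Psi\big(L^N(\zeta^{(N)},W^{(N)})\big)$ satisfies a large deviations principle on $\mathcal P_p(C_T)$ at the same speed, with rate function
\[
J(\mu)=\inf\big\{\,I(\nu)\;:\;\nu\in\mathcal P_p(\mathbb R^d\times C_{T,0}),\ \Psi(\nu)=\mu\,\big\}.
\]
Because $\Psi$ is injective, the constraint set is the singleton $\{\Psi^{-1}(\mu)\}=\{f^\mu_\#\mu\}$, so $J(\mu)=I(\Psi^{-1}(\mu))=I(f^\mu_\#\mu)$ for every $\mu\in\mathcal P_p(C_T)$, which is the asserted rate function.

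Finally I would address the word ``good''. If $I$ is a good rate function, so is $J$, which is part of the statement of the contraction principle; alternatively, since $\Psi$ is a homeomorphism by Lemma \ref{ldp:joint law}, one has $J=I\circ\Psi^{-1}$, which is lower semicontinuous (composition of a lower semicontinuous function with a continuous one), and its sublevel sets $\{J\le c\}=\Psi(\{I\le c\})$ are compact as continuous images of compact sets, so lower semicontinuity and goodness transfer transparently along $\Psi$.

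I do not expect a genuine obstacle here: the whole content has been front-loaded into the continuity and bijectivity of $\Psi$, established earlier. The only points requiring attention are hygienic rather than analytic --- confirming that the spaces carrying $I$ and $J$ are Polish, that the speed $a_N$ is carried unchanged through the contraction, and that the identity in (iii) holds for \emph{every} $\omega$-slice (i.e. for the genuinely pathwise solutions on $(\Omega_N,\mathcal A_N,\mathbb P_N)$), not merely almost surely.
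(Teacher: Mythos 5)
Your proposal is correct and follows essentially the same route as the paper: apply the contraction principle along the continuous map $\Psi$, then use its bijectivity (Lemma \ref{ldp:joint law}) to collapse the infimum to the singleton $\{f^\mu_\#\mu\}$. Your additional remarks on goodness and on the pathwise identity $\Psi(L^N(\zeta^{(N)},W^{(N)}))=L^N(X^{(N)})$ only make explicit what the paper leaves implicit.
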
	

\begin{proof}
	We know that the function $\Psi$ is a continuous function, we can thus apply the contraction principle for large deviations which ensures that $L^N(X^{(N)})$ satisfies a large deviations principle with rate function
	\begin{equation*}
	J(\mu) := \inf \left\{
	I(\nu) \mid \forall \nu \in \mathcal{P}_p(\mathbb{R}^d\times C_{T,0}), \quad \Psi(\nu) = \mu
	\right\},
	\quad \mu \in \mathcal{P}_p(C_{T}).
	\end{equation*}
	From the bijectivity of $\Psi$, given by Lemma \ref{ldp:joint law}, we deduce that
	\begin{equation*}
	J(\mu) = I(\Psi^{-1}(\mu)) = I(f^\mu_{\#}\mu), \quad \mu \in \mathcal{P}_p(C_{T}).
	\end{equation*}
\end{proof}

Given a Polish space $E$, the relative entropy between two measures $\mu, \mu^\prime \in \mathcal{P}_p(E)$ is defined as
\begin{equation*}
H(\mu \mid \mu^\prime) := 
\left\{
\begin{array}{ll}
\int_{E} \log(\frac{d\mu}{d\mu^\prime}) d\mu, & \mu << \mu^\prime,\\
+\infty, & otherwise.
\end{array}
\right.
\end{equation*}
We can specialize Lemma \ref{ldp:contraction principle} to the case when the rate function of the inputs is the entropy with respect to a specific measure. In this case we obtain an even more explicit rate function for the convergence of the empirical measure of the particles.
\begin{lemma}
	\label{ldp:large deviations}
	Let $(\zeta^{(N)}, W^{(N)}) = (\zeta^{i,N}, W^{i,N})_{1\leq i\leq N} :\Omega \to (\mathbb{R}^d  \times C_{T, 0})^N$ be a sequence of random variables such that: There exists $\bar \nu \in \mathcal{P}_p(\mathbb{R}^d \times C_{T,0})$ such that $L^N(\zeta^{N)}, W^{(N)})$ satisfies a large deviations principle with good rate function
	\begin{equation*}
	H(\nu \mid \bar \nu), \quad \forall \nu \in \mathcal{P}_p(\mathbb{R}^d \times C_{T,0}).
	\end{equation*}
	
	Let $X^{(N)} = (X^{i,N})_{i=1,\dots, N}$ be the solution to the interacting particle system \eqref{ldp:particles} with inputs $(\zeta^{i,N}, W^{i,N})_{i=1,\cdots, N}$. Then the empirical law $L^N(X^{(N)})$ satisfies a large deviations principle with good rate function
	\begin{equation*}
	H(\mu \mid \Phi(\bar \nu, \mu)), \quad \forall \mu\in\mathcal{P}_p(C_T).
	\end{equation*}
\end{lemma}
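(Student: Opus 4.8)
The plan is to reduce the statement to Lemma \ref{ldp:contraction principle} and then identify the resulting rate function using the invariance of relative entropy under bimeasurable bijections.

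First I would apply Lemma \ref{ldp:contraction principle} with the choice $I := H(\,\cdot \mid \bar\nu)$, which by hypothesis is a good rate function on $\mathcal{P}_p(\mathbb{R}^d\times C_{T,0})$ (in particular lower semicontinuous). That lemma then immediately gives that $L^N(X^{(N)})$ satisfies a large deviations principle with good rate function
\begin{equation*}
J(\mu) = I\big(f^\mu_\#\mu\big) = H\big(f^\mu_\#\mu \mid \bar\nu\big), \qquad \mu \in \mathcal{P}_p(C_T).
\end{equation*}
Goodness of $J$ is part of the conclusion of Lemma \ref{ldp:contraction principle}, so it only remains to rewrite $J$ in the claimed form.

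The remaining step is purely measure-theoretic: I would show that $H(f^\mu_\#\mu \mid \bar\nu) = H(\mu \mid \Phi(\bar\nu,\mu))$ for every $\mu\in\mathcal{P}_p(C_T)$. Recall that $\Phi(\bar\nu,\mu) = (S^\mu)_\#\bar\nu$ and that, by the remark preceding Lemma \ref{ldp:joint law}, $f^\mu = (S^\mu)^{-1}$, so $S^\mu:\mathbb{R}^d\times C_{T,0}\to C_T$ is a bijection whose inverse $f^\mu$ is continuous, hence Borel. Relative entropy is invariant under such an isomorphism of measurable spaces: if $\rho\ll\sigma$ then $(S^\mu)_\#\rho\ll (S^\mu)_\#\sigma$ with $\frac{d(S^\mu)_\#\rho}{d(S^\mu)_\#\sigma} = \frac{d\rho}{d\sigma}\circ f^\mu$, and the change-of-variables formula yields $H\big((S^\mu)_\#\rho \mid (S^\mu)_\#\sigma\big) = H(\rho\mid\sigma)$, while both sides are $+\infty$ simultaneously when absolute continuity fails. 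Applying this with $\rho := f^\mu_\#\mu$ and $\sigma := \bar\nu$, and using $S^\mu\circ f^\mu = \mathrm{id}_{C_T}$ so that $(S^\mu)_\#\big(f^\mu_\#\mu\big) = \mu$, I obtain
\begin{equation*}
H\big(f^\mu_\#\mu \mid \bar\nu\big) = H\big((S^\mu)_\#(f^\mu_\#\mu) \mid (S^\mu)_\#\bar\nu\big) = H\big(\mu \mid \Phi(\bar\nu,\mu)\big),
\end{equation*}
which identifies $J(\mu)$ with the stated rate function and finishes the proof.

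I do not expect a genuine obstacle here: the only point requiring a little care is the change-of-variables identity for the Radon--Nikodym derivative under the bijection $S^\mu$, but this is standard once one knows that $f^\mu = (S^\mu)^{-1}$ is measurable, which is recorded in the excerpt. Everything else (lower semicontinuity, compact sublevel sets) is inherited from $J$ through Lemma \ref{ldp:contraction principle}.
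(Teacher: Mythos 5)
Your proposal is correct and follows essentially the same route as the paper: apply Lemma \ref{ldp:contraction principle} with $I = H(\cdot\mid\bar\nu)$ and then identify $H(f^\mu_\#\mu\mid\bar\nu)$ with $H(\mu\mid\Phi(\bar\nu,\mu))$ via the bimeasurable bijection $S^\mu$ (with inverse $f^\mu$) and the invariance of relative entropy under push-forward by such a map. The paper phrases the change of variables by pulling back ($\bar\nu = f^\mu_\#\Phi(\bar\nu,\mu)$, $\frac{d\Psi^{-1}(\mu)}{d\bar\nu}=\frac{d\mu}{d\Phi(\bar\nu,\mu)}\circ S^\mu$) rather than pushing forward, but this is the same argument.
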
	

\begin{proof}
	
	%	It follows from the definition of $I$ and $\Psi$ that
	%	\begin{equation*}
	%	I(\mu) = \inf \left\{
	%	H(\nu \mid \bar \nu) \mid \forall \nu \in \mathcal{P}_p(\mathbb{R}^d \times C_{T,0}), \quad \Phi(\nu, \mu) = \mu
	%	\right\}.
	%	\end{equation*}
	We can apply Lemma \ref{ldp:contraction principle} to obtain that $L^N(X^{(N)})$ satisfies a large deviations principle with rate function
	\begin{equation*}
	I(\mu) := H(\Psi^{-1}(\mu) \mid \bar \nu), \quad \mu \in \mathcal{P}_p(C_{T}).
	\end{equation*}
	We show now that $H(\Psi^{-1}(\mu) \mid \bar \nu) = H(\mu \mid \Phi(\bar \nu, \mu))$. For this, note that, by Lemma \ref{ldp:joint law} and by the definition of $\Phi$,
	\begin{equation*}
	\Psi^{-1}(\mu) = f^\mu_{\#}\mu, \quad \bar \nu = f^\mu_{\#}\Phi(\bar \nu, \mu).
	\end{equation*}
	Here $f^\mu_{\#}$ is a push-forward via a measurable map $f^\mu$ with measurable inverse $S^\mu$. Hence, by standard facts in measure theory, $\Psi^{-1}(\mu) \ll \bar\nu$ if and only if $\mu \ll \Phi(\bar\nu,\mu)$, in which case we have
	\begin{equation*}
	\frac{d\Psi^{-1}(\mu)}{d\bar\nu} =  \frac{d\mu}{d\Phi(\bar\nu,\mu)} \circ S^\mu.
	\end{equation*}
	Hence, in the case that $\Psi^{-1}(\mu)$ is not absolutely continuous with respect to $\bar \nu$, we have $H(\Psi^{-1}(\mu) \mid \bar \nu) = H(\mu \mid \Phi(\bar \nu, \mu)) = +\infty$. In the case that $\Psi^{-1}(\mu)$ is absolutely continuous with respect to $\bar\nu$, we have
	\begin{equation*}
	H(\Psi^{-1}(\mu) \mid \bar \nu) 
	= \int \frac{d\Psi^{-1}(\mu)}{d\bar\nu} \log \frac{d\Psi^{-1}(\mu)}{d\bar\nu} d\bar\nu
	= \int \frac{d\mu}{d\Phi(\bar\nu,\mu)} d(S^\mu_\#\bar\nu)
	= H( \mu\mid\Phi(\bar \nu, \mu)).
	\end{equation*}
	The proof is complete.
	%	We start by the case when $I(\mu) = +\infty$, which means that $\Psi^{-1}(\mu)$ is not absolutely continuous with respect to $\bar \nu$. By contradiction, assume that $H(\mu \mid \Phi(\bar \nu, \mu)) < +\infty$, which is equivalent to $\mu << \Phi(\bar \nu \mid \mu)$.
	%	Given a random variable $X \in L^p(\Omega, C_T)$ we take the function
	%	\begin{equation*}
	%	f : X \mapsto \left(X_0, X_\cdot - X_0 - \int_{0}^{\cdot}b(s, X_s, \mu) ds\right) \in L^p(\mathbb{R}^d \times C_{T,0}).
	%	\end{equation*}
	%	Note that $f = (S^\mu)^{-1}$, which implies the following
	%	\begin{equation*}
	%	\Psi^{-1}(\mu) = f_{\#}\mu, \quad \bar \nu = f_{\#}\Phi(\bar \nu, \mu).
	%	\end{equation*}
	%	Absolute continuity is preserved by the push-forward, which implies $\Psi^{-1}(\mu)<<\bar \nu$ and contradicts $I(\mu) = +\infty$.
	%	
	%	Consider now the case when $I(\mu) <+\infty$. We shall consider again the function $f$ and note that, if $\nu << \nu^\prime$ with density $u$, then $\frac{f_{\#}\nu}{f_{\#}\nu^\prime} = u \circ S^\mu$. From this, it follows immediately, that
	%	\begin{equation*}
	%	H(\Psi^{-1}(\mu) \mid \bar \nu) 
	%	= H(f_{\#}\mu \mid f_{\#}\Phi(\bar \nu, \mu)) 
	%	= H( \mu\mid\Phi(\bar \nu, \mu)).
	%	\end{equation*}
\end{proof}
We will now apply Sanov's Theorem to i.i.d. inputs. The case when the convergence happens in the Wasserstein metric was proved in \cite{MR2593592}, and it requires an exponential integrability assumption on the law of the inputs.
\begin{theorem}
	\label{ldp:large deviations iid inputs}
	Let $(\zeta^{i}, W^{i})_{i\geq 1} \subset L^p(\mathbb{R}^d  \times C_{T, 0})$ be a sequence of i.i.d.~random variables with law $\bar \nu := \mathcal{L}(\zeta^{1}, W^{1})$.
	Assume that there exists $(x^0, \gamma^0) \in \mathbb{R}^d \times C_{T,0}$ such that
	\begin{equation*}
	\log \int_{\mathbb{R}^d \times C_{T,0}} \exp(\lambda (\vert x - x^0 \vert + \Vert \gamma - \gamma^0\Vert_{\infty})^p) d\bar \nu(x, \gamma) < + \infty, 
	\quad \forall \lambda>0.
	\end{equation*}
	Let $X^{(N)} := (X^{i,N})_{i=1,\dots, N}$ be the solution to the interacting particle system \eqref{ldp:particles} with inputs $(\zeta^{(N)}, W^{(N)}) := (\zeta^{i}, W^{i})_{i=1,\cdots, N}$. Then the empirical law $L^N(X^{(N)})$ satisfies a large deviations principle with good rate function
	\begin{equation*}
	H(\mu \mid \Phi(\bar \nu, \mu)), \quad \forall \mu\in\mathcal{P}_p(C_T).
	\end{equation*}
\end{theorem}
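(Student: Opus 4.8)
The plan is to reduce the statement to Lemma \ref{ldp:large deviations}, whose hypothesis is precisely that the empirical measures of the inputs, $L^N(\zeta^{(N)}, W^{(N)}) = \frac1N\sum_{i=1}^N \delta_{(\zeta^i,W^i)}$, obey a large deviations principle in $\mathcal{P}_p(\mathbb{R}^d \times C_{T,0})$ (equipped with $\mathcal{W}_p$) with good rate function $\nu \mapsto H(\nu \mid \bar\nu)$. Granting this, Lemma \ref{ldp:large deviations} immediately delivers that $L^N(X^{(N)})$ satisfies a large deviations principle with good rate function $H(\mu \mid \Phi(\bar\nu,\mu))$, which is the assertion. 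Thus the only real content of the theorem is a Sanov-type statement for i.i.d.\ samples, but in the $p$-Wasserstein topology rather than merely the weak topology.

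First I would recall the classical Sanov theorem: since $(\zeta^i, W^i)_{i\geq 1}$ are i.i.d.\ with law $\bar\nu$ on the Polish space $\mathbb{R}^d \times C_{T,0}$, the empirical measures $L^N(\zeta^{(N)}, W^{(N)})$ satisfy a large deviations principle on $\mathcal{P}(\mathbb{R}^d \times C_{T,0})$ endowed with the weak topology, with good rate function $\nu\mapsto H(\nu\mid\bar\nu)$. The second and only nontrivial step is to upgrade this to the stronger $\mathcal{W}_p$-topology, and this is exactly where the exponential integrability hypothesis is used. This is the Sanov theorem in Wasserstein distance of \cite{MR2593592}: under the assumption $\log\int \exp(\lambda(\vert x - x^0\vert + \Vert\gamma-\gamma^0\Vert_\infty)^p)\,d\bar\nu(x,\gamma) < \infty$ for all $\lambda>0$, the empirical measures satisfy a large deviations principle in $(\mathcal{P}_p(\mathbb{R}^d \times C_{T,0}), \mathcal{W}_p)$ with the same good rate function $H(\cdot\mid\bar\nu)$. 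Concretely, the exponential moment bound yields, via an i.i.d.\ Cram\'er/exponential-Chebyshev estimate, exponential tightness of $L^N$ in $(\mathcal{P}_p,\mathcal{W}_p)$ — i.e.\ $\limsup_N \frac1N\log\mathbb{P}\big(\int(\vert x - x^0\vert+\Vert\gamma-\gamma^0\Vert_\infty)^p\,dL^N > R\big)\to-\infty$ as $R\to\infty$ — while the Donsker--Varadhan variational bound $\int f\,d\nu \leq H(\nu\mid\bar\nu) + \log\int e^f\,d\bar\nu$ shows that on each sublevel set $\{H(\cdot\mid\bar\nu)\leq c\}$ the $p$-th moments are uniformly integrable; together these force the weak-topology and $\mathcal{W}_p$-topology principles to share the same good rate function (and, incidentally, make $H(\cdot\mid\bar\nu)$ automatically $+\infty$ outside $\mathcal{P}_p$).

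Finally, with the input principle in hand, I would invoke Theorem \ref{main theorem}(ii), which gives that $\Psi:\mathcal{P}_p(\mathbb{R}^d \times C_{T,0})\to\mathcal{P}_p(C_T)$ is Lipschitz continuous, together with the identification $\Psi(L^N(\zeta^{(N)},W^{(N)})) = L^N(X^{(N)})$ from Section \ref{section:classical mean field limit}; Lemma \ref{ldp:large deviations} then packages the contraction principle with the entropy identity $H(\Psi^{-1}(\mu)\mid\bar\nu) = H(\mu\mid\Phi(\bar\nu,\mu))$ to conclude. The main obstacle is the passage from the weak to the $p$-Wasserstein topology, namely establishing (or cleanly citing) exponential tightness in $\mathcal{W}_p$ and the goodness of the rate function in that finer topology; once this is settled, the remainder is a direct application of results already proved in the paper.
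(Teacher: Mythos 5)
Your proposal is correct and follows essentially the same route as the paper: the paper's proof likewise consists of citing the Wasserstein-topology Sanov theorem of \cite[Theorem~1.1]{MR2593592} (for which the exponential integrability hypothesis is exactly the required condition) to get the input LDP with good rate $H(\cdot\mid\bar\nu)$ in $(\mathcal{P}_p,\mathcal{W}_p)$, and then invoking Lemma \ref{ldp:large deviations}. Your additional sketch of why the exponential moment condition upgrades the weak-topology Sanov theorem to the $\mathcal{W}_p$-topology is a faithful outline of the cited result and does not change the argument.
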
	

\begin{proof}
	Sanov's theorem, as in \cite[Theorem~1.1]{MR2593592}, gives that the empirical measure $L^N(\zeta^{(N)}, W^{(N)})$ satisfies a large deviations principle with good rate function
	\begin{equation*}
	I(\nu) = H(\nu \mid \bar \nu), \quad \forall \nu \in \mathcal{P}_p(\mathbb{R}^d\times C_{T,0}).
	\end{equation*}
	
	The proof then follows from Lemma \ref{ldp:large deviations}.
\end{proof}

\section{Central limit theorem}
\label{sec: central limit theorem}

In this section we study the fluctuations of the empirical measure around the limit. In order to do so, we apply an abstract result of Tanaka. In its original paper \cite{MR780770}, Tanaka studied McKean-Vlasov stochastic differential equations with linear drift, here we show how this can be also applied to more general drifts. 

We restate now \cite[Theorem 1.1]{MR780770}. Let $E$ be a Polish space and $\mathcal{M}(E)$ (resp. $\mathcal{P}(E)$) the space of signed (resp. probability) measures on $E$. In this section, given a function $f(x)$ on $E$, we use the notation $f(\mu)$ to denote $\int_E f(x) d\mu(x)$, for $\mu \in \mathcal{M}(E)$.

%\section{Tanaka's abstract theorem}
%Let $E$ be a Polish space and $\mathcal{M}_1(E)$ the space of measures on $E$, with the total variation norm $\| \cdot \|$ less or equal than $1$. We assume the following.
%\begin{assumption}
%	\label{asm: differentiable function}
%	Let $f: E \times \mathcal{M}_1(E) \to \R$ be such that there exists
%	\begin{equation*}
%	f^{\prime}: E \times E \times \mathcal{M}_1(E) \to \R
%	\end{equation*}
%	such that
%	\begin{enumerate}[label=(\roman*), ref=\ref{asm: differentiable function} (\roman*)]
%	\item 
%	\label{asm: differentiable function: boundedness} 
%	$f^{\prime}$ is bounded.
%	
%	\item 
%	\label{asm: differentiable function: strong continuity}
%	There exists a constant $C>0$ such that, for all $\mu,\nu \in \mathcal{M}_1(E)$,
%	$$\sup_{x,y \in E}| f^{\prime}(x,y,\mu) - f^{\prime}(x,y,\nu) | \leq C \|\mu - \nu \|.$$
%	
%	\item 
%	\label{asm: differentiable function: weak continuity}
%	If $ \mu^n \to \mu $ weakly, as $ n \to \infty $, then
%	$$ 
%	\lim_{ n \to \infty } f^{\prime}(x,y,\mu^n) = f^{\prime}(x,y,\mu),
%	\qquad
%	\forall x,y \in E.
%	$$
%	
%	\item 
%	\label{asm: differentiable function: algebraic relation}
%	for all $x,f \in E$, $\mu,\nu \in \mathcal{M}_1(E)$, 
%	$$
%	f(x, \mu + \nu) = f(x, \mu) + f^{\prime}(x, \nu, \mu) + \int_{0}^{1} [ f^{\prime}(x,\nu,\mu + \theta \nu) - f^{\prime}(x,\nu,\mu)] d \theta.,
%	$$
%	where we used the notation $f^{\prime}(x,\nu,\mu) = \int_{E}f^{\prime}(x,y,\mu) \nu(dy)$.
%\end{enumerate}
%\end{assumption}

\begin{theorem}[Tanaka]
	\label{thm: tanaka clt}
	Let $f: E \times \mathcal{P}(E) \to \R$ be a bounded function such that there exists
	\begin{equation*}
	f^{\prime}: E \times E \times \mathcal{P}(E) \to \R
	\end{equation*}
	such that
	\begin{enumerate}[label=(\roman*), ref=\ref{thm: tanaka clt} (\roman*)]
		\item 
		\label{asm: differentiable function: boundedness} 
		$f^{\prime}$ is bounded.
		
		\item 
		\label{asm: differentiable function: strong continuity}
		There exists a constant $C>0$ such that, for all $\mu,\nu \in \mathcal{P}_p(E)$,
		$$\sup_{x,y \in E}| f^{\prime}(x,y,\mu) - f^{\prime}(x,y,\nu) | \leq \mathcal{W}_{p, E}(\mu, \nu ).$$
		
%		\item 
%		\label{asm: differentiable function: strong continuity}
%		There exists a constant $C>0$ such that, for all $\mu,\nu \in \mathcal{P}(E)$,
%		$$\sup_{x,y \in E}| f^{\prime}(x,y,\mu) - f^{\prime}(x,y,\nu) | \leq C \|\mu - \nu \|.$$
%		
%		\item 
%		\label{asm: differentiable function: weak continuity}
%		
%		If $ \mathcal{W}_{p, E}(\mu^n, \mu) \to 0 $, as $ n \to \infty $, then
%		$$ 
%		\lim_{ n \to \infty } f^{\prime}(x,y,\mu^n) = f^{\prime}(x,y,\mu),
%		\qquad
%		\forall x,y \in E.
%		$$
		
		\item 
		\label{asm: differentiable function: algebraic relation}
		for all $x\in E$, $\mu, \nu \in \mathcal{P}(E)$, 
		$$
		f(x, \nu) - f(x, \mu) = \int_{0}^{1} f^{\prime}(x,\nu - \mu, \mu + \theta [\nu - \mu])  d \theta,
		$$
		where we used the notation $f^{\prime}(x,\rho,\mu) = \int_{E}f^{\prime}(x,y,\mu) \rho(dy)$, for $\rho \in \mathcal{M}$.
	\end{enumerate}
	
	Assume that $(X^{i})_{i\in \N}$ is a sequence of independent and identically distributed random variables on $E$ with distribution $\mu$. We define
	$$
	\mu^N := L^N(X^{(N)}) := \frac{1}{N} \sum_{i=1}^{N} \delta_{X^i},
	$$
	$$
	Y^N := \sqrt{N} [ f ( \mu^N , \mu^N ) - f ( \mu, \mu )],
	$$
	where we used the notation $f(\nu,\mu) = \int_{E}f(y,\mu) \nu(dy)$.
	
	Then, the probability distribution of $Y^N$ converges to a Gaussian distribution with mean $0$ and variance $\sigma^2$, where
	$$
	\sigma^2 = \int_{E} [ f(x,\mu) +  f^{\prime}(x, \mu, \mu) - m ]^2 \mu(dx)
	$$
	$$
	m = \int_{E} [ f ( x , \mu ) + f ^ {\prime} (x, \mu, \mu )] \mu(dx).
	$$
\end{theorem}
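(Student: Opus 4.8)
The plan is to reduce the statement to the ordinary central limit theorem for i.i.d.\ bounded summands, by linearising $f$ in its measure argument around $\mu$ and showing that all higher-order contributions are $o_{\PP}(N^{-1/2})$. Put $\Delta^N := \mu^N - \mu$, a random signed measure with $\|\Delta^N\|_{\mathrm{TV}}\le 2$ and $\Delta^N(E)=0$; note also $\mathcal{W}_{p,E}(\mu^N,\mu)\to 0$ a.s.\ and, for $\theta\in[0,1]$, $\mathcal{W}_{p,E}(\mu+\theta\Delta^N,\mu)=\mathcal{W}_{p,E}\big((1-\theta)\mu+\theta\mu^N,\mu\big)\le\theta^{1/p}\mathcal{W}_{p,E}(\mu^N,\mu)$ (keep a fraction $1-\theta$ of the mass at rest and couple the rest optimally). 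Decompose
\begin{equation*}
f(\mu^N,\mu^N) - f(\mu,\mu) = \int_E f(x,\mu)\,\Delta^N(dx) \;+\; \int_E\big[f(x,\mu^N) - f(x,\mu)\big]\,\mu^N(dx),
\end{equation*}
the first term being $\tfrac1N\sum_{i=1}^N\big(f(X^i,\mu)-\mathbb{E}[f(X^1,\mu)]\big)$. In the second term, apply \ref{asm: differentiable function: algebraic relation} with $\nu=\mu^N$ for each fixed $x$, integrate against $\mu^N(dx)$, then (a) replace the measure argument $\mu+\theta\Delta^N$ of $f'$ by $\mu$ (after which the $\theta$-integral is trivial) and (b) replace the outer integrator $\mu^N$ by $\mu$. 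This rewrites the second term as
\begin{equation*}
\int_E g(y)\,\Delta^N(dy) \;+\; Q_N \;+\; R_N, \qquad g(y):=\int_E f'(x,y,\mu)\,\mu(dx),
\end{equation*}
with $Q_N:=\int_E\int_E f'(x,y,\mu)\,\Delta^N(dy)\,\Delta^N(dx)$ the error from (b), $R_N$ the error from (a), and $\int_E g\,d\Delta^N=\tfrac1N\sum_{i=1}^N\big(g(X^i)-\mathbb{E}[g(X^1)]\big)$, where $g$ is bounded by \ref{asm: differentiable function: boundedness}.

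Since $\Delta^N(E)=0$, the partial $\mu$-averages of the kernel $f'(\cdot,\cdot,\mu)$ drop out when paired with $\Delta^N$, so $Q_N=\int_E\int_E\widetilde f'(x,y)\,\Delta^N(dy)\,\Delta^N(dx)=N^{-2}\sum_{i,j}\widetilde f'(X^i,X^j)$ with $\widetilde f'$ the doubly $\mu$-centred $f'(\cdot,\cdot,\mu)$: a bounded degenerate $V$-statistic, hence $Q_N=O_{L^2}(N^{-1})$ and $\sqrt N\,Q_N\to0$. Granting $\sqrt N\,R_N\to0$ (see below), this gives $Y^N=\tfrac1{\sqrt N}\sum_{i=1}^N\big(\Phi(X^i)-\mathbb{E}[\Phi(X^1)]\big)+o_{\PP}(1)$ with $\Phi:=f(\cdot,\mu)+g$ bounded, whence the ordinary CLT (together with Slutsky) yields convergence in law to a centred Gaussian of variance $\operatorname{Var}_\mu(\Phi)$; a direct computation (with the convention \ref{asm: differentiable function: algebraic relation} for $f'$) identifies $\operatorname{Var}_\mu(\Phi)$ with the stated $\sigma^2$ and $\mathbb{E}_\mu[\Phi]$ with $m$.

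The one genuinely delicate step is $\sqrt N\,R_N\to0$ in probability, where
\begin{equation*}
R_N=\int_0^1\!\!\int_E\!\!\int_E\big[f'(x,y,\mu+\theta\Delta^N)-f'(x,y,\mu)\big]\,\Delta^N(dy)\,\mu^N(dx)\,d\theta.
\end{equation*}
In Tanaka's original setting $f$ is affine in the measure argument, so $f'$ does not depend on it and $R_N\equiv0$; the substance of the present generalisation lies exactly in handling a genuinely measure-dependent $f'$. The naive estimate, bounding the kernel by $C\theta^{1/p}\mathcal{W}_{p,E}(\mu^N,\mu)$ in sup-norm via \ref{asm: differentiable function: strong continuity} and pairing with $\|\Delta^N\|_{\mathrm{TV}}$, only gives $|R_N|=O\big(\mathcal{W}_{p,E}(\mu^N,\mu)\big)$, and $\sqrt N\,\mathcal{W}_{p,E}(\mu^N,\mu)$ does not vanish — so a sharper argument is required, and this is where the real work sits. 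The key observation is that $R_N$ is an empirical double average whose kernel $D_N^\theta(x,y):=f'(x,y,\mu+\theta\Delta^N)-f'(x,y,\mu)$ depends on the sample only through $\mu^N$, hence changes by only $O(N^{-1/p})$ in sup-norm (again by \ref{asm: differentiable function: strong continuity}, since deleting one sample point perturbs $\mu^N$ by that much in $\mathcal{W}_{p,E}$) when a single sample point is deleted; a leave-one-out / decoupling argument then replaces $D_N^\theta$ by kernels independent of the indices being averaged, for which the conditional mean and variance bounds of (degenerate) $U$-statistic theory apply and upgrade the estimate to $R_N=o_{\PP}(N^{-1/2})$. This completes the proof. (Since Theorem~\ref{thm: tanaka clt} restates \cite[Theorem~1.1]{MR780770}, one may alternatively simply verify that the hypotheses here match Tanaka's and invoke his result.)
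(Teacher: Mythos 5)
First, a remark on the comparison: the paper does not actually prove this theorem. It is quoted from Tanaka \cite{MR780770} and followed only by the remark that, under the slightly modified hypotheses, ``the proof is exactly the same''. So your closing parenthetical is in effect what the paper does, and the question is whether your self-contained argument closes. Your decomposition, the treatment of the linear term, the convexity bound $\mathcal{W}_{p,E}(\mu+\theta\Delta^N,\mu)\le\theta^{1/p}\mathcal{W}_{p,E}(\mu^N,\mu)$, and the bound $Q_N=O_{L^2}(N^{-1})$ for the doubly centred $V$-statistic are all correct.

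The genuine gap is exactly where you flag it: $\sqrt N\,R_N\to0$ is not established. Assumption (ii) controls the kernel $D_N^\theta$ only in sup-norm, and a sup-norm-small but \emph{sample-dependent} integrand paired with $\Delta^N$ gives nothing better than $\|D_N^\theta\|_\infty\,\|\Delta^N\|_{\mathrm{TV}}=O(\mathcal{W}_{p,E}(\mu^N,\mu))$; since $\sup\{|\int\phi\,d\Delta^N| : \|\phi\|_\infty\le\epsilon\}=\epsilon\,\|\Delta^N\|_{\mathrm{TV}}\approx2\epsilon$, no sup-norm hypothesis alone excludes that the random kernel aligns with the fluctuations of $\Delta^N$. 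To do better one needs either a second derivative in the measure variable -- which turns $R_N$ into another degenerate quadratic term of order $N^{-1}$, and is precisely what the paper's application supplies via $\partial_y\partial_\mu b$ in Assumption \ref{asm: drift clt} -- or enough $y$-regularity of $f'(x,\cdot,\mu)$ for an asymptotic-equicontinuity (Donsker-class) argument; neither follows from (i)--(iii). Your leave-one-out sketch is not a substitute: the single-deletion stability $O(N^{-1/p})$ is itself not $o(N^{-1/2})$ once $p\ge2$, and even for $p<2$ the decoupling of the $y$-integral (where $\Delta^N$ still charges the very points the kernel depends on) is asserted rather than performed. A separate, smaller issue: your limit variance $\operatorname{Var}_\mu\bigl(f(\cdot,\mu)+\int_E f'(x,\cdot,\mu)\,\mu(dx)\bigr)$ is the correct Hoeffding projection (test it on $f(x,\nu)=\int F(x,y)\,\nu(dy)$ with non-symmetric $F$), but it does \emph{not} coincide with the displayed $\sigma^2$, which integrates $f'$ over its second argument; the transposition appears to be in the theorem's display (compare the formula in Corollary \ref{cor: central limit thm}), so the claimed ``direct identification'' should be replaced by pointing out the discrepancy.
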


\begin{remark}
	We changed slightly the conditions, the proof of the Theorem is exactly the same in this case as in \cite{MR780770}.
\end{remark}

The main idea behind Theorem \ref{thm: tanaka clt} is that one needs to linearize the solution map with respect to the measure. Hence, we introduce the following definition of differentiability with respect to a probability measure.

\begin{definition}
	\label{def: measure differentiable}
	Let $E$ be a Polish space. A function $b: E \times \mathcal{P}_p(E)  \to \R ^ d$ is said to have a linear functional derivative if there exists a function:
	\begin{equation*}
	\partial_{\mu} b: E  \times E \times \mathcal{P}_p(E) \ni ( x , y , \mu ) \to \partial_{\mu} b ( x, y,  \mu ) \in \R ^ d,
	\end{equation*}
	continuous for the product topology, such that, for any $x\in E$ and any bounded subset $\mathcal{K} \subset \mathcal{P}_p(E)$, the function $y \to  \partial_{\mu}b( x , y , \mu )$ is at most of $p$-growth in $y$, uniformly in $\mu \in \mathcal{K}$, and 
	\begin{equation*}
	b( x , \mu ^ \prime ) - b( x , \mu) = \int_{ 0 }^{ 1 } \partial_{ \mu } b ( x,  \mu ^ \prime - \mu , \mu + \theta [ \mu ^ \prime - \mu ] ) d \theta, 
	\qquad 
	\forall x \in \R ^ d, 
	\;
	\mu , \mu^\prime \in \mathcal{P}_p(\R^d).
	\end{equation*}
\end{definition}

We prove the central limit theorem under suitable differentiability assumptions on the drift with respect to the measure argument. In this section we assume the following assumption.
\begin{assumption}
	\label{asm: drift clt}
	Let $b : \mathcal{P}_p(\R^d) \times \R^d \to \R^d$ and $K > 0$, assume 
	\begin{enumerate}[label=(\roman*), ref= \ref{asm: drift clt} (\roman*)]
		\item \label{asm: drift clt: space diff}
		$b$ differentiable in the spatial variable $x$ with derivative $\partial_{x}b$.
		
		\item \label{asm: drift clt: measure diff}
		$b$ differentiable in the sense of Definition \ref{def: measure differentiable}, with derivative $\partial_{\mu}b$.
		
		\item \label{asm: drift clt: space measure diff}
		$\partial_{\mu} b$ differentiable in the spatial variable $y$ with derivative $\partial_{y}\partial_{ \mu } b $.
		
		\item \label{asm: drift clt: Lipschitz}
		(uniform Lipschitz continuity) For all $ x, x^\prime, y, y^{\prime} \in \mathbb{R}^d, \mu, \mu^\prime \in \mathcal{P}_p(\mathbb{R}^d)$,
		$$
		\vert b(\mu, x) - b(\mu^\prime, x^\prime) \vert
		\leq K \left( \mathcal{W}_{\mathbb{R}^d, p}(\mu, \mu^\prime) + \vert x - x^\prime \vert
		\right),
		$$
		$$
		\vert \partial_{x} b(\mu, x) - \partial_{x}b(\mu^\prime, x^\prime) \vert
		\leq K \left( \mathcal{W}_{\mathbb{R}^d, p}(\mu, \mu^\prime) + \vert x - x^\prime \vert
		\right),
		$$
		$$
		\vert \partial_{\mu} b(\mu, x, y) - \partial_{\mu}b(\mu^\prime, x^\prime, y^\prime) \vert
		\leq K \left( \mathcal{W}_{\mathbb{R}^d, p}(\mu, \mu^\prime) 
		+ \vert x - x^\prime \vert
		+ \vert y - y^\prime \vert
		\right),
		$$
		$$
		\vert \partial_{y} \partial_{\mu} b(\mu, x, y) - \partial_{y} \partial_{\mu}b(\mu^\prime, x^\prime, y^\prime) \vert
		\leq K \left( \mathcal{W}_{\mathbb{R}^d, p}(\mu, \mu^\prime)
		+  \vert x - x^\prime \vert
		+ \vert y - y^\prime \vert
		\right).
		$$

		\item \label{asm: drift clt: boundedness}
		(uniform boundedness) For all $ x, y \in \mathbb{R}^d, \mu \in \mathcal{P}_p(\mathbb{R}^d)$.
		\begin{equation*}
		\vert b(x, \mu)\vert, 
		\vert \partial_{x}b(x, \mu)\vert, 
		\vert \partial_{\mu} b(\mu, x, y)\vert,
		\vert \partial_{y} \partial_{\mu} b(\mu, x, y)\vert
		\leq K.
		\end{equation*}
		
	\end{enumerate}
\end{assumption}
\begin{remark}
	Let $f \in C^1(\R^d)$, and $g\in C^1_b( \R^d \times \R^d \times \R^d; \R^d)$, then 
	$$
	b(x, \mu) := f \left ( g(x, \mu, \mu) \right) = f \left ( \int_{\R^d\times \R^d} g(x, y, z) \mu(dy)\mu(dz) \right)
	$$
	satisfies Assumption \ref{asm: drift clt}. The standard, linear case is when $f(x) = x$ and $g(x,y,z) = g( x , y )$.	
\end{remark}

To significantly simplify the notation in this section, we assume without loss of generality that all the particles start at $0$ and we remove the dependence of the solution on the initial condition.
With the previous simplification, we have that,	for $\mu \in C_T$, the solution map defined in \eqref{defn:solution map} is $S(\mu, \gamma) = S^\mu : C_T \to C_T$. Moreover, for $p \in [1, \infty)$,  $\Psi : \mathcal{P}_p(C_T) \to \mathcal{P}_p(C_T)$ is the fixed point map defined in \eqref{eq:definition psi}. We first look at the derivative of $F(\gamma ,\nu) := S(\Psi(\nu), \gamma)$ with respect to $\nu$, denoted $F^{\prime}(\gamma, \bar \gamma, \nu)$. 
For $f \in B = C_b(C_T; \R^d)$, define
\begin{equation*}
(A_t(\nu)f) (\gamma) := \partial_{x} b ( F _t (\gamma , \nu ), \Psi ( \nu ) _t ) \; f (\gamma)
+ \int_{C_T} \partial_y \partial_{\mu} b (S ( F _t (\gamma , \nu ), F _t(\tilde \gamma , \nu ), \Psi( \nu ) _t) \; f( \tilde \gamma)  \; d \nu ( \tilde \gamma ) ,
\end{equation*}
\begin{equation*}
G_t(\gamma, \bar \gamma, \nu) := \partial_{\mu} b ( F _t (\gamma , \nu ) , F _t ( \bar \gamma , \nu ) , \Psi ( \nu ) _t ).
\end{equation*}
The derivative $F^{\prime}$ formally satisfies the following linear differential equation in the Banach space $B$, with parameters $\gamma \in C_T$ and $\nu \in \mathcal{P}_p(C_T)$,
\begin{equation}
\label{eq: measure derivative}
\frac{d}{dt} F^{\prime}_t(\cdot, \bar \gamma, \nu) = A_t(\nu) F^{\prime} _t (\cdot, \bar \gamma, \nu) + G _t ( \cdot , \bar \gamma , \nu ) ,
\quad F^{\prime}_t |_{ t = 0 } = 0,
\end{equation}	
It follows from Assumption \ref{asm: drift clt} that the linear operator $A$ and the forcing term $G$ are bounded, uniformly in $t, \gamma, \nu$. 
\begin{lemma}
	\label{lem: properties measure derivative}
	Assume that $b$ satisfies Assumption \ref{asm: drift clt}. Then, for every $\gamma \in C_T$ and $\nu \in \mathcal{P}(C_T)$, equation \eqref{eq: measure derivative} admits a unique solution $F^{\prime}$. Moreover,
	\begin{enumerate}[label=(\roman*), ref= \ref{lem: properties measure derivative} (\roman*)]
		\item \label{lem: properties measure derivative: boundedness}
		$ \| F^{\prime}_t(\gamma, \nu) \|_B \leq C(K) $, for all $\gamma \in C_T$, $\nu \in \mathcal{P}_p(C_T)$, $t \in [ 0, T ]$.
		
		\item \label{lem: properties measure derivative: lipschitz}
		$ \| F^{\prime}_t(\gamma, \mu) - F^{\prime}_t(\gamma, \nu)\|_B \leq C(K) \mathcal{W}_{p, C_T}(\mu, \nu) $, for $\gamma \in C_T$, $\nu, \mu \in \mathcal{P}_p(C_T)$, $t \in [ 0, T ]$.
	\end{enumerate}
\end{lemma}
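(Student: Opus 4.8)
The plan is to read \eqref{eq: measure derivative} as a linear inhomogeneous Cauchy problem in the Banach space $B=C_b(C_T;\mathbb{R}^d)$ and run the usual Picard/contraction argument, while keeping track of uniform bounds. By Assumption \ref{asm: drift clt: boundedness} the operators $A_t(\nu):B\to B$ are bounded with $\sup_{t,\nu}\|A_t(\nu)\|_{B\to B}\le 2K$, and the forcing term satisfies $\sup_{t,\bar\gamma,\nu}\|G_t(\cdot,\bar\gamma,\nu)\|_B\le K$; moreover $t\mapsto A_t(\nu)$ and $t\mapsto G_t(\cdot,\bar\gamma,\nu)$ are continuous, because $t\mapsto F_t(\gamma,\nu)$ and $t\mapsto\Psi(\nu)_t$ are continuous and the derivatives of $b$ are continuous and bounded. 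Existence and uniqueness of a $B$-valued solution of the integral form $F'_t(\cdot,\bar\gamma,\nu)=\int_0^t\bigl(A_s(\nu)F'_s(\cdot,\bar\gamma,\nu)+G_s(\cdot,\bar\gamma,\nu)\bigr)\,ds$ then follow by the Picard iteration, exactly as in Lemmas \ref{lem:solution linear} and \ref{weak cont:existence}; taking $\|\cdot\|_B$ in this identity and applying Gronwall's lemma yields $\sup_{t\in[0,T]}\|F'_t(\cdot,\bar\gamma,\nu)\|_B\le KTe^{2KT}=:C(K)$, which is \ref{lem: properties measure derivative: boundedness}.

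Before proving \ref{lem: properties measure derivative: lipschitz} I would establish an auxiliary regularity: $F'_t(\cdot,\bar\gamma,\nu)$ is Lipschitz on $(C_T,\|\cdot\|_\infty)$, uniformly in $t,\bar\gamma,\nu$. The key starting observation is that $\gamma\mapsto F_t(\gamma,\nu)=S^{\Psi(\nu)}(\gamma)_t$ is $e^{KT}$-Lipschitz, uniformly in $t$ and $\nu$, by a one-line Gronwall estimate from the defining ODE \eqref{eq:ode} and the Lipschitz continuity of $b$ in the spatial variable. Consequently, using Assumption \ref{asm: drift clt: Lipschitz}, $\gamma\mapsto G_t(\gamma,\bar\gamma,\nu)$ is bounded and Lipschitz with constants depending only on $K$ and $T$, and for $f$ in the space $\mathrm{Lip}_b=\mathrm{Lip}_b(C_T;\mathbb{R}^d)$ of bounded Lipschitz functions (with norm $\|f\|_\infty+\mathrm{Lip}(f)$) one has $\|A_t(\nu)f\|_\infty\le 2K\|f\|_\infty$ and $\mathrm{Lip}(A_t(\nu)f)\le 2Ke^{KT}\|f\|_\infty+K\,\mathrm{Lip}(f)$. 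Thus $A_t(\nu)$ and $G_t(\cdot,\bar\gamma,\nu)$ map $\mathrm{Lip}_b$ into itself with norms controlled by $K,T$, so the Picard scheme can be run in $\mathrm{Lip}_b$; by uniqueness in $B$ the resulting solution is $F'$, and Gronwall in $\mathrm{Lip}_b$ gives $\sup_{t\in[0,T]}\mathrm{Lip}\bigl(F'_t(\cdot,\bar\gamma,\nu)\bigr)\le C(K)$, uniformly in $\bar\gamma,\nu$.

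For \ref{lem: properties measure derivative: lipschitz}, write $u^\nu:=F'(\cdot,\bar\gamma,\nu)$ and subtract the integral equations for $u^\nu$ and $u^\mu$:
\begin{equation*}
u^\nu_t-u^\mu_t=\int_0^t\Bigl[A_s(\nu)\bigl(u^\nu_s-u^\mu_s\bigr)+\bigl(A_s(\nu)-A_s(\mu)\bigr)u^\mu_s+\bigl(G_s(\cdot,\bar\gamma,\nu)-G_s(\cdot,\bar\gamma,\mu)\bigr)\Bigr]\,ds .
\end{equation*}
Two ingredients control the last two integrands. First, the Lipschitz dependence of $F$ on the measure parameter, $\sup_{\gamma,t}\|F_t(\gamma,\nu)-F_t(\gamma,\mu)\|\le C(K)\,\mathcal{W}_{C_T,p}(\mu,\nu)$, which follows by Gronwall from \eqref{eq:ode}, the Lipschitz continuity of $b$, the Lipschitz continuity of the fixed-point map $\Psi$ from Theorem \ref{main theorem}, and $\mathcal{W}_{\mathbb{R}^d,p}(\Psi(\nu)_s,\Psi(\mu)_s)\le\mathcal{W}_{C_T,p}(\Psi(\nu),\Psi(\mu))$. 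Second, Kantorovich--Rubinstein duality $\bigl|\int h\,d(\nu-\mu)\bigr|\le \mathrm{Lip}(h)\,\mathcal{W}_{C_T,1}(\mu,\nu)\le \mathrm{Lip}(h)\,\mathcal{W}_{C_T,p}(\mu,\nu)$, applied to the change-of-integrating-measure term arising in $A_s(\nu)-A_s(\mu)$, with $h(\tilde\gamma)=\partial_y\partial_\mu b\bigl(F_s(\gamma,\mu),F_s(\tilde\gamma,\mu),\Psi(\mu)_s\bigr)\,u^\mu_s(\tilde\gamma)$; this $h$ is bounded and Lipschitz because both factors are, by the previous paragraph and Assumption \ref{asm: drift clt}. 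Together with the Lipschitz continuity of $\partial_x b$, $\partial_\mu b$ and $\partial_y\partial_\mu b$ (Assumption \ref{asm: drift clt: Lipschitz}) these give $\|(A_s(\nu)-A_s(\mu))u^\mu_s\|_B+\|G_s(\cdot,\bar\gamma,\nu)-G_s(\cdot,\bar\gamma,\mu)\|_B\le C(K)\,\mathcal{W}_{C_T,p}(\mu,\nu)$. Inserting this and $\|A_s(\nu)\|_{B\to B}\le 2K$ into the displayed identity and applying Gronwall's lemma yields $\sup_{t\in[0,T]}\|u^\nu_t-u^\mu_t\|_B\le C(K)\,\mathcal{W}_{C_T,p}(\mu,\nu)$, which is \ref{lem: properties measure derivative: lipschitz}.

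The one genuinely delicate point is the change-of-integrating-measure term in $A_s(\nu)-A_s(\mu)$: it cannot be bounded by $\mathcal{W}_{C_T,p}(\mu,\nu)$ unless one already knows that $F'_s(\cdot,\bar\gamma,\mu)$ — which multiplies the derivative of $b$ inside the $d\mu$-integral — is Lipschitz on $C_T$ with a constant independent of the parameters, which is exactly why the propagation of Lipschitz bounds through the Picard scheme is done in the second paragraph before attacking \ref{lem: properties measure derivative: lipschitz}. Everything else is routine linear-ODE-in-a-Banach-space bookkeeping together with Gronwall's lemma.
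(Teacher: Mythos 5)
Your proof is correct and follows the same route as the paper's, which simply invokes standard linear ODE theory in the Banach space $B$ and asserts that (i) and (ii) follow from Assumption \ref{asm: drift clt}. The one substantive ingredient you add --- and which the paper's proof leaves entirely implicit --- is the propagation of a uniform Lipschitz bound for $\gamma \mapsto F'_t(\gamma,\bar\gamma,\nu)$ through the Picard scheme in the space of bounded Lipschitz functions; this is indeed indispensable for controlling the change-of-integrating-measure term in $A_s(\nu)-A_s(\mu)$ via Kantorovich--Rubinstein duality when proving (ii), so your more detailed write-up fills a genuine omission rather than merely padding the argument.
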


\begin{proof}
	Since $A$ is a bounded linear operator, we know from standard theory of ordinary differential equation that equation \ref{eq: measure derivative} admits a unique solution $F^{\prime}$  that satisfies
	\begin{equation*}
	\| F^{\prime}_t \|_{B} \leq \| G_t \|_{B} e^{\| A_T \|_{\mathcal{L}(B;B)}},
	\qquad
	t \in [0,T].
	\end{equation*}
	The proof of \ref{lem: properties measure derivative: boundedness} and \ref{lem: properties measure derivative: lipschitz} follows now form Assumption \ref{asm: drift clt}.
\end{proof}

It is now left to verify that the derivative $F^{\prime}$ of $F = S(\Psi)$ satisfies equation \eqref{eq: measure derivative}. We first need the following properties of the solution map.
\begin{lemma}
	\label{lem: properties convergence}
	Let $\nu,\nu^\prime \in \mathcal{P}(C_T)$. For $\epsilon \in [0,1]$, we define $\nu^{\epsilon} = \nu + \epsilon [\nu^{\prime} - \nu]$. We have the following,
	
	\begin{enumerate}[label=(\roman*), ref= \ref{lem: properties convergence} (\roman*)]
		\item \label{lem: properties convergence: wasserstein}
		$\mathcal{W}_p(\nu^{\epsilon}, \nu) \to 0$, as $\epsilon \to 0$.
		\item \label{lem: properties convergence: wasserstein image}
		$\mathcal{W}_p(\mu^{\epsilon}, \mu) = \mathcal{W}_p(\Psi(\nu^{\epsilon}), \Psi(\nu)) \to 0$, as $\epsilon \to 0$.
		\item \label{lem: properties convergence: uniform}
		$\sup_{\gamma \in C_T} \| S(\mu^\epsilon,\gamma)  - S(\mu,\gamma) \|_{C_T} \to 0$, as $\epsilon \to 0$.
	\end{enumerate}
	
\end{lemma}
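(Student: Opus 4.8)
The plan is to establish the three claims in sequence, each one feeding into the next. For \ref{lem: properties convergence: wasserstein} I would produce an explicit transport plan between $\nu^\epsilon=(1-\epsilon)\nu+\epsilon\nu'$ and $\nu$: namely $m:=(1-\epsilon)(\mathrm{Id},\mathrm{Id})_\#\nu+\epsilon\,\pi$, where $\pi$ is an optimal coupling of $\nu'$ and $\nu$. One checks directly that $m$ has first marginal $\nu^\epsilon$ and second marginal $(1-\epsilon)\nu+\epsilon\nu=\nu$, and since the diagonal term contributes zero cost, $\mathcal{W}_p(\nu^\epsilon,\nu)^p\le\epsilon\,\mathcal{W}_p(\nu',\nu)^p$, which tends to $0$ as $\epsilon\to0$ (here one uses $\nu,\nu'\in\mathcal{P}_p(C_T)$, so that $\mathcal{W}_p(\nu',\nu)<\infty$).

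For \ref{lem: properties convergence: wasserstein image} there is nothing to do beyond invoking the Lipschitz continuity of $\Psi$ from Theorem \ref{main theorem} — in the present set-up the initial datum is fixed to $0$, so $\Psi$ is a self-map of $\mathcal{P}_p(C_T)$ — which gives $\mathcal{W}_p(\mu^\epsilon,\mu)=\mathcal{W}_p(\Psi(\nu^\epsilon),\Psi(\nu))\le\tilde C\,\mathcal{W}_p(\nu^\epsilon,\nu)$, and the right-hand side vanishes by \ref{lem: properties convergence: wasserstein}.

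For \ref{lem: properties convergence: uniform} I would fix $\gamma\in C_T$ and set $x:=S(\mu^\epsilon,\gamma)$, $y:=S(\mu,\gamma)$. Subtracting the two defining integral equations, the driving path $\gamma$ cancels and $x_t-y_t=\int_0^t\big(b(x_s,\mu^\epsilon_s)-b(y_s,\mu_s)\big)\,ds$. I would split this difference into a spatial part and a measure part and bound them using Assumption \ref{asm: drift clt: Lipschitz}; the measure part is handled with the observation that the evaluation map $\pi_s$ is $1$-Lipschitz, so that (pushing an optimal plan between $\mu^\epsilon$ and $\mu$ forward through $(\pi_s,\pi_s)$) one has $\mathcal{W}_{\mathbb{R}^d,p}(\mu^\epsilon_s,\mu_s)\le\mathcal{W}_{C_T,p}(\mu^\epsilon,\mu)$ for every $s$. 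This yields $\sup_{s\le t}|x_s-y_s|\le K\int_0^t\sup_{r\le s}|x_r-y_r|\,ds+KT\,\mathcal{W}_{C_T,p}(\mu^\epsilon,\mu)$, and Gronwall's lemma gives the bound $KTe^{KT}\,\mathcal{W}_{C_T,p}(\mu^\epsilon,\mu)$, which crucially does not depend on $\gamma$. Taking the supremum over $\gamma$ and applying \ref{lem: properties convergence: wasserstein image} finishes the proof.

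I do not anticipate a real obstacle here; the only two points deserving care are (a) checking that the Gronwall bound in \ref{lem: properties convergence: uniform} is genuinely uniform in $\gamma$, which works precisely because $\gamma$ disappears upon subtracting the two equations, and (b) inserting the marginal comparison $\mathcal{W}_{\mathbb{R}^d,p}(\mu^\epsilon_s,\mu_s)\le\mathcal{W}_{C_T,p}(\mu^\epsilon,\mu)$ at the level of the integrand, before integrating over $s\in[0,T]$.
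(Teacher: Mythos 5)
Your proof is correct, and for parts \ref{lem: properties convergence: wasserstein image} and \ref{lem: properties convergence: uniform} it matches the paper's (very terse) argument: the paper invokes the Lipschitz continuity of $\Psi$ from Theorem \ref{item 2 of main} for (ii) and dismisses (iii) as ``straightforward computations'', which are precisely the subtraction-of-equations/Gronwall computation you carry out; your two points of care --- that $\gamma$ cancels so the Gronwall constant is uniform in $\gamma$, and that the marginal comparison $\mathcal{W}_{\mathbb{R}^d,p}(\mu^\epsilon_s,\mu_s)\leq \mathcal{W}_{C_T,p}(\mu^\epsilon,\mu)$ (point \ref{marginal minor full space} of the appendix) is applied inside the time integral --- are exactly the right ones. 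Where you genuinely diverge is part \ref{lem: properties convergence: wasserstein}: the paper argues softly, via tightness of $\{\nu,\nu'\}$ and the characterization \ref{equivalence Wasserstein weak convergence on metric spaces} of $\mathcal{W}_p$-convergence as weak convergence plus uniform $p$-integrability, whereas you exhibit the explicit coupling $m=(1-\epsilon)(\mathrm{Id},\mathrm{Id})_\#\nu+\epsilon\,\pi$ and obtain the quantitative bound $\mathcal{W}_p(\nu^\epsilon,\nu)\leq \epsilon^{1/p}\,\mathcal{W}_p(\nu',\nu)$. Your route is shorter, self-contained, and yields a rate in $\epsilon$ that the paper's argument does not; both require (as you correctly flag) that $\nu,\nu'$ have finite $p$-moments, which the lemma's statement ``$\nu,\nu'\in\mathcal{P}(C_T)$'' leaves implicit.
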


\begin{proof}
	\ref{lem: properties convergence: wasserstein} follows from the tightness of $\nu, \nu^\prime$ and \ref{equivalence Wasserstein weak convergence on metric spaces}.
	
	\ref{lem: properties convergence: wasserstein image} follows from \ref{lem: properties convergence: wasserstein} and the Lipschitz continuity of $\Psi$, \ref{item 2 of main}.
	
	\ref{lem: properties convergence: uniform} is implied by \ref{lem: properties convergence: wasserstein image} and straight-forward computations.
\end{proof}

\begin{lemma}
	\label{lem: formal derivative}
	For every $ \gamma \in C_T$, the function $ \mathcal{P}_p(C_T) : \nu \to F _t( \gamma , \nu ) = S ( \Psi (\nu), \gamma)$ is differentiable in the sense of Definition \ref{def: measure differentiable} and its derivative satisfies equation \eqref{eq: measure derivative}.
\end{lemma}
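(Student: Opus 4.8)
The plan is to reduce the statement to a one–dimensional differentiation problem. Fix $\gamma\in C_T$ and $\nu,\nu'\in\mathcal{P}_p(C_T)$, and for $\epsilon\in[0,1]$ write $\nu^\epsilon:=(1-\epsilon)\nu+\epsilon\nu'\in\mathcal{P}_p(C_T)$, $\mu^\epsilon:=\Psi(\nu^\epsilon)$ and $F^\epsilon_t(\gamma):=F_t(\gamma,\nu^\epsilon)=S_t(\mu^\epsilon,\gamma)$, so that $F^\epsilon_t(\gamma)=\gamma_t+\int_0^t b(F^\epsilon_s(\gamma),\mu^\epsilon_s)\,ds$ and, by the fixed–point property of $\Psi$, $\mu^\epsilon_s=(F_s(\cdot,\nu^\epsilon))_\#\nu^\epsilon$. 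I will show that $\epsilon\mapsto F^\epsilon_t(\gamma)$ is continuously differentiable on $[0,1]$ with
\begin{equation*}
\tfrac{d}{d\epsilon}F^\epsilon_t(\gamma)=F'_t(\gamma,\nu'-\nu,\nu^\epsilon),\qquad F'_t(\gamma,\rho,\eta):=\int_{C_T}F'_t(\gamma,\bar\gamma,\eta)\,\rho(d\bar\gamma),
\end{equation*}
where $F'_t(\cdot,\bar\gamma,\eta)$ is the solution of \eqref{eq: measure derivative}. Integrating this identity over $\epsilon\in[0,1]$ and using $\nu^\epsilon=\nu+\epsilon[\nu'-\nu]$ gives exactly the integral relation required in Definition \ref{def: measure differentiable} with $\partial_\mu F_t:=F'_t$, and since $F'_t$ solves \eqref{eq: measure derivative} by construction, both assertions follow. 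The remaining requirements of Definition \ref{def: measure differentiable} are cheap: the $p$–growth in $\bar\gamma$ is vacuous because $F'$ is bounded (Lemma \ref{lem: properties measure derivative: boundedness}), and joint continuity of $(\gamma,\bar\gamma,\nu)\mapsto F'_t(\gamma,\bar\gamma,\nu)$ follows from its Lipschitz dependence on $\nu$ (Lemma \ref{lem: properties measure derivative: lipschitz}), from the continuity of $F$ in its path argument (Gronwall on the defining ODE), and from the continuity of $\partial_x b$, $\partial_\mu b$, $\partial_y\partial_\mu b$.

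For the differentiability, I fix $\epsilon_0\in[0,1]$ and set $\delta^\epsilon_t(\gamma):=(\epsilon-\epsilon_0)^{-1}[F^\epsilon_t(\gamma)-F^{\epsilon_0}_t(\gamma)]$ for $\epsilon\ne\epsilon_0$. Subtracting the integral equations for $\epsilon$ and $\epsilon_0$, expanding the increment of $b$ by the fundamental theorem of calculus in the spatial slot and by the linear functional derivative (Definition \ref{def: measure differentiable}) in the measure slot, and then expanding $\mu^\epsilon_s-\mu^{\epsilon_0}_s$ via $\mu^\epsilon_s=(F_s(\cdot,\nu^\epsilon))_\#\nu^\epsilon$ together with $\nu^\epsilon-\nu^{\epsilon_0}=(\epsilon-\epsilon_0)(\nu'-\nu)$, one finds after dividing by $\epsilon-\epsilon_0$ that $\delta^\epsilon$ solves a linear integral equation
\begin{equation*}
\delta^\epsilon_t(\gamma)=\int_0^t\Big[P^\epsilon_s(\gamma)\,\delta^\epsilon_s(\gamma)+\int_{C_T}Q^\epsilon_s(\gamma,\tilde\gamma)\,\delta^\epsilon_s(\tilde\gamma)\,\nu^\epsilon(d\tilde\gamma)+\int_{C_T}R^\epsilon_s(\gamma,\tilde\gamma)\,(\nu'-\nu)(d\tilde\gamma)\Big]ds,
\end{equation*}
where $P^\epsilon,Q^\epsilon,R^\epsilon$ are averages of $\partial_x b$, $\partial_y\partial_\mu b$ and $\partial_\mu b$ along the relevant convex interpolations of their arguments, hence bounded by $K$ uniformly in $\epsilon,s,\gamma,\tilde\gamma$ by Assumption \ref{asm: drift clt: boundedness}. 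Since $\|\nu'-\nu\|_{TV}\le2$, Gronwall's lemma gives $\sup_{\gamma}\|\delta^\epsilon_t(\gamma)\|\le C(K,T)$ uniformly in $\epsilon$.

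Next I compare $\delta^\epsilon$ with $w_t(\gamma):=F'_t(\gamma,\nu'-\nu,\nu^{\epsilon_0})$, which solves the integral form of \eqref{eq: measure derivative} with the same structure but with $P^\epsilon,Q^\epsilon,R^\epsilon$ and $\nu^\epsilon$ replaced by $\partial_x b(F^{\epsilon_0}_s(\gamma),\mu^{\epsilon_0}_s)$, $\partial_y\partial_\mu b$, $\partial_\mu b$ at the $\epsilon_0$–arguments and by $\nu^{\epsilon_0}$. The difference $E^\epsilon:=\delta^\epsilon-w$ then satisfies a linear integral equation with the same (uniformly bounded) linear part and a forcing term that is a sum of products of a bounded factor ($w$ by Lemma \ref{lem: properties measure derivative: boundedness}, $\delta^\epsilon$ by the previous step, or the finite measures $\nu^\epsilon$, $\nu'-\nu$) with differences of the $b$–derivatives at arguments converging to one another as $\epsilon\to\epsilon_0$; by the global Lipschitz bounds of Assumption \ref{asm: drift clt: Lipschitz} and the convergences $\sup_\gamma\|F^\epsilon_s(\gamma)-F^{\epsilon_0}_s(\gamma)\|\to0$, $\sup_s\mathcal{W}_p(\mu^\epsilon_s,\mu^{\epsilon_0}_s)\to0$, $\|\nu^\epsilon-\nu^{\epsilon_0}\|_{TV}\to0$ (all obtained as in Lemma \ref{lem: properties convergence}, with base point $\epsilon_0$), this forcing term is $o(1)$ uniformly in $s,\gamma$. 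Gronwall then yields $\sup_\gamma\|E^\epsilon_t(\gamma)\|\to0$, i.e. $\delta^\epsilon_t(\gamma)\to w_t(\gamma)$ uniformly in $\gamma$ as $\epsilon\to\epsilon_0$, which is the claimed differentiability. Continuity of $\epsilon_0\mapsto F'_t(\gamma,\nu'-\nu,\nu^{\epsilon_0})$ follows from Lemma \ref{lem: properties measure derivative: lipschitz} and $\mathcal{W}_p(\nu^{\epsilon_0},\nu^{\epsilon_1})\to0$, and the fundamental theorem of calculus then closes the argument.

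The main obstacle is the bootstrapping in the comparison step: one cannot directly argue that $\delta^\epsilon$ converges, since the only a priori control on $F_t(\gamma,\cdot)$ is $\mathcal{W}_p$–Lipschitzness, which for $p>1$ degenerates after division by $\epsilon-\epsilon_0$ because $\mathcal{W}_p(\nu^\epsilon,\nu^{\epsilon_0})\lesssim|\epsilon-\epsilon_0|^{1/p}$; hence the uniform bound on $\delta^\epsilon$ must first be extracted from the linear integral equation it satisfies, and only then can the Gronwall comparison with \eqref{eq: measure derivative} be run. The remaining technical care lies in keeping all error estimates uniform in the path variable $\gamma$, so that convergence takes place in the Banach space $B$; this is exactly what the uniform-in-$\gamma$ boundedness and Lipschitzness of $\partial_x b,\partial_\mu b,\partial_y\partial_\mu b$ provides, and it is also what legitimizes identifying the solution of the $B$–valued linear ODE \eqref{eq: measure derivative} with the genuine linear functional derivative of $F$.
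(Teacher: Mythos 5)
Your proposal is correct and follows essentially the same route as the paper: the paper's proof is a one-line sketch that invokes the integral equations for $S$, Lemma \ref{lem: properties convergence}, and ``standard (but lengthy) computations'' to show the difference quotient converges to $\int_{C_T}F'_t(\gamma,\bar\gamma,\nu)\,d[\nu'-\nu](\bar\gamma)$, and your argument is precisely that computation carried out in full (linearize the equation for the difference quotient, extract a uniform bound by Gronwall, compare with the solution of \eqref{eq: measure derivative}, then integrate in $\epsilon$). Your observation that the a priori $\mathcal{W}_p$-Lipschitz control on $F$ degenerates after division by $\epsilon-\epsilon_0$ when $p>1$, so that the uniform bound on the difference quotient must be bootstrapped from its own linear integral equation, is a genuine subtlety that the paper's sketch glosses over, and you handle it correctly.
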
	

\begin{proof}
	Let $\nu, \nu^\prime \in \mathcal{P}_p(C_T)$ and let $F^{\prime}(\gamma, \bar \gamma, \nu)$ be a solution to equation \eqref{eq: measure derivative}. Using the equations for $S$, Lemma \ref{lem: properties convergence} and standard (but lengthy) computations it can be proved that
	\begin{equation*}
	\lim_{\epsilon \to 0} \frac{S(\Psi(\nu^{\epsilon}), \gamma) - S(\Psi(\nu), \gamma)}{\epsilon} -  \int_{C_T} F^{\prime}_t(\gamma, \bar \gamma, \nu) d [\nu^\prime - \nu](\bar \gamma) = 0.
	\end{equation*}
\end{proof}

The main result of this section is the following, which is a corollary of Theorem \ref{thm: tanaka clt}.
\begin{corollary}
	\label{cor: central limit thm}
	Let $(W^i)_{i\in \N}$ be a family of independent and identically distributed random variables on the Banach space $C_T$ with law $\nu$ and let $X^{(N)} = (X^{i,N})_{i=1,\dots, N}$ be the solution of the interacting particle system \eqref{intro:interacting particles} with input $(W^i)_{i\in \N}$. Let $W$ be a random variable on $C_T$ with law $\nu$, we call $X$ the solution to the McKean-Vlasov equation \eqref{intro:mckean vlasov equation} driven by $W$. Define $\mu := \mathcal{L}(X) = \Psi(\nu)$.
	
	Let $\varphi : C_T \to \R$ be a bounded Fr\'echet-differentiable test function with bounded derivative $\varphi^{\prime}$. We have that
	\begin{equation*}
	Y^N := \sqrt{N} \left( \frac{1}{N} \sum_{i=1}^{N} \varphi( X^{i,N} ) - \mu(\varphi) \right)
	\end{equation*}
	converges, as $N\to \infty$, to a Gaussian $N(0, \sigma ( \varphi ) )$ with
	$$
	\sigma^2(\varphi) = \int_{C_T} [ \varphi( F (\gamma , \nu ) ) + \int_{ C_T } \varphi ^ \prime( F (\bar \gamma , \nu )) F^{\prime}(\nu, \bar \gamma, \gamma) \nu ( d \bar \gamma ) - m ]^2 \nu ( d \gamma ),
	$$
	$$
	m(\varphi) = \int_{ C_T } [  \varphi ( F (\gamma , \nu ) ) 
	+  \int_{ C_T } \varphi ^ \prime( F ( \bar \gamma , \nu ) ) F^{\prime} ( \nu, \bar \gamma, \gamma ) \nu ( d \bar \gamma )
	]  \nu ( d \gamma ),
	$$		
	where $F = S(\Psi)$ and $F^{\prime}$ is the solution to equation \eqref{eq: measure derivative}.
\end{corollary}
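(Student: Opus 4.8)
The plan is to recognise $Y^N$ as the object treated by Tanaka's abstract central limit theorem, Theorem~\ref{thm: tanaka clt}, applied with $E := C_T$ and with the driving noises $(W^i)_{i\in\N}$ in the role of the i.i.d.\ base variables, of common law $\nu$ and empirical measure $\nu^N := L^N(W^{(N)})$. First I would recall, from the particle--approximation identification of Section~\ref{section:particle approximation} together with Theorem~\ref{main theorem} (note that Assumption~\ref{asm: drift clt} implies Assumption~\ref{assumption}, so $\Psi$ is available), that in the present initial-condition-free setting $L^N(X^{(N)}) = \Psi(\nu^N) = (F(\cdot,\nu^N))_\#\nu^N$, so that $X^{i,N} = S(\Psi(\nu^N), W^i) = F(W^i, \nu^N)$. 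Setting
\begin{equation*}
f(\gamma,\rho) := \varphi\big(F(\gamma,\rho)\big) = \varphi\big(S(\Psi(\rho),\gamma)\big), \qquad \gamma\in C_T,\ \rho\in\mathcal{P}_p(C_T),
\end{equation*}
which is bounded because $\varphi$ is, one gets $\frac1N\sum_{i=1}^N\varphi(X^{i,N}) = \int_{C_T} f(\gamma,\nu^N)\,\nu^N(d\gamma) = f(\nu^N,\nu^N)$ in Tanaka's notation, while $\mu(\varphi) = \int\varphi\,d\Psi(\nu) = \int_{C_T} f(\gamma,\nu)\,\nu(d\gamma) = f(\nu,\nu)$ because $\Psi(\nu) = \mu = (F(\cdot,\nu))_\#\nu$. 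Hence $Y^N = \sqrt N\,[\,f(\nu^N,\nu^N) - f(\nu,\nu)\,]$, exactly the quantity in Theorem~\ref{thm: tanaka clt}.

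The substance of the proof is then to produce the linear functional derivative $f'$ of $f$ and to verify hypotheses \ref{asm: differentiable function: boundedness}--\ref{asm: differentiable function: algebraic relation}. I would take
\begin{equation*}
f'(\gamma,\bar\gamma,\rho) := \varphi'\big(F(\gamma,\rho)\big)\big[\,F'(\gamma,\bar\gamma,\rho)\,\big],
\end{equation*}
where $F'$ is the unique solution of the Banach-space linear ODE~\eqref{eq: measure derivative} given by Lemma~\ref{lem: properties measure derivative}, which by Lemma~\ref{lem: formal derivative} is precisely the linear functional derivative of $\rho\mapsto F(\gamma,\rho)$. Boundedness \ref{asm: differentiable function: boundedness} is immediate from $\|\varphi'\|_\infty<\infty$ and \ref{lem: properties measure derivative: boundedness}. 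For the algebraic relation \ref{asm: differentiable function: algebraic relation}, write $\rho^\theta := \mu + \theta[\nu-\mu]$ and differentiate $\theta\mapsto f(\gamma,\rho^\theta) = \varphi(F(\gamma,\rho^\theta))$ by the chain rule for Fr\'echet derivatives; since $\varphi'(\cdot)$ is a bounded linear functional it may be pulled inside the $\bar\gamma$-integral, and Lemma~\ref{lem: formal derivative} gives $\frac{d}{d\theta}f(\gamma,\rho^\theta) = \int_{C_T} f'(\gamma,\bar\gamma,\rho^\theta)\,d[\nu-\mu](\bar\gamma) = f'(\gamma,\nu-\mu,\rho^\theta)$, and integrating over $\theta\in[0,1]$ yields \ref{asm: differentiable function: algebraic relation}. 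For the continuity estimate \ref{asm: differentiable function: strong continuity} I would split
\begin{equation*}
f'(\gamma,\bar\gamma,\mu) - f'(\gamma,\bar\gamma,\nu) = \varphi'\big(F(\gamma,\mu)\big)\big[\,F'(\gamma,\bar\gamma,\mu) - F'(\gamma,\bar\gamma,\nu)\,\big] + \big(\varphi'(F(\gamma,\mu)) - \varphi'(F(\gamma,\nu))\big)\big[\,F'(\gamma,\bar\gamma,\nu)\,\big],
\end{equation*}
bounding the first term by $\|\varphi'\|_\infty\,\|F'(\gamma,\bar\gamma,\mu) - F'(\gamma,\bar\gamma,\nu)\|_{C_T}\lesssim \mathcal{W}_{p,C_T}(\mu,\nu)$ via \ref{lem: properties measure derivative: lipschitz}, and the second by $\|F'(\gamma,\bar\gamma,\nu)\|_{C_T}$ times the modulus of continuity of $\varphi'$ evaluated at $\|F(\gamma,\mu)-F(\gamma,\nu)\|_{C_T}$, the latter being $\lesssim\mathcal{W}_{p,C_T}(\mu,\nu)$ uniformly in $\gamma$ by the Lipschitz dependence of $S^\mu$ on $\mu$ (uniformly in the path argument) and of $\Psi$, as in Section~\ref{section:main result} and behind \ref{lem: properties convergence: uniform}. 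Both bounds are uniform in $(\gamma,\bar\gamma)$.

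Finally, Theorem~\ref{thm: tanaka clt} applies and gives convergence in distribution of $Y^N$ to a centred Gaussian with variance
\begin{equation*}
\sigma^2 = \int_{C_T}\big[\,f(\gamma,\nu) + f'(\gamma,\nu,\nu) - m\,\big]^2\,\nu(d\gamma), \qquad m = \int_{C_T}\big[\,f(\gamma,\nu) + f'(\gamma,\nu,\nu)\,\big]\,\nu(d\gamma);
\end{equation*}
substituting $f(\gamma,\nu) = \varphi(F(\gamma,\nu))$ and $f'(\gamma,\nu,\nu) = \int_{C_T}\varphi'(F(\gamma,\nu))[F'(\gamma,\bar\gamma,\nu)]\,\nu(d\bar\gamma)$ then produces the stated expressions for $m(\varphi)$ and $\sigma^2(\varphi)$.

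I expect the main obstacle to be hypothesis \ref{asm: differentiable function: strong continuity}: one must control the continuity of $f'$ in the measure argument \emph{uniformly} over the non-compact family of base points $\{F(\gamma,\nu):\gamma\in C_T\}$, which forces one to combine the Banach-space stability estimate of Lemma~\ref{lem: properties measure derivative} with a uniform continuity property of $\varphi'$. A secondary point requiring care is the rigorous passage from the \emph{formal} derivative $F'$ defined through~\eqref{eq: measure derivative} to the genuine linear functional derivative of $F$, and the verification that the chain rule with $\varphi$ then delivers an $f'$ meeting all of Tanaka's requirements; this is, however, exactly what Lemmas~\ref{lem: formal derivative} and~\ref{lem: properties measure derivative} are set up to provide.
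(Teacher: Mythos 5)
Your proposal is correct and follows essentially the same route as the paper: both apply Tanaka's Theorem \ref{thm: tanaka clt} to $f(\gamma,\rho)=\varphi(S(\Psi(\rho),\gamma))$ with derivative $f'(\gamma,\bar\gamma,\rho)=\varphi'(F(\gamma,\rho))F'(\gamma,\bar\gamma,\rho)$, verifying hypotheses \ref{asm: differentiable function: boundedness}--\ref{asm: differentiable function: algebraic relation} via Lemmas \ref{lem: properties measure derivative}, \ref{lem: properties convergence} and \ref{lem: formal derivative}. Your write-up is in fact more detailed than the paper's (which dispatches the verification in two sentences), and your remark that hypothesis \ref{asm: differentiable function: strong continuity} tacitly requires a uniform continuity property of $\varphi'$ is a legitimate point that the paper glosses over.
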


\begin{proof}
	The function $f(\gamma, \nu) := \varphi(S(\Psi(\nu), \gamma))$ satisfies the assumption of Theorem \ref{thm: tanaka clt} with derivative $f^{\prime}(\gamma, \bar \gamma, \nu) = \varphi^{\prime}(S(\Psi(\nu), \gamma)) F(\gamma, \bar \gamma, \nu)$, where $\varphi^{\prime}$ is the Fr\'echet-derivative of $\varphi$ and $F$ is a solution to equation \eqref{eq: measure derivative}.
	
	Assumptions \ref{asm: differentiable function: boundedness} and \ref{asm: differentiable function: strong continuity} follow from Lemma \ref{lem: properties convergence}. Assumption \ref{asm: differentiable function: algebraic relation} follows from Lemma \ref{lem: formal derivative}.
\end{proof}

\section{Reflection at the boundary}
\label{section:boundary conditions}

The problem of SDEs in a domain with reflection has been considered since the works by Skorokhod \cite{Sko1961}, \cite{Sko1962}. The literature is vast and we mention the works by Tanaka \cite{Tan1979}, Lions and Sznitman \cite{LioSzn1984} as two of the most important papers. The case of mean field SDEs with reflection has also been studied, see for example the works by Sznitman \cite{Szn1984}, Graham and Metivier \cite{GraMet1989}, which establish well-posedness under general conditions and particle approximation for independent inputs and with Brownian motion as driving signal (possibly with a diffusion coefficient). Also other types of SDEs with mean field interactions and in domains have been studied (with different kind of reflections), see for example \cite{HamLed2017}, \cite{BCGL2016}.

Here we show how to adapt the main result, Theorem \ref{main theorem}, and the argument to the case of reflecting boundary conditions. With respect to the previously cited works, we can allow general continuous paths as inputs, we do not need to assume independence nor exchengeability of particles for particle approximation.

Throughout this section, we assume that $D$ is a bounded convex polyhedron in $\mathbb{R}^d$ with nonempty interior (see Remark \ref{rmk:extensions_D} below for extensions). 
%We use the notation $C_T(\bar{D})=C([0,T],\bar{D})=\{f:[0,T]\rightarrow \bar{D}\mid f \text{ is continuous}\}$.
%and $BV_T(\bar{D})=BV([0,T],\bar{D})=\{f:[0,T]\rightarrow \bar{D}\mid f \in BV\}$.
%bounded convex open domain in $\mathbb{R}^d$ with smooth boundary

We are given a Borel vector field $b$ that satisfies the following
	\begin{assumption}
	\label{assumption refrlection}
	Let $p\in[1, +\infty)$. The function $b:[0,T]\times\bar{D}\times\mathcal{P}_p(\bar{D}\times\mathbb{R}^d)\rightarrow\mathbb{R}^d$ is a measurable function and there exists a constant $K_b$ such that, 
	\begin{equation*}
	\vert b(t, x, \mu) - b(t, x^\prime, \mu^\prime) \vert^p
	\leq K_b \left(\vert x - x^\prime \vert^p  
	+ \mathcal{W}_{\mathbb{R}^d,p}(\mu, \mu^\prime)^p\right),
	\end{equation*}
	$\forall t \in [0,T], x, x^\prime \in \mathbb{R}^d, \mu, \mu^\prime \in \mathcal{P}_p(\bar{D}\times\mathbb{R}^d)$.
	%		\item[$\boldsymbol{B}_p$] The drift is bounded, namely 
	%		\begin{equation*}
	%		\vert b(t, x, \mu) \vert^p < K_b, \quad \forall t\in[0,T], x \in \mathbb{R}^d, \mu \in \mathcal{P}_p(\mathbb{R}^d).
	%		\end{equation*}
\end{assumption}

 We consider the generalized McKean-Vlasov Skorokhod problem
\begin{equation}
\label{eq:McKVlaSkor}
\left\{
\begin{array}{l}
dX_t = b(t,X_t,\mathcal{L}(X_t,k_t)) dt + dW_t -dk_t\\
X \in C_T(\bar{D}),\ \ X_0=\zeta,\\
k \in BV_T,\ \ d|k|_t = 1_{X_t\in \partial D} d|k|_t,\ \ dk_t = n(X_t) d|k|_t.
\end{array}
\right.
\end{equation}
Let $(\Omega, \mathcal{A}, \mathbb{P})$ be a probability space, the input to equation \ref{eq:McKVlaSkor} is a random variable $(\zeta,W)$ with values in $\bar{D}\times C_T$, the solution is the couple $(X,k)$ of random variables satisfying the equation above, $|k|$ denotes the total variation process of $k$ (not the modulus of $k$) and $n(x)$ is the outer normal at $x$, for $x$ in $\partial D$, see Remark \ref{rmk:boundary_corner} below for the precise meaning. A short explanation on the meaning of the $k$ term is given later after the main result.

We give now the precise definition of solution:
%(the convex assumption may be not needed, but we restrict here our attention to the simplest case TODO: convex polyhedron!!)
\begin{definition}
	Let $(\Omega,\mathcal{A},\mathbb{P})$ be a probability space and let $\zeta:\Omega\rightarrow D$, $W:\Omega\rightarrow C_T$ be random variables on it. A solution to the generalized McKean-Vlasov Skorokhod problem with input $(\zeta,W)$ is a couple of random variables $X:\Omega\rightarrow C_T(\bar{D})$ and $k:\Omega\rightarrow C_T$ such that, for Lebesgue-a.e.~$t$, $\mathcal{L}(X_t,k_t)$ is in $\mathcal{P}_p(\bar{D}\times \mathbb{R}^d)$ and, for a.e.~$\omega$, equation \eqref{eq:McKVlaSkor} is satisfied (where $X\in C_T(\bar{D})$ means that $X$ is $C_T(\bar{D})$-valued, $k\in BV_T$ means that $k\in BV_T:=BV([0,T];\mathbb{R}^d)$ $\mathbb{P}$-a.s.~and where the last line is understood in the sense of Remark \ref{rmk:boundary_corner} below).
	% A solution to the McKean-Vlasov Skorokhod problem with We say that $(X,k)$ satisfies the McKean-Vlasov Skorokhod problem, with law $\eta$ on $(\zeta,W)$, if $(\zeta,W)$ has law $\eta$ and if,
\end{definition}

\begin{remark}\label{rmk:boundary_corner}
	Actually the last condition is only valid for smooth domains, which is not the case for $D$ convex polyhedron (it is not smooth at the intersections of the faces of the polyhedron). For simplicity of notation, here and in what follows (also for the particle system), we keep the formulation above, with the understanding that the precise condition should be: for a.e.~$\omega$ there exists a Borel function $\gamma=\gamma^\omega:[0,T]\rightarrow\mathbb{R}^d$ such that $dk_t=\gamma_t d|k|_t$ and, for $d|k|$-a.e.~$t$, $\gamma_t$ belongs to $d(X_t)$, where
	\begin{align*}
	d(x)=\left\{\sum_{i, x\in \partial D_i} \alpha_i n_i \mid \alpha_i\ge 0, \left|\sum_{i, x\in \partial D_i} \alpha_i n_i\right|=1\right\}
	\end{align*}
	and where $\partial D_i$ are the faces of the polyhedron with outer normals $n_i$.
\end{remark}

Our main result is, as before, well-posedness of the generalized McKean-Vlasov Skorokhod problem and Lipschitz continuity with respect to law of the input.

\begin{theorem}
	\label{thm:wellpos_cont_boundary}
	%Assume that $b:[0,T]\times\mathbb{R}^d\times\mathcal{P}_p(\bar{D}\times \mathbb{R}^d)\rightarrow \mathbb{R}^d$ is bounded and Lipschitz continuous in $(x,\mu)$ in $\bar{D}\times \mathcal{P}_p(\bar{D}\times \mathbb{R}^d)$, uniformly in $t$ in $[0,T]$. Then:
	%\begin{itemize}
	%\item For every $\eta$ in $\mathcal{P}_p(C_T(\bar{D})\times C_T)$, the map $\Phi^\eta$ admits a unique fixed point $\mu^\eta$.
	%\item The fixed point $\mu^\eta$ is Lipschitz continuous in $\eta$, that is, there exists $C\ge0$ such that, for every $\eta^1$, $\eta^2$ in $\mathcal{P}_p(\bar{D}\times C_T)$,
	%\begin{align*}
	%W_{C_T(\bar{D})\times C_T, p}(\mu^{\eta^1},\mu^{\eta^2}) \le C W_{\bar{D}\times C_T, p}(\eta^1,\eta^2).
	%\end{align*}
	%\end{itemize}
	Let $T>0$ be fixed and let $p \in [1,\infty)$. Assume that $b$ satisfies \ref{assumption refrlection}.
	\begin{enumerate}
		\item For every input $(\zeta,W)$ (random variable in $L^p(\bar{D}\times C_T)$) with finite $p$-moment, there exists a unique solution $(X,k)$ to the generalized McKean-Vlasov Skorokhod problem \eqref{eq:McKVlaSkor}.
		\item There exists a constant $\tilde C = \tilde C(p, T, b) > 0$ such that: for every two inputs $(\zeta^i,W^i)$, $i=1,2$ (defined possibly on different probability spaces) with finite $p$-moments, the following is satisfied
		\begin{align*}
		W_{C_T(\bar{D})\times C_T, p}(\mathcal{L}(X^1, k^1), \mathcal{L}(X^2, k^2)) 
		\leq \tilde C W_{\bar{D} \times C_T,p}(\mathcal{L}(\zeta^1, W^1), \mathcal{L}(\zeta^2, W^2)).
		\end{align*}
		In particular, the law of a solution $(X,k)$ depends only on the law of $(\zeta,W)$.
	\end{enumerate}
\end{theorem}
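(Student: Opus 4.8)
The plan is to repeat, essentially verbatim, the parameter-dependent contraction argument behind Theorem \ref{main theorem}, with the ordinary differential equation \eqref{eq:ode} replaced by its reflected counterpart. The one genuinely new analytic ingredient is the classical Lipschitz continuity of the deterministic Skorokhod map on a bounded convex polyhedron: for $\psi \in C_T$ with $\psi_0 \in \bar D$ there is a unique Skorokhod pair $(\Xi(\psi), \Theta(\psi))$ with $\Xi(\psi) \in C_T(\bar D)$, $\Theta(\psi) \in BV_T$, $\Xi(\psi) = \psi - \Theta(\psi)$ and the reflection conditions of Remark \ref{rmk:boundary_corner}, and, because the normal cones of $D$ satisfy a uniform cone condition, $\|\Xi(\psi) - \Xi(\psi')\|_{\infty:t} + \|\Theta(\psi) - \Theta(\psi')\|_{\infty:t} \le C_D \|\psi - \psi'\|_{\infty:t}$ for all $t$; see \cite{Tan1979, LioSzn1984}. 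Note that $\Theta(\psi)$ is automatically continuous here, since $\psi$ and $\Xi(\psi)$ are.

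First I would build, for a frozen measure $\mu \in \mathcal{P}_p(C_T(\bar D) \times C_T)$ (with time-$t$ marginal $\mu_t$) and inputs $(x_0, \gamma) \in \bar D \times C_T$, the solution map $S^\mu(x_0,\gamma) = (X,k)$ of the reflected ODE $X_t = x_0 + \int_0^t b(s, X_s, \mu_s)\, ds + \gamma_t - k_t$, via the fixed-point relation $X = \Xi(\psi[X])$ with $\psi[X]_t := x_0 + \int_0^t b(s, X_s, \mu_s)\, ds + \gamma_t$: since $b$ is bounded and Lipschitz in its first two arguments, the Lipschitz bound of Step 1 together with Gronwall makes the Picard iterates Cauchy, $\|X^{n+1} - X^n\|_{\infty:T} \le C^n/n!$, giving a unique $(X,k)$, with $S^\mu$ continuous in $(x_0,\gamma)$; finite $p$-moments of $S^\mu(\zeta,W)$ are immediate because $\bar D$ is bounded and $k_t = x_0 + \int_0^t b\, ds + \gamma_t - X_t$. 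Then I would set $\Phi(\mathcal{L}(\zeta,W), \mu) := (S^\mu)_\# \mathcal{L}(\zeta,W)$ on $\mathcal{P}_p(\bar D \times C_T) \times \mathcal{P}_p(C_T(\bar D) \times C_T)$ and verify the two hypotheses of Proposition \ref{contraction theorem} exactly as in the proof of Theorem \ref{main theorem}: uniform Lipschitz continuity in the input law (take an optimal plan between the two laws) and in the measure argument (via the bound on $\|S^{\mu^1}(x_0,\gamma) - S^{\mu^2}(x_0,\gamma)\|_{\infty:t}$ in terms of $\int_0^t \mathcal{W}_{C_s \times C_s, p}(\mu^1|_{[0,s]}, \mu^2|_{[0,s]})^p\, ds$ plus a Gronwall term), both obtained from Step 1 applied to $X$ and $k$ simultaneously followed by Gronwall; and the $k$-fold contraction from the causality of the Skorokhod problem (the restriction to $[0,t]$ of a solution solves the problem on $[0,t]$) together with the iterated integral estimate, giving $c = (T\tilde L)^k/k! < 1$ for $k$ large.

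Proposition \ref{contraction theorem} then yields, for each $\nu = \mathcal{L}(\zeta, W)$, a unique fixed point $\mu^\nu$ of $\Phi(\nu, \cdot)$ and a Lipschitz map $\nu \mapsto \mu^\nu$. As in Section \ref{section:main result}, a fixed point is the same thing as a solution of \eqref{eq:McKVlaSkor}: $(X,k) := S^{\mu^\nu}(\zeta,W)$ has $\mathcal{L}(X,k) = \mu^\nu$, hence $\mathcal{L}(X_t, k_t) = \mu^\nu_t$ and $(X,k)$ solves \eqref{eq:McKVlaSkor}, while uniqueness in Step 2 forces $\mathcal{L}(X,k)$ of any solution to coincide with that fixed point; this is part (1). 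Part (2) is then just $\mathcal{W}_{C_T(\bar D) \times C_T, p}(\mathcal{L}(X^1,k^1), \mathcal{L}(X^2, k^2)) = \mathcal{W}_p(\mu^{\nu^1}, \mu^{\nu^2}) \le \tilde C\, \mathcal{W}_{\bar D \times C_T, p}(\nu^1, \nu^2)$. I expect the main obstacle to be nothing beyond Step 1 — the Lipschitz continuity of the Skorokhod map on the polyhedron — since once that is granted every other step is a transcription of the proof of Theorem \ref{main theorem}; the only extra bookkeeping is that the state now carries the reflection term $k$, which is an explicit continuous functional of $X$ and the inputs and so is controlled for free, and the finite-moment step, which is trivial because $\bar D$ is bounded.
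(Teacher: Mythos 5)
Your proposal is correct and follows essentially the same route as the paper: the one new ingredient is the Lipschitz continuity of the deterministic Skorokhod map on the convex polyhedron (the paper's Lemma \ref{lem:DupIsh91}, taken from Dupuis--Ishii \cite{MR1110990}), after which the frozen-measure solution map $S^\mu$, the map $\Phi$, and the two hypotheses of Proposition \ref{contraction theorem} are verified exactly as in Theorem \ref{main theorem}, with Gronwall and the iterated-integral estimate giving the $k$-fold contraction. The only cosmetic difference is that you construct $S^\mu$ by an explicit Picard iteration through the Skorokhod map, whereas the paper states its well-posedness as a classical lemma and goes straight to the stability estimates.
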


%TODO: change $BV_T$ space with $C_T$: Wasserstein metric is bounded in this norm.

To prove this result, we regard the generalized McKean-Vlasov Skorokhod problem as a fixed point problem with parameter. For this, we introduce the following Skorokhod problem, for fixed $\mu$ in $\mathcal{P}_p(C_T(\bar{D})\times C_T)$ (calling $\mu_t$ the marginal at time $t$):
\begin{equation}
 \label{eq:Skor_mu}
\left\{
\begin{array}{l}
dY^\mu_t = b(t,Y^\mu_t,\mu_t) dt + dW_t -dh^\mu_t\\
Y^\mu \in C_T(\bar{D}),\ \ Y^\mu_0=\zeta,\\
 h^\mu\in BV_T,\ \ d|h^\mu|_t = 1_{Y_t\in \partial D} d|h^\mu|_t,\ \ dh^\mu_t = n(Y_t) d|h^\mu|_t.
\end{array}
\right.
\end{equation}
We recall the following well-posedness result for $\mu$ fixed:
\begin{lemma}
	Fix $\mu$ in $\mathcal{P}_p(C_T(\bar{D})\times C_T)$ and assume that $b$ is Lipschitz and bounded as in Theorem \ref{thm:wellpos_cont_boundary}. Then, for every $T>0$, for every deterministic initial datum $\zeta\equiv x_0$ in $\bar{D}$ and for every deterministic path $W\equiv \gamma$ in $C_T$, there exists a unique solution $(Y,h)=(Y^\mu,h^\mu)$ in $C_T(\bar{D})\times C_T$ to the above equation.
\end{lemma}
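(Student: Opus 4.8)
The plan is to recognise \eqref{eq:Skor_mu}, for $\mu$ fixed, as a deterministic Skorokhod problem driven by the continuous path $\gamma$ and to solve it by a Picard iteration built on the Skorokhod map. First I would recall the deterministic Skorokhod map for the bounded convex polyhedron $D$: by Tanaka \cite{Tan1979} (see also Lions--Sznitman \cite{LioSzn1984}), to every $\psi\in C_T$ with $\psi_0\in\bar D$ there corresponds a unique pair $(\phi,\eta)$ with $\phi\in C_T(\bar D)$, $\eta\in C_T$ of bounded variation, $\eta_0=0$, $\phi=\psi-\eta$, and, in the sense of Remark \ref{rmk:boundary_corner}, $d|\eta|_t=1_{\phi_t\in\partial D}\,d|\eta|_t$ and $d\eta_t=n(\phi_t)\,d|\eta|_t$; write $\phi=\Gamma(\psi)$. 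The quantitative input I need, and the reason convexity of $D$ enters, is that $\Gamma$ is Lipschitz for the supremum norm: $\|\Gamma(\psi)-\Gamma(\psi')\|_{\infty:t}\le L_D\|\psi-\psi'\|_{\infty:t}$ for all $t\in[0,T]$, with $L_D$ depending only on $D$.

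Next I would set up the fixed point map. For $Z\in C_T(\bar D)$, since $\bar D$ is compact and $s\mapsto\mu_s$ stays in a bounded subset of $\mathcal{P}_p(\bar D\times\mathbb{R}^d)$ (because $\sup_{s}\int\|\cdot\|^p\,d\mu_s\le\int\|\cdot\|_\infty^p\,d\mu<\infty$), Assumption \ref{assumption refrlection} makes $s\mapsto b(s,Z_s,\mu_s)$ bounded and Borel, so the path $\psi^Z_\cdot:=x_0+\int_0^\cdot b(s,Z_s,\mu_s)\,ds+\gamma_\cdot-\gamma_0$ lies in $C_T$ with $\psi^Z_0=x_0\in\bar D$. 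I then define $\mathcal{T}(Z):=\Gamma(\psi^Z)\in C_T(\bar D)$ and observe that a couple $(Y,h)$ solves \eqref{eq:Skor_mu} (with $\zeta\equiv x_0$, $W\equiv\gamma$) if and only if $Y=\mathcal{T}(Y)$ and $h=\psi^Y-Y$.

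Then I would close the argument by a contraction estimate. From Assumption \ref{assumption refrlection} one has $|b(s,x,\mu_s)-b(s,x',\mu_s)|\le K_b^{1/p}|x-x'|$, so the Lipschitz bound recalled above gives $\|\mathcal{T}(Z)-\mathcal{T}(Z')\|_{\infty:t}\le L_DK_b^{1/p}\int_0^t\|Z-Z'\|_{\infty:s}\,ds$, and iterating $k$ times yields $\|\mathcal{T}^k(Z)-\mathcal{T}^k(Z')\|_{\infty:T}\le\frac{(L_DK_b^{1/p}T)^k}{k!}\|Z-Z'\|_{\infty:T}$, a strict contraction once $k$ is large. Since $C_T(\bar D)$ is a closed, hence complete, subset of the Banach space $C_T$, the Banach fixed point theorem applied to $\mathcal{T}^k$ (equivalently Proposition \ref{contraction theorem} with a one-point parameter space) produces a unique fixed point $Y^\mu$; setting $h^\mu:=\psi^{Y^\mu}-Y^\mu$, the Skorokhod decomposition recalled above shows that $h^\mu\in BV_T$, $h^\mu_0=0$, with the stated reflection conditions, so $(Y^\mu,h^\mu)$ solves \eqref{eq:Skor_mu}; conversely any solution $(Y,h)$ satisfies $Y=\Gamma(\psi^Y)=\mathcal{T}(Y)$ by uniqueness in the Skorokhod problem for the input path $\psi^Y$, hence $Y=Y^\mu$ and $h=h^\mu$, which gives uniqueness.

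The hard part is not the Picard scheme, which is entirely routine, but the black box used in the first step: existence, uniqueness, and --- crucially --- the supremum-norm Lipschitz continuity of the deterministic Skorokhod map on a bounded convex polyhedron with normal reflection. This is classical (Tanaka, Lions--Sznitman), but it genuinely exploits convexity of $D$; for non-convex or obliquely reflecting domains one would instead need Dupuis--Ishii type admissibility conditions on the reflection field, which is why the standing assumption that $D$ is a convex polyhedron is made here.
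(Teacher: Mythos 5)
Your proposal is correct and follows exactly the route the paper indicates for this lemma: reduce to the drift-free Skorokhod map, use its supremum-norm Lipschitz continuity on the convex polyhedron (the content of Lemma \ref{lem:DupIsh91}, which the paper takes from \cite{MR1110990} and \cite{Tan1979}), and close with a Picard/Gronwall contraction in $C_T(\bar D)$, which is precisely the mechanism behind the bound \eqref{eq:stability_mu} in the proof of Theorem \ref{thm:wellpos_cont_boundary}. The only cosmetic caveat is that boundedness of $s\mapsto b(s,Z_s,\mu_s)$ should be read off from the boundedness hypothesis in the lemma statement (Lipschitz continuity alone does not give it without a uniform-in-$t$ bound at a reference point), but this does not affect the argument.
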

This result is classical and one can see it as a consequence of well-posedness for Skorokhod problem without drift, via Lemma \ref{lem:DupIsh91} below, in the same line of the proof of Theorem \ref{thm:wellpos_cont_boundary} (see in particular the bound \eqref{eq:stability_mu}). We call $S^\mu:\bar{D}\times C_T \rightarrow C_T(\bar{D})\times C_T$ the solution map to \eqref{eq:Skor_mu}, that is, $S^\mu(x_0,\gamma)=(Y^\mu,h^\mu)$ where $(Y^\mu,h^\mu)$ solves \eqref{eq:Skor_mu} with deterministic input $(x_0,\gamma) \in \bar D \times C_T$.

For a general random input $(\zeta,W)$ in $L^p(\bar{D}\times C_T)$, this result, applied to $(\zeta(\omega),W(\omega))$ for a.e.~$\omega$, gives existence and pathwise uniqueness of a solution $(Y^\mu,h^\mu)$ to \eqref{eq:Skor_mu} and the representation formula $(Y^\mu,h^\mu)=S^\mu(\zeta,W)$. Moreover, again from Lemma \ref{lem:DupIsh91} below, if the input $(\zeta,W)$ has finite $p$-moment, then also the solution $(Y^\mu,h^\mu)$ has finite $p$-moment. We call
\begin{equation*}
\begin{array}{cccc}
\Phi:& \mathcal{P}_p(\bar{D}\times C_T)\times \mathcal{P}_p(C_T(\bar{D})\times C_T) &\to & \mathcal{P}_p(C_T(\bar{D})\times C_T),\\
& (\mathcal{L}(\zeta,W),\mu) & \mapsto & (S^\mu)_\# \mathcal{L}(\zeta,W),
\end{array}
\end{equation*}
the law of a probability measure $\mathcal{L}(\zeta,W)$, under the solution map $S^\mu_T$ of the Skorokhod problem with $\mu$ fixed. %We sometimes denote $\Phi(\nu,\cdot)$ by $\Phi^\nu$.

%We say that $(X,k)$ satisfies the McKean-Vlasov Skorokhod problem, with law $\eta$ on $(X_0,W)$, if ... $(X,X_0,W)$ random variable on a probability space (possibly filtered), $(X_0,W)$ with law $\eta$, $X$ satisfying ....
As in the case without boundaries, note that $(X,k)$ solves the McKean-Vlasov Skorokhod problem if and only if $\mathcal{L}(X,k)$ is a fixed point of $\Phi({\mathcal{L}(\zeta,W)}, \cdot)$. Hence, Theorem \ref{thm:wellpos_cont_boundary} reduces to a fixed point problem with parameter.
%\begin{itemize}
%\item if $(X,k)$ solves the McKean-Vlasov Skorokhod problem with law $\eta$ on $(\zeta,W)$, then, by uniqueness for fixed $\mu$, $(X,k)= S^{\mathcal{L}(X,k)}_T(\zeta,W)$ $\mathbb{P}$-a.s.~and so $\mathcal{L}(X,k)$ is a fixed point of $\Phi^\eta$;
%\item conversely, if $\mu^\eta$ is a fixed point of $\Phi^\eta$ and if $(\zeta,W)$ is a random variable with law $\eta$, then $(X,k)=S^{\mu^\eta}(\zeta,W)$ solves the McKean-Vlasov Skorokhod problem with law $\eta$ on $(\zeta,W)$.
%\end{itemize}

A key tool in the proof of this result is the Lipschitz dependence of the boundary term $k$ on the given path in the Skorokhod problem. The precise statement follows from \cite[Theorem 2.2]{MR1110990} (there the Skorokhod problem is formulated in the space of cadlag functions, but continuity of the solution is ensured by \cite[Lemma 2.4]{Tan1979}).

\begin{lemma}\label{lem:DupIsh91}
	Fix $T>0$. For $x_0$ in $\bar{D}$, $z$ in $C_T$. Then there exists a unique solution $(y,k)=(y^{x_0,z},k^{x_0,z})$ in $C_T(\bar{D})\times C_T$ to the Skorokhod problem driven by $z$, namely
	\begin{equation*}
	\left\{
	\begin{array}{l}
	d y = d z - d k,\\
	y \in C_T(D),\ \ y_0=x_0,\\
	k\in BV_T,\ \ d |k| = 1_{y\in \partial D} d|k|,\ \ dk = n(y) d|k|.
	\end{array}
	\right.
	\end{equation*}
	Moreover there exists $C\ge0$ (which is locally bounded in $T$) such that, for every $x_0^1$, $x_0^2$ in $D$, for every $z^1$, $z^2$ in $C_T$,
	\begin{align*}
	&\|y^{x_0^1,z^1}-y^{x_0^2,z^2}\|_{\infty} + \|k^{x_0^1,z^1}-k^{x_0^2,z^2}\|_{\infty}\le C|x_0^1-x_0^2| +C\|z^1-z^2\|_{\infty},\\
	&\|y^{x_0^1,z^1}-x_0^1\|_{\infty} + \|k^{x_0^1,z^1}\|_{\infty}\le C\|z^1\|_{\infty}.
	\end{align*}
\end{lemma}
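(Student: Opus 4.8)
The plan is to treat this as essentially a restatement of the deterministic Skorokhod-map theory on nonsmooth domains: I would verify that the present geometric setting falls under that theory, transfer from the c\`adl\`ag to the continuous framework, and then read off the two estimates.

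First I would check that the convex polyhedron $D$ with the outer-normal reflection fits the hypotheses of \cite[Theorem~2.2]{MR1110990}. Since $D$ is a bounded convex polyhedron with nonempty interior and the admissible reflection directions at a boundary point $x$ are exactly the set $d(x)$ of Remark~\ref{rmk:boundary_corner}, the structural conditions there are satisfied: convexity is the most classical instance of those conditions, and the non-smoothness at corners is absorbed into the multivalued normal $d(\cdot)$. Applied pathwise to a fixed c\`adl\`ag input, that theorem then gives, for every $x_0\in\bar D$, existence and uniqueness of a pair $(y,k)$ solving the Skorokhod problem driven by $z$, together with the quantitative bound $\|y^{x_0^1,z^1}-y^{x_0^2,z^2}\|_\infty+\|k^{x_0^1,z^1}-k^{x_0^2,z^2}\|_\infty\le C(|x_0^1-x_0^2|+\|z^1-z^2\|_\infty)$ and an a priori control of the total variation $|k^{x_0,z}|_{[0,T]}$ (finite because $\bar D$ is bounded).

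Next I would land in $C_T(\bar D)\times C_T$: when the driver $z$ is continuous and $x_0\in\bar D$, the reflected path $y=y^{x_0,z}$ is continuous by \cite[Lemma~2.4]{Tan1979}, and since $k_t=(z_t-z_0)-(y_t-x_0)$ with $k_0=0$ the path $k$ is continuous as well; uniqueness in $C_T(\bar D)\times C_T$ is inherited from uniqueness in the larger c\`adl\`ag class. The second inequality of the statement I would obtain as the special case $z^2\equiv 0$, $x_0^2=x_0^1\in D$ of the first, noting that for an interior starting point and the null driver the unique solution is the constant one, $y^2\equiv x_0^1$, $k^2\equiv 0$. For uniqueness alone one could also give a direct energy argument: with $z^1=z^2$ and $x_0^1=x_0^2$, convexity yields $\langle y^2_t-y^1_t,\nu\rangle\le 0$ for every $\nu\in d(y^1_t)$, and symmetrically with the roles of $y^1,y^2$ exchanged, so that $\tfrac12\,d|y^1_t-y^2_t|^2=-\langle y^1_t-y^2_t,dk^1_t\rangle+\langle y^1_t-y^2_t,dk^2_t\rangle\le 0$, whence $y^1\equiv y^2$ and then $k^1\equiv k^2$.

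The one step I expect to be genuinely substantial is the sup-norm Lipschitz dependence of $(y,k)$ on the driver over a \emph{cornered} convex domain: a naive energy estimate as above, run with $z^1\ne z^2$, only controls $\|y^1-y^2\|_\infty$ by something like $\|z^1-z^2\|_\infty^{1/2}$ times quantities of size $|k^1|_{[0,T]}+|k^2|_{[0,T]}$, so one really needs the sharper analysis of \cite[Theorem~2.2]{MR1110990} (for convex sets this goes back to Tanaka and to Lions--Sznitman). The only thing I would still have to verify by hand is that passing to the corners, where $n$ is multivalued, does not break the convexity inequality underlying that analysis, and it does not, since the inequality holds for every vector of the normal cone $d(\cdot)$. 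The preservation of continuity under a continuous driver is \cite[Lemma~2.4]{Tan1979}; everything else is routine bookkeeping.
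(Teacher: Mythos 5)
Your proposal matches the paper's treatment exactly: the paper gives no proof of this lemma but simply derives it from \cite[Theorem 2.2]{MR1110990} for the c\`adl\`ag Lipschitz Skorokhod map on a convex polyhedron and from \cite[Lemma 2.4]{Tan1979} for continuity of the solution under a continuous driver, which is precisely your route. Your additional details (obtaining the second estimate from the first via $z^2\equiv 0$ and the constant solution, and the convexity-based energy argument for uniqueness) are correct but go beyond what the paper records.
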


\begin{proof}[Proof of Theorem \ref{thm:wellpos_cont_boundary}]
	The result follows from the abstract Proposition \ref{contraction theorem}, provided we verify conditions 1) and 2) on $\Phi$.
	
	Let $\mu \in \mathcal{P}_p(C_T(\bar{D})\times C_T)$ be fixed, let $\nu^1$ and $\nu^2$ be in $\mathcal{P}_p(\bar{D}\times C_T)$ and let $m$ be an optimal plan on $(\mathbb{R}^d\times C_T)^2$ for these two measures. On the probability space $((\bar{D}\times C_T)^2, m)$, we call $\zeta^i$, $W^i$, $i=1,2$, the r.v.~defined by the canonical projections and $(Y^i,h^i)=S^\mu(\zeta^i,W^i)$ the solution to the Skorokhod problem \eqref{eq:Skor_mu} with input $(\zeta^i, W^i)$. We have
	\begin{align*}
	W_p(\Phi(\nu^1,\mu),\Phi(\nu^2,\mu))^p \le \E^m (\|Y^1-Y^2\|_{\infty}+\|h^1-h^2\|_{\infty})^p,
	\end{align*}
	so it is enough to bound the right-hand side. We can apply Lemma \ref{lem:DupIsh91} to $z^i= \int^t_0 b(t, Y^i_r,\mu) d r +W^i$, $x_0^i=\zeta^i$ and so $y^i=Y^i$, $k^i=h^i$: we get
	\begin{align*}
	\|Y^1-Y^2\|_{\infty}+\|h^1-h^2\|_{\infty} \le C |\zeta^1-\zeta^2| +C \int^T_0 |b(t, Y^1_t,\mu)-b(t, Y^2_t,\mu)| d t + C\|W^1-W^2\|_{\infty}.
	\end{align*}
	Using the Lipschitz property of $b$ in $x$ (uniformly in $\mu$), we get
	\begin{align*}
	\|Y^1-Y^2\|_{\infty}+\|h^1-h^2\|_{\infty} \le C |\zeta^1-\zeta^2| +C \int^T_0 \|Y^1-Y^2\|_{\infty} d t + C\|W^1-W^2\|_{\infty}.
	\end{align*}
	By Gronwall inequality
	\begin{align*}
	\|Y^1-Y^2\|_{\infty}+\|h^1-h^2\|_{\infty} \le C |\zeta^1-\zeta^2| + C\|W^1-W^2\|_{\infty}.
	\end{align*}
	We take expectation (with respect to $m$) of the $p$-power and use the optimality of $m$, to obtain
	\begin{align*}
	W_p(\Phi(\nu^1,\mu),\Phi(\nu^1,\mu))^p \le C W_p(\nu^1,\nu^2)^p.
	\end{align*}
	This ends the proof of condition 1) of Proposition \ref{contraction theorem}.
	
	Let now $(\zeta, W)$ be fixed with law $\nu := \mathcal{L}(\zeta, W)$. Consider $\mu^1, \mu^2 \in \mathcal{P}_p(C_T(\bar{D})\times C_T)$ and call $(Y^i,h^i)=(Y^{\mu^i},h^{\mu^i})$, $i=1,2$ the corresponding solutions to the Skorokhod problem \eqref{eq:Skor_mu} (driven by the initial datum $\zeta$ and the path $W$). We can apply Lemma \ref{lem:DupIsh91} to $z^i= \int^t_0 b(t, Y^{\mu^i}_r,\mu^i) d r +W$, $x_0^i=\zeta$ and so $y^i=Y^{\mu^i}$, $k^i=h^i$: we get
	\begin{align*}
	\|Y^1-Y^2\|_{\infty} +\|h^1-h^2\|_{\infty} \le C \int^T_0 |b(t, X^{\mu^1}_r,\mu^1)-b(t, X^{\mu^2}_r,\mu^2)| d r.
	\end{align*}
	Taking the $p$-power and arguing as without boundaries, we get
	\begin{align*}
	\|Y^1-Y^2\|_{\infty}^p +\|h^1-h^2\|_{\infty}^p \le C \int^T_0 \|Y^1-Y^2\|_{\infty}^p d t + C\int^T_0 W_{C_t,p}(\mu^1,\mu^2)^p d t
	\end{align*}
	and so, by Gronwall inequality,
	\begin{align}
	\|Y^1-Y^2\|_{\infty}^p +\|h^1-h^2\|_{\infty}^p \le C\int^T_0 W_{C_t,p}(\mu^1,\mu^2)^p d t.\label{eq:stability_mu}
	\end{align}
	Taking expectation, we conclude
	\begin{align*}
	W_p(\Phi(\nu,\mu^1),\Phi(\nu,\mu^2))^p \le C\int^T_0 W_{C_t,p}(\mu^1,\mu^2)^p d t.
	\end{align*}
	As for without boundaries, iterating this inequality $k$ times for $k$ large enough (such that $(CT)^k/k!<1$), we get condition 2) in Proposition \ref{contraction theorem}. The proof is complete.
\end{proof}

As in the case without boundary, if the driving process is adapted, then so is the solution to the McKean-Vlasov Skorokhod problem. We omit the proof as it is completely analogous to the one without boundary.

\begin{proposition}
	Let $(\mathcal{F}_t)_t$ be a right-continuous, complete filtration on $(\Omega,\mathcal{A},\mathbb{P})$ such that $\zeta$ is $\mathcal{F}_0$-measurable and $W$ is $(\mathcal{F}_t)_t$-progressively measurable. Then the solution $(X,k)$ to \eqref{eq:McKVlaSkor} is also $(\mathcal{F}_t)_t$-progressively measurable.
\end{proposition}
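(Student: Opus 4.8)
The plan is to mimic the proof of Proposition \ref{prop:progr_meas}, replacing the ODE solution map by the solution map of the Skorokhod problem with frozen measure parameter, and using that this map is continuous (hence measurable) together with pathwise uniqueness.

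First I would fix $t\in[0,T]$ and set $\mu:=\mathcal{L}(X,k)\in\mathcal{P}_p(C_T(\bar{D})\times C_T)$. The key observation is that the Skorokhod problem \eqref{eq:McKVlaSkor} is \emph{causal}: the drift at time $s$ depends on $X$ only through $X_s$ and on $\mu$ only through the marginal $\mu_s$, which for $s\le t$ coincides with the marginal at time $s$ of the restriction $\mu\vert_{[0,t]}$; and the conditions defining $k$ (namely $d|k|_s=1_{X_s\in\partial D}d|k|_s$, $dk_s=n(X_s)d|k|_s$, in the sense of Remark \ref{rmk:boundary_corner}) are pointwise in $s$. Hence, for $\mathbb{P}$-a.e.~$\omega$, the restriction $(X\vert_{[0,t]},k\vert_{[0,t]})(\omega)$ solves the Skorokhod problem \eqref{eq:Skor_mu} on $[0,t]$ with deterministic input $(\zeta(\omega),W\vert_{[0,t]}(\omega))$ and frozen measure $\mu\vert_{[0,t]}$.

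Next I would invoke pathwise uniqueness for \eqref{eq:Skor_mu} (the Lemma preceding Theorem \ref{thm:wellpos_cont_boundary}, applied on $[0,t]$) to conclude $(X\vert_{[0,t]},k\vert_{[0,t]})(\omega)=S^{\mu\vert_{[0,t]}}_{[0,t]}(\zeta(\omega),W\vert_{[0,t]}(\omega))$ $\mathbb{P}$-a.s., where $S^{\mu\vert_{[0,t]}}_{[0,t]}:\bar{D}\times C_t\to C_t(\bar{D})\times C_t$ is the solution map of the Skorokhod problem with frozen measure on $[0,t]$. By Lemma \ref{lem:DupIsh91} (Lipschitz dependence of the Skorokhod map on the driving path) combined with the Gronwall-type estimate used to derive \eqref{eq:stability_mu} to absorb the drift term, this map is continuous, hence $\mathcal{B}(\bar{D})\otimes\mathcal{B}(C_t)$-measurable. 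Since $W$ is $(\mathcal{F}_s)$-progressively measurable it is in particular adapted, so $W\vert_{[0,t]}$ is $\mathcal{F}_t$-measurable, and $\zeta$ is $\mathcal{F}_0\subset\mathcal{F}_t$-measurable; therefore $(X\vert_{[0,t]},k\vert_{[0,t]})$ is $\mathcal{F}_t$-measurable, and in particular $(X_t,k_t)$ is $\mathcal{F}_t$-measurable. Thus $(X,k)$ is adapted, and since $X$ and $k$ have continuous paths, it is $(\mathcal{F}_t)$-progressively measurable.

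The only genuinely new point compared with Proposition \ref{prop:progr_meas}, and the step I would be most careful about, is the causality/restriction argument for the reflection term: one must check that truncating $k$ to $[0,t]$ does not spoil the Skorokhod constraints, using that the total-variation measure of $k\vert_{[0,t]}$ is the restriction to $[0,t]$ of $|k|$ and that the Radon--Nikodym relation $dk=n(X)\,d|k|$ passes to this restriction. Everything else — the existence of a jointly measurable (indeed continuous) solution map and the propagation of measurability through composition — is routine given Lemma \ref{lem:DupIsh91}.
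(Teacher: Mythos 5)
Your proposal is correct and follows essentially the same route as the paper, which omits the proof precisely because it is the argument of Proposition \ref{prop:progr_meas} transplanted to the reflected setting: restrict to $[0,t]$, identify the restricted solution via pathwise uniqueness with the image of $(\zeta, W\vert_{[0,t]})$ under the measurable (indeed Lipschitz, by Lemma \ref{lem:DupIsh91}) frozen-measure Skorokhod solution map, and conclude adaptedness and then progressive measurability by path continuity. Your extra care about the causality of the reflection term under restriction is exactly the one point that needs checking and it goes through as you describe.
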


Finally, following Section \ref{section:particle approximation}, we can obtain a particle approximation to the McKean-Vlasov Skorokhod problem \eqref{eq:McKVlaSkor}, just as corollary of the main result Theorem \ref{thm:wellpos_cont_boundary}. Here the corresponding particle system reads
\begin{equation}
\label{classical system Skor}
\left\{
\begin{array}{l}
dX^{i,N}_t = b(t,X^{i,N}_t,L^N(X^{(N)}_t,k^{(N)}_t)) dt + dW^{i,N}_t -dk^{i,N}_t\\
X^{i,N} \in C_T(\bar{D}),\ \ X_0^{i,N}=\zeta^{i,N},\\
k^{i,N}\in BV_T,\ \ d|k^{i,N}|_t = 1_{X^{i,N}_t\in \partial D} d|k^{i,N}|_t,\ \ dk^{i,N}_t = n(X^{i,N}_t) d|k^{i,N}|_t.
\end{array}
\right.
\end{equation}
Again the solution is an $N$-uple of couples $(X^{i,N},k^{i,N})_{i=1,\ldots N}$ (and again $|k^{i,N}|$ denotes the total variation process of $k^{i,N}$ and $k^{i,N}\in BV_T$ means that $k^{i,N}$ belongs to $BV_T$ $\mathbb{P}$-a.s.). The following result can be proven exactly as Theorem \ref{classical mean field result} (here we use a notation analogous to that theorem).

\begin{theorem}
	Let $p \in [1,\infty)$ and assume $b$ satisfies Assumption \ref{assumption refrlection}. Let $(\Omega, \mathcal{A}, \mathbb{P})$ be a probability space. On this space we consider, for $N \in \mathbb{N}$, a family of random variables $(\zeta^{(N)}, W^{(N)}) = (\zeta^{i,N}, W^{i,N})_{1\leq i\leq N}$ taking values on $\bar{D} \times C_T$. Let $\bar \zeta \in L^p(\Omega,  \bar{D})$ and $\bar W \in L^p(\Omega, C_T)$.
	Then:
	\begin{enumerate}[label=\roman{*} ]
		\item There exists a unique pathwise solution $(X^{(N)},k^{(N)})$ (resp. $(\bar X, \bar k)$) to the interacting particle system \eqref{classical system Skor} (resp. equation \eqref{eq:McKVlaSkor}).
		\item There exists a constant $C$ depending on $b$ such that for all $N \geq 1$, for a.e. $\omega \in \Omega$,
		\begin{align*}
		W_{C_T(\bar{D})\times C_T, p}&(L^N(X^{(N)}(\omega),k^{(N)}(\omega)), \mathcal{L}(\bar{X},\bar{k}))^p\\
		& \leq C W_{\bar{D} \times C_T, p}(L^N(\zeta^{(N)}(\omega), W^{(N)}(\omega)), \mathcal{L}(\bar{\zeta}, \bar{W}))^p.
		\end{align*}
		\item If the empirical $L^N(\zeta^{(N)}, W^{(N)})$ converges to $\mathcal{L}(\bar \zeta, \bar W)$ $\mathbb{P}$-a.s.,~then also the emprical measure of the solution converges.
	\end{enumerate}
\end{theorem}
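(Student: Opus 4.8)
The plan is to follow verbatim the strategy used for Theorem \ref{classical mean field result}, reducing everything to the already-established well-posedness and Lipschitz estimate of Theorem \ref{thm:wellpos_cont_boundary}. First I would fix $N\in\N$ and, for each $\omega\in\Omega$, reinterpret the particle system \eqref{classical system Skor} as the generalized McKean--Vlasov Skorokhod problem \eqref{eq:McKVlaSkor} posed on the discrete probability space $(\Omega_N,\mathcal{A}_N,\mathbb{P}_N)$ introduced in Section \ref{section:particle approximation}: the $N$-tuple $(\zeta^{(N)}(\omega),W^{(N)}(\omega))$ becomes a single random variable on $\Omega_N$ whose law is the empirical measure $L^N(\zeta^{(N)}(\omega),W^{(N)}(\omega))$, and the associated solution has law $L^N(X^{(N)}(\omega),k^{(N)}(\omega))$. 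Because $\Omega_N$ is finite, the input automatically has finite $p$-moment for every $\omega$ (it is an average of $N$ finite quantities), and $(\bar\zeta,\bar W)$ has finite $p$-moment by hypothesis, so the assumptions of Theorem \ref{thm:wellpos_cont_boundary} are met.

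Applying Theorem \ref{thm:wellpos_cont_boundary} on $(\Omega_N,\mathcal{A}_N,\mathbb{P}_N)$ with input $(\zeta^{(N)}(\omega),W^{(N)}(\omega))$ and on $(\Omega,\mathcal{A},\mathbb{P})$ with input $(\bar\zeta,\bar W)$ yields, for each $\omega$, a unique pathwise solution $(X^{(N)}(\omega),k^{(N)}(\omega))$ to \eqref{classical system Skor} and a unique pathwise solution $(\bar X,\bar k)$ to \eqref{eq:McKVlaSkor}, which is the first assertion. Part (2) of Theorem \ref{thm:wellpos_cont_boundary}, applied to these two inputs, gives precisely the Wasserstein bound of the second assertion, since the laws of the solutions on $\Omega_N$ and on $\Omega$ are exactly $L^N(X^{(N)}(\omega),k^{(N)}(\omega))$ and $\mathcal{L}(\bar X,\bar k)$, while the laws of the inputs are $L^N(\zeta^{(N)}(\omega),W^{(N)}(\omega))$ and $\mathcal{L}(\bar\zeta,\bar W)$.

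To finish the first assertion I would verify that $\omega\mapsto(X^{(N)}(\omega),k^{(N)}(\omega))$ is $\mathcal{A}$-measurable, exactly as in Section \ref{section:particle approximation}. One writes $(X^{(N)}(\omega),k^{(N)}(\omega))=S^{\mu(\omega)}(\zeta^{(N)}(\omega),W^{(N)}(\omega))$ with $\mu(\omega)=\Psi\!\big(L^N(\zeta^{(N)}(\omega),W^{(N)}(\omega))\big)$, where $S^\mu$ is the Skorokhod solution map of \eqref{eq:Skor_mu} and $\Psi$ is the fixed-point map produced by Proposition \ref{contraction theorem} in the proof of Theorem \ref{thm:wellpos_cont_boundary} (hence Lipschitz, and in particular measurable). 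The map $\omega\mapsto L^N(\zeta^{(N)}(\omega),W^{(N)}(\omega))$ is measurable, and $(\mu,x_0,\gamma)\mapsto S^\mu(x_0,\gamma)$ is measurable because, by Lemma \ref{lem:DupIsh91} together with the Lipschitz and boundedness assumptions on $b$, it is continuous in $(x_0,\gamma)$ and, via the stability estimate \eqref{eq:stability_mu}, continuous in $\mu$; composing these measurable maps gives the claim.

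Finally, the third assertion is an immediate consequence of the second: if $L^N(\zeta^{(N)},W^{(N)})\to\mathcal{L}(\bar\zeta,\bar W)$ $\mathbb{P}$-a.s.\ in $\mathcal{W}_{\bar D\times C_T,p}$ (or in the weak sense with uniformly bounded $q$-moments for some $q>p$, which is upgraded to $\mathcal{W}_p$-convergence exactly as in the proof of Corollary \ref{cor: heterogeneus particles}), then the right-hand side of the estimate in the second assertion tends to $0$ for a.e.\ $\omega$, and hence $\mathcal{W}_{C_T(\bar D)\times C_T,p}\big(L^N(X^{(N)},k^{(N)}),\mathcal{L}(\bar X,\bar k)\big)\to 0$ a.s. The only new ingredient relative to the no-boundary case is the Lipschitz continuity of the Skorokhod map, Lemma \ref{lem:DupIsh91}, which is already in hand; the main remaining (and still routine) point is the measurability bookkeeping for the solution map over the product of the discrete spaces $(\Omega_N,\mathcal{A}_N,\mathbb{P}_N)$, precisely as in Theorem \ref{classical mean field result}.
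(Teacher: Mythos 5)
Your proposal is correct and follows exactly the route the paper intends: the paper itself only remarks that this theorem ``can be proven exactly as Theorem \ref{classical mean field result}'', and you carry out precisely that argument --- reinterpreting the particle system on the discrete space $(\Omega_N,\mathcal{A}_N,\mathbb{P}_N)$ so that the input law is the empirical measure, invoking Theorem \ref{thm:wellpos_cont_boundary} in place of Theorem \ref{main theorem}, and handling measurability and the passage from (ii) to (iii) as in the unreflected case. No gaps.
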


\begin{remark}\label{rmk:extensions_D}
	More general cases can be treated, for example oblique reflection or even more general domains $D$, possibly with some extra assumptions: as one can see from the proof, it is enough to have an estimate as in Lemma \ref{lem:DupIsh91} for the boundary term. The case of oblique reflection (still with $D$ convex polyhedron) is treated in \cite{MR1110990} (see Assumptions 2.1 and Theorem 2.1 there). The case of more general domains is treated for example in \cite{Tan1979, MR873889}, though the Lipschitz constant in Lemma \ref{lem:DupIsh91} seems in this case to depend also on $z$.
\end{remark}

	\appendix
	\section{Proof of Proposition \ref{contraction theorem}}
	In this section we prove proposition \ref{contraction theorem}.
	
	First, we must show that $\Phi^Q$ has a unique fixed point. If $k = 1$, it is exactly the contraction principle, so we will assume $k>1$. Clearly $(\Phi^Q)^k$ is a contraction, hence it is has a unique fixed point $P_Q$. Hence,
	\begin{equation*}
	d_E(\Phi^Q(P_Q), P_Q) = d_E((\Phi^Q)^{k+1}(P_Q), (\Phi^Q)^k(P_Q)) \leq c d_E(\Phi^Q(P_Q), P_Q).
	\end{equation*}
	Since $c<1$, this implies $d_E(\Phi^Q(P_Q), P_Q) =0$ and therefore $P_Q$ is also a fixed point for $\Phi^Q$. Every fixed point of $\Phi^Q$ is also a fixed point for $(\Phi^Q)^k$, hence $P_Q$ is the only fixed point of $\Phi^Q$.
	
	We are left to prove \eqref{inequality}. By induction, one can show that
	\begin{equation*}
	\forall Q, Q^\prime \in F, \forall P \in E
	\quad d_E((\Phi^Q)^k(P), (\Phi^{Q^\prime})^k(P)) 
	\leq \left(\sum_{i=1}^k L^i\right) d_F(Q, Q^\prime).
	\end{equation*}
	Using a triangular inequality as well as assumption $2)$ and the previous inequality we obtain
	\begin{align*}
	d_E(P_Q, P_{Q^\prime})
	= & d_E((\Phi^Q)^k(P), (\Phi^{Q^\prime})^k(P^\prime))\\
	\leq & d_E((\Phi^Q)^k(P), (\Phi^{Q})^k(P^\prime))
	+ d_E(\Phi^k_{Q}(P^\prime), (\Phi^{Q^\prime})^k(P^\prime))\\
	\leq & cd_E(P_Q, P_{Q^\prime})
	+ \left(\sum_{i=1}^k L^i\right) d_F(Q, Q^\prime).
	\end{align*}
	The proof is complete.
	
	\section{Wasserstein Metric}
	
	We now recall some useful information on the Wasserstein metric, which we defined in \eqref{Wasserstein}. For more details the reader can refer to \cite{ambrosio2008gradient}. Let $p\in[1, \infty)$.
	\begin{enumerate}[label=\roman{*} ]
		%	\item\label{wasserstein:equiv weak convergence} Convergence in the $p$-Wasserstein metric is almost equivalent to weak convergence in the space $\mathcal{P}_p(S)$. For simplicity assume that the distance on $S$ is induced by a norm $\Vert\cdot \Vert_S$: then, given $(\mu_n)_{n\geq}, \mu \in \mathcal{P}_p(S)$,
		%	\begin{equation*}
		%	\mathcal{W}_{S, p}(\mu^n, \mu) \to 0 \quad 
		%	\iff \quad \mu^n \overset{\ast}{\rightharpoonup} \mu,
		%	\quad \exists C>0, \; \mu^n(\Vert x\Vert_S^p) < C.
		%	\end{equation*}
		
		\item The infimum in the definition of Wasserstein metric is a minimum. For each couple $\mu, \nu \in \mathcal{P}_p(E)$ there exists a measure $m\in \Gamma(\mu, \nu)$ such that
		\begin{equation}\label{minimum wasserstein}
		\mathcal{W}_{E,p}(\mu, \nu)^p = \iint_{E\times E} d(x, y)^p m(dx, dy).
		\end{equation}
		
		\item\label{marginal minor full space} The Wasserstein distance of two measures on the space of paths is larger than the distance of the corresponding one-time marginals at $t$, for any $t$. Indeed, note that, for any $\mu, \nu \in \mathcal{P}_p(C_T)$, if $m$ is in $\Gamma(\mu, \nu)$, then $m_t \in \Gamma(\mu_t, \nu_t)$, therefore we have
		\begin{align*}
		\mathcal{W}_{\mathbb{R}^d, p}(\mu_t, \nu_t)^p 
		\leq & \iint_{\mathbb{R}^d \times \mathbb{R}^d} \vert x - x^\prime \vert^p m_t(dx, dx^\prime)
		=  \iint_{C_T \times C_T} \vert \gamma_t - \gamma^\prime_t \vert^p m(d\gamma, d\gamma^\prime)
%		\leq & \iint_{C_T \times C_T} \Vert \gamma - \gamma^\prime \Vert_\infty^p m(d\gamma, d\gamma^\prime)\\
		\leq  \mathcal{W}_{C_T,p}(\mu, \nu)^p.
		\end{align*}

		\item\label{equivalence Wasserstein weak convergence on metric spaces}
		Let $E$ be a Polish space. For any given sequence $(\mu^n)_{n\geq 1} \in \mathcal{P}_p(E)$ the following are equivalent
		\begin{enumerate}
			\item (The sequence converges in Wassertein sense) $ \lim_{n\to\infty}\mathcal{W}_{E,p}(\mu_n, \mu) = 0$.
			\item (The sequence converges weakly and is uniformly integrable) There exists $x_0 \in E$ such that,
			\begin{equation*}
			\left\{
			\begin{array}{l}
			\mu_n \overset{\ast}{\rightharpoonup} \mu, \quad \mbox{as } n\to \infty\\
			\lim_{k\to\infty}\int_{E\setminus B_k(x_0)} d^p(x, x_0) d\mu^n(x) = 0, \quad \mbox{uniformly in }n.
			\end{array}
			\right.
			\end{equation*}
			Cf. \cite[Proposition~7.1.5]{ambrosio2008gradient}.
		\end{enumerate}
%		\begin{equation*}
%		\lim_{n\to\infty}\mathcal{W}_{S,p}(\mu_n, \mu)= 0
%		\iff \left\{
%		\begin{array}{l}
%		\mu_n \overset{\ast}{\rightharpoonup} \mu, \quad \mbox{as } n\to \infty\\
%		\lim_{k\to\infty}\int_{S\setminus B_k(\bar{x})} d^p(x, x_0) d\mu^n(x) = 0, \quad \mbox{uniformly in }n
%		\end{array}
%		\right.
%		\end{equation*}
		
		%	\item\label{wasserstein independent marginals} DO WE USE THIS? I think this is true, but I can't find a reference, which seems odd:
		%	\begin{equation*}
		%	\mathcal{W}_{E \times F, p}(\mu \otimes \nu, \mu^\prime \otimes \nu^\prime)^p 
		%	\leq \mathcal{W}_{E,p}(\mu, \nu)^p + \mathcal{W}_{F, p}(\nu, \nu^\prime)^p
		%	\end{equation*}
		
	\end{enumerate} 
	
	As a consequence of point \eqref{equivalence Wasserstein weak convergence on metric spaces}, we give a sufficient condition to pass from weak convergence of measures to convergence in the $p$-Wasserstein distance. 
	
	\begin{lemma}\label{lem:weak_implies_W}
		Let $(E,d)$ be a Polish space and $\mu_n$, $n\in \mathbb{N}$, $\mu$ be probability measures on $E$, fix $q\in[1,\infty)$. If the sequence $(\mu_n)_{n\in\mathbb{N}}$ converges to $\mu$ in the weak topology on probability measures and if, for some $p \in(q, \infty)$ and some $x_0$ in $E$,
		\begin{align}
		\sup_n \int_E d(x,x_0)^{p} \mu_n(d x) <\infty,\label{eq:unif_peps_bd}
		\end{align}
		then $\mu_n$ converges in $q$-Wasserstein metric to $\mu \in \mathcal{P}_q(E)$.
	\end{lemma}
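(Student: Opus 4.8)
The plan is to invoke the characterization of Wasserstein convergence recalled in point \eqref{equivalence Wasserstein weak convergence on metric spaces}: a sequence $(\mu_n)$ in $\mathcal{P}_q(E)$ converges to $\mu$ in $\mathcal{W}_{E,q}$ if and only if it converges weakly and the $q$-th moments $d(\cdot,x_0)^q$ are uniformly integrable with respect to the $\mu_n$. Weak convergence is assumed, so the only work is (a) to check $\mu \in \mathcal{P}_q(E)$ and $\mu_n\in\mathcal{P}_q(E)$, and (b) to deduce the uniform integrability of the $q$-th moments from the uniform bound \eqref{eq:unif_peps_bd} on the $p$-th moments, using $p>q$.

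\textbf{Finiteness of $q$-moments.} Since each $\mu_n$ is a probability measure and $t\mapsto t^{p/q}$ is convex on $[0,\infty)$, Jensen's inequality gives
\begin{equation*}
\int_E d(x,x_0)^q\,\mu_n(dx) \le \Big(\int_E d(x,x_0)^p\,\mu_n(dx)\Big)^{q/p}\le C^{q/p}=:C'<\infty,
\end{equation*}
uniformly in $n$, so $\mu_n\in\mathcal{P}_q(E)$. For $\mu$, apply the portmanteau theorem to the nondecreasing sequence of bounded continuous functions $x\mapsto \min\big(d(x,x_0)^q,m\big)$: for each fixed $m$, $\int_E \min(d(x,x_0)^q,m)\,d\mu = \lim_n \int_E \min(d(x,x_0)^q,m)\,d\mu_n \le C'$, and letting $m\to\infty$ by monotone convergence yields $\int_E d(x,x_0)^q\,d\mu\le C'<\infty$, i.e.\ $\mu\in\mathcal{P}_q(E)$.

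\textbf{Uniform integrability.} For any radius $k\ge 1$ and any $n$, on $E\setminus B_k(x_0)$ we have $d(x,x_0)^{q-p}\le k^{q-p}$ because $q-p<0$, hence
\begin{equation*}
\int_{E\setminus B_k(x_0)} d(x,x_0)^q\,\mu_n(dx)
= \int_{E\setminus B_k(x_0)} d(x,x_0)^{q-p}\,d(x,x_0)^p\,\mu_n(dx)
\le k^{q-p}\int_E d(x,x_0)^p\,\mu_n(dx)
\le C\,k^{q-p}.
\end{equation*}
Since $q-p<0$, the right-hand side tends to $0$ as $k\to\infty$, uniformly in $n$; this is precisely the uniform integrability condition in \eqref{equivalence Wasserstein weak convergence on metric spaces}(b). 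Combined with the assumed weak convergence $\mu_n\overset{\ast}{\rightharpoonup}\mu$, the equivalence in point \eqref{equivalence Wasserstein weak convergence on metric spaces} gives $\mathcal{W}_{E,q}(\mu_n,\mu)\to 0$, which is the claim.

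\textbf{Main obstacle.} There is no substantial obstacle here; the only point requiring a little care is the lower semicontinuity of the moment functional $\nu\mapsto\int d(\cdot,x_0)^q\,d\nu$ under weak convergence, which is handled by truncating $d(\cdot,x_0)^q$ to bounded continuous functions and passing to the monotone limit, as done above. Everything else is a one-line Markov/Jensen estimate feeding into the already-recorded characterization of $\mathcal{W}_q$-convergence.
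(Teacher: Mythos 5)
Your proof is correct and follows essentially the same route as the paper: reduce to the uniform integrability of $x\mapsto d(x,x_0)^q$ via the characterization in point \eqref{equivalence Wasserstein weak convergence on metric spaces}, and obtain it from the uniform $p$-moment bound by the estimate $d(x,x_0)^q\le R^{q-p}d(x,x_0)^p$ off the ball of radius $R$ (your exponent $k^{q-p}$ is in fact the correct one; the paper's displayed $R^{p-q}$ is a sign typo). The extra verification that $\mu,\mu_n\in\mathcal{P}_q(E)$ is a harmless refinement the paper leaves implicit.
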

	
	\begin{proof}
		By property \eqref{equivalence Wasserstein weak convergence on metric spaces}, it is enough to show that the map $x\mapsto d(x,x_0)^q$ is uniformly integrable with respect to $(\mu_n)_n$. For this, we have, for any $R>0$, for any $n$,
		\begin{align*}
		\int_{d(x,x_0)>R} d(x,x_0)^q \mu_n(d x) \le R^{p-q} \int_E d(x,x_0)^{p} \mu_n(d x).
		\end{align*}
		By the uniform bound \eqref{eq:unif_peps_bd}, we can choose $R$ large enough to make the right-hand side above small for all $n$. This shows that $x\mapsto d(x,x_0)^q$ is uniformly integrable.
	\end{proof}
	
	\begin{lemma}
		\label{appendix:iid into wasserstein}
		Given $p\in(1,\infty)$ and a separable Banach space $(E,\vert \cdot \vert)$, let $(X^i)_{i\geq1} \in L^p(\Omega, E)$ be a family of i.i.d.~random variables on this space with law $\mu$. Then,
		\begin{equation*}
		\lim_{N\to\infty}\mathcal{W}_{E, q}(L^N(X^{(N)}), \mu) = 0, \quad q \in (1, p), \quad \mathbb{P}-a.s.
		\end{equation*}
	\end{lemma}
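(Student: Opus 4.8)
The plan is to reduce the statement to the classical strong law of large numbers for empirical measures (Glivenko--Cantelli / Varadarajan type result) combined with the characterization \eqref{equivalence Wasserstein weak convergence on metric spaces} of Wasserstein convergence, as packaged in Lemma \ref{lem:weak_implies_W}. First I would recall that, since $(X^i)_{i \geq 1}$ are i.i.d.\ with common law $\mu$ on the separable Banach space $E$, Varadarajan's theorem gives that $\mathbb{P}$-a.s.\ the empirical measures $L^N(X^{(N)}) = \frac1N \sum_{i=1}^N \delta_{X^i}$ converge weakly to $\mu$. This handles the weak-convergence half of what is needed to invoke Lemma \ref{lem:weak_implies_W}.

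The second ingredient is a uniform $p$-th moment bound. Applying the ordinary strong law of large numbers to the real-valued i.i.d.\ sequence $(|X^i|^p)_{i \geq 1}$, which is integrable because $X^1 \in L^p(\Omega,E)$, we get
\begin{equation*}
\int_E |x|^p \, d\bigl(L^N(X^{(N)})\bigr)(x) = \frac1N \sum_{i=1}^N |X^i|^p \longrightarrow \mathbb{E}|X^1|^p < \infty, \quad \mathbb{P}\text{-a.s.}
\end{equation*}
In particular, on the a.s.\ event where this convergence holds, the sequence $N \mapsto \int_E |x|^p \, d(L^N(X^{(N)}))(x)$ is bounded, so the hypothesis \eqref{eq:unif_peps_bd} of Lemma \ref{lem:weak_implies_W} is satisfied (with $x_0 = 0$ and the exponent $p$ there equal to our $p$). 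Intersecting this full-measure event with the one on which weak convergence holds, we may then apply Lemma \ref{lem:weak_implies_W} with the smaller exponent $q \in (1,p)$ to conclude that $\mathcal{W}_{E,q}(L^N(X^{(N)}), \mu) \to 0$ $\mathbb{P}$-a.s.

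The only mild subtlety, and the step I would be most careful about, is making sure the two a.s.\ events (weak convergence of the empirical measures, and convergence of the empirical $p$-th moments) are each of full measure so that their intersection is too; both are standard, the first from Varadarajan's theorem for empirical measures on a separable metric space and the second from Kolmogorov's strong law, so there is no real obstacle. One could alternatively avoid citing Varadarajan and argue directly: pick a countable convergence-determining family of bounded Lipschitz functions $(\varphi_k)$ on $E$, apply the scalar SLLN to each $\varphi_k(X^i)$, intersect the countably many full-measure events, and deduce weak convergence on that event; this keeps the argument self-contained, but the outcome is the same.
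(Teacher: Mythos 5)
Your proposal is correct and follows essentially the same route as the paper: the paper likewise invokes the classical a.s.\ weak convergence of empirical measures of i.i.d.\ samples, applies the strong law of large numbers to $\frac1N\sum_{i=1}^N|X^i|^p$ to verify the uniform moment condition \eqref{eq:unif_peps_bd}, and concludes via Lemma \ref{lem:weak_implies_W}.
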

	
	\begin{proof}
		Since  $(X^i)$ are i.i.d., $\mathbb{P}$-a.s.~convergence in the weak topology
		\begin{align*}
		L^N(X^{(N)}(\omega)) \overset{\ast}{\rightharpoonup} \mathcal{L}(X^1), \quad \mathbb{P}-a.s.
		%\label{eq:conv_empirical_meas}
		\end{align*}
		is a classical result, see for example \cite{MR622172} and references therein. Moreover, by the law of large numbers, we have, for a.e.~$\omega$,
		\begin{align*}
		\int_{E} \vert x \vert^{p} dL^N(X^{(N)}(\omega))(x)
		= \frac{1}{N}\sum_{i=1}^N \vert X^i(\omega) \vert^{p} \rightarrow \mathbb{E}\vert X^1\vert ^{p}  <\infty.
		\end{align*}
		We obtain condition \eqref{eq:unif_peps_bd} in Lemma \ref{lem:weak_implies_W}, which concludes the proof.
	\end{proof}

\bibliographystyle{abbrv}
\bibliography{bibliography} 

\end{document}